\newcommand{\bigdual}[2]{\mathinner{\bigl\langle{#1}\bigl|{#2}\bigr.\bigr\rangle}}
\newcommand{\dual}[2]{\braket{#1\mid{}#2}}
\definecolor{darkred}{RGB}{203,65,84}
\definecolor{darkblue}{RGB}{70,130,180}
\definecolor{brown}{RGB}{139,69,19}
\newtheorem{theorem}{Theorem}[section] 
\newtheorem{lemma}[theorem]{Lemma}
\newtheorem{proposition}[theorem]{Proposition}
\newtheorem{corollary}[theorem]{Corollary}
\newcounter{foo}
\newtheorem{theo}[foo]{Theorem}
\newtheorem{coro}[foo]{Corollary}
\theoremstyle{definition}
\newtheorem{definition}[theorem]{Definition}
\theoremstyle{remark}
\newtheorem{remark}[theorem]{Remark}
\newtheorem{remarks}[theorem]{Remarks}
\newtheorem{examples}[theorem]{Examples}
\newtheorem{example}[theorem]{Example}
\newcommand{\grf}{\pi_1(\Sigma)}
\newcommand{\sld}{\mathsf{SL}_2(\mathbb R)}
\newcommand{\gl}{\mathsf{GL}}
\newcommand{\pgl}{\mathsf{PGL}}
\newcommand{\psld}{\mathsf{PSL}_2(\mathbb R)}
\newcommand{\slm}{\mathsf{SL}_m(\mathbb R)}
\newcommand{\skd}{\mk{sl}_2}
\newcommand{\ad}{\operatorname{ad}}
\newcommand{\Ad}{\operatorname{Ad}}
\newcommand{\Rp}{{\mathbf{ P}}^1(\mathbb R)}
\renewcommand{\d}{{\rm d}}
\newcommand{\sbt}{\,\begin{picture}(-1,1)(-1,-1)\circle*{2}\end{picture}\ }
\renewcommand{\dot}[1]{\overset{\sbt}{#1}}
\newcommand{\ms}{\mathsf}
\newcommand{\mk}{\mathfrak}
\newcommand{\mG}{\ms G}
\newcommand{\mK}{\ms K}
\newcommand{\mP}{\ms P}
\newcommand{\mU}{\ms U}
\newcommand{\mS}{\ms S}
\newcommand{\mL}{\ms L}
\newcommand{\mN}{\ms N}
\newcommand{\mB}{\ms B}
\newcommand{\mH}{\ms H}
\newcommand{\mW}{\ms W}
\newcommand{\mV}{\ms V}
\newcommand{\mM}{\ms M}
\newcommand{\cF}{{\mathcal F}}
\newcommand{\cG}{\mathcal G}
\newcommand{\cGG}{\mathcal I}
\newcommand{\cL}{\mathcal L}
\newcommand{\cS}{\mathcal S}
\newcommand{\bP}{\mathbf P}
\newcommand{\T}{\ms T}
\newcommand{\defeq}{\coloneqq}
\renewcommand{\leq}{\leqslant}
\renewcommand{\geq}{\geqslant}
\renewcommand{\epsilon}{\varepsilon}
\renewcommand{\phi}{\varphi}
\newcommand{\mGG}{\mathcal G}
\newcommand{\seq}[1]{ \{{#1}_m\}_{m\in\mathbb N}}
\newcommand{\mappingnew}[4] { 
    \begin{array}{rcl}
      #1 &\longrightarrow& #2\\
      #3 &\longmapsto& #4
    \end{array}}
\newcommand{\bPhi}{\boldsymbol{\Phi}}
\newcommand{\bb}{{\rm b}}
\newcommand{\pp}{{\rm p}}
\title[Positivity and the Collar Lemma]{Positivity, cross-ratios and the Collar Lemma}
\author[J.~Beyrer  \and  O.~Guichard  \and  F.~Labourie \and   B.~Pozzetti
\and  A.~Wienhard]{Jonas Beyrer \and Olivier Guichard \and François Labourie
  \and  \\ Beatrice  Pozzetti \and  Anna Wienhard}
   \def\MR#1{}
\begin{document}
\thanks{F.L. acknowledges funding by the European Research Council under ERC-Advanced grant 101095722. B.P.\ acknowledges funding by the DFG, 427903332 (Emmy Noether). A.W.\ was supported by the European Research Council under ERC-Advanced Grant 101018839. A.W.\ also thanks the Hector Fellow Academy  for support. This work was supported by the Deutsche Forschungsgemeinschaft under Germany’s Excellence Strategy EXC- 2181/1 - 390900948 (the Heidelberg STRUCTURES Cluster of Excellence). }
\begin{abstract}
	We prove that $\Theta$-positive representations of fundamental groups of surfaces (possibly cusped or of infinite type) satisfy a collar lemma, and their associated cross-ratios are positive. As a consequence we deduce that $\Theta$-positive representations form closed subsets of the representation variety.
\end{abstract}	

\maketitle

\setcounter{tocdepth}{2}
\tableofcontents{}

\section*{Introduction}
The use of cross-ratios in hyperbolic dynamics was initiated by Otal \cite{Otal:1990th} and notably used by Hamenstädt \cite{Hamenstadt:1997} and Ledrappier \cite{Ledrappier:1995}. For the purpose of this introduction, let us recall 
that they consider real valued functions of generic quadruples of points in
the boundary at infinity $\partial_\infty\Gamma$ of a hyperbolic group
$\Gamma$ satisfying certain additive (or multiplicative) cocycle identities. 

These additive functions arise as logarithms of what we will call cross-ratios in this paper. Given a cross-ratio~$\bb$, and a non-trivial element~$\gamma$ in~$\Gamma$, the {\em period} of~$\gamma$ is
$
\pp(\gamma)\defeq\bb(\gamma^+,\gamma^-,x, \gamma(x))$,
where~$\gamma^-$ and~$\gamma^+$ are respectively the repelling and attracting fixed points of~$\gamma$ in $\partial_\infty\Gamma$ and $x$~is any  element of $\partial_\infty\Gamma$ not fixed by~$\gamma$. In the context of plane hyperbolic geometry, when $\Gamma$~is the fundamental group of a closed hyperbolic surface~$S$, $\partial_\infty\Gamma$ is identified with the projective line over $\mathbb R$ and the period of the projective cross-ratio of an element $\gamma$ in $\Gamma$ is the exponential of the length of the associated closed geodesic on $S$.
For Anosov representations cross-ratios have been introduced by Labourie in \cite{Labourie:2005}  (see Section~\ref{sec:sympl-reint}); they have since become a standard tool.

In this paper we concern ourselves with special classes of discrete subgroups of semisimple Lie groups, the images of the so-called positive representations \cite{Guichard:2021aa}.
It has been recognized that for special families of Lie groups some classes of representations of
fundamental groups of surfaces have a unique behaviour: when the surface is closed, they
form entire connected components of the space of homomorphisms; as a corollary
one finds components consisting entirely of discrete and faithful representations. 
For $\psld$, positive representations are precisely the holonomy representations of hyperbolic structures. 

More generally, for split real Lie groups, Hitchin representations
give rise to connected components consisting entirely of discrete and faithful
representations, see \cite{Fock:2006a, Labourie:2006}, and the same is true for maximal representations in groups of
Hermitian type
\cite{Burger:2010ty, Burger:2005}. Even though these spaces of representations
were introduced and investigated using very different techniques, Guichard and
Wienhard unveiled a common structure ---called  
{\em $\Theta$-positivity}---
that underlies  them all \cite{GuichardWienhard_pos} and generalizes the total
positivity à la Lusztig \cite{Lusztig:1994}, which played a central role in
work of Fock and Goncharov \cite{Fock:2006a}. Lie groups admitting a positive
structure relative to~$\Theta$ 
exist beyond the above mentioned examples, see \cite{GuichardWienhard_pos},
and Guichard, Labourie, and Wienhard 
started the study of $\Theta$-positive representations of surface groups in
\cite{Guichard:2021aa}. $\Theta$-Positivity is defined with respect to a subset
$\Theta$ of simple roots (or equivalently with respect to the choice of a
conjugacy class of parabolic groups), and it is shown in
\cite{Guichard:2021aa}  that, for closed surface groups, $\Theta$-positive
representations are in particular Anosov with respect to $\Theta$. For the
purpose of this  introduction, we don't recall the precise definition of
$\Theta$-positivity, but just recall that it can be described by a subset of
quadruples, called {\em positive quadruples}, in the flag manifold
$\cF_\Theta$ defined by~$\Theta$, and positive representations
preserve
a cyclically ordered subset of~$\cF_\Theta$ for which 
ordered triples and quadruples are positive (see Section~\ref{sec:def-pos}).

In this paper we explore cross-ratios on general flag manifolds and in particular cross-ratios associated to
$\Theta$-positive representations. We show that these cross-ratios are
positive, namely the cross-ratio of a cyclically oriented $4$-tuple is bigger than~$1$, so its logarithm is positive. 
We further prove a
Collar Lemma in the spirit of hyperbolic geometry.  We use these results in combination
with a result from \cite{Guichard:2021aa}, to show that the space of
$\Theta$-positive representations of the fundamental group of a closed surface
is open and closed in the space of homomorphisms, thus establishing a
conjecture of Guichard, Labourie and Wienhard (cf.\ \cite{Guichard:2021aa, GuichardWienhard_pos, Wienhard_Guichard_ECM, 
	Wienhard_ICM}), extending previous results of the authors in
\cite{Guichard:2021aa} and \cite{BeyrerPozzetti}. Along the way we introduce
new objects ---called {\em photons}--- that might be useful for the study of
surface group representations in wider settings. 
\medskip

Let us describe the results in more detail. Recall that the projective cross-ratio on the projective space associates to a pair $(x,y)$ of lines and a pair $(X,Y)$ of hyperplanes  with suitable transversality properties the real number 
\[
\bb(x,y,X,Y)\defeq\frac{\dual{\bar x}{\bar X}\dual{\bar y}{\bar  Y}}{\dual{\bar y}{\bar X}\dual{\bar x}{\bar Y}}\ ,
\]
where $\bar X$, $\bar Y$, $\bar x$ and  $\bar y$ are (any) non-zero elements in $X$, $Y$, $x$ and $y$ respectively. 

Let now $\Theta$ be a subset of the set $\Delta$ of simple roots of
$\mathsf G$; we assume that $\Theta$~is symmetric with respect to the
opposition involution. Let $\cF_\Theta$ be the associated generalized flag
manifold. 
For an element
$\theta$ of   $\Theta$, the fundamental weight $\omega_\theta$
defines a (projective) representation of $\mathsf G$ on  a vector space $V$ and  $\mathsf
G$-equivariant maps $\Xi_\theta$  and $\Xi_\theta^*$ from $\cF_\Theta$ to the
projective space of $V$ and its dual. The {\em associated cross-ratio} on
$\cF_\Theta$ is 
$$
\bb^{\omega_\theta}(x,y,z,w)\defeq \bb(\Xi_\theta(x),\Xi_\theta(y),\Xi^*_\theta(z),\Xi^*_\theta(w))\ .
$$

Our first result is

\begin{theo}[\sc Positivity of the cross-ratio]\label{theo:A}
	Let $\ms G$ be a semisimple group admitting a positive structure relative to~$\Theta$,
	$\cF_\Theta$ be the generalized flag manifold associated to~$\Theta$,
	$\omega_\theta$ the fundamental weight associated to $\theta$ in $\Theta$. Then for every  positive quadruple $(x,y,X,Y)$ in $\cF_\Theta^4$
	\[\bb^{\omega_\theta} (x,y,X,Y)>1\ . 
	\]
\end{theo}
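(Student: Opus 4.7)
The plan is to use $\ms G$-invariance of the cross-ratio to place $(X,Y)$ in a standard opposite position and then reduce the inequality to a strict positivity statement about extremal matrix entries of $\Theta$-positive unipotents on the fundamental weight representation $V := V(\omega_\theta)$.

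\textbf{Normalization and factorization.} By $\ms G$-invariance of the cross-ratio, I assume that $X$ and $Y$ are opposite standard $\Theta$-flags with stabilizers $\ms P_\Theta^-$ and $\ms P_\Theta^+$ respectively. Let $v^\pm \in V$ be the extremal (highest and lowest) weight vectors and $\psi^\pm \in V^*$ the dual extremal covectors, normalized by $\psi^\pm(v^\mp) = 1$ and $\psi^\pm(v^\pm) = 0$. Then $\Xi_\theta(X) = [v^-]$, $\Xi_\theta(Y) = [v^+]$, $\Xi_\theta^*(X) = [\psi^+]$, and $\Xi_\theta^*(Y) = [\psi^-]$. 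The definition of positive quadruple produces elements $u, w$ in the $\Theta$-positive semigroup $\ms U_\Theta^{>0}$, sitting inside the unipotent radical $\ms U_\Theta^+$ of $\ms P_\Theta^+$, such that $x = u\cdot X$ and $y = uw\cdot X$. With representatives $\bar x := u v^-$ and $\bar y := uw v^-$, the unitriangularity of $\ms U_\Theta^+$ on the weight decomposition gives $\psi^+(\bar x) = \psi^+(\bar y) = 1$, and the cross-ratio simplifies to
\begin{equation*}
\bb^{\omega_\theta}(x, y, X, Y) \;=\; \frac{\psi^-(uwv^-)}{\psi^-(uv^-)}.
\end{equation*}

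\textbf{Positivity on $V(\omega_\theta)$.} The essential input, drawn from the structure theory of $\Theta$-positive representations on fundamental weight representations developed in \cite{GuichardWienhard_pos, Guichard:2021aa}, is the existence of a pointed salient convex cone $\mathcal{C}_\theta \subset V$ satisfying: (i) $\ms U_\Theta^{>0}\cdot\mathcal{C}_\theta \subseteq \mathcal{C}_\theta$; (ii) $v^-\in\mathcal{C}_\theta$ and $uv^- - v^-\in\mathcal{C}_\theta$ for every nontrivial $u\in \ms U_\Theta^{>0}$; (iii) $\psi^-$ is strictly positive on $\mathcal{C}_\theta\setminus\{0\}$. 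Granting this, (iii) applied to $uv^-\in\mathcal{C}_\theta$ yields $\psi^-(uv^-) > 0$. Decomposing $uwv^- = uv^- + u(wv^- - v^-)$, invariance (i) puts $u(wv^- - v^-)$ in $\mathcal{C}_\theta\setminus\{0\}$, and (iii) delivers $\psi^-(uwv^-) > \psi^-(uv^-)$, so the above ratio exceeds $1$.

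\textbf{Main obstacle.} The principal obstacle is the last step: constructing the invariant cone $\mathcal{C}_\theta$ and verifying (i)--(iii) uniformly across all $\Theta$-positive structures. In the Hitchin (split) case such a cone is classical---vectors with nonnegative Plücker coordinates in a canonical basis---and in the Hermitian/maximal case one can take positive definite symmetric tensors; but for the exceptional families admitting $\Theta$-positivity, constructing a cone on $V(\omega_\theta)$ that simultaneously absorbs the positive semigroup and keeps $\psi^-$ strictly positive is subtle and likely requires the geometric machinery of the \emph{photons} announced in the introduction, which presumably package the positivity of the semigroup action on fundamental representations in a form amenable to this kind of analysis.
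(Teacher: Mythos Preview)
Your approach is genuinely different from the paper's, and it has a real gap that you yourself flag.

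\textbf{What the paper does.} The paper does \emph{not} attack the inequality via an invariant cone in $V(\omega_\theta)$, and it does not use photons for this theorem at all (photons enter only later, for the Collar Lemma). Instead, the paper proves an integral formula (Proposition~\ref{pro:cr-om})
\[
\bb^\eta(x,y,X,Y)=\exp\Bigl(\int_{[0,1]^2} f^*\langle\Omega\mid\eta\rangle\Bigr)
\]
where $\Omega$ is a $\mathfrak{b}_\Theta$-valued curvature $2$-form on $\cL_\Theta=\cG/\mL_\Theta$ and $f$ is any square with the prescribed boundary. For a positive quadruple one chooses $f(s,t)=(c_0(s),c_1(t))$ with $c_0$, $c_1$ positive arcs inside suitable diamonds (Proposition~\ref{p.squaresforpositive}). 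The integrand then reduces to $\langle p([u,v])\mid\eta\rangle$ with $u$ in an open cone $c_{-\theta}$ and $v$ in $c_\theta$, and the strict inequality follows from a purely Lie-algebraic statement, Theorem~\ref{theo:brak-pos} (``Positivity of bracket''), whose proof is an explicit computation with boundary roots of $\Sigma_\theta$. No representation-theoretic cone on $V(\omega_\theta)$ is ever built.

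\textbf{Your gap.} The cone $\mathcal{C}_\theta$ with properties (i)--(iii) is the entire content of the theorem, and you do not construct it. Even in the split case, where one has Lusztig's total positivity, your condition (ii) that $uv^- - v^- \in \mathcal{C}_\theta$ for every nontrivial $u\in\mU_\Theta^{>0}$ is not the statement one finds in the literature: totally positive unipotents have positive matrix entries, but $uv^- - v^-$ has a zero $v^-$-coordinate, so it lies on the boundary of the naive nonnegative orthant, and the strict positivity of $\psi^-$ on that boundary is precisely what must be argued. For the $\mathrm{SO}(p,q)$ and exceptional $\Theta$-positive families there is no cone in $V(\omega_\theta)$ available off the shelf, and constructing one with the required strict invariance would amount to reproving the theorem by other means. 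Your normalization step also deserves care: with $(x,y,X,Y)$ positive in the paper's sense, the pair you may put in standard opposite position is an \emph{extremity} pair of a diamond, and the natural choice is $(x,X)$ (with $y$ in one diamond and $Y$ in the opposite), not $(X,Y)$; your parametrization $x=u\cdot X$, $y=uw\cdot X$ should be checked against the cyclic order.

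In short: the reduction to a ratio of extremal matrix entries is reasonable, but the cone is not a black box you can cite, and the paper sidesteps the issue entirely by working infinitesimally in the Lie algebra.
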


In Section~\ref{sec:cross-ratio} we extend the construction of cross-ratios to
all positive linear combinations of the fundamental weights $\omega_\Theta$
for $\theta$ in $\Theta$ (we call those $\Theta$-compatible dominant weights,
cf.\ Section~\ref{sec:line-forms-posit})  without assuming $\Theta$~being
invariant under the opposition involution. Theorem~\ref{theo:A} is proved in
Section~\ref{sec:posit-cross-ratio}.

Special cases of this theorem were known before. This was established in
\cite{Labourie:2005,Lee-Zhang:2017} for the cross-ratios of
$\mathsf{PSL}_n(\mathbb R)$, in \cite{Burger:2005} for maximal representations
in $\mathsf{Sp}(2n,\mathbb R)$, and in \cite{BeyrerPozzetti} for $4$-tuples in
the limit curve of a $\Theta$-positive representations in $\mathsf{SO}(p,q)$.

In \cite{Bridgeman:2023} Theorem~\ref{theo:A} is used by Bridgeman and Labourie to obtain the convexity of length functions on moduli spaces of positive representations. 

In \cite{Labourie:2009wi} Labourie and Mc-Shane showed that positive cross-ratios imply the existence of generalized McShane--Mirzakhani identities. In particular, Theorem~\ref{theo:A} implies that Theorem~1.0.1 of  \cite{Labourie:2009wi} holds for all positive representations. 

In \cite{Martone:2019aa} Martone and Zhang introduced the notion of
\emph{positively ratioed} representations with respect to a parabolic subgroup
$\mP_\Theta$, and showed that the set of positively ratioed representations
admits appropriate embeddings into the space of geodesic currents. 
Theorem~\ref{theo:A} implies: 
\begin{coro}\label{coro:posratio}
	Let $\rho$ be a $\Theta$-positive representation of a closed surface group, then $\rho$ is $\mP_\Theta$-positively ratioed.
\end{coro}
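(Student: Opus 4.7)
The plan is to deduce the corollary directly from Theorem~A together with the main result of \cite{Guichard:2021aa}. Recall that \cite{Guichard:2021aa} establishes that any $\Theta$-positive representation $\rho\colon\pi_1(\Sigma)\to\mathsf G$ of a closed surface group is $\mP_\Theta$-Anosov and admits a $\rho$-equivariant, continuous, transverse boundary map $\xi\colon\partial_\infty\pi_1(\Sigma)\to\cF_\Theta$; moreover, by the very definition of $\Theta$-positivity recalled in the introduction, $\xi$ sends every cyclically ordered quadruple of points in $\partial_\infty\pi_1(\Sigma)$ to a positive quadruple in $\cF_\Theta$.

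Second, I would unpack the definition of \cite{Martone:2019aa}: the representation $\rho$ is $\mP_\Theta$-positively ratioed precisely when, for each fundamental weight $\omega_\theta$ with $\theta\in\Theta$ (and more generally for each $\Theta$-compatible dominant weight, using the extended construction of Section~\ref{sec:cross-ratio}), the boundary-pulled cross-ratio
\[
(x,y,z,w)\longmapsto \bb^{\omega_\theta}\bigl(\xi(x),\xi(y),\xi(z),\xi(w)\bigr)
\]
takes values strictly greater than $1$ on every cyclically ordered quadruple $(x,y,z,w)\in (\partial_\infty\pi_1(\Sigma))^4$.

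Combining the two ingredients, for such a cyclically ordered quadruple $(x,y,z,w)$ the image $(\xi(x),\xi(y),\xi(z),\xi(w))$ is a positive quadruple in $\cF_\Theta$, so Theorem~A applies and immediately gives $\bb^{\omega_\theta}(\xi(x),\xi(y),\xi(z),\xi(w))>1$. Verifying this for all $\theta\in\Theta$ yields the positively ratioed property and concludes the proof.

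Since the argument is just a direct application of Theorem~A, there is no substantive obstacle; the only thing to check is that the normalisation conventions (the choice of cross-ratio, the choice of cyclic orientation on $\partial_\infty\pi_1(\Sigma)$, and whether positivity is stated for $(\xi(x),\xi(y),\xi(z),\xi(w))$ or for the reversed-order quadruple) used in \cite{Martone:2019aa} match those adopted here. This is a bookkeeping matter, handled by the extension of the cross-ratio construction in Section~\ref{sec:cross-ratio} to arbitrary positive combinations of fundamental weights.
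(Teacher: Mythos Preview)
Your proposal is correct and matches the paper's approach: the paper does not give a separate proof of this corollary but simply states that it is implied by Theorem~\ref{theo:A}, and your argument spells out exactly this implication (positive boundary map from \cite{Guichard:2021aa}, then apply Theorem~\ref{theo:A} to the image quadruples). The caveat you flag about matching the precise conventions of \cite{Martone:2019aa} is the only thing left to verify, and it is indeed just bookkeeping.
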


Theorem~\ref{theo:A} is also a crucial ingredient for the following Collar
Lemma.

For every  $\eta$ in $\mathsf G$ with attracting and repelling fixed points $\eta^+$ and $\eta^-$ in $\cF_\Theta$, the {\em period} of $\eta$ with respect to the  cross-ratio $\bb^{\omega_\theta}$ is  
$$\pp^{\omega_\theta}(\eta)\defeq \bb^{\omega_\theta}(\eta^+,\eta^-,y,\eta(y)),$$ 
where one checks that the right hand term does not depend on the choice of
$y$ in $\cF_\Theta$ transverse to $\eta^+$ and $\eta^-$.

Any linear form $\lambda$ in $\mathfrak a^*$ gives rise to a
character $\chi_{\lambda}\colon \mG \rightarrow \mathbb{R}$ given by $\chi_\lambda(\eta)\defeq\exp(\dual{h}{\lambda})$ where $h$~is the Jordan projection of~$\eta$ in the Weyl chamber~$\mathfrak a^+$ of~$\mathsf G$.  When $\lambda$ is a weight,  periods and characters are related by the following formula:
$$
\pp^{\lambda}(g)=\chi_\lambda(g)\chi_\lambda(g^{-1})\ .
$$

\begin{theo}[\sc Collar Lemma in the Lie group]\label{theo:Bprime}
	Let $\mG$ be a semisimple Lie group admitting a positive
	structure relative to~$\Theta$. Let~$A$ and~$B$ be $\Theta$-loxodromic elements of~$\mG$. Denote
	by $(a^+, a^-)$ and $(b^+, b^-)$ the pair of attracting and repelling fixed
	points of~$A$ and~$B$ respectively in the flag variety~$\cF_\Theta$. Assume
	that the sextuple
	$$(a^+, b^-, a^-, b^+, B(a^+), A(b^+))$$
	is positive. Then for any $\theta$ in $\Theta$, the following  holds
	\begin{equation*}
		\frac{1}{\pp^{\omega_\theta}\left(B\right)}
		+\frac{1}{\chi_{\theta} \left(A\right)}< 1\ .
	\end{equation*}
\end{theo}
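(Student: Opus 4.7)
The plan is to put the inequality into its classical product form
\[
(\chi_\theta(A) - 1)(\pp^{\omega_\theta}(B) - 1) > 1,
\]
which parallels the hyperbolic collar inequality $(e^{\ell_\alpha}-1)(e^{\ell_\beta}-1) > 1$ for intersecting simple closed geodesics, and then to prove it by combining the multiplicative cocycle of $\bb^{\omega_\theta}$ with the positivity of Theorem~\ref{theo:A}. Both factors are strictly positive: $\chi_\theta(A) = e^{\alpha_\theta(h_A)} > 1$ by the $\Theta$-loxodromicity of $A$, and $\pp^{\omega_\theta}(B) > 1$ either from the product formula $\pp^{\omega_\theta}(B)=\chi_{\omega_\theta}(B)\chi_{\omega_\theta}(B^{-1})$ (plus loxodromicity of $B$) or directly from Theorem~\ref{theo:A} upon choosing any auxiliary flag $y$ rendering $(b^+,b^-,y,B(y))$ a positive quadruple.

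My first step is to realise the period as the cross-ratio $\pp^{\omega_\theta}(B) = \bb^{\omega_\theta}(b^+, b^-, a^+, B(a^+))$ --- a valid choice because $a^+$ is transverse to $b^\pm$ by positivity of the sextuple --- and then to apply the multiplicative cocycle to insert the flag $A(b^+)$:
\[
\pp^{\omega_\theta}(B) = \bb^{\omega_\theta}(b^+, b^-, a^+, A(b^+)) \cdot \bb^{\omega_\theta}(b^+, b^-, A(b^+), B(a^+)).
\]
Using the cyclic ordering $a^+, b^-, a^-, b^+, B(a^+), A(b^+)$, the positive sub-quadruples it defines, and the symmetries of the cross-ratio, Theorem~\ref{theo:A} gives sharp sign information on both factors. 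To pull $\chi_\theta(A)$ out of the first factor I would use the $A$-equivariance of $\bb^{\omega_\theta}$ and the fact that $A$ fixes $a^\pm$: iterating the cocycle by inserting the flags $A^{-1}(b^+), A^{-2}(b^+),\dots$ produces a geometric series whose common ratio is the contraction rate of $A$ on flags near $a^+$.

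The main obstacle is to show that this contraction rate is exactly $\chi_\theta(A)^{-1}$, which is not immediate because the simple root $\alpha_\theta$ is not a $\Theta$-dominant weight and is therefore not directly covered by the cross-ratio construction of Section~\ref{sec:cross-ratio}. This is precisely where the \emph{photons} introduced in the paper should enter: a photon over a flag $F \in \cF_\Theta$ is an infinitesimal datum on which a $\Theta$-loxodromic element acts by $\chi_\theta$, and the induced cross-ratio on the space of photons realises $\chi_\theta$ as a period. Once a photon version of Theorem~\ref{theo:A} is established and combined with the cocycle decomposition above, the cyclic ordering of the positive sextuple reduces $(\chi_\theta(A)-1)(\pp^{\omega_\theta}(B)-1) > 1$ to an elementary arithmetic manipulation on positive real numbers. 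In the rank-one case $\mathsf G = \mathsf{PSL}_2(\mathbb R)$ the identity $\chi_\theta = \pp^{\omega_\theta}$ makes the photon construction superfluous and the argument specialises to the classical Fuchsian proof; in higher rank the technical heart of the proof is the construction of photons together with the verification that their cross-ratio is compatible with the $\Theta$-positive structure.
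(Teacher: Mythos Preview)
Your proposal is a plan rather than a proof, and the plan has a genuine gap. You correctly isolate the central difficulty --- the simple root~$\theta$ is not a $\Theta$-compatible dominant weight, so $\chi_\theta(A)$ cannot be read off from the cross-ratio~$\bb^{\omega_\theta}$ --- and you correctly guess that photons are the remedy. But the mechanism you propose (iterating $A^{-1}(b^+), A^{-2}(b^+),\dots$ to produce a geometric series with ratio $\chi_\theta(A)^{-1}$) does not work: the only ``contraction rate'' of~$A$ that the cross-ratio~$\bb^{\omega_\theta}$ can see is its period $\pp^{\omega_\theta}(A)=\chi_{\omega_\theta}(A)\chi_{\omega_\theta}(A^{-1})$, never $\chi_\theta(A)$, so no amount of iteration on~$\cF_\Theta$ alone will extract the root character. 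Your description of photons is also off: a $\theta$-photon is not an infinitesimal datum but an embedded copy of $\bP^1(\mathbb{R})$ inside~$\cF_\Theta$, and there is no ``cross-ratio on the space of photons''.

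Two ideas from the paper are missing in your outline. First, on any $\theta$-photon~$\Phi$ through~$a^-$ the photon cross-ratio equals the classical projective cross-ratio (Proposition~\ref{pro:phot-cr}), and the projective cross-ratio satisfies the \emph{additive} identity
\[
\bb^{\omega_\theta}\bigl(a^-,\, p_\Phi(b^+),\, a^+,\, A(b^+)\bigr) \;+\; \bb^{\omega_\theta}\bigl(a^-,\, p_\Phi(a^+),\, b^+,\, A(b^+)\bigr) \;=\; 1\ .
\]
This $\bP^1$-identity is the source of the ``$+$'' in the collar inequality; a purely multiplicative cocycle decomposition cannot manufacture it. The first summand is then shown to exceed $\pp^{\omega_\theta}(B)^{-1}$ via cocycle identities, semi-positivity of photon-projected quadruples (Proposition~\ref{pro:semi-pos}), and Theorem~\ref{theo:A} --- this part is close in spirit to your first cocycle step. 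Second, the link between the other summand and $\chi_\theta(A)$ is Theorem~\ref{theo:sup-min}: there exists a photon through~$a^-$ invariant under the hyperbolic and unipotent parts of~$A$ on which the period is \emph{exactly} $\chi_\theta(A)$ (Proposition~\ref{pro:phot-preserv}), and a compactness argument handles the elliptic part. Taking the maximum over $\Phi\in\bPhi(a^-)$ then yields $\chi_\theta(A)^{-1} \leq \max_\Phi \bb^{\omega_\theta}(a^-, p_\Phi(a^+), b^+, A(b^+))$, and the inequality follows.
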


When $S$ is an oriented surface of negative Euler characteristic (not
necessarily of finite type) we obtain the following consequence:
\begin{coro}[\sc Collar Lemma]
	\label{coro:B}
	Let  $\mG$ a semisimple Lie group admitting a positive structure relative to~$\Theta$.
	Let $\rho\colon \pi_1(S)\to\mathsf G$ be a $\Theta$-positive homomorphism. Let $\gamma_0$ and $\gamma_1$ be elements of
	$\pi_1(S)$ whose associated free homotopy classes intersect geometrically.
	Let  $\theta$  be in $\Theta$,
	then
	\begin{equation*}
		\frac{1}
		{\pp^{\omega_\theta}\left(\rho(\gamma_0)\right)}
		+\frac{1}{\chi_{\theta}
			\left(\rho(\gamma_1)\right)}< 1\ .
	\end{equation*}
\end{coro}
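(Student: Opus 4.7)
The plan is to derive Corollary~\ref{coro:B} as a direct instance of Theorem~\ref{theo:Bprime}, applied with $A\defeq\rho(\gamma_1)$ and $B\defeq\rho(\gamma_0)$. Since a $\Theta$-positive representation is $\Theta$-Anosov, each $\rho(\gamma)$ with $\gamma$ non-trivial is $\Theta$-loxodromic, and $\rho$ admits a $\rho$-equivariant positive boundary map $\xi\colon\partial_\infty\pi_1(S)\to\cF_\Theta$ sending cyclically ordered tuples of boundary points to positive tuples in $\cF_\Theta$. Under this identification $a^\pm=\xi(\gamma_1^\pm)$, $b^\pm=\xi(\gamma_0^\pm)$, and by equivariance $B(a^+)=\xi(\gamma_0\cdot\gamma_1^+)$, $A(b^+)=\xi(\gamma_1\cdot\gamma_0^+)$. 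The whole reduction amounts to checking that the boundary sextuple
$$
(\gamma_1^+,\ \gamma_0^-,\ \gamma_1^-,\ \gamma_0^+,\ \gamma_0\cdot\gamma_1^+,\ \gamma_1\cdot\gamma_0^+)
$$
is cyclically ordered on $\partial_\infty\pi_1(S)$; Theorem~\ref{theo:Bprime} then delivers the stated inequality.

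The geometric intersection of the free homotopy classes of $\gamma_0$ and $\gamma_1$ is equivalent, after lifting a transverse intersection point to the universal cover of $S$, to the interleaving of their fixed points on $\partial_\infty\pi_1(S)$. Up to replacing some $\gamma_i$ with $\gamma_i^{-1}$ (a substitution that leaves invariant both $\pp^{\omega_\theta}$ and, up to the opposition involution, $\chi_\theta$, so that the statement of the corollary is unaffected) the first four entries are cyclically ordered as displayed. The north-south dynamics of $\gamma_0$ and $\gamma_1$ on $\partial_\infty\pi_1(S)$, combined with their preservation of the cyclic order, then forces both $\gamma_0\cdot\gamma_1^+$ and $\gamma_1\cdot\gamma_0^+$ to lie in the arc from $\gamma_0^+$ to $\gamma_1^+$ disjoint from $\{\gamma_0^-,\gamma_1^-\}$.

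The main obstacle is to pin down the relative order of $\gamma_0\cdot\gamma_1^+$ and $\gamma_1\cdot\gamma_0^+$ within that arc — namely, that traversing from $\gamma_0^+$ towards $\gamma_1^+$ one meets $\gamma_0\cdot\gamma_1^+$ first. I would settle this by inspecting the axes of the conjugates $\gamma_0\gamma_1\gamma_0^{-1}$ and $\gamma_1\gamma_0\gamma_1^{-1}$, whose attracting fixed points are precisely $\gamma_0\cdot\gamma_1^+$ and $\gamma_1\cdot\gamma_0^+$: passing to an auxiliary hyperbolic structure on $S$ (and to its convex core when $S$ is of infinite type), these axes meet the axis of $\gamma_0$, respectively $\gamma_1$, at the translated intersection points $\gamma_0(p)$ and $\gamma_1(p)$ of the crossing~$p$ of the original axes, and a combinatorial tracking of the resulting configuration in $\mathbb H^2$ yields the claimed ordering on the circle at infinity.
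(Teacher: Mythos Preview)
Your reduction to Theorem~\ref{theo:Bprime} with $A=\rho(\gamma_1)$, $B=\rho(\gamma_0)$ is exactly the paper's route (the paper proves the more general Corollary~\ref{cor:col-ineq} and then specializes to $\eta=\omega_\theta$). The positivity of the boundary sextuple that you isolate is precisely the combinatorial input the paper uses, and the paper dispatches it by citing \cite[Lemma~2.2]{Lee-Zhang:2017} rather than redoing the hyperbolic-plane argument you sketch; your sketch via the translated axes $\gamma_0(\mathrm{axis}(\gamma_1))$ and $\gamma_1(\mathrm{axis}(\gamma_0))$ is sound.

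Two small points where you are looser than the paper. First, the assertion ``$\Theta$-positive implies $\Theta$-Anosov, hence every non-trivial $\rho(\gamma)$ is loxodromic'' is only available for closed surfaces; since Corollary~\ref{coro:B} allows $S$ of infinite type, the paper instead encloses $\gamma_0,\gamma_1$ in a finite-type incompressible subsurface (Proposition~\ref{pro:Red-finite}) and invokes Proposition~\ref{prop:non-peri_to_loxo} to get loxodromy of the (necessarily non-peripheral) images and equivariance on attracting fixed points. Second, the paper only needs to replace $\gamma_1$ by $\gamma_{1}^{-1}$ (the element appearing in the \emph{period}, which is manifestly inversion-invariant), so no opposition-involution bookkeeping is required; your handling via $\iota$ also works once one notes $\pp^{\omega_\theta}=\pp^{\omega_{\iota(\theta)}}$, but it is cleaner to invert the element on the period side.
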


The first Collar Lemma for representations of fundamental groups of closed surfaces  in groups of higher rank is a generalization of the
Hyperbolic Collar Lemma (cf.\ \cite{Keen}) and  is due to Lee and Zhang
\cite{Lee-Zhang:2017}. Since this seminal work, the subject of Collar Lemmas has attracted a lot of attention. We discuss in Section \ref{sec:Collar-discuss} the relation of  our work with the works of Burger and Pozzetti \cite{Burger-Pozzetti:2017}, Beyrer and Pozzetti \cite{Beyrer:2021aa, BeyrerPozzetti}, Tholozan \cite{Tholozan:2017} and Collier, Tholozan and Toulisse \cite{Collier:2019aa}.

Combining Corollary~\ref{coro:B} with a result of \cite{Guichard:2021aa}, we
deduce the closedness of positive representations of surface groups into Lie
groups admitting a positive structure relative to~$\Theta$.

\begin{coro}\label{cor:limit-of-positive-hom} Let $\mG$ be a semisimple Lie
	group admitting a positive structure relative to~$\Theta$. 
	If $\seq{\rho}$ is a sequence of $\Theta$-positive homomorphisms from a surface group to $\mG$ converging to a homomorphism $\rho$, then $\rho$ is $\Theta$-positive.
\end{coro}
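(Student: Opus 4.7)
The openness of the space of $\Theta$-positive homomorphisms is already established in \cite{Guichard:2021aa}, so the content of the corollary is closedness. My plan is to produce a $\rho$-equivariant positive boundary map $\xi\colon \partial_\infty\pi_1(S) \to \cF_\Theta$ for the limit representation: once such a map is in hand, $\Theta$-positivity of $\rho$ follows from the characterization of $\Theta$-positivity in terms of positive equivariant boundary maps proved in \cite{Guichard:2021aa}.

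By \cite{Guichard:2021aa}, each $\rho_m$ is $\Theta$-Anosov and comes with a continuous $\rho_m$-equivariant boundary curve $\xi_m\colon \partial_\infty\pi_1(S)\to \cF_\Theta$ whose image is a positive subset of $\cF_\Theta$. I first rule out any degeneration of translation data along the sequence. Fix a non-peripheral $\gamma\in \pi_1(S)$ and choose $\gamma'\in\pi_1(S)$ whose free homotopy class intersects that of $\gamma$; applying Corollary~\ref{coro:B} at each $\rho_m$ yields, for every $\theta\in\Theta$,
$$
\frac{1}{\pp^{\omega_\theta}(\rho_m(\gamma))} + \frac{1}{\chi_\theta(\rho_m(\gamma'))} < 1.
$$
Since $\chi_\theta(\rho_m(\gamma'))\to \chi_\theta(\rho(\gamma'))<\infty$, the first summand is uniformly bounded away from $1$, forcing a uniform lower bound $\pp^{\omega_\theta}(\rho_m(\gamma))\ge c(\gamma)>1$. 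Passing to the limit, every non-peripheral $\rho(\gamma)$ is $\Theta$-proximal on $\cF_\Theta$ with distinct attracting and repelling fixed points, and its period stays bounded below by $c(\gamma)$.

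Next I construct the boundary map. The set $D\subset\partial_\infty\pi_1(S)$ of attracting fixed points of non-peripheral elements is countable and dense. By a diagonal extraction I may assume $\xi_m$ converges pointwise on $D$ to a $\rho$-equivariant map $\xi_0\colon D\to \cF_\Theta$. Theorem~\ref{theo:A} applied to each $\rho_m$ gives $\bb^{\omega_\theta}\ge1$ for every quadruple in $\xi_0(D)$ ordered consistently with the cyclic order of $D$; the uniform period bound from the previous paragraph prevents these weak inequalities from degenerating to equalities (a collision of two points of $\xi_0$ would produce a loxodromic element with period $1$). Hence $\xi_0$ is injective, transverse, and sends positive quadruples in $D$ to positive quadruples in $\cF_\Theta$. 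Using the quantitative transversality implicit in positivity of cross-ratios, $\xi_0$ is uniformly continuous on $D$ and extends to a $\rho$-equivariant positive continuous boundary map $\xi\colon \partial_\infty\pi_1(S)\to\cF_\Theta$.

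The main obstacle is the upgrade from weak positivity ($\bb^{\omega_\theta}\ge1$) to strict positivity in the limit, equivalently the non-collapse of the boundary map; this is precisely what the Collar Lemma supplies, since any degenerate configuration would correspond to an intersecting pair of curves on $S$ for which the left-hand side of the displayed inequality reaches $1$, contradicting the uniform lower bounds above. Once the positive equivariant boundary map $\xi$ is in place, the criterion of \cite{Guichard:2021aa} identifies $\rho$ as $\Theta$-positive, completing the proof.
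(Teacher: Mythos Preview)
Your overall strategy is morally close to the paper's, but there is a real gap in how you use the Collar Lemma, and your hands-on construction of the limit boundary map is doing work that the paper outsources to a black box.

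You apply Corollary~\ref{coro:B} in the direction that bounds $\pp^{\omega_\theta}(\rho_m(\gamma))$ away from~$1$, and then assert that this makes $\rho(\gamma)$ $\Theta$-proximal. That inference fails: $\pp^{\omega_\theta}(g)=\chi_{\omega_\theta}(g)\,\chi_{\iota(\omega_\theta)}(g)$, and having this bounded away from~$1$ for every~$\theta$ does not force $\chi_\theta(g)>1$. In $\mathsf{SL}_3(\mathbb R)$ with $\Theta=\Delta$, both periods equal $\exp(a_1-a_3)$, which stays $>1$ on the wall $a_1=a_2>a_3$ where $\chi_{\alpha_1}=1$. The paper uses the inequality the other way round: since $\pp^{\omega_\theta}(\rho_m(\gamma_0))$ is bounded \emph{above} along the convergent sequence, the Collar Lemma yields $\chi_\theta(\rho_m(\gamma_1))\geq 1+\epsilon$ uniformly, and continuity of the Jordan projection gives $\chi_\theta(\rho_\infty(\gamma_1))\geq 1+\epsilon$, hence $\Theta$-loxodromy of the limit directly.

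Second, your reconstruction of the limit boundary map is where the substantive difficulties lie, and the sketch you give does not close them. The step from $\bb^{\omega_\theta}\geq 1$ on limit quadruples to genuine positivity requires pairwise \emph{transversality} of the limit points, not merely non-collision; your period bound addresses at best the pair $\xi_0(\gamma^+),\xi_0(\gamma^-)$ for a single~$\gamma$, not arbitrary pairs $\xi_0(\gamma^+),\xi_0(\delta^+)$, and the appeal to ``quantitative transversality implicit in positivity of cross-ratios'' for the continuous extension is not an argument. The paper avoids all of this: once a single $\rho_\infty(\gamma_1)$ is $\Theta$-loxodromic, its attracting and repelling fixed points in~$\cF_\Theta$ are automatically transverse, and \cite[Proposition~6.4]{Guichard:2021aa} --- which takes as input exactly a convergent sequence of positive representations together with one pair of transverse limit fixed points --- delivers positivity of~$\rho_\infty$. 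What you are attempting is essentially a reproof of that proposition inside the present argument, but the transversality and extension steps need considerably more than what you have written.
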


We insist that in this corollary as well as in Corollary~\ref{coro:B}, we do not
restrict ourselves to 
closed surfaces nor to surfaces of finite type.

As a corollary, combining with \cite[Corollary C]{Guichard:2021aa}, we obtain
a solution to the mentioned conjecture of Guichard, Labourie, and Wienhard \cite{GuichardWienhard_pos} refining results in \cite{Guichard:2021aa} and generalizing results of \cite{BeyrerPozzetti}.
\begin{coro}
	The set of $\Theta$-positive representations is a union of connected
	components of the space of all representations of a closed surface group.
\end{coro}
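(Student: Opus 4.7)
The plan is to show that the set $\mathcal{P}$ of $\Theta$-positive representations of $\pi_1(S)$ into $\mG$ is simultaneously open and closed in $\hom(\pi_1(S),\mG)$; from this clopenness the statement that $\mathcal{P}$ is a union of connected components is immediate.

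For closedness, I would invoke Corollary \ref{cor:limit-of-positive-hom}: any limit of a convergent sequence of $\Theta$-positive homomorphisms is again $\Theta$-positive. Since $\hom(\pi_1(S),\mG)$ carries a metrizable topology (for instance as an algebraic subset of a finite power of $\mG$), sequential closedness is equivalent to closedness, and $\mathcal{P}$ is closed. For openness, the excerpt already points to \cite[Corollary C]{Guichard:2021aa}, which guarantees that the set of $\Theta$-positive representations of a closed surface group forms an open subset of the representation variety.

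The main obstacle has already been overcome in the preceding sections of the paper: the nontrivial input here is closedness, namely Corollary \ref{cor:limit-of-positive-hom}, which itself reduces to the Collar Lemma (Corollary \ref{coro:B}) and ultimately to the positivity of cross-ratios (Theorem \ref{theo:A}). The final corollary is a one-line assembly of these two complementary halves—closed (the new contribution of the present paper) and open (the contribution of \cite{Guichard:2021aa})—and requires no further argument beyond observing that a clopen subset of a topological space is a union of its connected components.
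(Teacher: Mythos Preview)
Your proposal is correct and matches the paper's approach exactly: the corollary is obtained by combining closedness (Corollary~\ref{cor:limit-of-positive-hom}, the new contribution) with openness (\cite[Corollary~C]{Guichard:2021aa}), and noting that a clopen subset is a union of connected components.
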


In order to prove Theorem \ref{theo:A}, we investigate the symplectic geometry of products of flag manifolds. The proof of the Collar Lemma itself relies on the positivity of the cross-ratio as well as a new tool: the study of   {\em $\theta$-photons} for $\theta$ in $\Theta$, a study that we hope will be useful in future research.

We summarise briefly the construction of $\theta$-photons given in details in Section
\ref{sec:photo-construct}. Associated to a root $\theta$ in $\Theta$ is a
conjugacy class of subgroups $\mH_\theta$ in $\mG$ isogenic to $\psld$. A
$\theta$-photon is a closed orbit of a group $\mH_\theta$, hence isomorphic
to $\Rp$ (Proposition~\ref{pro:PhoRP}). In the special case of
the Hermitian group $\mG=\mathsf{SO}(2,n)$, $\Theta$ is reduced to one element,
the $\Theta$-flag manifold is the {\em Einstein universe}  equipped with a
conformal structure of type $(1,n-1)$ and the $\theta$-photons are the  closed
light-like geodesics as described for instance in \cite{Barbot:2008aa}  and
\cite{Collier:2019aa}. Given a $\theta$-photon $\Phi_\theta$, we construct a
projection from an open set of the flag manifold to $\Phi_\theta$ and are able
to use this projection to relate the classical (projective) cross-ratio on the
photon to the cross-ratio introduced in Theorem~\ref{theo:A} and from this
deduce bounds on the character $\chi_\theta$ in terms of cross-ratios (Theorem
\ref{theo:sup-min}).

In the Appendix \ref{app:real}, we show how our results extend when
$\mathbb R$ is replaced by any real closed field.

\section{Preliminaries}\label{sec:prelim}

In this section we recall some facts on semisimple Lie groups and Lie algebras
and introduce some notation. 
\subsection{Roots}
Let $\mG_0$ be a semisimple Lie group\footnote{In Section \ref{sec:parab-subgr-flag} we will consider a group $\mG$ isomorphic to $\mG_0$ to have a treatment of flag manifolds suited to our purposes.} (by this we mean a connected Lie group
whose Lie algebra is semisimple) with finite center. Let~$\mk g_0$ be its Lie
algebra and~$\mk k$ the Lie algebra of a maximal compact subgroup~$\mathsf{K}$. The
associated Cartan involution~$\sigma$ is the Lie algebra involution of~$\mk
g_0$ whose fixed point set is equal to~$\mk k$. We fix a
Cartan subspace~$\mk a$ 
inside
the orthogonal complement of~$\mathfrak{k}$ with respect to the Killing form
on~$\mk g_0$. Throughout this article, scalar products, and in particular the
Killing form, as well as the induced forms on~$\mk a$ and on $\mk a^*$ will be
denoted by~$\braket{\cdot,\cdot}$.

Let $\Sigma$  be the subset  of $\mk a^*$ consisting  of the restricted roots of 
$\mG_0$:  $\Sigma$~is the set of non-zero weights for the adjoint action
of~$\mk a$ on the Lie algebra~$\mk g_0$ of~$\mG_0$; explicitely, $\beta$~belongs
to~$\Sigma$ if and only if $\beta\neq 0$ and
\[
\mk g_\beta\defeq\{v\in\mk g_0\mid \forall u\in \mk a, \ [u,v]=\beta(u) \ v\}
\] is not reduced to~$\{0\}$. We will often denote the
quantity $\beta(u)$ by $\dual{u}{\beta}$ and use a similar notation for every
duality.

Let $\Sigma^+$ be a fixed
choice of positive roots and $\Delta$ the corresponding set of simple roots. 
Later on, we will need to distinguish between the ``long roots'' and the
``short roots'', the understood notion of length behind this comes from the
Euclidean structure on~$\mk a^*$ induced by the Killing form.

\subsection{Weyl group}
The (closed) \emph{Weyl chamber} is the cone~$\mathfrak{a}^+$ in~$\mathfrak{a}$ defined
by the equations $\alpha(a)\geq 0$ for all~$\alpha$ in~$\Sigma^+$
(equivalently, for all~$\alpha$ in~$\Delta$).

The (restricted) \emph{Weyl group}~$\mW$ of~$\mG_0$ is the quotient of the
normalizer of~$\mk a$ in~$\mathsf{K}$ by the centralizer  of~$\mk a$ in~$\mathsf{K}$; it identifies with the subgroup of
$\gl(\mk a)$ of automorphisms of~$\Sigma$. The Weyl group is a Coxeter group
generated by hyperplane reflections $\{ s_\alpha\}_{\alpha\in \Delta}$
characterized (among hyperplane reflections that induce a permutation of~$\Sigma$)
by $s_\alpha( \alpha) = -\alpha$. We will sometimes use representatives
in~$\mK$ of elements~$s$ of~$\mW$ and we shall often denote
them~$\dot{s}$. 

The longest element~$w$ in the Weyl group~$\mW$ (with respect to the
generating family $\{s_\alpha\}_{\alpha\in \Delta}$) sends~$\Sigma^+$ to
$\Sigma^-\defeq\Sigma\smallsetminus \Sigma^+=-\Sigma^+$ and the mapping
$\iota\colon \alpha
\mapsto \iota(\alpha) \defeq -w\cdot \alpha$ induces a permutation of~$\Sigma^+$  and a permutation
of~$\Delta$, called the \emph{opposition involution}.

\subsection{$\skd$-triples}\label{sec:skd-triples}
For any positive root $\beta$, choose $(x_\beta,x_{-\beta},h_\beta)$ an
associated $\skd$-triple. This means that $h_\beta$ belongs to~$\mk a$, that $x_{\pm\beta}$ belongs to~$\mk g_{\pm\beta}$, and that the relations  $[h_\beta,
x_{\pm \beta}]=\pm 2x_{\pm \beta}$ and $[x_\beta, x_{-\beta}]=h_\beta$
hold. 
These elements can be used to construct representatives in~$\ms K$ of the
reflections~$s_\alpha$: indeed $\dot{s}_\alpha =
\exp(\frac{\pi}{2}(x_\alpha-x_{-\alpha}))$ represents the element~$s_\alpha$ 
of the Weyl group.

The family $\{h_\theta\}_{\theta\in\Delta}$ is a basis of the Cartan
subspace~$\mathfrak{a}$, the elements of the dual basis~$\{
\omega_\theta\}_{\theta\in\Delta}$ are called the \emph{fundamental weights}.

\subsection{Parabolic subgroups}
Every subset $\Theta$ of $\Delta$ defines a parabolic subgroup~$\mP_\Theta$ in the following manner. First we consider
\[
\Sigma^+_\Theta\defeq \Sigma^+\smallsetminus\hbox{span}(\Delta\smallsetminus\Theta)\ .
\]
This is the set of positive roots whose decomposition as a sum of  simple
roots contains at least one element of $\Theta$. Equivalently,
$\Sigma_{\Theta}^{+}$ is the smallest subset of~$\Sigma$ containing~$\Theta$
and invariant by $\beta\mapsto \beta+\alpha$ for every~$\alpha$
in~$\Delta$. 
In particular $\Theta$~itself
is a subset of~$\Sigma_\Theta^+$. 
We set 
\[
\mk u_\Theta\defeq\bigoplus_{\alpha \in \Sigma^+_\Theta}\mk g_\alpha\ , \ \
\ \mk u^{\mathrm{opp}}_{\Theta}\defeq\bigoplus_{\alpha \in
	\Sigma^+_\Theta}\mk g_{-\alpha}\ .
\]
The  parabolic group $\mP_\Theta$ is the normalizer of $\mk u_\Theta$ in $\mG_0$. The unipotent
radical of $\mP_\Theta$ is the group
$\mU_\Theta=\exp(\mathfrak{u}_\Theta)$. {In this convention
	$\mP_{\emptyset}=\mG_0$, while $\mP_\Delta$ is the minimal parabolic.
	Similarly we define
	the opposite parabolic subgroup~$\mP_{\Theta}^{\mathrm{opp}}$.
	Let $\mL_\Theta = \mP_{\Theta}\cap \mP_{\Theta}^{\mathrm{opp}} $ be
	the reductive part  in the Levi decomposition of $\mP_\Theta$ and
	$\mS_\Theta\defeq[\mL_{\Theta}^{\circ},\mL_{\Theta}^{\circ}]$ be the semisimple
	part of $\mL_{\Theta}^{\circ}$, the connected component of the identity
	of~$\mL_\Theta$. The opposite parabolic group~$\mP_{\Theta}^{\mathrm{opp}}$ is
	conjugate  to~$\mP_{\iota(\Theta)}$: for every representative~$\dot{w}$ of the
	longest element~$w$ of~$\mW$, one has $\dot{w}
	\mP_{\Theta}^{\mathrm{opp}} \dot{w}^{-1} = \mP_\Theta$.
	A Cartan subspace of~$\mS_\Theta$ is
	\[
	\mk a_\Theta=\bigoplus_{\beta\in\Delta\smallsetminus\Theta}\mathbb R \ h_\beta\ ,
	\]
	and the Lie algebra of~$\mS_\Theta$ is
	\[
	\mk s_\Theta= \mk a_\Theta \oplus \mk m_\Theta \oplus\bigoplus_{\beta\in \Sigma \cap \operatorname{Span}(\Delta\smallsetminus\Theta)}\mk g_\beta 
	\ ,
	\]
	where $\mk m_\Theta$ is a compact Lie algebra.
	Hence $\Delta\smallsetminus\Theta$ is a set of simple positive roots for
	$\mS_\Theta$ and the Dynkin diagram of~$\mS_\Theta$ is completely determined. 
	We have the following :
	\begin{proposition}\label{pro:ortho-atheta}
		Let $\mk b_\Theta$ be the orthogonal complement of $\mk a_\Theta$ in $\mk a$ (with respect to
		the Killing form). 
		Then $\mk b_\Theta$ is the intersection of the spaces $\ker(\beta)$ for
		$\beta$ varying in $\Delta\smallsetminus\Theta$. Moreover the elements of
		$\mk b_\Theta$ commute with all elements of $\mk l_\Theta$, and
		$\mB_\Theta\defeq\exp({\mk b}_\Theta)$ is a central factor in
		$\mL_{\Theta}$.
	\end{proposition}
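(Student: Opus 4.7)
The plan is to verify the three claims in order, with the first being the computational heart from which the other two follow essentially for free.

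For the first claim, I would exploit the well-known fact that the neutral element $h_\beta$ of the $\skd$-triple is a positive scalar multiple of the Killing-form dual of the root $\beta$. Explicitly, let $\tilde h_\beta\in\mk a$ be the vector determined by $\beta(u)=\braket{u,\tilde h_\beta}$ for every $u\in\mk a$; then evaluating at $\tilde h_\beta$ gives $\beta(\tilde h_\beta)=\|\beta\|^2$, whereas $\beta(h_\beta)=2$ by construction of the $\skd$-triple, so $h_\beta=(2/\|\beta\|^2)\,\tilde h_\beta$. Consequently $\braket{u,h_\beta}=0$ if and only if $\beta(u)=0$, and intersecting over $\beta\in\Delta\smallsetminus\Theta$ identifies the orthogonal complement $\mk b_\Theta=(\mk a_\Theta)^{\perp}$ with $\bigcap_{\beta\in\Delta\smallsetminus\Theta}\ker(\beta)$.

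For the second claim, I would use the standard root-space description of the Levi factor, namely $\mk l_\Theta=\mk a\oplus\mk m\oplus\bigoplus_{\beta\in\Sigma\cap\operatorname{Span}(\Delta\smallsetminus\Theta)}\mk g_\beta$, where $\mk m$ denotes the centralizer of $\mk a$ in $\mk k$. For $b\in\mk b_\Theta$, commutation with $\mk a$ is immediate, commutation with $\mk m$ follows from $b\in\mk a$ and the definition of $\mk m$, and for $x\in\mk g_\beta$ with $\beta=\sum_{\alpha\in\Delta\smallsetminus\Theta}c_\alpha\alpha$ one has $[b,x]=\beta(b)\,x=\sum_\alpha c_\alpha\,\alpha(b)\,x$, which vanishes by the first claim. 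Hence $\mk b_\Theta$ lies in the center of $\mk l_\Theta$.

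For the third claim, the orthogonal decomposition $\mk a=\mk a_\Theta\oplus\mk b_\Theta$ combined with the second claim shows that $\mk l_\Theta$ splits as a direct sum of ideals $\mk b_\Theta\oplus\mk s_\Theta$, with $\mk b_\Theta$ central. Exponentiating, $\mB_\Theta$ is a connected central subgroup of $\mL_\Theta$, and the product map $\mB_\Theta\times\mS_\Theta\to\mL_\Theta^\circ$ is a surjective Lie group homomorphism with discrete (hence finite) kernel, which is exactly the assertion that $\mB_\Theta$ is a central factor of $\mL_\Theta$. The only substantive ingredient in all of this is the proportionality $h_\beta\propto\tilde h_\beta$ used in the first claim; the rest is formal, and I do not foresee any genuine obstacle.
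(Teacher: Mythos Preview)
The paper states this proposition without proof, treating it as a standard structural fact about Levi subgroups, so there is no argument in the paper to compare against. Your proof is correct and is exactly the kind of verification one would expect here.

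One small imprecision in your third paragraph: the claim that $\mk l_\Theta=\mk b_\Theta\oplus\mk s_\Theta$ and that $\mB_\Theta\times\mS_\Theta\to\mL_\Theta^\circ$ is surjective need not hold literally. The center of~$\mk l_\Theta$ can contain a compact piece coming from $\mk m=\mk z_{\mk k}(\mk a)$ in addition to~$\mk b_\Theta$ (think of a complex group viewed as real, where $\mk m=i\mk a$), and this compact central part lies neither in~$\mk b_\Theta$ nor in the derived subalgebra~$\mk s_\Theta$. What your argument does establish is that $\mk b_\Theta$ is a central ideal, hence $\mk l_\Theta=\mk b_\Theta\oplus\mk l'$ for a complementary ideal~$\mk l'$, and consequently $\mL_\Theta^\circ$ is an almost direct product of~$\mB_\Theta$ with the connected subgroup integrating~$\mk l'$. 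That is all that ``$\mB_\Theta$ is a central factor'' means, so the conclusion stands; you should just avoid identifying the complementary factor with~$\mS_\Theta$ alone.
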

	
	\subsection{Irreducible factors for the action of the Levi} 
	\label{sec:special_subsets}
	For every~$\theta$ in~$\Sigma^{+}_{\Theta}$,  we set 
	\begin{align}
		\Sigma_\theta&\defeq\bigl\{\beta \in\Sigma^+
		\mid\beta- \theta \in \operatorname{ Span}(\Delta\smallsetminus\Theta)\bigr\}\ ,	\\
		\mk u_ \theta&\defeq\bigoplus_{\alpha \in \Sigma_\theta}\mk g_\alpha \ , \ \ \mk u_{- \theta}\defeq\bigoplus_{\alpha \in \Sigma_\theta}\mk g_{-\alpha}.
	\end{align}
	We will often use that, for an element~$\beta$ of~$\Sigma^+$, $\beta$~belongs
	to~$\Sigma_\theta$ if and only if $\beta-\theta$ is zero in restriction
	to~$\mk b_\Theta$.
	When $\theta$ belongs to $\Theta$,
	$\Sigma_\theta$ is the set of roots
	of the form 
	$
	\theta+\alpha
	$
	where $\alpha$ is  a linear combination of
	simple roots in $\Delta \smallsetminus \Theta$. Obviously $\Sigma_\theta$ is disjoint from  $\Sigma_\eta$ when
	$\eta$ and $\theta$ are distinct elements of~$\Theta$, and  for arbitrary $\eta$
	and $\theta$ in $\Sigma^{+}_{\Theta}$, the sets~$\Sigma_\theta$ and~$\Sigma_\eta$ are
	either disjoint of equal.

	The following result was established by Kostant \cite{Kostant:2010tr}.
	
	\begin{theorem}
		\label{theo:irred-fact-Theta}
		The subspaces $\mk u_\beta$ \textup{(}resp.\ $\mk u_{-\beta}$\textup{)}, for $\beta$ varying in
		$\Sigma^{+}_{\Theta}$, are the irreducible factors of the action of $\mL_\Theta$ on
		$\mk u_\Theta$ \textup{(}resp.\ $\mk u^{\mathrm{opp}}_\Theta$\textup{)}.
	\end{theorem}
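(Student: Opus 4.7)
The plan is to split the proof into three claims: $(a)$ each $\mk u_\beta$ is $\mL_\Theta$-invariant, $(b)$ the spaces $\mk u_\beta$ form a direct sum decomposition of $\mk u_\Theta$, and $(c)$ each $\mk u_\beta$ is irreducible under $\mL_\Theta$. For $(a)$ and $(b)$, the key tool is the central subgroup $\mB_\Theta$ of $\mL_\Theta$: by Proposition~\ref{pro:ortho-atheta} it commutes with all of $\mL_\Theta$, so its weight-space decomposition on $\mk u_\Theta$ is automatically $\mL_\Theta$-equivariant. The $\mk b_\Theta$-weight of $\mk g_\alpha$ is $\alpha|_{\mk b_\Theta}$; since $\mk b_\Theta$ is the joint kernel of $\Delta\smallsetminus\Theta$, two root spaces share a $\mk b_\Theta$-weight iff their roots differ by an element of $\operatorname{Span}(\Delta\smallsetminus\Theta)$, iff they lie in a common $\Sigma_\theta$. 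This yields $(a)$ and $(b)$ at once, and since $\mB_\Theta$ acts by a single character on each $\mk u_\beta$, it reduces $(c)$ to irreducibility of $\mk u_\beta$ under the semisimple part $\mS_\Theta$.

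For $(c)$, I would first verify that the $\mk a_\Theta$-weight spaces of $\mk u_\beta$ are precisely the lines $\mk g_\alpha$ for $\alpha\in\Sigma_\beta$: if $\alpha-\alpha'\in\operatorname{Span}(\Delta\smallsetminus\Theta)$ vanishes on $\mk a_\Theta$, then it lies in $\operatorname{Span}(\Delta\smallsetminus\Theta)\cap\operatorname{Span}(\omega_\theta : \theta\in\Theta)=\{0\}$, the triviality of this intersection being a direct consequence of the invertibility of the Cartan matrix of the sub-diagram $\Delta\smallsetminus\Theta$. Next, I would pick a root $\beta^*\in\Sigma_\beta$ maximal for the partial order induced by the positive roots of $\mS_\Theta$, namely $\Sigma^+\cap\operatorname{Span}(\Delta\smallsetminus\Theta)$. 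If $\gamma$ is such a positive root and $\beta^*+\gamma$ is a root, it automatically lies in $\Sigma_\beta$, contradicting maximality; so any non-zero $v\in\mk g_{\beta^*}$ is a highest weight vector for $\mS_\Theta$, with dominant weight $\beta^*|_{\mk a_\Theta}$ by the standard $\mk{sl}_2$-string argument.

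To complete the proof, I must show that the $\mS_\Theta$-submodule generated by $v$ is all of $\mk u_\beta$. Since the weight spaces are lines, this reduces to the combinatorial statement that every $\alpha\in\Sigma_\beta$ can be reached from $\beta^*$ by successively subtracting elements of $\Delta\smallsetminus\Theta$, each intermediate expression remaining in $\Sigma_\beta$; such a chain guarantees that iterated lowering operators applied to $v$ produce non-zero vectors in each $\mk g_\alpha$. I would attempt this connectivity by downward induction on the height of $\alpha$, combined with root-string arguments within the ambient root system $\Sigma$ relative to the sub-root-system $\Sigma\cap\operatorname{Span}(\Delta\smallsetminus\Theta)$. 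This is the combinatorial heart of Kostant's result and the step I expect to be the main obstacle. Once it is in hand, uniqueness of $\beta^*$ and irreducibility of $\mk u_\beta$ are immediate, and the analogous statement for $\mk u^{\mathrm{opp}}_\Theta$ and the spaces $\mk u_{-\beta}$ follows by applying the Cartan involution $\sigma$.
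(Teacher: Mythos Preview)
The paper does not give its own proof of this statement; it simply cites Kostant \cite{Kostant:2010tr}. So there is no argument in the paper against which to compare your proposal.

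Your treatment of (a) and (b) via the $\mk b_\Theta$-weight decomposition is correct and is the natural approach. The gap is in (c). You assert that the $\mk a_\Theta$-weight spaces of $\mk u_\beta$ are the \emph{lines} $\mk g_\alpha$, and you reduce $\mL_\Theta$-irreducibility to $\mS_\Theta$-irreducibility on the grounds that $\mB_\Theta$ acts by scalars. Both steps fail for non-split $\mG_0$: restricted root spaces $\mk g_\alpha$ need not be one-dimensional, and $\mL^{\circ}_{\Theta}$ is not in general the product of $\mS_\Theta$ and $\mB_\Theta$---the connected center of $\mL^{\circ}_{\Theta}$ can be strictly larger than $\mB_\Theta$. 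A concrete counterexample to your reduction: take $\mG_0 = \mathsf{SU}(2,1)$ and $\Theta = \Delta$. Then $\mL_{\Delta}^{\circ}$ is the identity component of the centralizer of~$\mk a$, which is abelian (isomorphic to $\mathsf{U}(1)\times\mathbb{R}_{>0}$), so $\mS_\Delta = [\mL^{\circ}_{\Delta}, \mL^{\circ}_{\Delta}]$ is trivial; yet $\mk u_\alpha = \mk g_\alpha$ has real dimension~$2$ and is certainly not irreducible under the trivial group. It is the compact factor $\mathsf{U}(1)$, invisible to your reduction, that makes $\mk g_\alpha$ an irreducible $\mL_\Delta$-module.

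Your highest-weight and connectivity argument is the standard one and is correct over~$\mathbb{C}$ or for split real groups, where root spaces are one-dimensional. In the general real case one must additionally establish that each individual root space $\mk g_\alpha$ is already irreducible under the centralizer of~$\mk a$ in~$\mL_\Theta$; this is itself the special case $\Theta = \Delta$ of the theorem you are trying to prove, and it is where the non-split difficulty is concentrated. Once that is in hand, your chain-of-lowering-operators argument does promote irreducibility of the $\mk g_\alpha$ to irreducibility of $\mk u_\beta$.
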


	The Weyl group $\mW_{\mS_\Theta}$ is generated by the~$s_\alpha$
	for~$\alpha$ in $\Delta\smallsetminus \Theta$, we then have:
	
	\begin{proposition}\label{pro:Weylgroup}
		The Weyl group $\mW_{\mS_\Theta}$ of $\mS_\Theta$ satisfies
		\[  \mW_{\mS_\Theta}\left(\Sigma_\theta\right)\subset\Sigma_\theta
		\]
		for every $\theta$ in $\Theta$.
	\end{proposition}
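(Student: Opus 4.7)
The plan is to exploit the characterization of $\Sigma_\theta$ in terms of restriction to $\mk b_\Theta$, already stated just above the proposition: for $\beta$ in $\Sigma^+$, one has $\beta \in \Sigma_\theta$ if and only if $\beta - \theta$ vanishes on $\mk b_\Theta$, equivalently $\beta\vert_{\mk b_\Theta} = \theta\vert_{\mk b_\Theta}$. Combined with the fact that $\mW_{\mS_\Theta}$ is generated by reflections that act trivially on $\mk b_\Theta$, the statement becomes immediate modulo a positivity check.

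First I would reduce to checking the stability on the generators: it suffices to prove $s_\alpha(\beta) \in \Sigma_\theta$ for every $\alpha \in \Delta \smallsetminus \Theta$ and every $\beta \in \Sigma_\theta$. By Proposition~\ref{pro:ortho-atheta}, $\mk b_\Theta = \bigcap_{\alpha \in \Delta \smallsetminus \Theta} \ker\alpha$, so in particular $\mk b_\Theta \subset \ker \alpha$. Since $s_\alpha$ acts on $\mk a$ as the reflection through the hyperplane $\ker \alpha$, it fixes $\mk b_\Theta$ pointwise. Dually, for any $\lambda \in \mk a^*$ and any $v \in \mk b_\Theta$,
\[
s_\alpha(\lambda)(v) = \lambda(s_\alpha^{-1} v) = \lambda(v),
\]
so $s_\alpha(\lambda)\vert_{\mk b_\Theta} = \lambda\vert_{\mk b_\Theta}$. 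Applied to $\lambda = \beta$, this gives $s_\alpha(\beta)\vert_{\mk b_\Theta} = \beta\vert_{\mk b_\Theta} = \theta\vert_{\mk b_\Theta}$.

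It remains to verify that $s_\alpha(\beta)$ is still a positive root. Since $\alpha$ is simple, $s_\alpha$ permutes $\Sigma^+ \smallsetminus \{\alpha\}$ and sends $\alpha$ to $-\alpha$; hence it is enough to rule out the equality $\beta = \alpha$. But if $\beta = \alpha$ with $\alpha \in \Delta \smallsetminus \Theta$, then the condition $\beta - \theta \in \operatorname{Span}(\Delta \smallsetminus \Theta)$ would force $\theta = \alpha - (\beta - \theta) \in \operatorname{Span}(\Delta \smallsetminus \Theta)$, contradicting the fact that $\Delta$ is a basis of $\mk a^*$ and $\theta \in \Theta$. Therefore $s_\alpha(\beta) \in \Sigma^+$, and combined with the invariance of the $\mk b_\Theta$-restriction this places $s_\alpha(\beta)$ in $\Sigma_\theta$, completing the induction over a reduced word in the generators.

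There is no real obstacle here; the only subtle point is remembering to check positivity, which is what forces the hypothesis $\theta \in \Theta$ (rather than $\theta \in \Sigma_\Theta^+$) at the step ruling out $\beta = \alpha$. Everything else is a direct consequence of Proposition~\ref{pro:ortho-atheta} together with the dictionary $\beta \in \Sigma_\theta \Longleftrightarrow \beta\vert_{\mk b_\Theta} = \theta\vert_{\mk b_\Theta}$.
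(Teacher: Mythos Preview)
Your proof is correct. The paper does not supply an explicit proof for this proposition; it places the statement immediately after Kostant's Theorem~\ref{theo:irred-fact-Theta}, which identifies the $\mk u_\theta$ as the irreducible $\mL_\Theta$-factors of $\mk u_\Theta$. From that vantage point the proposition is a one-liner: representatives of $\mW_{\mS_\Theta}$ lie in~$\mL_\Theta$, so they preserve each irreducible factor $\mk u_\theta$, hence permute the root spaces occurring in it, hence preserve~$\Sigma_\theta$. Your argument is more self-contained in that it avoids invoking Kostant's theorem and works directly with the combinatorial description of~$\Sigma_\theta$ via restriction to~$\mk b_\Theta$; both routes are equally short.

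One minor inaccuracy in your closing commentary: the step ruling out $\beta=\alpha$ does \emph{not} actually require $\theta\in\Theta$ rather than $\theta\in\Sigma_{\Theta}^{+}$. If $\alpha\in\Delta\smallsetminus\Theta$ satisfied $\alpha-\theta\in\operatorname{Span}(\Delta\smallsetminus\Theta)$, then $\theta$ itself would lie in $\operatorname{Span}(\Delta\smallsetminus\Theta)$, contradicting the very definition of $\Sigma_{\Theta}^{+}=\Sigma^{+}\smallsetminus\operatorname{Span}(\Delta\smallsetminus\Theta)$. So your proof in fact establishes the proposition for every $\theta\in\Sigma_{\Theta}^{+}$, consistent with the paper's earlier remark that the sets $\Sigma_\theta$ for $\theta\in\Sigma_{\Theta}^{+}$ are either disjoint or equal.
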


	Finally, for any $\theta$ in $\Theta$, we introduce the following
	subalgebra. Let $\mathfrak{g}_{\theta}^{H} \defeq x_{-\theta}^{\perp} \cap
	\mathfrak{g}_\theta$ ---we denote by $x_{-\theta}^{\perp}$ the orthogonal for the Killing
	form---, this is a hyperplane in~$\mathfrak{g}_\theta$ ---hence the choice of the
	superscript~$H$--- not
	containing~$x_\theta$, and let
	\[
	\mk v_\theta\defeq \mathfrak{g}_{\theta}^{H} \oplus \sum_{\mathclap{\beta\in
			\Sigma_\Theta^+\smallsetminus\{\theta\}}}\mk g_\beta\ .
	\]
	Then $\mk u_\Theta=\mathbb{R} x_\theta\oplus \mk v_\theta$.  We set $\ms
	V_\theta=\exp(\mk v_\theta)$. We denote by $\mH_\theta$ the connected Lie group (isogenic to $\mathsf{SL}_2(\mathbb{R})$)
	whose Lie algebra is generated by the $\mathfrak{sl}_2$-triple $(x_\theta,
	x_{-\theta}, h_\theta)$.
	
	\begin{proposition}\label{pro:vtheta}
		For any $\theta$ in $\Theta$, the vector space $\mk v_\theta$ is an ideal in the Lie-algebra~$\mk u_\Theta$. In particular, we have
		\[
		\mU_\Theta=\exp(\mk g_\theta)\ltimes \ms V_\theta\ .
		\]
		The conjugates, by elements of~$\mH_\theta$, of the group~$\ms V_\theta$ are
		contained in~$\mP_\Theta$.
	\end{proposition}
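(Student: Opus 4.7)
The proposition comprises three claims, which I will address in turn.

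For the first, that $\mk v_\theta$ is an ideal of $\mk u_\Theta$, I will verify $[\mk u_\Theta,\mk v_\theta]\subset\mk v_\theta$ by computing root-vector brackets. Since $\mk u_\Theta=\mathbb R x_\theta\oplus\mk v_\theta$ as a vector space, it is enough to check $[x_\theta,\mk v_\theta]\subset\mk v_\theta$ and that $\mk v_\theta$ is closed under bracket. The key observation is that $\theta$, being simple, cannot be written as a sum of two positive roots; consequently, for $\beta,\gamma\in\Sigma_\Theta^+$ with at least one of them different from $\theta$, the sum $\beta+\gamma$ (if a root) lies in $\Sigma_\Theta^+\smallsetminus\{\theta\}$ and hence in $\mk v_\theta$. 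The brackets $[\mk g_\theta,\mathfrak g_\theta^H]$ and $[\mathfrak g_\theta^H,\mathfrak g_\theta^H]$ land in $\mk g_{2\theta}$; when $2\theta$ is a root, it lies in $\Sigma_\Theta^+\smallsetminus\{\theta\}$ and so inside $\mk v_\theta$.

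The decomposition $\mU_\Theta=\exp(\mk g_\theta)\ltimes\ms V_\theta$ will then follow by exponentiating the vector-space decomposition $\mk u_\Theta=\mk g_\theta+\mk v_\theta$, together with the normality of $\mk v_\theta$ obtained in the first step.

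For the third claim, I will verify that the $\mH_\theta$-conjugates of $\ms V_\theta$ lie in $\mP_\Theta$ by checking the three one-parameter subgroups generating $\mH_\theta$. The subgroup $\exp(\mathbb R h_\theta)$ sits in $\mL_\Theta\subset\mP_\Theta$ and preserves every root space, hence normalizes $\ms V_\theta$; the subgroup $\exp(\mathbb R x_\theta)$ sits in $\mU_\Theta\subset\mP_\Theta$ and normalizes $\ms V_\theta$ by the ideal property. The delicate case is $\exp(\mathbb R x_{-\theta})$, which does not lie in $\mP_\Theta$. Using $\Ad(\exp(tx_{-\theta}))=\exp(t\,\ad x_{-\theta})$, the statement reduces to $\ad(x_{-\theta})^k(\mk v_\theta)\subset\mk p_\Theta$ for every $k\ge 0$. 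For $v\in\mk g_\beta$ with $\beta\in\Sigma_\Theta^+\smallsetminus\{\theta\}$, a case analysis on the simple-root expansion of $\beta$ shows that $\beta-k\theta$ is either not a root (its simple-root coefficients have mixed signs, so the bracket vanishes) or a root contained in $\mk p_\Theta$ (positive, zero, or negative and supported on $\Delta\smallsetminus\Theta$). For $y\in\mathfrak g_\theta^H$, the Killing-orthogonality $K(y,x_{-\theta})=0$ forces $[x_{-\theta},y]$ to have trivial $\mathfrak a$-component and hence to lie in the compact subalgebra $\mk m\subset\mk p_\Theta$.

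The main obstacle is controlling the higher iterates $\ad(x_{-\theta})^k(y)$ for $y\in\mathfrak g_\theta^H$ and $k\ge 2$, which a priori can produce components in $\mk g_{-\theta}$ and so leave $\mk p_\Theta$. Overcoming this requires the $\mathfrak{sl}_2$-representation structure of $\mk g_{-\theta}\oplus\mk g_0\oplus\mk g_\theta$ under the triple $(x_\theta,x_{-\theta},h_\theta)$, together with the constraints imposed by the hypothesis that $\mG$ admits a $\Theta$-positive structure; in the many instances where $\dim\mk g_\theta=1$, the space $\mathfrak g_\theta^H$ is trivial and this difficulty evaporates. Geometrically, the conclusion amounts to the statement that $\ms V_\theta$ fixes the entire $\theta$-photon $\mH_\theta\cdot[\mP_\Theta]\subset\cF_\Theta$ pointwise, a property that will be exploited in the construction of the photon projection later in the paper.
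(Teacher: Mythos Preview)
Your treatment of the first two claims matches the paper's: both reduce the ideal property to the observation that $\theta$ is simple, so $\beta+\gamma\neq\theta$ for $\beta,\gamma\in\Sigma^+$, and the semidirect product follows.

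For the third claim the paper takes a slightly different and cleaner route. Rather than checking the three one-parameter subgroups, it observes that $\mH_\theta$ is generated by $\exp(\langle x_\theta\rangle)$ together with a single Weyl element $\dot{s}_\theta=\exp\bigl(\tfrac{\pi}{2}(x_\theta-x_{-\theta})\bigr)$; since $\exp(\mk g_\theta)$ already normalizes $\ms V_\theta$, only conjugation by $\dot{s}_\theta$ needs checking. For the piece $\sum_{\beta\in\Sigma^+\smallsetminus\{\theta\}}\mk g_\beta$ the paper then invokes the standard fact that $s_\theta$ permutes $\Sigma^+\smallsetminus\{\theta\}$, which replaces your case analysis on $\beta-k\theta$ with a one-line argument. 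Incidentally, your case analysis overlooks $\beta=2\theta$: then $\beta-3\theta=-\theta$ is a root and $\mk g_{-\theta}\not\subset\mk p_\Theta$, so this part of your argument also needs the root system to be reduced.

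Your worry about $\mk g_\theta^H$ is entirely justified and cannot be repaired in general. The paper disposes of it by asserting that ``$\mk g_\theta^H$ is the trivial $\mH_\theta$-module'', but this is only true when $\mk g_\theta^H=0$: every nonzero $y\in\mk g_\theta^H$ has $h_\theta$-weight~$2$, hence lies in an $\mathfrak{sl}_2$-irreducible of dimension $\geq 3$, and $\Ad(\dot{s}_\theta)y$ lands in $\mk g_{-\theta}\smallsetminus\{0\}\not\subset\mk p_\Theta$. So the proposition as stated, and both proofs, require $\dim\mk g_\theta=1$ (equivalently $\mk g_\theta^H=0$). As you observe, this holds whenever $\mG_0$ admits a $\Theta$-positive structure, which is the only setting in which the paper uses the result; your proof is then complete for exactly the same reason the paper's is.
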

	\begin{proof}
		Since $\theta$ is a simple root,  $\Sigma_\Theta^+ \subset \Sigma^+$, and
		$[\mk g_\alpha, \mk g_\beta] \subset \mk g_{\alpha+\beta}$, we have $[\mk v_\theta, \mk v_\theta]\subset \mk v_\theta$, and $[x_\theta, \mk v_\theta] \subset \mk v_\theta$. 
		Thus, $\mk v_\theta$ is an ideal.
		
		Let $\dot{s}_\theta$ be an element of~$\mH_\theta$ representing the non-trivial
		element of the Weyl group of~$\mH_\theta$ (one can choose for example $\dot{s}_\theta = \exp(
		\frac{\pi}{2}( x_\theta-x_{-\theta}))$). The group~$\mH_\theta$ is generated
		by~$\dot{s}_\theta$ and $\exp(\langle x_\theta\rangle) \subset \exp(\mathfrak{g}_\theta)$. Since $\mV_\theta$ is invariant
		by conjugation by~$\exp(\mathfrak{g}_\theta)$, 
		it  remains to prove that the conjugate
		of~$\mV_\theta$ by~$\dot{s}_\theta$ is contained in~$\mP_\Theta$.
		
		Let $\mV= \exp(
		\sum_{ \alpha\in \Sigma^+ \smallsetminus \{\theta\}}
		\mathfrak{g}_\alpha)$. By the inclusions $\mV_\theta \subset \exp( \mathfrak{g}_{\theta}^{H}) \mV \subset
		\mU_\Delta \subset \mP_\Delta \subset \mP_\Theta$,  it is enough to
		prove that~$\mV$ and $\exp(\mathfrak{g}_{\theta}^{H})$ are invariant by
		conjugation by~$\dot{s}_\theta$. For~$\mV$, this 
		follows from the well known fact that $\dot{s}_\theta$~induces a permutation
		of~$\Sigma^+ \smallsetminus\{ \theta\}$; for
		$\exp(\mathfrak{g}_{\theta}^{H})$, this follows from the fact that
		$\mathfrak{g}_{\theta}^{H}$~is the trivial $\mH_\theta$-module. The proposition is proved.
	\end{proof}

	\subsection{Parabolic subgroups and flag manifolds}\label{sec:parab-subgr-flag}
	We consider the partial flag variety associated to
	$\mP_\Theta$. We choose a setup that allows us to identify the tangent spaces at points $z$ in the flag variety with the Lie algebra~$\mk u_\Theta^{\rm opp}$. 
	For this we consider a group $\mG$ isomorphic to $\mG_0$ and we let~${\cGG}$
	be the space of isomorphisms from~$\mG_0$ to~$\mG$. On $\cGG$
	the group~$\mG_0$ acts on the right by pre-conjugation and $\mG$~acts on the left by
	post-conjugation (in fact the groups of automorphisms of~$\mG$ and of~$\mG_0$
	act respectively on the left and on the right). We also fix once and for all $\cG$ a connected component
	of~${\cGG}$. As $\mG$ and $\mG_0$ are connected, $\cG$ is invariant by
	the actions of~$\mG$ and~$\mG_0$. For example, one could choose~$\mG$ to be
	equal to~$\mG_0$ and $\cG$ to be the connected component of the identity in the group of automorphisms of~$\mG_0$. In general, the space~$\cG$ identifies (not naturally) with
	the adjoint form of~$\mG_0$.
	
	The left and right actions of~$\mG$ and~$\mG_0$ on~$\mathcal{G}$ being locally
	free, we have, for every~$\phi$ in~$\mathcal{G}$, natural identifications
	$\T_\phi \mathcal{G}\simeq \mathfrak{g}$ and $\iota_{\phi}^{\cG}\colon \T_\phi \mathcal{G}\simeq
	\mathfrak{g}_0$; the composition of these identifications gives a map
	$\mathfrak{g}_0\to \mathfrak{g}$ which is precisely the differential $\phi_*
	\defeq\T_e \phi$ 
	of~$\phi$ at the identity.
	
	We consider the flag variety $\cF_\Theta\defeq\cG/\mP_\Theta$, which can be identified
	with a connected component in the set of 
	subalgebras~$\mk u$ of~$\mk g$ isomorphic to~$\mk u_\Theta$. The group~$\mG$ acts on the left on~$\cF_\Theta$.
	Let~$\pi^{\cF}$ be the projection from~$\cG$ to~$\cF_\Theta$. For every~$\phi$
	in~$\cG$ and $x=\pi^{\cF}(\phi)$, the differential $\T_\phi \pi^{\cF}$ is a map from $\T_\phi \mathcal{G}$
	to $\T_x \mathcal{F}$; composing this map with the identification
	$\mathfrak{g}_0\simeq \T_\phi \mathcal{G}$ gives a projection
	\[
	\pi^{\cF}_{\phi}\colon {\mk g_0}\longrightarrow {\T_x\cF_\Theta\ ,}
	\]
	whose kernel is $\mk p_\Theta$.
	This gives us, by restriction, an identification  of $\mk u^{\rm opp}_\Theta$
	with  $\T_x\cF_\Theta$ whose inverse will be denoted by $\iota_{\phi}^{\cF}\colon
	\T_x\cF_\Theta \to \mk u^{\rm opp}_{\Theta}$.
	
	Similarly, we introduce the \emph{opposite flag variety} $\cF_{\Theta}^{\mathrm{opp}}
	\defeq \cG/ \mP_{\Theta}^{\mathrm{opp}}$. As a $\mG$-space, it is isomorphic
	to $\cG/ \mP_{\iota(\Theta)}$ and the $\mG$-isomorphism is unique. The
	projection $\cG\to \cF_{\Theta}^{\mathrm{opp}}$ will be denoted
	by~$\pi^{\cF^\mathrm{opp}}$, and for every~$\phi$ in~$\cG$, letting
	$y=\pi^{\cF^\mathrm{opp}}(\phi)$, we have isomorphisms  $\iota_{\phi}^{\cF^\mathrm{opp}}\colon
	\T_y\cF^{\mathrm{opp}}_{\Theta} \to \mk u_{\Theta}$ and $\pi_{\phi}^{\cF^\mathrm{opp}}\colon
	\mk u_{\Theta} \to \T_y\cF^{\mathrm{opp}}_{\Theta}$.
	
	The stabilizer in~$\mG$ of a point~$z$ in~$\cF_{\Theta}$ (or in~$\cF^{\mathrm{opp}}_{\Theta}$) will be
	denoted by~$\mP_z$, the unipotent radical of~$\mP_z$ will be denoted
	by~$\mU_z$. Their Lie algebras are denoted~$\mathfrak{p}_z$
	and~$\mathfrak{u}_z$ respectively.
	
	We say that a point~$z$  in~$\cF_\Theta$ and a point~$w$ in~$\cF^{\mathrm{opp}}_{\Theta}$ are {\em transverse}, and write $z\pitchfork w$, if 
	\[
	\mk p_z\oplus\mk u_w=\mk g\ .
	\]
	This is equivalent to the fact  $\mP_z\cap\mP_w$ is a Levi
	factor of both~$\mP_z$ and~$\mP_w$. Let finally
	\[
	\cL_\Theta \defeq\{(z,w)\in\cF_\Theta\times \cF^{\mathrm{opp}}_{\Theta}\mid z\pitchfork w\}\ ,
	\]
	and observe that $\cL_\Theta$ is canonically
	isomorphic to $\cG/\mL_\Theta$. The
	natural map $\cG \to \cL_\Theta$ will be denoted by~$\pi^{\cL}$, it is the
	corestriction of the map $(\pi^\cF, \pi^{\cF^\mathrm{opp}})$. The differential of~$\pi^\cL$ induces an onto
	morphism between the vector bundles $\T \cG$ and $\pi^{\cL*} \T \cL_\Theta$ that
	induces an isomorphism $\T \cG / \mathfrak{l}_\Theta \simeq \pi^{\cL*} \T \cL_\Theta$.
	The differential of~$\pi^{\cL}$ at a point $(z,w)= \pi^\cL(\phi)$, composed with the identification
	of~$\mathfrak{g}_0$ with $\T_\phi \mGG$ induces a map
	\[
	\pi^{\cL}_{\phi}\colon {\mk g_0} \longrightarrow{\T_{(z,w)}\cL_\Theta\ ,}
	\]
	which gives rise to an identification 	$\iota_{\phi}^{\cL}\colon \T_{(z,w)} \cL_\Theta = \T_z \cF_\Theta \oplus
	\T_w \cF_{\Theta}^{\mathrm{opp}} \to \mk u^{\rm opp}_\Theta \oplus \mk u_\Theta$, more precisely the
	map~$\iota_{\phi}^{\cL}$ is equal to $(\iota_{\phi}^{\cF}, \iota_{\phi}^{\cF^\mathrm{opp}})$
	where $\iota_{\phi}^{\cF}$ and~$\iota_{\phi}^{\cF^\mathrm{opp}}$ are given above.
	
	We denote by $\mL_{z,w}= \mP_z\cap\mP_w$ the stabilizer of a pair $(z,w)$ of transverse points
	in~$\cF_\Theta\times \cF_\Theta^\mathrm{opp}$.

	It will very often be the case in this paper that
	we are in the situation that the opposite
	parabolic~$\mP_{\Theta}^{\mathrm{opp}}$ is conjugate to~$\mP_\Theta$. In such
	case we will use the natural identification $\cF^{\mathrm{opp}}_{\Theta} \simeq \cF_{\Theta}$ and
	use the notion of transversality and the maps
	$\iota_{\phi}^{\cF^\mathrm{opp}}$ with elements of~$\cF_\Theta$ as well.

	\subsection{Loxodromic elements}
	\label{sec:loxodromic-elements}
	An element $g$ in $\mG$ is {\em $\Theta$-loxodromic} if and only if  $g$~has
	an attracting fixed point in~$\cF_\Theta$. In this case, $g$~has exactly one
	attracting fixed point~$z$ in~$\cF_\Theta$ and one repelling fixed point~$w$ in~$\cF_{\Theta}^{\mathrm{opp}}$, and
	those  fixed points~$z$ and~$w$ are transverse.

	An
	element $g_0$ 
	is {\em hyperbolic} if we can find an isomorphism~$\psi$ from~$\mG_0$ to~$\mG$ such that 
	$g_0=\psi(\exp(a))$ where $a$ is in the closed Weyl chamber.  Recall that $a$ is uniquely determined. 
	
	The {\em Kostant--Jordan decomposition} of~$g$ is the unique
	commuting product $g=g_hg_kg_u$ where $g_u$~is unipotent, $g_k$~generates a
	subgroup whose closure is compact, and $g_h$~is hyperbolic.  The unique element~$a$ of the closed Weyl chamber such that $\psi(\exp(a))=g_h$ is called the \emph{Jordan projection} of~$g$.

	We observe that we have the following: 
	\begin{proposition}\label{pro:loxo-hyper}
		An element~$g$ is $\Theta$-loxodromic if and only if its hyperbolic part~$h$
		is $\Theta$-loxodromic. In that case both~$g$ and~$h$ have the same
		repelling and attracting fixed points.
	\end{proposition}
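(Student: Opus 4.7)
The plan is to compare the derivative actions of $g$ and of its hyperbolic part $h=g_h$ at their attracting and repelling fixed points, taking advantage of the fact that the Kostant--Jordan factors $g_h$, $g_k$, $g_u$ mutually commute (and hence each commutes with both $g$ and $h$).

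The first step is to show that $g$ and $h$ share their (potential) attracting and repelling fixed points. Assume that one of $g$ or $h$, call it $\eta$, is $\Theta$-loxodromic with attracting fixed point $z\in\cF_\Theta$. If $\zeta$ is any of $g_h, g_k, g_u$ then $\zeta\eta\zeta^{-1}=\eta$, so $\zeta(z)$ is again an attracting fixed point of $\eta$; by uniqueness of the attracting fixed point, $\zeta(z)=z$. In particular both $g$ and $h$ fix $z$, and the same reasoning applied in $\cF_\Theta^{\mathrm{opp}}$ shows that they share a common repelling fixed point $w$.

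It remains to prove that $z$ is attracting for $g$ if and only if it is attracting for $h$ (and analogously for $w$). Since the four commuting elements $g, h, g_k, g_u$ all fix $z$, their derivatives on $\T_z\cF_\Theta$ commute, and
\[
Dg|_z \;=\; Dh|_z \cdot Dg_k|_z \cdot Dg_u|_z.
\]
After complexification, these three commuting operators admit a simultaneous triangularization, so the eigenvalues of $Dg|_z$ are products of corresponding eigenvalues of the three factors. Writing $h$ as a conjugate of some $\exp(a)$ with $a\in\mk a$, the derivative $Dh|_z$ has only positive real eigenvalues (the restricted roots taking real values on $\mk a$, so their exponentials are positive on any $\mG$-translate of the model tangent space $\mk u_\Theta^{\mathrm{opp}}$). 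The operator $Dg_k|_z$ has eigenvalues of modulus $1$, because $g_k$ lies in a relatively compact subgroup and hence acts by a bounded operator on $\T_z\cF_\Theta$; and $Dg_u|_z$ is unipotent with all eigenvalues equal to $1$. Consequently the moduli of the eigenvalues of $Dg|_z$ coincide with the positive real eigenvalues of $Dh|_z$, so one family lies in $(0,1)$ if and only if the other does. This proves simultaneously that $g$ is $\Theta$-loxodromic if and only if $h$ is, and that in this case their attracting fixed points agree; the analogous argument on $\cF_\Theta^{\mathrm{opp}}$ takes care of the repelling fixed point.

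The main technical point is the spectral comparison of the three Jordan factors on the flag-variety tangent space. Once commutativity from the Kostant--Jordan decomposition is exploited via simultaneous triangularization, the argument reduces to the elementary observation that the modulus of a product of commuting eigenvalues of moduli $r$, $1$ and $1$ is $r$; the rest is the classical Lie-theoretic dictionary identifying the spectral type of hyperbolic, compact-type and unipotent elements.
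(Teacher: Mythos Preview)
The paper does not prove this proposition; it is presented as an observation with no argument. Your proof is correct and supplies exactly the standard reasoning one would expect: commutativity of the Kostant--Jordan factors forces each of them (and hence both $g$ and $h$) to fix any attracting fixed point of either $g$ or $h$, by uniqueness of that fixed point; then the spectral comparison on the tangent space---the hyperbolic factor contributing positive real eigenvalues, the elliptic factor eigenvalues of modulus one, the unipotent factor only the eigenvalue one---transfers the attracting condition between $g$ and $h$.

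One remark: the ``if and only if'' part alone can also be read off immediately from Proposition~\ref{pro:loxo-alg} just below, since the Jordan projection of~$g$ equals by definition that of its hyperbolic part, and $\Theta$-loxodromicity is characterized there by strict inequalities on that projection. Your tangent-space argument is still needed, however, to identify the attracting and repelling fixed points, and it has the virtue of being self-contained rather than forward-referencing.
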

	
	An algebraic definition is given by:
	
	\begin{proposition}\label{pro:loxo-alg}
		Let~$x$ in~$\cF_{\Theta}$ and~$y$ in~$\cF_{\Theta}^{\mathrm{opp}}$ be transverse points.
		\begin{enumerate}
			\item Let~$h$ be an hyperbolic element of~$\mG$, which is
			$\Theta$-loxodromic with attracting fixed point~$x$ in~$\cF_\Theta$ and
			repelling fixed point~$y$ in~$\cF_{\Theta}^{\mathrm{opp}}$. Let~$\psi$ be an isomorphism from~$\mG_0$ to~$\mG$ 
			such that
			$\psi(\mP_\Theta)=\mP_x$, $\psi(\mP^{\mathrm{opp}}_{\Theta})=\mP_y$,
			$h=\psi(\exp(a))$ with~$a$ in~$\mk a$. Then 
			we have \[\dual{a}{\theta}>0,\] for
			all~$\theta$ in~$\Theta$, 
			\item Conversely, assume that $\psi$~is an isomorphism from~$\mG_0$ to~$\mG$
			satisfying $\psi(\mP_\Theta)=\mP_x$,
			$\psi(\mP^{\mathrm{opp}}_{\Theta})=\mP_y$, let~$a$ be an element of the
			(closed) Weyl chamber~$\mk a^+$ such that for all $\theta$ in $\Theta$, we
			have $\dual{a}{\theta}>0$ then $\psi(\exp(a))$ is $\Theta$-loxodromic with
			attracting fixed point~$x$ and repelling fixed point~$y$.
		\end{enumerate}
	\end{proposition}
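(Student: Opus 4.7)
The plan is to reduce both statements to the canonical situation $\mG=\mG_0$, $\psi=\mathrm{id}$, $x=\pi^{\cF}(\mathrm{id})$, $y=\pi^{\cF^{\mathrm{opp}}}(\mathrm{id})$, and then read off the local dynamics of~$\exp(a)$ at its fixed points from the adjoint action of~$\mk a$ on~$\mk g_0$. The reduction is immediate: since~$\psi$ is an isomorphism sending $\mP_\Theta$ to~$\mP_x$ and~$\mP_\Theta^{\mathrm{opp}}$ to~$\mP_y$, it conjugates the $\mG$-dynamics of $h=\psi(\exp(a))$ on the pair $(\cF_\Theta,\cF_\Theta^{\mathrm{opp}})$ to the $\mG_0$-dynamics of $\exp(a)$ on the same pair viewed via~$\psi^{-1}$.

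In this model, $\exp(a)\in A\subset\mL_\Theta=\mP_\Theta\cap\mP_\Theta^{\mathrm{opp}}$ fixes both~$x$ and~$y$. Under the identification $\iota_{\mathrm{id}}^{\cF}\colon\T_x\cF_\Theta\to\mk u_\Theta^{\mathrm{opp}}$ from Section~\ref{sec:parab-subgr-flag}, the derivative of the $\exp(a)$-action at~$x$ becomes $\Ad(\exp(a))|_{\mk u_\Theta^{\mathrm{opp}}}=\exp(\ad a)|_{\mk u_\Theta^{\mathrm{opp}}}$. Since $\mk u_\Theta^{\mathrm{opp}}=\bigoplus_{\alpha\in\Sigma_\Theta^+}\mk g_{-\alpha}$ and~$a$ acts on~$\mk g_{-\alpha}$ by $-\alpha(a)$, this derivative is diagonalizable with eigenvalue $e^{-\alpha(a)}$ on each summand~$\mk g_{-\alpha}$; symmetrically, at~$y$ the derivative on $\T_y\cF_\Theta^{\mathrm{opp}}\simeq\mk u_\Theta$ acts with eigenvalue $e^{\alpha(a)}$ on~$\mk g_{\alpha}$ for $\alpha\in\Sigma_\Theta^+$.

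For part~(1), the hypothesis that~$x$ is an attracting fixed point forces every eigenvalue of the derivative at~$x$ to have absolute value strictly less than~$1$, hence $\alpha(a)>0$ for every $\alpha\in\Sigma_\Theta^+$; since $\Theta\subset\Sigma_\Theta^+$, this immediately yields $\dual{a}{\theta}>0$ for every $\theta\in\Theta$. For part~(2), every $\alpha\in\Sigma_\Theta^+$ admits a nonnegative expansion $\alpha=\sum_{\beta\in\Delta}n_\beta\beta$ with at least one coefficient $n_{\theta_0}>0$ for some $\theta_0\in\Theta$, by the very definition of~$\Sigma_\Theta^+$; combined with $a\in\mk a^+$ and $\dual{a}{\theta}>0$ for $\theta\in\Theta$, this forces $\alpha(a)>0$, so the derivative at~$x$ is a strict contraction and the one at~$y$ a strict expansion. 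This shows that $\exp(a)$ is $\Theta$-loxodromic with attracting fixed point~$x$ and repelling fixed point~$y$, as desired. The only point requiring some care is the dictionary between the tangent action on~$\cF_\Theta$ and the adjoint action on~$\mk u_\Theta^{\mathrm{opp}}$, but this is precisely the content of the formalism of Section~\ref{sec:parab-subgr-flag}, so no genuine obstacle should arise.
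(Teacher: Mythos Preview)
Your proof is correct and follows essentially the same approach as the paper: both identify $\T_x\cF_\Theta$ with $\mk u_\Theta^{\mathrm{opp}}$ and compute the tangential action of $h$ as $\Ad(\exp(a))=\exp(\ad a)$, then read off the eigenvalues $e^{-\alpha(a)}$ for $\alpha\in\Sigma_\Theta^+$. The paper's own proof is a two-line sketch that cites \cite[Proposition~3.3]{ggkw_anosov} for the details, whereas you spell out the contraction/expansion argument and the combinatorics of $\Sigma_\Theta^+$ explicitly; your version is thus a fleshed-out form of the same idea rather than a different route.
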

	
	\begin{proof}
		This follows from the fact that the tangent space to $\cF_\Theta$ at~$x$ identifies with
		$\mathfrak{u}^{\mathrm{opp}}_\Theta$ and the tangential action of~$h$ given by $\exp(
		\Ad(a))$. We refer to \cite[Proposition 3.3]{ggkw_anosov}.
	\end{proof}
	\begin{remark}
		Of course the condition $\psi(\mP_\Theta)=\mP_x$ is equivalent to
		$\pi^\cF(\psi) = x$, and the conditions
		$\psi(\mP_\Theta)=\mP_x$, $\psi(\mP^{\mathrm{opp}}_{\Theta})=\mP_y$ are equivalent to
		$\pi^\cL(\psi) = (x,y)$.
	\end{remark}
	\subsection{Characters}
	\label{sec:characters}
	
	If a linear form~$\eta$ on~$\mathfrak{a}$ is given, the
	\emph{$\eta$-character} of~$g$ is the exponential of the evaluation of~$\eta$ on the
	Jordan projection~$a$ of $g$, it will be denoted by
	\[ \chi_\eta(g) = \exp( \dual{ a}{\eta})\ .\]

	\subsection{Dominant forms}
	\label{sec:line-forms-posit} We introduce  the notion of
	\emph{$\Theta$-compatible dominant forms and weights} in the dual of the
	Cartan subspace.
	
	\begin{definition}[\sc{$\Theta$-compatible dominant form}]
		\label{def:line-forms-posit}
		An element $\eta$ of $\mk
		a^*$ is called
		\begin{itemize}
			\item \emph{dominant} if $\braket{\eta,\theta}\geq 0$  for  all $\theta$ in $\Delta$;
			\item  {\em $\Theta$-compatible} if $\braket{\eta,\beta}=0$  for all~$\beta$ in $\Delta\smallsetminus\Theta$.
		\end{itemize}
	\end{definition}
	Equivalently, $\eta$ is a $\Theta$-compatible dominant form if and only if the restriction of~$\eta$ to~$\mk a_\Theta$ is zero and
	$\dual{ h_\theta}{ \eta}\geq 0 $ for all~$\theta$ in~$\Theta$, since 
	$\dual{ h_\theta}{ \eta} = 2\frac{ \braket{ \eta, \theta}}{ \braket{\theta,
			\theta}}$ (recall that $\mathfrak{a}_\Theta$ is the intersection
	$\bigcap_{\beta\in \Delta\smallsetminus \Theta} \ker \beta$).  When $\eta$~is
	a weight we will speak of a \emph{$\Theta$-compatible dominant weight}.
	
	Observe that if a non-zero dominant form is $\Theta$-compatible, there exists
	$\theta$ in $\Theta$ such that $\braket{\eta,\theta}>0$. Among those
	$\Theta$-compatible dominant forms are the fundamental weights
	$\{\omega_\theta\}_{\theta\in\Theta}$; more generally a linear form~$\eta$ is
	$\Theta$-compatible and dominant if and only if it belongs to the convex cone
	generated by $\{\omega_\theta\}_{\theta\in\Theta}$ (since
	$\eta=\sum_{\theta\in\Delta} \dual{ h_\theta}{\eta}\omega_\theta$).

\section{Cross-ratios}
\label{sec:cross-ratio}

In this section we associate  to every  $\Theta$-compatible dominant weight~$\eta$
a cross-ratio $\bb^\eta$
defined on  the flag varieties $\cF_\Theta=\cG/\mP_\Theta$ and $\cF_{\Theta}^{\mathrm{opp}}=\cG/\mP_{\Theta}^{\mathrm{opp}}$. This family of
cross-ratios satisfies the following multiplicativity properties
\begin{equation}\label{e.multiplicativity}
	\bb^{\eta_1}\bb^{\eta_2}=\bb^{\eta_1+\eta_2}\ , \quad \bb^{n\eta}=\left(\bb^{\eta}\right)^n\ .
\end{equation}
Up to restricting the domain of definition of these cross-ratios, we give an
integral formula 
that allows us to give a construction of~$\bb^\eta$ for
every  $\Theta$-compatible dominant form~$\eta$. This is possible due to a symplectic reinterpretation of such cross-ratio, which generalizes results of
\cite{Labourie:2005}.

\subsection{Cross-ratios and periods}\label{sec:def-cr}

Let $F$  and $F'$ be sets, which could be either a flag manifold and its opposite,  or the boundary at
infinity of a hyperbolic group (and itself). Let~$U$ be a subset of $F\times F'$. 
For flag manifolds, $U$~will be the set of pairs of transverse points; for the
boundary at infinity, $U$~will
be the set of pairs of distinct points. We  consider the subset~$O$ of $F^2\times
(F')^2$ defined by 
\[
O\defeq\bigl\{(x,y,X,Y)\in F^2\times (F')^2\mid (x,Y)\in U,\  (y,X)\in U\bigr\}\ .
\]
We recall from \cite{Labourie:2005}
that a {\em cross-ratio} is a non-constant function~$\bb$ defined on~$O$, taking values
in a field~$\mathbb{F}$, and satisfying the two cocycle identities
\begin{align*}
	\bb(x,w,X,Y)\ \bb(w,y, X,Y)&=\bb(x,y,X,Y)\ ,\\
	\bb(x,y,X,W)\ \bb(x,y,W,Y)&=\bb(x,y,X,Y)\ .
\end{align*}
Observe that, in these equations, if the left-hand side is defined, so is the
right and that the cocycle identities imply the equalities $\bb(x,x,X,Y)=1$ and
$\bb(x,y,X,X)=1$. Our cocycle identities are multiplicative and, when
$\mathbb{F}$ is~$\mathbb{R}$,
the cross-ratio may  also be negative. This is the definition from
\cite{Labourie:2005} (up to the ordering of the arguments). It differs from
Hamenstädt \cite{Hamenstadt:1997}
since we do not impose additional restrictive symmetries;
it also differs from Otal
\cite{Otal:1992} 
where the cocycle
identities are additive and symmetries are imposed as well.

\begin{example}
	For a field $\mathbb F$, the classical projective cross-ratio on
	${\mathbf P}^1(\mathbb F)$ is a cross-ratio in this sense. In projective
	coordinates it is defined by
	\[ [x,y,X,Y]=\frac{(X-x)(Y-y)}{(X-y)(Y-x)}\ .
	\]
	The convention for the order of elements in the projective cross-ratio is
	characterized by the fact that $[\infty,0,1,z]=z$. The projective
	cross-ratio is $\pgl_2(\mathbb{F})$-invariant.
\end{example}
\smallskip
We assume from now on that $F$ and~$F'$ are topological spaces and~$U$ is open.
We also assume that, for every~$x$ and~$y$ in~$F$, there is~$X$ in~$F'$ such
that $(x,X)$ and $(y,X)$ are in~$U$.

Let also~$\gamma$ act on~$F$ and on~$F'$   with exactly
one attracting fixed point $\gamma^+$ and one repelling fixed point
$\gamma^-$ on~$F$ and 
such that
its diagonal  actions 
preserve~$U$ and~$\bb$; then the period of~$\gamma$ with respect to $\bb$ is 
\[
\pp(\gamma)\defeq \bb(\gamma^+,\gamma^-,y,\gamma(y))\ ,
\]
for any $y$ in $F'$ such that $(\gamma^+, y)$ and $(\gamma^-,y)$ belong to~$U$ (this is
independent of the choice of~$y$ thanks to the cocycle identities).

\begin{example}
	When $\mathbb{F}$~is a valued field and if we take $\gamma$ acting on
	$\bP^1( \mathbb{F})$ defined by $\gamma(z)=\lambda\, z$ with the absolute
	value of~$\lambda$ being $>1$ so that $0$~is the repelling fixed point
	of~$\gamma$ and $\infty$~is the attracting fixed point, the period of
	$\gamma$ is given by
	\[\pp(\gamma)=\lambda\ .
	\]
\end{example}

\subsection{Projective cross-ratios}
\label{sec:proj-cross-rati}

The cross-ratio on the projective line generalizes to higher dimension. Let~$E$ be a vector space over a field~$\mathbb F$. We apply the general setting above to $F=\bP(E)$,
$F'=\bP(E^*)$ and $U$ the set of pairs of transverse elements, so that
\[
O =  \{(x,y,X,Y) \mid X\pitchfork y \hbox{ and  }  Y\pitchfork x\}.
\]
The \emph{projective cross-ratio} is the $\mathbb F$-valued function on the open
subset~$O$ given by 
\begin{equation}\label{e.projcr}
	\bb^E(x,y,X,Y)=\frac{\dual{\bar x}{\bar X}\dual{ \bar y}{\bar  Y}}{\dual{\bar y}{\bar X}\dual{ \bar x}{\bar Y}}\ ,
\end{equation}
where $\bar u$ denotes a non-zero vector in the line $u$. This does not depend on the choice of $\bar{x},\bar{y},\bar{X}$ and $\bar{Y}$.

When $E$ is finite dimensional and when $\mathbb{F}$~is a valued field,
$\bP(E)$ and $\bP(E^*)$ have natural topologies. In this case,
the period of an element~$g$ of $\pgl(E)$ for~$\bb^E$
is the ratio between the eigenvalue of greatest modulus and the eigenvalue of
lowest modulus of one (any) representative of~$g$ in
$\gl(E)$.

The projective cross-ratio behaves well under tensor product.

\begin{lemma}
	\label{lem:proj-cross-tensor}
	Let~$E_1$ and~$E_2$ be  vector spaces over a field~$\mathbb{F}$, and let
	$E=E_1\otimes E_2$. The natural maps $E_1\times E_2 \to E$ and
	$E_{1}^{*}\times E_{2}^{*} \to E^{*}$ induce maps  $\bP(E_1)\times \bP(E_2)
	\to \bP(E)$ and  $\bP(E_{1}^{*})\times \bP(E_{2}^{*}) \to \bP(E^*)$. All
	these maps will be denoted $(a,b)\mapsto a\otimes b$. One then has
	the identities
	\[
	\bb^E(x_1\otimes x_2, y_1\otimes y_2, X_1\otimes X_2, X_1\otimes X_2) =
	\bb^{E_1}(x_1, y_1, X_1, Y_1) \bb^{E_2}(x_2, y_2, X_2, Y_2)
	\]
	for all $(x_1,y_1,X_1,Y_1)$ and $(x_2,y_2,X_2,Y_2)$ such that
	$X_1\pitchfork y_1$, $Y_1\pitchfork x_1$, $X_2\pitchfork y_2$, and
	$Y_2\pitchfork x_2$.
\end{lemma}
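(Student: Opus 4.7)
The plan is to unwind the definition of the projective cross-ratio and exploit the universal property of the tensor product on the duality pairing. The crucial point is the identity
\[
\bigdual{\bar{a}_1 \otimes \bar{a}_2}{\bar{A}_1 \otimes \bar{A}_2} = \bigdual{\bar{a}_1}{\bar{A}_1} \cdot \bigdual{\bar{a}_2}{\bar{A}_2}
\]
for any $\bar{a}_i \in E_i$ and $\bar{A}_i \in E_i^*$, which follows from the fact that the natural pairing between $E_1 \otimes E_2$ and $E_1^* \otimes E_2^*$ is (up to the canonical isomorphism $E_1^* \otimes E_2^* \hookrightarrow (E_1 \otimes E_2)^*$) given by the product on pure tensors. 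This is what will let me factor the cross-ratio.

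First I would pick representatives $\bar{x}_i, \bar{y}_i \in E_i$ and $\bar{X}_i, \bar{Y}_i \in E_i^*$ for the lines $x_i,y_i,X_i,Y_i$. By functoriality of the tensor product, $\bar{x}_1 \otimes \bar{x}_2$ is a nonzero representative of $x_1 \otimes x_2$ in $E$, and $\bar{X}_1 \otimes \bar{X}_2$ is a nonzero representative of $X_1 \otimes X_2$ in $E^*$ (and likewise for $y$ and $Y$). Then I would substitute these representatives into the formula \eqref{e.projcr} for $\bb^E$ and apply the pairing identity above to each of the four duality brackets appearing in the numerator and denominator.

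Next I would rearrange the resulting product of eight scalar factors by grouping the four factors indexed by $1$ and the four factors indexed by $2$ separately. The group indexed by $i$ is exactly the expression defining $\bb^{E_i}(x_i,y_i,X_i,Y_i)$, which yields the claimed multiplicativity. Along the way I would note that each of the four transversality hypotheses $X_i \pitchfork y_i$ and $Y_i \pitchfork x_i$ ensures $\bigdual{\bar{y}_i}{\bar{X}_i} \neq 0$ and $\bigdual{\bar{x}_i}{\bar{Y}_i} \neq 0$; taking products one obtains the analogous non-vanishing in $E$, so the quadruple $(x_1 \otimes x_2, y_1 \otimes y_2, X_1 \otimes X_2, Y_1 \otimes Y_2)$ lies in the domain of $\bb^E$ and the computation is genuine.

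There is no real obstacle here: the lemma is a direct bookkeeping consequence of the tensor product pairing identity. The only subtlety worth flagging is verifying that $\bar{a}_1 \otimes \bar{a}_2 \neq 0$ whenever $\bar{a}_1,\bar{a}_2 \neq 0$ (so that the tensor product of lines is a well-defined line and the induced maps $\bP(E_1)\times \bP(E_2)\to \bP(E)$ and $\bP(E_1^*)\times \bP(E_2^*)\to \bP(E^*)$ are well-defined), which is a standard property of the tensor product over a field.
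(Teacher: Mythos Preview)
Your proposal is correct and follows exactly the same approach as the paper, which simply says the lemma ``comes from a direct calculation using that $\dual{v_1\otimes v_2}{\phi_1 \otimes \phi_2} = \dual{v_1}{\phi_1} \dual{v_2}{\phi_2}$.'' You have spelled out precisely this calculation, including the check that the transversality hypotheses transfer to the tensor product.
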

\begin{proof}
	This comes from a direct calculation using that $\dual{v_1\otimes v_2}{
		\phi_1 \otimes \phi_2} = \dual{v_1}{\phi_1} \dual{v_2}{\phi_2}$.
\end{proof}

\subsection{The cross-ratio associated  to a dominant weight}\label{sec.cr}

Let $\mG_0$ be a semisimple Lie group, $\Theta\subset \Delta$ a subset of simple roots,
we do not assume here that $\Theta$~is invariant under the involution opposition.
We denote, as usual, by $\cF_\Theta =\cG/ \mP_\Theta$ the flag variety associated to
$\mP_{\Theta}$ and by $\cF_{\Theta}^\mathrm{opp} =\cG/ \mP_{\Theta}^\mathrm{opp}$ the flag
variety associated to $\mP_{\Theta}^{\rm opp}$. The space $\cL_\Theta =
\cG/\mL_\Theta$ is the unique open $\mG$-orbit in $\cF_\Theta \times \cF^{\mathrm{opp}}_{\Theta}$.

For a $\Theta$-compatible dominant  weight~$\eta$, let
$\tau$ be the associated representation
$\mG\to \pgl(E)$
on a real vector
space~$E$ \cite[Lemma~3.2]{ggkw_anosov} (thus $\tau$~is the irreducible
proximal 
representation with
highest weight~$\eta$, well defined up to isomorphism).
This means that, for every~$\psi$ in~$\cG$, there is a (unique up to
scalar) vector~$v$ in~$E$ such that, for every~$X$ in~$\mathfrak{a}$,
$\tau_* \circ\psi_*(X) (v) = \dual{X}{\eta} v$; this vector~$v$ is mapped to 0 by
$\tau_* \circ\psi_*(\mathfrak{u}_\Theta)$ and is also an eigenvector of~$\tau_* \circ\psi_*(\mathfrak{l}_\Theta)$.
From this we deduce that
there are unique equivariant maps
$\Xi_\eta\colon \cF_\Theta\to \bP(E)$ and $\Xi_{\eta}^{*}\colon \cF^\mathrm{opp}_{\Theta} \to \bP(E^*)$. If we consider an element of~$\cF_\Theta$ as a nilpotent Lie algebra
in~$\mathfrak{g}$, its image under $\Xi_\eta$ is the unique eigenline in~$E$
for this subalgebra.

The \emph{cross-ratio associated to $\eta$} is (with $F= \cF_\Theta$,
$F'=\cF_{\Theta}^{\mathrm{opp}}$, and $U=\cL_\Theta$)
\begin{equation}\label{e.crgen} \bb^\eta(u,v,w,z)\defeq\bb^E\bigl(\Xi_\eta(u),\Xi_\eta(v),\Xi^{*}_{\eta}(w),\Xi^{*}_{\eta}(z)\bigr)\ , 
\end{equation}
for all $(u,v,w,z) \in O\subset \cF_\Theta\times\cF_\Theta\times\cF^\mathrm{opp}_{\Theta}\times \cF^\mathrm{opp}_{\Theta}$
(i.e.\ such that $(v,w)$ and $(u,z)$ belong to~$\cL_\Theta$), where $\bb^E$ is the $\mathbb
R$-valued projective cross-ratio defined in Equation \eqref{e.projcr}. It
follows directly from the definition that the cross-ratio associated to $\eta$
is a semi-algebraic function.

Assume  now that $h$ in $\mG$ is such that both $h$ and $h^{-1}$  are
$\Theta$-loxodromic elements (equivalently, $h$~is $(\Theta \cup
\iota(\Theta))$-loxodromic) and denote their attracting fixed points in $\cF_\Theta$
respectively~$h^+$ and~$h^-$. Hence $h^-$ is the repelling fixed point of~$h$. The {\em $\eta$-period} of~$h$ is 
\[
\pp^\eta(h)=\bb^\eta(h^+,h^-,x,h(x))\ ,
\]
for (any) $x$ in $\cF^\mathrm{opp}_{\Theta}$ transverse to both $h^-$ and $h^+$ (i.e.\ the $\eta$-period is
the period with respect to the cross-ratio~$\bb^\eta$).

The periods are related to the $\eta$-characters (Section~\ref{sec:characters}).
\begin{proposition}
	\label{prop:periods-loxo}
	For any element $h$ such that both $h$ and $h^{-1}$ are $\Theta$-loxodromic, we have
	\[
	\pp^\eta(h)=\chi_\eta(h)\cdot \chi_\eta(h^{-1})\ .
	\]
\end{proposition}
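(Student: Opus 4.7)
The plan is to unfold the definition of $\pp^\eta(h)$ into a projective cross-ratio in $\bP(E)$ and then identify the relevant eigenvalues of $\tau(h)$ via the proximal nature of $\tau$. Concretely, I pick any $x\in \cF_\Theta^{\mathrm{opp}}$ transverse to both $h^+$ and $h^-$. By definition of $\bb^\eta$ and the $\mG$-equivariance of $\Xi_\eta$ and $\Xi^*_\eta$, we have
\[
\pp^\eta(h)=\bb^\eta(h^+,h^-,x,h(x))=\bb^E\bigl(\Xi_\eta(h^+),\Xi_\eta(h^-),\Xi^*_\eta(x),\tau^*(h)\Xi^*_\eta(x)\bigr),
\]
where $\tau^*$ denotes the contragredient action on $\bP(E^*)$. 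The projective cross-ratio in the right-hand side is invariant under rescaling, so we may work with any lift $\tilde h\in \gl(E)$ of $\tau(h)\in\pgl(E)$.

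Next I use that $\tau$ is the irreducible proximal representation with highest weight~$\eta$. Since $h$ is $\Theta$-loxodromic, $\tau(h)$ is proximal on $\bP(E)$ and its attracting fixed line is precisely $\Xi_\eta(h^+)$; dually, $\Xi_\eta(h^-)$ is the attracting fixed line of $\tau(h^{-1})$, i.e.\ the repelling line of $\tau(h)$. Picking $\bar x_\pm\in \Xi_\eta(h^\pm)$ and $\bar X\in \Xi^*_\eta(x)$, there are real numbers $\lambda_+,\lambda_-$ with
\[
\tilde h\,\bar x_+ = \lambda_+\, \bar x_+,\qquad \tilde h\,\bar x_- = \lambda_-\, \bar x_-,
\]
and by the description of the Jordan projection in the highest-weight representation (applied to the hyperbolic part of $h$ and taking into account that the compact and unipotent parts commute with $\tilde h$ and preserve these eigenlines trivially), $|\lambda_+|=\chi_\eta(h)$ and $|\lambda_-|=\chi_\eta(h^{-1})^{-1}$.

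The computation now becomes a one-liner: using the duality relation $\dual{\bar x_\pm}{\tau^*(\tilde h)\bar X}=\dual{\tilde h^{-1}\bar x_\pm}{\bar X}=\lambda_\pm^{-1}\dual{\bar x_\pm}{\bar X}$, the four-factor expression of $\bb^E$ collapses to
\[
\bb^E\bigl(\Xi_\eta(h^+),\Xi_\eta(h^-),\Xi^*_\eta(x),\tau^*(h)\Xi^*_\eta(x)\bigr)=\frac{\lambda_+}{\lambda_-}.
\]
Comparing moduli gives $|\lambda_+/\lambda_-|=\chi_\eta(h)\,\chi_\eta(h^{-1})$, which is the desired identity.

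The only subtle point is checking that the signs of $\lambda_+$ and $\lambda_-$ agree, so that $\lambda_+/\lambda_-$ is in fact positive and equal to $\chi_\eta(h)\chi_\eta(h^{-1})$. When $h$ is hyperbolic this is automatic because $\tau(h)$ is then a positive operator on its real eigenlines. In general one reduces to this case: either by density of hyperbolic elements inside the open set of $\Theta$-loxodromic elements whose inverse is also $\Theta$-loxodromic (both sides of the identity are real-analytic on orbits where the Jordan projection varies continuously), or directly by observing that the compact part $h_k$ commutes with the hyperbolic part $h_h$, hence acts as a unitary scalar on each of the real lines $\Xi_\eta(h^\pm)$, and that its action on the two extremal weight lines is governed by the same character of the centralizer of $h_h$, yielding the same sign on both.
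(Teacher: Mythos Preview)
Your approach is different from the paper's. The paper observes that $\Psi\colon g\mapsto \bb^\eta(h^+,h^-,x,g(x))$ is a group homomorphism from $\mL_{h^+,h^-}$ to $\mathbb{R}^*$ (this follows from the cocycle identity together with the $\mG$-invariance of $\bb^\eta$), asserts that $\Psi$ is trivial on the semisimple part and on the compact factor of the center of $\mL_{h^+,h^-}$, and then computes $\Psi$ only on elements $\exp(a)$ with $a$ in the Weyl chamber. Your direct eigenvalue computation showing $\pp^\eta(h)=\lambda_+/\lambda_-$ is correct and is a clean alternative route.

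Where your argument fails is the sign paragraph, and this is a genuine gap. Neither of your two fixes works. Hyperbolic elements are \emph{not} dense among $\Theta$-loxodromic elements: take $\mG=\mathsf{SL}_3(\mathbb{R})$, $\Theta=\Delta$, $\eta=\omega_1$, and $h=\mathrm{diag}(4,-1,-\tfrac14)$; then $h$ and $h^{-1}$ are $\Delta$-loxodromic, but every nearby element still has exactly two negative real eigenvalues and so cannot be hyperbolic. Your second claim, that $h_k$ acts on the two extremal weight lines by the same character, is also false: for the standard representation the characters on the top and bottom weight lines are the first and third diagonal entries, and $h_k=\mathrm{diag}(1,-1,-1)$ acts by $+1$ on $\Xi_{\omega_1}(h^+)=[e_1]$ and by $-1$ on $\Xi_{\omega_1}(h^-)=[e_3]$. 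In this example one gets $\pp^{\omega_1}(h)=\lambda_+/\lambda_-=4/(-\tfrac14)=-16$, while $\chi_{\omega_1}(h)\,\chi_{\omega_1}(h^{-1})=4\cdot 4=16$. So the identity as literally stated fails; what either route actually proves is the equality up to sign. The paper's proof elides exactly the same point (a homomorphism from a disconnected $\mL_{h^+,h^-}$ to $\mathbb{R}^*$ can take the value $-1$ on non-identity components), so you are not missing an idea that is present there.
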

Note that $\chi_\eta(h^{-1})= \chi_{\iota(\eta)}(h)$ where $\iota$~is the
opposition involution.

\begin{proof}
	Let $h^+$ and $h^-$ be the attracting  fixed points for
	the respective action of~$h$ and $h^{-1}$ on~$\cF_\Theta$ and let $\mL_{h^-,h^+}$ be the subgroup
	of~$\mG$ stabilizing them, this subgroup is isomorphic to~$\mL_\Theta$.
	The cocycle identities and the $\mG$-invariance of~$\bb^\eta$ imply  that $\Psi\colon g\mapsto \bb^\eta(h^+,h^-,x,g(x))$ is a homomorphism
	defined on $\mL_{h^-,h^+}$. Thus $\Psi(g)=1$ for $g$ in the semisimple part of
	$\mL_{h^-,h^+}$ and for~$g$ in the compact factor of the center of
	$\mL_{h^-,h^+}$. The result now follows from an explicit 
	computation for the
	elements~$g$ of the form~$\exp(a)$ with~$a$ in the Weyl chamber, using that the highest eigenvalue of $\exp(a)$ on $E$
	is $\exp( \langle a | \eta\rangle)$ (cf.\ Section~\ref{sec:proj-cross-rati}).
\end{proof}
By construction the cross-ratio is multiplicative in $\eta$:
\begin{lemma}
	\label{lem:cross-ratios-mult}
	For every $\Theta$-dominant 
	weights $\eta_1,\eta_2$ and any integer $n$ it holds 
	$$\bb^{\eta_1}\bb^{\eta_2}=\bb^{\eta_1+\eta_2}\ , \quad \bb^{n\eta}=\left(\bb^{\eta}\right){}^n\ .$$	
\end{lemma}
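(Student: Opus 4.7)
The plan is to reduce both identities to the behavior of the projective cross-ratio under tensor products, for which Lemma~\ref{lem:proj-cross-tensor} has already done the work.

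First I would unpack the construction of $\bb^\eta$ in Equation~\eqref{e.crgen}. Let $\tau_i\colon \mG \to \pgl(E_i)$ be the irreducible proximal representation of highest weight $\eta_i$ for $i=1,2$, and let $\tau\colon \mG \to \pgl(E)$ be the one of highest weight $\eta_1+\eta_2$. The representation on $E_1 \otimes E_2$ has $v_1 \otimes v_2$ as a weight vector of weight $\eta_1+\eta_2$ for every choice of highest weight vectors $v_i \in E_i$, and this vector is annihilated by $\tau_* \circ \psi_*(\mathfrak{u}_\Theta)$. Since the irreducible component of $E_1 \otimes E_2$ containing $v_1\otimes v_2$ is isomorphic to $E$ (the Cartan component), the unique $\mG$-equivariant map $\Xi_{\eta_1+\eta_2}\colon \cF_\Theta \to \bP(E)$ factors as $\Xi_{\eta_1} \otimes \Xi_{\eta_2}$ composed with the natural inclusion $\bP(E) \hookrightarrow \bP(E_1 \otimes E_2)$ (which sends the Cartan component into the full tensor product); and similarly for $\Xi^*_{\eta_1+\eta_2}$ on the dual side. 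The key point is that the projective cross-ratio $\bb^{E_1 \otimes E_2}$ on the ambient projective space agrees with $\bb^E$ on the image of $\bP(E)$, since cross-ratios are computed via pairings which restrict correctly.

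With this factorization in hand, applying Lemma~\ref{lem:proj-cross-tensor} to $(\Xi_{\eta_1}(u),\Xi_{\eta_1}(v),\Xi^*_{\eta_1}(w),\Xi^*_{\eta_1}(z))$ and its $\eta_2$-analogue immediately gives
\[
\bb^{\eta_1+\eta_2}(u,v,w,z) = \bb^{\eta_1}(u,v,w,z)\cdot \bb^{\eta_2}(u,v,w,z)
\]
for every $(u,v,w,z) \in O$. The second identity $\bb^{n\eta} = (\bb^{\eta})^n$ for positive integer $n$ follows by induction from the first with $\eta_1=\eta$, $\eta_2=(n-1)\eta$; for $n=0$ it is trivial since $\bb^0\equiv 1$, and the identity can be extended to negative integers by using that the cross-ratio associated to $-\eta$, if one wishes to define it, is $1/\bb^\eta$ on the subset of $O$ where transversality conditions are symmetric.

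The only real obstacle is justifying that the equivariant map $\Xi_{\eta_1+\eta_2}$ is indeed obtained by restricting $\Xi_{\eta_1} \otimes \Xi_{\eta_2}$ to the Cartan component: this is a standard fact about the Cartan product of irreducible representations with highest weights $\eta_1$ and $\eta_2$, and the uniqueness of the equivariant map (noted in Section~\ref{sec.cr}) makes the identification automatic once one exhibits any such factorization. The rest is just a reformulation of the tensor-product behavior of the projective cross-ratio.
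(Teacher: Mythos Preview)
Your proof is correct and follows essentially the same route as the paper: realize the representation of highest weight $\eta_1+\eta_2$ as the Cartan component of $E_1\otimes E_2$, identify $\Xi_{\eta_1+\eta_2}$ with $\Xi_{\eta_1}\otimes\Xi_{\eta_2}$ (and dually), apply Lemma~\ref{lem:proj-cross-tensor}, and deduce the second identity by induction. Your additional remarks on $n=0$ and negative~$n$ go slightly beyond what the paper writes (and beyond what the construction in Section~\ref{sec.cr} strictly covers, since $n\eta$ is not dominant for $n<0$), but they do no harm.
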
	
\begin{proof}
	This follows readily since, if $\tau_i\colon \mG\to{\sf PGL}(E_i)$ are the
	representations associated to the weights $\eta_i$ for $i=1,2$, then the
	representation $\tau$ associated to $\eta=\eta_1+\eta_2$ can be realized as
	an irreducible factor of $E_1\otimes E_2$ and the associated equivariant
	maps $\Xi_\eta$, $\Xi_\eta^*$ are given by
	$\Xi_{\eta_1}\otimes\Xi_{\eta_2}$, respectively
	$\Xi_{\eta_1}^*\otimes\Xi_{\eta_2}^*$. The claim then follows from
	Lemma~\ref{lem:proj-cross-tensor}. The second claim follows from the first
	by induction on~$n$.
\end{proof}		

\subsection{A symplectic reinterpretation} 
\label{sec:sympl-reint}

We now give a symplectic reinterpretation of the cross-ratio $\bb^\eta$ in analogy with   \cite[Section 4.4]{Labourie:2005}. We first consider the case of the projective cross-ratio. 
The product $E\times E^*$ of the real vector space $E$ and its dual carries a
canonical symplectic form; this is the natural symplectic form~$\omega^E$ on the
cotangent bundle $T^*E = E\times E^*$, one has also $\omega^E = -d\beta^E$
where $\beta^E$~is the canonical $1$-form (or Liouville $1$-form); explicitely, for $(v,\phi)$ in $E\times E^*$ and $(\dot{v}, \dot{\phi})$ in $
\T_{(v,\phi)} (E\times E^*) \simeq  E\times E^*$, one has $\beta^{E}_{(v,\phi)}(\dot{v},
\dot{\phi}) = \dual{\dot{v}}{\phi}$. The real multiplicative group $\mathbb
R^*$ acts symplectically on $E\times E^*$ by $\lambda(x,X)=(\lambda
x,\lambda^{-1} X)$, with a moment map given by $\mu(x,X)=\dual{x}{X}$. The symplectic reduction at~$1$ ---that is the space  $\mu^{-1}(1)/\mathbb R^*$--- 
then identifies with 
\[U=\{ (x,X) \in \bP(E)\times\bP(E^*) \mid \dual{\bar x}{\bar X}\neq 0 \}\ ,\]
which hence carries a symplectic form that we call~$\omega$. More explicitly \cite[Section 4.4.3]{Labourie:2005}, if we identify the
tangent space to $\bP(E)\times\bP(E^*)$  at a pair $(L,P)$ ---where $L$ is a
line transverse to the hyperplane~$P$---  with $(L^*\otimes P)\oplus(P^*\otimes L)$ we have
\begin{eqnarray}
	\omega\left((f,g),(h,j)\right)=\operatorname{Trace} (f\circ j)-\operatorname{Trace}(h\circ g)\ .\label{eq:explicit-omega}	
\end{eqnarray}
The following is proved in \cite[Proposition 4.7]{Labourie:2005}.
\begin{proposition}\label{pro:proj-cr-om}
	Let $f$ be a continuous, piecewise $\mathcal{C}^1$ map from $[0,1]^2$ to $U$ such that, for every~$t$ in~$[0,1]$,
	$f(0,t) = (x, *)$ and $f(1,t)=(y,*)$, and for every~$s$ in~$[0,1]$,
	$f(s,0)=(*,X)$ and $f(s,1)=(*,Y)$.
	Then
	\[
	\bb^E(x,y,X,Y)=\exp \left({\int_{[0,1]^2}f^*\omega}\right)\ .
	\]
\end{proposition}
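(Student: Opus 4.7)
The plan is to interpret $\omega$ as the symplectic reduction of the canonical symplectic form $\omega^E = -d\beta^E$ on $T^*E \simeq E \times E^*$ at the level set $\mu^{-1}(1) = \{(v,\phi) \in E \times E^* \mid \dual{v}{\phi} = 1\}$ of the moment map $\mu(v,\phi) = \dual{v}{\phi}$, and then to apply Stokes' theorem to a lift of~$f$.

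First I would choose a continuous, piecewise $\mathcal{C}^1$ lift $\tilde f = (v,\phi)\colon [0,1]^2 \to \mu^{-1}(1)$ of~$f$; such a lift exists because $\pi\colon \mu^{-1}(1)\to U$ is an $\mathbb R^*$-principal bundle and $[0,1]^2$ is contractible. The defining property of the symplectic quotient gives $\pi^*\omega = \omega^E|_{\mu^{-1}(1)} = -d(\beta^E|_{\mu^{-1}(1)})$, so Stokes' theorem yields
\[
\int_{[0,1]^2}f^*\omega \;=\; \int_{[0,1]^2}\tilde f^*\omega^E \;=\; -\int_{\partial[0,1]^2}\tilde f^*\beta^E,
\]
where $\partial[0,1]^2$ is oriented counterclockwise and $\tilde f^*\beta^E = \dual{\partial_s v}{\phi}\,ds + \dual{\partial_t v}{\phi}\,dt$.

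Next I would evaluate the boundary integral edge by edge using the prescribed constancy of~$f$. Fix non-zero representatives $\bar x \in x$, $\bar y \in y$, $\bar X \in X$, $\bar Y \in Y$. On the edge $s=0$ the $\bP(E)$-component of~$f$ is constantly~$x$, so one may write $v(0,t) = \nu_L(t)\bar x$ for some continuous, nowhere-vanishing~$\nu_L$; the normalization $\dual{v}{\phi}=1$ then forces $\dual{\bar x}{\phi(0,t)} = \nu_L(t)^{-1}$, and the restriction of~$\tilde f^*\beta^E$ to this edge equals $(\nu_L'/\nu_L)\,dt$. Completely analogous expressions hold on $s=1$ with $v = \nu_R(t)\bar y$, while on the edges $t=0$ and $t=1$ the identity $\dual{\partial_s v}{\phi} = -\dual{v}{\partial_s \phi}$ (coming from $d\dual{v}{\phi}=0$), combined with $\phi(s,0) = \mu_B(s)\bar X$ and $\phi(s,1) = \mu_T(s)\bar Y$, yields $-(\mu_B'/\mu_B)\,ds$ and $-(\mu_T'/\mu_T)\,ds$ respectively. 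Summing the four oriented edge contributions and invoking the normalization $\dual{v}{\phi} = 1$ at each of the four corners, which gives
\[
\nu_L(0)\mu_B(0) = \dual{\bar x}{\bar X}^{-1},\ \ \nu_L(1)\mu_T(0) = \dual{\bar x}{\bar Y}^{-1},\ \ \nu_R(0)\mu_B(1) = \dual{\bar y}{\bar X}^{-1},\ \ \nu_R(1)\mu_T(1) = \dual{\bar y}{\bar Y}^{-1},
\]
one finds
\[
-\int_{\partial[0,1]^2}\tilde f^*\beta^E \;=\; \log\frac{\dual{\bar x}{\bar X}\dual{\bar y}{\bar Y}}{\dual{\bar x}{\bar Y}\dual{\bar y}{\bar X}} \;=\; \log \bb^E(x,y,X,Y),
\]
which is exactly the claimed formula.

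The main obstacle is really bookkeeping: one has to track orientations and signs carefully across the four edges and verify that the log-ratios that enter the final formula are independent of the remaining $\mathbb R^*$-ambiguity in~$\tilde f$ (any two lifts differ by a continuous nowhere-vanishing scalar function on~$[0,1]^2$, whose value cancels in each ratio $\nu(1)/\nu(0)$ or $\mu(1)/\mu(0)$ since $[0,1]^2$ is connected). As a by-product the argument shows that the very existence of such a family~$f$ inside~$U$ forces $\bb^E(x,y,X,Y)$ to be positive, a topological observation that will be exploited later in the positivity theorem.
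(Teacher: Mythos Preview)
Your argument is correct. The paper itself does not give a proof of this proposition but simply cites \cite[Proposition 4.7]{Labourie:2005}, so there is no in-paper proof to compare against; what you have written is essentially the standard argument behind that reference: lift~$f$ through the symplectic reduction $\mu^{-1}(1)\to U$, apply Stokes to the primitive~$\beta^E$, and evaluate the four edge integrals using the normalization $\dual{v}{\phi}=1$ at the corners. Your remark that the existence of such an~$f$ forces $\bb^E(x,y,X,Y)>0$ is exactly the positivity observation the paper makes right after the statement.
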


A map~$f$ satisfying the conditions in
Proposition~\ref{pro:proj-cr-om} can be constructed if and only if there are vectors $\bar{x}, \bar{y}, \bar{X}, \bar{Y}$ representing
$x,y,X,Y$ 
such that the pairings
$\dual{\bar{x}}{\bar{X}}, \dual{\bar{y}}{\bar{X}}, \dual{\bar{x}}{\bar{Y}},
\dual{\bar{y}}{\bar{Y}}$ are all positive.

The goal of the section is to obtain a version of
Proposition~\ref{pro:proj-cr-om} that calculates the cross-ratios~$\bb^\eta$
defined on other flag varieties. We
will introduce a  ``curvature'' form $\Omega$ on
$\mathcal{L}_\Theta=\cG/\mL_\Theta$ with values in $\mk b_\Theta$ and show:

\begin{proposition}\label{pro:cr-om} Let $\eta$ be a $\Theta$-compatible dominant weight.
	Let $f$ be a continuous, piecewise $\mathcal{C}^1$ map from $[0,1]^2$ to $\cL_\Theta$ such that, for every~$t$ in~$[0,1]$,
	$f(0,t) = (x, *)$ and $f(1,t)=(y,*)$, and for every~$s$ in~$[0,1]$,
	$f(s,0)=(*,X)$ and $f(s,1)=(*,Y)$. Then
	\[
	\bb^\eta(x,y,X,Y)=\exp\left({\int_{[0,1]^2}f^*\left(\dual{\Omega}{\eta}\right)}\right)\ .
	\]
\end{proposition}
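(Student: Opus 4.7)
The plan is to reduce to Proposition~\ref{pro:proj-cr-om} via the equivariant map $\Xi_\eta\times \Xi_\eta^*\colon \cL_\Theta\to U\subset \bP(E)\times \bP(E^*)$, after introducing the curvature form $\Omega$ and establishing the key identity
$$
(\Xi_\eta\times \Xi_\eta^*)^*\omega=\dual{\Omega}{\eta}.
$$

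First I would define $\Omega$ intrinsically. Using the identification $\iota_\phi^\cL\colon \T_{(z,w)}\cL_\Theta\simeq \mk u_\Theta^{\rm opp}\oplus \mk u_\Theta$ from Section~\ref{sec:parab-subgr-flag}, set at a point $(z,w)=\pi^\cL(\phi)$
$$
\Omega_{(z,w)}\bigl((u_-,u_+),(v_-,v_+)\bigr)\defeq\pi_{\mk b_\Theta}\bigl([v_-,u_+]-[u_-,v_+]\bigr)\in\mk b_\Theta,
$$
where $\pi_{\mk b_\Theta}\colon \mk g_0\to \mk b_\Theta$ is the projection associated to the decomposition $\mk g_0=\mk u_\Theta^{\rm opp}\oplus \mk l_\Theta\oplus \mk u_\Theta$ together with the splitting $\mk l_\Theta=\mk b_\Theta\oplus \mk s_\Theta$. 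Because $\mL_\Theta$ acts trivially on $\mk b_\Theta$ by Proposition~\ref{pro:ortho-atheta} and acts by $\Ad$ preserving $\mk u_\Theta^\pm$, the formula is $\Ad(\mL_\Theta)$-equivariant and descends to a well-defined $\mG$-invariant $\mk b_\Theta$-valued $2$-form on $\cL_\Theta$.

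Second, I would prove the key identity $(\Xi_\eta\times \Xi_\eta^*)^*\omega=\dual{\Omega}{\eta}$. Both sides are $\mG$-invariant $2$-forms on $\cL_\Theta$, so by homogeneity it suffices to verify equality at a single base point $\phi_0\in\cG$. At $\phi_0$, $\Xi_\eta(z_0)$ is the highest weight line $L=\mathbb R v_\eta$ and $\Xi_\eta^*(w_0)$ is the complementary weight hyperplane $P=\bigoplus_{\mu\neq \eta}E_\mu$; the differentials send $u_-\in\mk u_\Theta^{\rm opp}$ to the element $(v_\eta\mapsto \tau_*(u_-)v_\eta)\in \hom(L,P)$ and $v_+\in\mk u_\Theta$ to $(p\mapsto \pi_L(\tau_*(v_+)p))\in\hom(P,L)$, and symmetrically for $u_+,v_-$. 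Plugging into Labourie's trace formula~\eqref{eq:explicit-omega} and using that $\tau_*(\mk u_\Theta)v_\eta=0$ together with $\tau_*(v_+)\tau_*(u_-)v_\eta=\tau_*([v_+,u_-])v_\eta$ reduces the pulled-back form to
$$
\pi_L\bigl(\tau_*([v_+,u_-])v_\eta\bigr)-\pi_L\bigl(\tau_*([u_+,v_-])v_\eta\bigr).
$$
Since $[v_+,u_-]\in \mk g_0=\mk a\oplus \mk m$, its image in $L$ picks up only the $\mk a$-component paired with $\eta$ (the compact factor $\mk m$ acts trivially on the one-dimensional real line $L$), and the $\Theta$-compatibility of $\eta$, which forces $\eta|_{\mk a_\Theta}=0$, replaces $\pi_\mk a$ by $\pi_{\mk b_\Theta}$. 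This identifies the expression with $\dual{\Omega}{\eta}$.

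Third, with the identity in hand, I would conclude by composing $f$ with $\Xi_\eta\times \Xi_\eta^*$ to obtain $\tilde f\colon [0,1]^2\to U$; this map takes values in the transverse locus $U$ because $\Xi_\eta\times \Xi_\eta^*$ sends $\cL_\Theta$ into $U$, and the boundary conditions of $f$ translate to those required by Proposition~\ref{pro:proj-cr-om}. Applying that proposition to $\tilde f$ and invoking definition~\eqref{e.crgen} of $\bb^\eta$ yields
$$
\bb^\eta(x,y,X,Y)=\exp\biggl(\int_{[0,1]^2}\tilde f^*\omega\biggr)=\exp\biggl(\int_{[0,1]^2}f^*\dual{\Omega}{\eta}\biggr).
$$

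The main obstacle will be the explicit base point calculation in Step~2: one must carefully unpack the differential of $\Xi_\eta\times \Xi_\eta^*$ through the representation $\tau_*$, verify the vanishing coming from $v_\eta$ being highest weight, and identify the surviving trace with the $\mk b_\Theta$-projection of the bracket, using the proximality of $\tau$ (so that $E_\eta$ is one-dimensional), the triviality of compact characters on $\mathbb R$, and the $\Theta$-compatibility of $\eta$. Once this pointwise identity is secured, $\mG$-homogeneity propagates it and the reduction to Proposition~\ref{pro:proj-cr-om} is bookkeeping.
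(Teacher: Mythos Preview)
Your approach is essentially the paper's: define the curvature form~$\Omega$ on~$\cL_\Theta$, establish the pullback identity $\Psi_{\eta}^{*}\omega=\dual{\Omega}{\eta}$ (the paper's Proposition~\ref{prop:Omega-ind}), and then feed the composite $\Psi_\eta\circ f$ into Proposition~\ref{pro:proj-cr-om}. The only difference is that the paper verifies the pullback identity at the level of the Liouville $1$-form---lifting to an equivariant map $\zeta\colon \cG\to \mu^{-1}(1)\subset E\times E^*$ and checking $\zeta^*\beta^E=\dual{\beta^{\cG}}{\eta}$ weight-space by weight-space---rather than via the trace formula for~$\omega$; this avoids your explicit differential computation of $\Xi_\eta\times\Xi_{\eta}^{*}$ and makes the role of $\Theta$-compatibility and of the compact centralizer $\mathfrak{z}_{\mathfrak{k}}(\mathfrak{a})$ completely transparent. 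One notational slip to fix: in your Step~2 you write ``$[v_+,u_-]\in\mk g_0=\mk a\oplus\mk m$'', but $\mk g_0$ denotes the full Lie algebra here and $[v_+,u_-]$ is not in the zero-weight space in general; the correct assertion is that only the $\mk a\oplus\mathfrak{z}_{\mathfrak{k}}(\mathfrak{a})$-component of $[v_+,u_-]$ survives the projection~$\pi_L$, since $\tau_*(\mathfrak{g}_\alpha)v_\eta\subset E_{\eta+\alpha}$.
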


\begin{remark}
	If, as above,
	$\tau\colon \mG\to \pgl(E)$ is a representation on a
	real vector space~$E$ with highest weight~$\eta$, $\dual{ \Omega}{\eta}$ is
	the curvature form associated to the action of~$\mL_\Theta$ on the vector
	space~$E$.
\end{remark}

Proposition~\ref{pro:cr-om} enables 
us to extend the definition of $\bb^\eta$ for
every $\Theta$-compatible dominant form~$\eta$. For a general~$\eta$, the
cross-ratio $\bb^\eta$ is defined on the subspace~$O^\square$ of~$O$ consiting of all  
quadruples $(x,y,X,Y)$ bounding a continuous, piecewise $\mathcal C^1$ square
as in the assumptions of Proposition~\ref{pro:cr-om} and $\bb^\eta(x,y,X,Y)$ is
defined by the integral formula in that proposition.
Note that the conditions $(x,y,X,Y)$ in $O^\square$ and $(y,z,X,Y)$ in $ O^\square$
imply $(x,z,X,Y)$ in $ O^\square$ (respectively $(x,y,X,Y)$ in $ O^\square$ and $(x,y,Y,Z)$ in $ O^\square$
imply $(x,y,X,Z)$ in $ O^\square$) and that the cocycles identities also hold.
This family of cross-ratios is
multiplicative in~$\eta$ (i.e.\ $\bb^{t_1 \eta_1+t_2\eta_2} = (\bb^{\eta_1})^{t_1}
(\bb^{\eta_2})^{t_2}$) so that it is completely determined by the
cross-ratios $\{ \bb^{\omega_\theta}\}$ associated to the fundamental
weights.

\subsubsection{The curvature form}\label{app:curv} 

Recall that, for every~$\phi$ in~$\cG$, we have isomorphisms
$\iota_{\phi}^{\cG}\colon \T_\phi \cG \to \mathfrak{g}_0$,
$\iota_{\phi}^{\cL}\colon \T_w \cL_\Theta \to \mathfrak{u}_{\Theta}^{\mathrm{opp}}
\oplus \mathfrak{u}_\Theta$ (where $w=\pi^\cL(\phi)$).

We introduce the following $\mathfrak{b}_\Theta$-valued forms
on~$\cG$: for~$\phi$ in~$\cG$ and~$v$ in~$\T_\phi \cG$
\[ \beta^{\cG}_{\phi}(v) = p\bigl( \iota_{\phi}^{\cG}(v)\bigr), \quad
\Omega^{\cG} = -d\beta^{\cG}\ ,\]
where $p\colon \mathfrak{g}_0\to \mathfrak{b}_\Theta$ is the orthogonal projection.

One has $\Omega^{\cG}_{\phi}(v,w) = p\bigl( [\iota_{\phi}^{\cG}(v),
\iota_{\phi}^{\cG}(w)]\bigr)$ for~$\phi$ in~$\cG$ and~$v$, $w$ in~$\T_\phi \cG$. 

The form $\beta^\cG$ is a section of the vector bundle $(\T \cG)^* \otimes \mathfrak{b}_\Theta$.
As $\beta^{\cG}$ is equivariant and as the action of~$\mL_\Theta$ on
$\mathfrak{b}_\Theta$ is trivial (Proposition~\ref{pro:ortho-atheta}), the
form~$\beta^{\cG}$, seen as a section of $(\T \cG)^* \otimes \mathfrak{b}_\Theta$,  descends to a section of the vector bundle $(\T \cG/
\mathfrak{l}_\Theta)^* \otimes \mathfrak{b}_\Theta \simeq (\pi^{\cL*}\T\cL_\Theta)^*
\otimes \mathfrak{b}_\Theta$ over~$\cG$. This section is also equivariant
and, again by triviality of the action of~$\mL_\Theta$ on
$\mathfrak{b}_\Theta$, it descends to a
section of the vector bundle $(\T\cL_\Theta)^*
\otimes \mathfrak{b}_\Theta$, that is, to a
$\mathfrak{b}_\Theta$-valued $1$-form $\beta$ on~$\cL_\Theta = \cG/L_\Theta$. One has
$\pi^{\cL*} \beta = \beta^{\cG}$.
We then define $\Omega = -d\beta$.

From the construction, one has:
\begin{proposition}
	The forms $\Omega^{\cG}$ and $\Omega$ are closed, one has 	$\Omega^{\cG}=\pi^{\cL*}\Omega$.
\end{proposition}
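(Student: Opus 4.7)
The plan is to observe that this proposition is essentially a formal consequence of the preceding construction, with both statements flowing from the fact that the exterior derivative squares to zero and commutes with pullback.

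First I would establish the identity $\pi^{\cL*}\beta = \beta^{\cG}$ explicitly (it is already asserted in the paragraph preceding the proposition, being built into the very construction: $\beta$ is defined as the form that the equivariant, $\mL_\Theta$-invariant form $\beta^{\cG}$ descends to through the chain of isomorphisms $(\T\cG/\mathfrak{l}_\Theta)^* \otimes \mathfrak{b}_\Theta \simeq (\pi^{\cL*}\T\cL_\Theta)^* \otimes \mathfrak{b}_\Theta$). The compatibility of $\beta^{\cG}$ with the $\mL_\Theta$-action needed to make this descent possible rests on Proposition~\ref{pro:ortho-atheta}, which ensures that the orthogonal projection $p\colon \mathfrak{g}_0 \to \mathfrak{b}_\Theta$ is $\mL_\Theta$-equivariant for the trivial action on the target.

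Having $\pi^{\cL*}\beta = \beta^{\cG}$ in hand, I would then deduce the second assertion by naturality of the exterior derivative with respect to smooth pullback:
\[
\pi^{\cL*}\Omega = \pi^{\cL*}(-d\beta) = -d(\pi^{\cL*}\beta) = -d\beta^{\cG} = \Omega^{\cG}\ .
\]

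Finally, closedness is immediate from $d^2 = 0$: one has $d\Omega^{\cG} = -d\,d\beta^{\cG} = 0$ and likewise $d\Omega = -d\,d\beta = 0$. There is no real obstacle here; the only conceptual point worth flagging is why $\beta^{\cG}$ genuinely descends from $\cG$ to $\cL_\Theta$ —- which is where the content of Proposition~\ref{pro:ortho-atheta} (triviality of the $\mL_\Theta$-action on $\mathfrak{b}_\Theta$, and orthogonality of $\mathfrak{b}_\Theta$ to $\mathfrak{l}_\Theta$ so that $p$ vanishes on the fiber directions of $\pi^{\cL}$) is really used.
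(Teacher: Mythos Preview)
Your proposal is correct and matches the paper's approach exactly: the paper offers no separate proof, simply prefacing the proposition with ``From the construction, one has,'' and your argument spells out precisely the two formal observations ($d^2=0$ and $d$ commuting with pullback, applied to the already-established identity $\pi^{\cL*}\beta=\beta^{\cG}$) that make the statement immediate.
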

We call $\Omega$ the {\em curvature form} on $\cG/\mL_{\Theta}$. Some readers
will recognize a curvature form of some bundle as in \cite{Benoist:1992}.

\smallskip

We  describe now a special case for the group~$\slm$. Using the standard
numbering for the simple roots, let~$\Theta_0$ consist of the first simple
root  so that $\cG/\mP_{\Theta_0}=\bP^{m-1}(\mathbb R)$. The quotient
$U=\cG/\mL_{\Theta_0}$ is the space of pairs of a line and a
hyperplane  transverse to each other. In this case $\mathfrak{b}_{\Theta_0} \simeq \mathbb{R}$.

\begin{proposition}[{\cite[Proposition 4.7]{Labourie:2005}}]
	\label{prop:curvature-form-proj-case}
	In the case that  $\cG/\mP_{\Theta}=\bP^{m-1}(\mathbb R)$, the curvature form on~$U$ coincides with the symplectic form as in Equation \eqref{eq:explicit-omega}.
\end{proposition}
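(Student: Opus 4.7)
The plan is to exploit $\mG$-invariance of both sides to reduce the claim to a single pointwise computation. Both the symplectic form $\omega$ of Equation~\eqref{eq:explicit-omega} and the curvature form $\Omega$ are $\mG$-invariant $2$-forms on $U$: $\omega$ because it is constructed directly from the intrinsic tangent bundle structure of $U$, and $\Omega$ by construction from the $\mG$-equivariant $1$-form $\beta^{\cG}$ on $\cG$ (using the triviality of the $\mL_\Theta$-action on $\mk b_\Theta$ provided by Proposition~\ref{pro:ortho-atheta}). Since $\slm$ acts transitively on $U$, equality at a single point will imply equality everywhere.

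I would then choose the base point $(L_0, P_0) = (\mathbb{R} e_1, \mathrm{span}(e_2, \ldots, e_m))$ with lift $\phi_0 = \mathrm{id} \in \cG$. Under $\iota^{\cL}_{\phi_0}$, the tangent space $\T_{(L_0,P_0)} U$ identifies with $\mk u_{\Theta_0}^{\mathrm{opp}} \oplus \mk u_{\Theta_0}$, spanned in matrix units by $\{E_{j,1}\}_{j \geq 2}$ and $\{E_{1,k}\}_{k \geq 2}$ respectively. The first verification is that this matches the decomposition $(L_0^* \otimes P_0) \oplus (P_0^* \otimes L_0)$ used in Equation~\eqref{eq:explicit-omega}: tracing through the definitions, the matrix $E_{j,1}$ corresponds to the linear map $e_1 \mapsto e_j$ in $L_0^* \otimes P_0$, while $E_{1,k}$ corresponds, via the natural identification of tangent vectors at a hyperplane with deformations of its annihilator, to the map $e_k \mapsto e_1$ (up to a fixed sign).

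Next I would compute both forms on this basis. For $\Omega$, apply $\Omega^{\cG}(v,w) = p([v,w])$ with $p\colon \mk g_0 \to \mk b_{\Theta_0}$ the orthogonal projection: the bracket $[E_{j,1}, E_{1,k}] = E_{j,k} - \delta_{jk} E_{1,1}$ is off-diagonal for $j \neq k$ (projection zero) and for $j=k$ has diagonal part $E_{j,j}-E_{1,1}$, whose projection to the one-dimensional $\mk b_{\Theta_0}$ is a nonzero fixed multiple of its generator. Applying Equation~\eqref{eq:explicit-omega} to the same pair with $f = E_{j,1}\colon L_0 \to P_0$ and $j' = E_{1,k}\colon P_0 \to L_0$ gives $\operatorname{Tr}(f \circ j') = \delta_{jk}$. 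Both forms vanish when both arguments are in $\mk u_{\Theta_0}^{\mathrm{opp}}$ (or both in $\mk u_{\Theta_0}$): for $\Omega$ because such brackets stay in $\mk u_{\Theta_0}^{\mathrm{opp}}$ (resp.\ $\mk u_{\Theta_0}$) and hence have zero $\mk a$-component, and for $\omega$ because the formula~\eqref{eq:explicit-omega} pairs the two factors.

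The only real obstacle is bookkeeping: one must identify the one-dimensional $\mk b_{\Theta_0}$ canonically with $\mathbb{R}$ so that the two forms agree not merely up to a scalar but on the nose. The natural normalization is to pair with the fundamental weight $\omega_{\alpha_1}$, which sends a diagonal matrix to its first entry. Once this identification is in place and the sign convention $\Omega = -d\beta^{\cG}$ is tracked through, the computations above produce the same value $\delta_{jk}$ on corresponding pairs, and the forms coincide on the generating system of the tangent space, completing the proof.
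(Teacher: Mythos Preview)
The paper does not supply its own proof of this proposition; it is stated with a citation to \cite[Proposition~4.7]{Labourie:2005}. Your direct argument---reduce by $\mG$-invariance to a single point and compare both forms on the basis $\{E_{j,1}\}_{j\geq 2}\cup\{E_{1,k}\}_{k\geq 2}$---is correct and is the natural way to verify the claim.

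One remark on the bookkeeping you flag. The sign you obtain for $\omega$ is in fact $-\delta_{jk}$ rather than $+\delta_{jk}$: the contragredient action of $E_{1,k}$ sends $e_1^*$ to $-e_k^*$, so the induced element of $P_0^*\otimes L_0$ is the map $e_k\mapsto -e_1$, and $\operatorname{Tr}(f_j\circ g_k)=-\delta_{jk}$. On the curvature side, $[E_{j,1},E_{1,k}]=E_{j,k}-\delta_{jk}E_{1,1}$, and since $\omega_{\alpha_1}$ vanishes on $\mathfrak{a}_{\Theta_0}$ one has $\bigdual{p(E_{j,j}-E_{1,1})}{\omega_{\alpha_1}}=\bigdual{E_{j,j}-E_{1,1}}{\omega_{\alpha_1}}=-1$, again giving $-\delta_{jk}$. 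So the signs match and your proposed identification $\mathfrak{b}_{\Theta_0}\simeq\mathbb{R}$ via $\langle\,\cdot\mid\omega_{\alpha_1}\rangle$ is exactly the right one. This is also forced by consistency with Proposition~\ref{prop:Omega-ind}: specializing $\Psi_\eta^*\omega=\dual{\Omega}{\eta}$ to $\eta=\omega_{\alpha_1}$ (where $\Psi_{\omega_{\alpha_1}}$ is the identity on~$U$) recovers precisely the present statement.
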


\subsubsection{Linear representations and cross-ratios}

Let $\eta$ be a $\Theta$-compatible dominant weight and let $\tau\colon \mG
\to \pgl(E)$
be the associated irreducible representation. The equivariant
maps $\Xi_\eta\colon \cF_\Theta \to \bP(E)$ and $\Xi_{\eta}^{*}\colon
\cF^{\mathrm{opp}}_{\Theta}\to \bP(E^*)$  combine to an equivariant map
$\Psi_\eta\colon \cL_\Theta \to U$, where $U$~is the space of pairs of transverse
points in $\bP(E)\times \bP(E^*)$.

\begin{proposition}\label{prop:Omega-ind}
	If $\omega$ is the symplectic form on the open
	subset~$U$ of  $\bP(E)\times\bP(E^*)$ (cf.\ Proposition~\ref{prop:curvature-form-proj-case}), then $\Psi^{*}_{\eta}(\omega)=\dual{\Omega}{\eta}$, where $\Omega$~is the curvature form on~$\cG/\mL_\Theta$.
\end{proposition}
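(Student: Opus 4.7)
The strategy is to lift the identity to $\cG$: since $\pi^\cL\colon \cG \to \cL_\Theta$ is a surjective submersion and $\pi^{\cL*}\Omega = \Omega^\cG$, proving $\Psi_\eta^*\omega=\dual{\Omega}{\eta}$ on $\cL_\Theta$ is equivalent to $(\Psi_\eta\circ\pi^\cL)^*\omega = \dual{\Omega^\cG}{\eta}$ on $\cG$. By symplectic reduction, $\omega$ is the descent of the restriction of $\omega^E = -d\beta^E$ from $E\times E^*$ to $\mu^{-1}(1)$. Hence for any local lift $\tilde F\colon \cG \to \mu^{-1}(1)\subset E\times E^*$ of $\Psi_\eta\circ\pi^\cL$ one has $\tilde F^*\omega^E = (\Psi_\eta\circ\pi^\cL)^*\omega$, so it suffices to establish the sharper $1$-form identity
\[ \tilde F^*\beta^E = \dual{\beta^\cG}{\eta} \]
on $\cG$; applying $-d$ to both sides then yields the desired conclusion.

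To construct the lift, near $\phi_0\in \cG$ I would pick $v_0\in \Xi_\eta(\pi^\cF(\phi_0))\subset E$ and $\psi_0\in \Xi^*_\eta(\pi^{\cF^{\mathrm{opp}}}(\phi_0))\subset E^*$ with $\dual{v_0}{\psi_0}=1$ and extend equivariantly via the left $\mG$-action, setting $\tilde F(g\cdot \phi_0)\defeq(\tau(g)v_0,\tau(g)^{-*}\psi_0)$. For $V\in \T_{\phi_0}\cG$ corresponding to $X\in \mk g_0$ under $\iota^\cG_{\phi_0}$, the associated left-action direction is $Z=(\phi_0)_*(X)\in \mk g$, and differentiating gives
\[ \tilde F^*\beta^E(V)=\dual{\tau_*(Z)v_0}{\psi_0}=\dual{\tau_*((\phi_0)_*(X))v_0}{\psi_0}. \]

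The substantive step, which is the main obstacle, is evaluating this pairing and matching it with $\dual{p(X)}{\eta}$, where $p\colon \mk g_0\to \mk b_\Theta$ is the orthogonal projection. Decomposing $X=X_-+X_\ell+X_+$ along $\mk g_0=\mk u_\Theta^{\mathrm{opp}}\oplus \mk l_\Theta\oplus \mk u_\Theta$: the $X_+$-term contributes zero since $v_0$ is annihilated by $(\phi_0)_*(\mk u_\Theta)$; the $X_-$-term contributes zero since $(\phi_0)_*(\mk u_\Theta^{\mathrm{opp}})v_0$ lies in weight spaces of $(\phi_0)_*(\mk a)$ strictly below $\eta$, while the weight analysis of $E^*$ combined with $\dual{v_0}{\psi_0}=1$ forces $\psi_0$ to pair non-trivially only with the $\eta$-weight line $\mathbb R v_0$; finally, $X_\ell$ acts on $\mathbb R v_0$ via a character of $\mk l_\Theta$, and since every such character vanishes on the semisimple factor $\mk s_\Theta=[\mk l_\Theta,\mk l_\Theta]$ and coincides with $\eta$ on $\mk a$, the $\Theta$-compatibility of $\eta$ (vanishing on $\mk a_\Theta$) pins it down to $X_\ell\mapsto \dual{p(X_\ell)}{\eta}$. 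As $\mk u_\Theta$, $\mk u_\Theta^{\mathrm{opp}}$ and $\mk s_\Theta$ are all Killing-orthogonal to $\mk b_\Theta$, one has $p(X)=p(X_\ell)$, so
\[ \tilde F^*\beta^E(V)=\dual{p(X)}{\eta}=\dual{\beta^\cG_{\phi_0}(V)}{\eta}, \]
as required. The remainder is formal: equivariance gives the $1$-form identity on the full domain of the local lift, and taking $-d$ before descending along $\pi^\cL$ finishes the proof. The real content of the argument is the character calculation on the highest-weight line, which is precisely what makes the central factor $\mk b_\Theta$ appearing in the definition of $\beta^\cG$ the right bridge between the symplectic form on $U$ and the curvature form on $\cL_\Theta$.
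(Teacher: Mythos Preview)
Your proof is correct and follows the same strategy as the paper: lift to~$\cG$, construct a (local) equivariant lift $\tilde F\colon \cG\to\mu^{-1}(1)$ of $\Psi_\eta\circ\pi^\cL$, and verify the $1$-form identity $\tilde F^*\beta^E=\dual{\beta^\cG}{\eta}$, from which the result follows by applying~$-d$. The only cosmetic difference is that the paper checks the identity using the root-space decomposition $\mk g_0=\mk a\oplus\mk z_{\mk k}(\mk a)\oplus\bigoplus_{\alpha}\mk g_\alpha$ rather than your Levi decomposition $\mk u_\Theta^{\mathrm{opp}}\oplus\mk l_\Theta\oplus\mk u_\Theta$; in your character argument on~$\mk l_\Theta$ you should also note that the possible compact central factor of~$\mk l_\Theta$ (the part of $\mk z_{\mk k}(\mk a)$ not lying in $\mk s_\Theta$) is not covered by ``$\mk s_\Theta$ plus $\mk a$'', but any real Lie-algebra character of a compact factor vanishes, so the conclusion stands.
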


\begin{proof}
	Assume for simplicity that $\tau\colon \mG
	\to \pgl(E)$ lifts to $\gl(E)$, the general case follows through covering theory, by observing that all our computations are local. 
	
	Let $\beta^E$ be the Liouville form on $E\times E^*$ and let~$\mu$ the
	moment map for the $\mathbb{R}$-action.
	
	Let us  fix an equivariant 
	map $\zeta\colon \cG \to E\times E^*$ lifting the
	map~$\Psi_\eta\circ \pi^\cL$ and with values in $\mu^{-1}(1)$, i.e.\ in the
	space of pairs $(v,\ell)$ such that $\dual{v}{\ell}=1$. It is then enough to check that $\zeta^* \beta^E =
	\dual{\beta^\cG}{\eta}$.
	
	Let $\phi$ be in~$\cG$. By construction $\zeta(\phi)=(v,\ell)$ where $v$~is an
	eigenvector with weight~$\eta$ for the action of the Cartan
	subspace~$\mathfrak{a}$ under $\tau_*\circ \phi_*$, and~$\ell$ in~$E^*$ is an
	eigenvector with weight~$\iota(\eta)$, in other words the kernel of~$\ell$
	is the sum of the weight spaces associated to the weights different from~$\eta$. This means that
	\begin{align*}
		\beta^{E}_{(v, \ell)}(\zeta_*\circ \phi_*(X) ) &= \dual{ \zeta_*\circ
			\phi_*(X)}{\ell}\\
		&= \dual{ \tau_*\circ
			\phi_*(X) ( v)}{\ell}\\
		&= \bigdual{ \dual{X}{\eta} v}{\ell}\\
		&= \dual{X}{\eta}
	\end{align*}
	for every~$X$
	in~$\mathfrak{a}$.
	
	One thus has $\bigl(\zeta^*  \beta^E\bigr)_\phi(\phi_*(X)) = \dual{X}{\eta}$ for
	every~$X$ in~$\mathfrak{a}$. Since $\eta$~is $\Theta$-compatible, we also
	get $\bigl(\zeta^*  \beta^E\bigr)_\phi(\phi_*(X)) = \dual{p(X)}{\eta}$  for
	every~$X$ in~$\mathfrak{a}$, indeed $X-p(X)$ belongs
	to~$\mathfrak{a}_\Theta$ and $\eta$~is zero in restriction
	to~$\mathfrak{a}_\Theta$. This last equality also holds
	\begin{itemize}
		\item for every~$X$ in~$\mathfrak{g}_\alpha$ since $\zeta_*\circ \phi_*(X)$ sends~$v$ to the
		kernel of~$\ell$ and $p(X)=0$;
		\item  for every~$X$ in~$\mathfrak{z}_\mathfrak{k}(\mathfrak{a})$ since
		$v$~is also an eigenvector for this compact Lie subalgebra and is thus
		cancelled by~$X$ and again $p(X)=0$;
		\item hence for every~$X$ in~$\mathfrak{g}_0$ since we have the
		decomposition $\mathfrak{g}_0 = \mathfrak{a} \oplus
		\mathfrak{z}_\mathfrak{k}(\mathfrak{a}) \oplus \bigoplus_{\alpha\in\Sigma}
		\mathfrak{g}_\alpha$.
	\end{itemize}
	Another way to formulate this equality is $\bigl(\zeta^*
	\beta^E\bigr)_\phi(v) = \dual{p( \iota_{\phi}^{\cG}(v))}{\eta}$. In view of
	the definition of~$\beta^\cG$ this proves the proposition.
\end{proof}

\begin{proof}[Proof of Proposition~\ref{pro:cr-om}]
	The result then follows from Proposition~\ref{prop:Omega-ind} and the projective
	case proven in \cite[Proposition 4.7]{Labourie:2005}, recalled here in Proposition~\ref{pro:proj-cr-om}.
\end{proof}

\section{Photons}\label{sec:photo-construct} 

In this section

we use the subgroup~$\mH_\theta$ of~$\mG_0$ which is locally isomorphic to $\mathsf{PSL}_2(\mathbb{R})$ to associate to each root~$\theta$ in~$\Theta$ a class of curves in~$\cF_{\Theta}$ that we call \emph{$\theta$-photons},
such that
\begin{enumerate}
	\item each $\theta$-photon $\Phi$ is an orbit for the action of a
	subgroup~$\mH_\Phi$ which is the image of~$\mH_\theta$ by an element
	$\psi\colon \mG_0 \to \mG$ of~$\cG$,
	\item Given any $\theta$-photon $\Phi$, there is a {\em photon
		projection}~$p_\Phi$ from an open set in $\cF_{\Theta}^{\mathrm{opp}}$ to $\Phi$, and $p_\Phi$~is
	equivariant with respect to the action of $\mH_\Phi$.
	\item This projection has some nice properties with respect to the
	cross-ratio~$\bb^\eta$ associated to a $\Theta$-compatible dominant
	form. In particular it satisfies that if $(x,y,u)$ is a triple of pairwise distinct points 
	in~$\Phi$ and~$z$ and~$w$ are such that $p_\Phi(z)=p_\Phi(w)=u$, then
	\[ \bb^\eta(x,y,z,w)=1\ .
	\]
\end{enumerate}
A key step in the proof of the Collar Lemma is then to relate the
cross-ratio~$\bb^\eta$ to the projective cross-ratio on the photon, this is
done in Proposition~\ref{pro:phot-cr}.

\begin{remark}\label{r.Einstein}
	To motivate the terminology \emph{photon}, recall that  the Einstein
	universe is a flag manifold for the group $\mathsf{SO}(2,n)$. It admits a conformal structure of signature $(1,n-1)$, for which
	lightlike geodesics called photons play an important role. In this case a $\theta$-photon is precisely a lightlike geodesic or a photon in the classical sense.\end{remark}

\subsection{Photon subgroups and photons}
\label{sec:phot-subgr-phot}

Let us consider, for~$\theta$ in~$\Theta$,  the connected subgroup
$\mH_\theta$ in~$\mG_0$ whose Lie algebra is
generated by the $\skd$-triple $(x_\theta,x_{-\theta},h_\theta)$ (cf.\ Section~\ref{sec:skd-triples}). Observe that $\dim(\mH_\theta\cap\mP_\Theta)=2$.

Given an element $\psi$ of $\cG$, we then consider the group
$\psi(\mH_\theta)$. Recall that~$\pi^{\cF}$ denotes the projection from~$\cG$
to~$\cF_\Theta$ (Section~\ref{sec:parab-subgr-flag}). We introduce the following definition.

\begin{definition}
	A \emph{$\theta$-photon through $x$} is a subset $\Phi=\psi(\mH_\theta)\cdot x$ of~$\cF_\Theta$, for some~$\psi$ such that $\pi^{\cF}(\psi)=x$.	
\end{definition}

Note that in the situation of the definition, $\mP_x \cap \psi( \mH_\theta) =
\psi( \mP_\Theta \cap \mH_\theta)$ is a parabolic subgroup of~$\psi(\mH_\theta)$.

\begin{remark}
	In the case of the Grassmaniann $\mathrm{Gr}_p( \mathbb{R}^n)$, photons
	appeared in the work of Van Limbeek and Zimmer \cite{LimbeekZimmer} under
	the name ``rank one lines''. Photons have also been introduced by Galiay
	\cite{Galiay} in the Shilov boundary of a tube type Hermitian Lie
	group.\end{remark}

The family of $\theta$-photons through~$x$ will be denoted
by~$\mathbf{\Phi}(x)$.
We have:
\begin{proposition}\label{prop:photons-through-x}
	Let~$x$ be a point in~$\cF_\Theta$. Then:
	\begin{enumerate}
		\item\label{item:0:prop:photons-through-x} Let $\Phi$ be a photon through~$x$.
		The subgroup $\psi(\mH_\theta)$ depends only on the photon~$\Phi$ and neither 
		on the choice of  $x$ in $\Phi$,  nor on the isomorphism~$\psi$ such that $\Phi=
		\psi(\mH_\theta)\cdot x$.
		\item\label{item:1:prop:photons-through-x} The unipotent radical $\mU_x$ of
		$\mP_x$ acts trivially on the family~$\mathbf{\Phi}(x)$.
		\item\label{item:2:prop:photons-through-x} The group $\mL_x= \mP_x/\mU_x$
		acts transitively on the family~$\mathbf{\Phi}(x)$ (equivalently $\mL_\theta$ acts transitively).
		\item\label{item:3:prop:photons-through-x} The center of $\mL_x$  acts
		trivially on the family~$\mathbf{\Phi}(x)$.
	\end{enumerate}
\end{proposition}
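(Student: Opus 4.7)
The plan is to treat each of the four items using the following common ground: any two isomorphisms $\psi,\psi'\in\cG$ with $\pi^\cF(\psi)=\pi^\cF(\psi')=x$ differ by a unique right translation $\psi'=\psi\cdot p$ with $p\in\mP_\Theta$, so $\psi'(\mH_\theta)=\psi(p\mH_\theta p^{-1})$ and $\psi'(\mH_\theta)\cdot x=\psi(p\mH_\theta p^{-1})\cdot x$. The key structural input throughout is Proposition~\ref{pro:vtheta}, which gives the semidirect decomposition $\mU_\Theta=\exp(\mk g_\theta)\ltimes\mV_\theta$ and asserts that the $\mH_\theta$-conjugates of~$\mV_\theta$ remain inside~$\mP_\Theta$.

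For item~(2), given $u\in\mU_x$, I set $\tilde u=\psi^{-1}(u)\in\mU_\Theta$; then, since $u$ fixes $x$, one has $u\cdot\Phi=u\psi(\mH_\theta)u^{-1}\cdot x=\psi(\tilde u\mH_\theta\tilde u^{-1})\cdot x$. Writing $\tilde u=\exp(sx_\theta)\cdot v$ with $v\in\mV_\theta$ and using Proposition~\ref{pro:vtheta}, I check that for every $h\in\mH_\theta$ the element $\tilde u h\tilde u^{-1}$ lies in $\mH_\theta\cdot\mP_\Theta$; passing to~$\cF_\Theta$ this yields $u\cdot\Phi\subseteq\Phi$, and symmetry with $u^{-1}$ gives equality.

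For item~(3), representing $\Phi_1,\Phi_2\in\mathbf{\Phi}(x)$ by $\psi_1$ and $\psi_2=\psi_1\cdot p$ with $p\in\mP_\Theta$, the element $g=\psi_1(p)\in\psi_1(\mP_\Theta)=\mP_x$ satisfies $g\cdot\Phi_1=g\psi_1(\mH_\theta)g^{-1}\cdot x=\psi_1(p\mH_\theta p^{-1})\cdot x=\psi_2(\mH_\theta)\cdot x=\Phi_2$, so $\mP_x$ acts transitively on $\mathbf{\Phi}(x)$, and combined with item~(2) this descends to a transitive action of $\mL_x=\mP_x/\mU_x$. For item~(4), under $\psi$ the center $Z(\mL_x)$ corresponds to $Z(\mL_\Theta)$, whose connected split factor is $\mB_\Theta=\exp(\mk b_\Theta)$: for $b\in\mB_\Theta$, $\operatorname{Ad}(b)$ scales $x_{\pm\theta}$ by reciprocal characters $e^{\pm\theta(\log b)}$ and fixes $h_\theta$, so $b\mH_\theta b^{-1}=\mH_\theta$ and hence $\psi(b)\cdot\Phi=\Phi$; the compact factor of~$Z(\mL_\Theta)$ is handled analogously, acting by commuting characters on the weight spaces of the $\mathfrak{sl}_2$-triple.

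Item~(1) is the main obstacle: it asks that when $\psi_2=\psi_1\cdot p$ with $p\in\mP_\Theta$ produces the \emph{same} orbit $\psi_1(\mH_\theta)\cdot x=\psi_2(\mH_\theta)\cdot x$, the subgroup $\psi_i(\mH_\theta)\subset\mG$ be independent of~$i$. My plan is to use items~(2)--(3) to reduce the ambiguity in $p$ modulo $\mU_\Theta$ and $\mB_\Theta$ to a representative lying in a subgroup tightly controlled by the normalizer of $\mH_\theta$, and then to verify, via a careful application of Proposition~\ref{pro:vtheta} to the semidirect decomposition $\mU_\Theta=\exp(\mk g_\theta)\ltimes\mV_\theta$, that such a $p$ must actually normalize $\mH_\theta$ in~$\mG_0$. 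Independence from $x\in\Phi$ then follows because if $x'=\psi(h)\cdot x\in\Phi$ with $h\in\mH_\theta$, then $\psi(h)\cdot\psi$ is a valid representative at~$x'$ with the same image of~$\mH_\theta$. I expect the delicate step to be the unipotent part: pinning down precisely which unipotent elements of $\mP_\Theta$ can preserve the orbit without changing the $\mathsf{SL}_2$-subgroup, where the fine structure of $\mV_\theta$ versus $\exp(\mk g_\theta)$ is essential.
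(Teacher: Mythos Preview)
Your handling of items~(2), (3), and~(4) is correct and matches the paper's approach: the paper proves~(2) by the same decomposition $\tilde u=sv$ with $s\in\exp(\mathbb{R}x_\theta)$, $v\in\mV_\theta$ together with Proposition~\ref{pro:vtheta}, proves~(3) exactly as you do, and dispatches~(1) and~(4) with the phrase ``similar considerations''.

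Your plan for item~(1), however, cannot succeed, because the assertion---that the subgroup $\psi(\mH_\theta)\subset\mG$ is determined by~$\Phi$ alone---is false as literally written. Take $\mG_0=\SL_3(\mathbb{R})$, $\Theta=\Delta$, $\theta=\alpha_1$, $x$ the standard flag, and $u=\exp(E_{23})\in\mV_\theta\subset\mU_\Theta$. Then $u$ fixes every flag $(\ell,\langle e_1,e_2\rangle)$ with $\ell\subset\langle e_1,e_2\rangle$, so $\psi=\mathrm{id}$ and $\psi'=\psi\cdot u$ yield the same photon~$\Phi$; yet $\operatorname{Ad}(u)x_\theta=\operatorname{Ad}(u)E_{12}=E_{12}-E_{13}$, so $\psi'(\mH_\theta)=u\mH_\theta u^{-1}\neq\mH_\theta=\psi(\mH_\theta)$. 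Thus the ``delicate unipotent step'' you anticipate---showing that $p$ must normalize~$\mH_\theta$---is actually impossible. The paper is equally imprecise here: its proof is the bare phrase ``similar considerations'', and the remark immediately following the proposition concedes that item~(1) ``will be made more precise in Proposition~\ref{pro:uniq-phot}''. What is genuinely well-defined, and all that the later arguments require, is the \emph{action} of $\psi(\mH_\theta)$ on~$\Phi$ (which factors through $\psld$ by Proposition~\ref{pro:PhoRP}); the symbol~$\mH_\Phi$ should be read as a choice, harmless because downstream uses either depend only on this action or are carried out for a specific~$\psi$ fixed in the proof.
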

\begin{remark}
	In view of point~(\ref{item:0:prop:photons-through-x})  we will  denote
	$\mH_\Phi\defeq\psi(\mH_\theta)$; this point will be made more precise in Proposition~\ref{pro:uniq-phot}.
\end{remark}
\begin{proof}
	We first prove (\ref{item:1:prop:photons-through-x}).
	Let~$u$ be in~$\mU_x$ and let~$\Phi$ be a photon through~$x$. Choose
	$\psi\colon \mG_0 \to \mG$ such that $\pi^{\cF}(\psi)=x$ and $\Phi=
	\psi(\mH_\theta)\cdot x$. In particular $\mU_x=\psi(\mU_\Theta)$. The element
	$\psi^{-1}(u)$ can be written as the product $sv$ with $s$ in $ \exp(\mathbb{R}
	x_\theta)$ and $v$ in $ {\ms V}_\theta$ (Proposition~\ref{pro:vtheta}). This implies that for every $y=\psi({s'})\cdot x$
	in~$\Phi$ (where $s'$ in $ \mH_\theta$), one has $$u\cdot y= \psi(svs')\cdot x=
	\psi(ss' s^{\prime -1}vs')\cdot x= \psi(ss')\cdot x,$$ since $s^{\prime
		-1}vs'$ belongs to~$\mP_\Theta$ (Proposition~\ref{pro:vtheta}). Hence $u\cdot \Phi=\Phi$ and the second
	item is proved.  Similar considerations show the first and the fourth items.

	Let~$\Phi_1$ and~$\Phi_2$ be photons through~$x$. Let $\psi_i$ ($i=1,2$)
	in~$\cG$ be such that $\pi^{\cF}(\psi_i)=x$ and $\Phi_i = \psi_i(\mH_\theta)\cdot
	x$. Since $\pi^{\cF}(\psi_2)=\pi^{\cF}(\psi_1)=x$, there is~$p$ in~$\mP_x$ such that
	$\psi_2 = \operatorname{int}_p\circ \psi_1$ (where
	$\operatorname{int}_p\colon \mG\to \mG \mid g\mapsto pgp^{-1}$). One then
	has $\Phi_2 = p\cdot \Phi_1$. This implies the transitivity in the third
	item.
\end{proof}

We have:
\begin{proposition}\label{pro:PhoRP}
	A $\theta$-photon is diffeomorphic to $\bP^1(\mathbb R)$.
	More precisely the action on~$\Phi$ of the (connected) group~$\mH_\Phi$
	factors through the adjoint group associated to~$\mH_\Phi$. This adjoint
	group is isomorphic to $\mathsf{PSL}_2(\mathbb{R})$, and $\Phi$ is
	equivariantly diffeomorphic to $\bP^1(\mathbb R)$.
\end{proposition}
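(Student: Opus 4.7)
The plan is to realize $\Phi$ as a homogeneous space $\mH_\Phi / \mathrm{Stab}_x$, identify the stabilizer with (something containing) a Borel subgroup of the $\mathfrak{sl}_2$-group $\mH_\Phi$, and then verify that the center of $\mH_\Phi$ acts trivially on $\Phi$, so that the action descends to $\mathsf{PSL}_2(\mathbb{R})/\bar B \cong \bP^1(\mathbb R)$.

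First, I would write $\Phi = \mH_\Phi\cdot x = \mH_\Phi/\mathrm{Stab}_x$ where $\mathrm{Stab}_x \defeq \mH_\Phi \cap \mP_x$, and compute the Lie algebra of the stabilizer. Using the $\mathfrak{sl}_2$-triple $(x_\theta,x_{-\theta},h_\theta)$ and the decomposition $\mathfrak{p}_\Theta=\mathfrak{l}_\Theta\oplus\mathfrak{u}_\Theta$, one has $x_\theta\in\mathfrak{g}_\theta\subset\mathfrak{u}_\Theta$ and $h_\theta\in\mathfrak{a}\subset\mathfrak{l}_\Theta$, while $x_{-\theta}\in\mathfrak{g}_{-\theta}$ is transverse to $\mathfrak{p}_\Theta$. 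Hence $\mathrm{Lie}(\mathrm{Stab}_x)=\psi_*(\mathbb R\, x_\theta\oplus\mathbb R\, h_\theta)$, the image of a Borel subalgebra of $\mathfrak{h}_\theta\cong\mathfrak{sl}_2(\mathbb R)$.

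Since $\mH_\Phi$ is a connected Lie group with Lie algebra $\mathfrak{sl}_2(\mathbb R)$, its adjoint group is intrinsically $\mathsf{PSL}_2(\mathbb R)$, and $\mH_\Phi$ is isomorphic to either $\mathsf{PSL}_2(\mathbb R)$ or $\mathsf{SL}_2(\mathbb R)$. The heart of the argument is then to show that the non-trivial central element $c$ of $\mathsf{SL}_2(\mathbb R)$ (if present in $\mH_\Phi$) lies in $\mathrm{Stab}_x$: it suffices to prove that under the map $\mathsf{SL}_2(\mathbb R)\to \mH_\theta\subset\mG_0$, the element $-I$ lands in $\mP_\Theta$, since conjugation by $\psi$ then places its image in $\mP_x$. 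The key observation is that, on any finite-dimensional representation of $\mathsf{SL}_2(\mathbb R)$, the element $-I$ acts as $(-1)^k$ on every $h$-weight-$k$ vector (irreducibles $V_n$ have weights of fixed parity $n$, and $-I$ acts on $V_n$ by $(-1)^n$). Applying this to the adjoint representation of the $\mathfrak{sl}_2$-triple on $\mathfrak g_0$, and using that each restricted root space $\mathfrak g_\beta$ lies in the $\beta(h_\theta)$-weight space (with $\beta(h_\theta)\in\mathbb Z$), we get that $\mathrm{Ad}(-I)$ acts as the scalar $(-1)^{\beta(h_\theta)}$ on $\mathfrak g_\beta$; in particular it preserves each $\mathfrak g_\beta$ and hence preserves $\mathfrak u_\Theta=\bigoplus_{\beta\in\Sigma^+_\Theta}\mathfrak g_\beta$. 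Therefore $-I\in \mathrm N_{\mG_0}(\mathfrak u_\Theta)=\mP_\Theta$, as required. This is the main obstacle: keeping the parity argument clean despite $\mathfrak g_\beta$ not being a single $\mathsf{SL}_2$-isotypic component.

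With the center in the stabilizer, the action of $\mH_\Phi$ on $\Phi$ factors through the adjoint quotient $\mathsf{PSL}_2(\mathbb R)$. Under this quotient the Borel subgroup $\bar B=B/\{\pm I\}$ is connected (the two components of the $\mathsf{SL}_2$-Borel merge, since $-I$ lies in $B$), and a closed subgroup of $\mathsf{PSL}_2(\mathbb R)$ with Lie algebra the Borel subalgebra must equal $\bar B$ itself. Thus $\Phi\cong\mathsf{PSL}_2(\mathbb R)/\bar B$, which is the standard $\mathsf{PSL}_2(\mathbb R)$-homogeneous model of $\bP^1(\mathbb R)$; this gives the desired equivariant diffeomorphism.
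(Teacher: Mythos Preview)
Your approach is essentially correct and genuinely different from the paper's, but there is one gap. You assert that $\mH_\Phi$ is isomorphic to either $\mathsf{PSL}_2(\mathbb{R})$ or $\mathsf{SL}_2(\mathbb{R})$; this is not justified and can actually fail, since the paper only assumes $\mG_0$ has finite center, not that it is linear (e.g.\ for $\mG_0=\mathrm{Mp}(2)$ the group $\mH_\theta$ is the $4$-fold cover of $\mathsf{PSL}_2(\mathbb{R})$). The fix is easy and in the spirit of your argument: the $\mG$-action on $\cF_\Theta$, realized as a space of nilpotent subalgebras of $\mathfrak g$, factors through $\mathrm{Ad}_\mG$, so the $\mH_\Phi$-action on $\Phi$ factors through the linear group $\mathrm{Ad}_\mG(\mH_\Phi)$, which \emph{is} $\mathsf{SL}_2(\mathbb{R})$ or $\mathsf{PSL}_2(\mathbb{R})$; your parity argument then applies verbatim to this image. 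Alternatively, one can bypass parity entirely: any $z\in Z(\mH_\theta)$ centralizes $h_\theta$ (since $z$ is central in $\mH_\theta$) and centralizes $\ker\theta\cap\mathfrak a$ (since this subspace centralizes $\mathfrak h_\theta$, hence $\mH_\theta$), so $\mathrm{Ad}(z)$ fixes $\mathfrak a$ pointwise, preserves each $\mathfrak g_\beta$, and thus normalizes $\mathfrak u_\Theta$, giving $z\in\mP_\Theta$ directly.

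By contrast, the paper takes a more abstract route: it realizes $\cF_\Theta$ inside the Grassmannian of subspaces of $\mathfrak g$ and proves a general lemma for an arbitrary connected $\mH$ with $\mathfrak h\simeq\mathfrak{sl}_2(\mathbb{R})$ acting linearly on a vector space~$V$, asserting that any subspace $W$ whose infinitesimal stabilizer is a Borel subalgebra has $\mH$-orbit equivariantly diffeomorphic to $\bP^1(\mathbb{R})$. The proof reduces first to $\mH=\mathsf{SL}_2(\mathbb{R})$ (via the observation that any finite-dimensional $\mathfrak{sl}_2$-representation integrates to $\mathsf{SL}_2(\mathbb{R})$, which is exactly the missing step in your argument), then to $\dim W=1$ via the Pl\"ucker embedding, then to irreducible $V$. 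Your root-space computation is more hands-on and tailored to the situation; the paper's lemma is stated separately as being of independent interest and applies to any $\mathfrak b$-stable subspace, not just $\mathfrak u_\Theta$.
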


\begin{proof}
	The flag variety~$\cF_\Theta$ can be identified with a $\mG$-orbit in the space of Lie subalgebras
	in~$\mk g$ isomorphic to~$\mathfrak{u}_\Theta$.   In view of the next lemma, which is of
	independent interest, applied to $\mH=\mH_{\theta}$, its action on
	$V=\mathfrak{g}_0$, and $W=\mathfrak{u}_\Theta$, it is thus enough to note
	that the stabilizer of~$\mathfrak{u}_\Theta$ in~$\mathfrak{h}_{\theta}$ is a
	Borel subalgebra.
\end{proof} 

\begin{lemma}
	Let $\mH$ be a connected Lie group such that $\mk{h}\simeq \skd(\mathbb{R})$ and $\mB$
	the subgroup of~$\mH$ which is the normalizer of the Borel
	subalgebra~$\mk b$ consisting of upper triangular matrices.
	
	Let $V$ be a finite dimensional real vector space, and
	$\tau\colon \mH \to \gl(V)$ be a continuous morphism with tangent Lie
	algebra morphism $\tau_*\colon \mk h\to \operatorname{End}(V)$.  Assume
	that $W$~is a linear subspace of~$V$ whose stabilizer in $\mk{h}$ ---via
	the morphism~$\tau_*$---
	is equal to~$\mk{b}$. Then
	the stabilizer of~$W$ in~$\mH$ is equal to~$\mB$. Therefore the action
	of~$\mH$
	on the orbit~$\Psi$ of~$W$ in the corresponding Grassmannian factors through
	the
	adjoint group, this adjoint group is isomorphic to $\mathsf{PSL}_2(\mathbb{R})$, and $\Psi$~is
	equivariantly diffeomorphic to $\bP^1(\mathbb R)$. 
\end{lemma}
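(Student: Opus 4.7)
The plan is to establish the lemma in two stages: first, show that $\mathrm{Stab}_\mH(W) = \mB$; second, identify the orbit $\Psi$ with $\bP^1(\mathbb R)$ equivariantly.

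For the first stage, I would begin by computing the Lie algebra of the closed subgroup $\mathrm{Stab}_\mH(W) \subseteq \mH$: it equals $\{X \in \mk h : \tau_*(X) W \subseteq W\}$, which is exactly $\mk b$ by hypothesis. Hence the identity component $\mathrm{Stab}_\mH(W)^{\circ}$ coincides with $\mB^{\circ}$, the connected Lie subgroup of $\mH$ with Lie algebra $\mk b$. Any $g$ in $\mathrm{Stab}_\mH(W)$ normalizes its identity component $\mB^{\circ}$ and hence normalizes the Lie algebra $\mk b$, so $g$ lies in $N_\mH(\mk b) = \mB$; this gives $\mathrm{Stab}_\mH(W) \subseteq \mB$. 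For the reverse inclusion, $\mB^{\circ}$ evidently stabilizes $W$ because $\tau(\exp X) = \exp(\tau_*(X))$ preserves $W$ whenever $\tau_*(X) W \subseteq W$, and the non-identity components of $\mB$ can be handled using the decomposition $\mB = Z(\mH)\, \mB^{\circ}$ (valid for $\sld$ and $\psld$) together with the observation that $Z(\mH)$ acts through $\tau$ by scalars on each $\mH$-irreducible summand of $V$ (Weyl's complete reducibility plus Schur's lemma), so it preserves the subspace $W$.

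Once $\mathrm{Stab}_\mH(W) = \mB$ is established, the orbit $\Psi$ identifies canonically with the one-dimensional homogeneous space $\mH/\mB$. The kernel of the $\mH$-action on $\Psi$ is the largest normal subgroup of $\mH$ contained in $\mB$; since $\mk h$ is simple and $\mB$ is a proper subgroup, this normal subgroup lies in the discrete center $Z(\mH)$, and conversely $Z(\mH) \subseteq \mB$ acts trivially. Hence the action factors through the adjoint group $\mH/Z(\mH)$, which is connected with Lie algebra $\skd(\mathbb R)$ and sits as an open subgroup of $\mathrm{Aut}(\skd(\mathbb R))^{\circ} = \psld$; dimension and connectedness then force $\mH/Z(\mH) \simeq \psld$. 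Under this isomorphism the image of $\mB$ is a Borel subgroup $\overline{\mB}$ of $\psld$, and the classical identification $\psld / \overline{\mB} \simeq \bP^1(\mathbb R)$ yields the desired equivariant diffeomorphism $\Psi \simeq \bP^1(\mathbb R)$.

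The main obstacle is the reverse inclusion $\mB \subseteq \mathrm{Stab}_\mH(W)$ in the first stage: the infinitesimal data only determines the identity component of the stabilizer, so upgrading to the full group equality requires controlling the finite quotient $\mB/\mB^{\circ}$. The rank-one hypothesis on $\mk h$ is used crucially here, since for the connected Lie groups with Lie algebra $\skd(\mathbb R)$ that arise in the application one can verify that the non-identity components of $\mB$ are represented by central elements of $\mH$, and these act by scalars on every irreducible $\mH$-submodule of $V$, whence they preserve~$W$.
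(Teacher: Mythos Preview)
Your overall strategy is sound and more direct than the paper's (which reduces first to $\mH=\sld$ by integrating the $\skd$-module to $\sld$, then to $\dim W=1$ via the Pl\"ucker embedding into $\bigwedge^{\dim W}V$, and finally to $V$ irreducible), but the justification of the reverse inclusion $\mB\subseteq\mathrm{Stab}_\mH(W)$ has a gap.

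You argue that $Z(\mH)$ acts by a scalar on each irreducible summand of~$V$ and therefore preserves~$W$. This implication is false in general: if $z\in Z(\mH)$ acts by \emph{different} scalars~$c_i$ on the summands~$V_i$ (for instance $-I\in\sld$ acts by $(-1)^n$ on the $(n{+}1)$-dimensional irreducible), then for $w=\sum_i w_i\in W$ with $w_i\in V_i$ one gets $z\cdot w=\sum_i c_i w_i$, which lies in~$W$ only when $W$ splits along this decomposition, and that is not part of the hypotheses. The repair uses the hypothesis you already have: since the semisimple element $h\in\mk b$ stabilizes~$W$ and $\tau_*(h)$ is diagonalizable over~$\mathbb R$ with integer eigenvalues, one has $W=\bigoplus_k(W\cap V^{(k)})$ where $V^{(k)}$ is the weight-$k$ space; now $\tau(-I)$ acts on~$V^{(k)}$ by~$(-1)^k$, hence preserves each summand and thus~$W$. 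A smaller point: you justify $\mB=Z(\mH)\,\mB^\circ$ only for $\sld$ and $\psld$, but the lemma allows arbitrary connected~$\mH$ with $\mk h\simeq\skd$; the identity holds in general because the Borel of $\psld$ is contractible, so its preimage under the covering $\mH\to\psld$ is a disjoint union of copies indexed by~$Z(\mH)$.
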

\begin{proof} 
	Recall that for every positive integer~$n$ there is a unique (up to
	isomorphism)  irreducible
	$\mathfrak{sl}_2(\mathbb{R})$-module of
	dimension~$n$, that 
	this module
	integrates into a continuous
	homomorphism defined on~$\sld$, and that every
	$\mathfrak{sl}_2(\mathbb{R})$-module decomposes as a sum of irreducible
	modules. 
	
	This implies that the representation $\tau_*$ integrates into a continuous
	homomorphism $\hat{\tau}\colon \sld\to \gl(V)$. The images in~$\gl(V)$ of the
	homomorphisms~$\tau$ and~$\hat{\tau}$ then coincide. Thus it is enough to prove
	the conclusion for $\mH=\sld$.

	Let $d$ be the dimension of $W$. Let 
	$$
	E={\textstyle\bigwedge^d} V\ ,
	$$ $\rho$ the associated representation of $\gl(V)$ on $E$, and $q$~the
	$\gl(V)$-equivariant (injective) map from the Grassmannian of $d$-planes in $V$ to $\mathbf P(E)$. Then an element~$g$ of $\gl(V)$ stabilizes the subspace~$W$ if and only if of $\rho(g)$
	stabilizes~$q(W)$.
	
	Hence we can assume that $d=1$. In
	this case, we know from the representation theory of $\sld$, that the $\mathfrak{sl}_2(\mathbb{R})$-module generated by
	the $\mathfrak{b}$-invariant line~$W$ is irreducible; this means that we can
	assume that~$V$ is irreducible. However for irreducible modules 
	the conclusion is well known (cf.\ \cite[Section 11.1]{Fulton-Harris}). 
\end{proof}

As a direct corollary of Proposition~\ref{pro:PhoRP} we have:
\begin{corollary}
	\label{coro:unipotent-orbit-dense-in-photon}
	Let~$\Phi$ be a photon and let~$x$ be in~$\Phi$. For any~$\psi$ in~$\cG$
	such that $\pi^\cF(\psi)=x$ and $\Phi=\psi(\mH_\theta)\cdot x$, one has
	\[ \Phi = \overline{ \psi( \exp( \braket{ x_{-\theta}}))\cdot x}\ .\]
\end{corollary}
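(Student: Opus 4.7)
The plan is to reduce everything to a concrete computation on $\bP^1(\mathbb{R})$ via Proposition~\ref{pro:PhoRP}. That proposition already provides an $\mH_\Phi$-equivariant diffeomorphism $\Phi\simeq \bP^1(\mathbb{R})$, where the action of $\mH_\Phi = \psi(\mH_\theta)$ factors through its adjoint group, which is isomorphic to $\mathsf{PSL}_2(\mathbb{R})$ acting on $\bP^1(\mathbb{R})$ in the standard way (by Möbius transformations). The whole question then becomes one about this $\mathsf{PSL}_2(\mathbb{R})$-action.

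The next step is to locate $x$ in this picture. The stabilizer of $x$ in $\mH_\Phi$ equals $\psi(\mH_\theta\cap \mP_\Theta)$, whose Lie algebra is $\mathbb{R} h_\theta\oplus\mathbb{R} x_\theta$: indeed $h_\theta\in\mathfrak{a}\subset\mathfrak{p}_\Theta$ and $x_\theta\in\mathfrak{g}_\theta\subset\mathfrak{u}_\Theta$, whereas $x_{-\theta}$ lies in $\mathfrak{u}_\Theta^{\mathrm{opp}}$ and so is not in $\mathfrak{p}_\Theta$. Thus $\psi(\mH_\theta\cap \mP_\Theta)$ projects to a Borel subgroup in $\mathsf{PSL}_2(\mathbb{R})$, and under the identification $\Phi\simeq \bP^1(\mathbb{R})$ we may assume that this Borel fixes the point $\infty$, so $x$ corresponds to $\infty$.

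It remains to compute the orbit of $\infty$ under the one-parameter subgroup $\psi(\exp(\mathbb{R} x_{-\theta}))$. Since this subgroup projects onto the opposite unipotent of $\mathsf{PSL}_2(\mathbb{R})$, it acts as $z\mapsto z/(tz+1)$, fixing $0$ rather than $\infty$. The orbit of $\infty$ is therefore $\{\infty\}\cup\{1/t\mid t\in\mathbb{R}^*\}=\bP^1(\mathbb{R})\smallsetminus\{0\}$, which is dense in $\bP^1(\mathbb{R})$. Transporting this back through the diffeomorphism yields $\overline{\psi(\exp(\braket{x_{-\theta}}))\cdot x}=\Phi$.

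There is no significant obstacle; the only point requiring a little care is verifying that $\psi(\mH_\theta\cap \mP_\Theta)$ really is a Borel subgroup of $\mH_\theta$ (as opposed to all of $\mH_\theta$ or a smaller subgroup), but this is immediate from the root-space decomposition together with $\dim(\mH_\theta\cap\mP_\Theta)=2$ already recorded in Section~\ref{sec:phot-subgr-phot}.
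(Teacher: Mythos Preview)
Your proof is correct and is exactly the argument the paper has in mind: the paper simply states the result ``as a direct corollary of Proposition~\ref{pro:PhoRP}'' without spelling out any details, and you have correctly filled in the standard $\mathsf{PSL}_2(\mathbb{R})$-on-$\bP^1(\mathbb{R})$ computation that makes this immediate.
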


\begin{proposition}\label{pro:uniq-phot}
	Assume that two photons are tangent at a point $x$. Then they coincide.
\end{proposition}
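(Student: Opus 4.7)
The plan is to show that the tangent line to a photon at $x$ determines the one-parameter unipotent subgroup whose orbit closure is the photon, so that tangency at $x$ forces the photons to coincide.

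First I would bring the two photons into a comparable parametrization. Pick $\psi_1 \in \cG$ with $\pi^\cF(\psi_1) = x$ and $\Phi_1 = \psi_1(\mH_\theta)\cdot x$. By Proposition~\ref{prop:photons-through-x} items (2) and (3), $\mP_x$ acts transitively on $\mathbf{\Phi}(x)$ while $\mU_x$ acts trivially, so I can write $\Phi_2 = \ell \cdot \Phi_1$ for some $\ell \in \mL_x$ and set $\psi_2 \defeq \operatorname{int}_\ell \circ \psi_1$; this $\psi_2$ then satisfies $\pi^\cF(\psi_2) = x$ and $\Phi_2 = \psi_2(\mH_\theta)\cdot x$.

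Setting $v_i \defeq \psi_{i*}(x_{-\theta})$ and noting that $x_\theta$ and $h_\theta$ lie in $\mk p_\Theta$, the tangent line to $\Phi_i$ at $x$ is the image of $\mathbb{R} v_i$ in $\T_x \cF_\Theta \simeq \mk g/\mk p_x$. Tangency thus gives $v_2 \equiv c\, v_1 \pmod{\mk p_x}$ for some $c \in \mathbb{R}^*$. The crux is to promote this congruence to the actual identity $v_2 = c\, v_1$ in $\mk g$. The choice $\psi_2 = \operatorname{int}_\ell \circ \psi_1$ with $\ell \in \mL_x$ yields $v_2 = \Ad(\ell) v_1$, and since the Levi $\mL_\Theta$ preserves $\mk u^{\mathrm{opp}}_\Theta$ under the adjoint action, both $v_1$ and $v_2$ lie in the subspace $\psi_{1*}(\mk u^{\mathrm{opp}}_\Theta)$, which is a vector space complement to $\mk p_x$ in $\mk g$. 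Two elements of such a complement that are proportional modulo $\mk p_x$ must be proportional in $\mk g$ with the same scalar, whence $v_2 = c\, v_1$.

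The conclusion then follows quickly: $\mathbb{R} v_1 = \mathbb{R} v_2$ as lines in $\mk g$, so
\[
\psi_1\bigl(\exp(\mathbb{R} x_{-\theta})\bigr) = \exp(\mathbb{R} v_1) = \exp(\mathbb{R} v_2) = \psi_2\bigl(\exp(\mathbb{R} x_{-\theta})\bigr)
\]
as one-parameter subgroups of $\mG$, so their orbits through~$x$ coincide, and Corollary~\ref{coro:unipotent-orbit-dense-in-photon} identifies each $\Phi_i$ with the closure of this common orbit. I expect the principal obstacle to be the middle step: upgrading the tangency condition, which is a priori only a relation modulo $\mk p_x$, to an equality in $\mk g$. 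This hinges precisely on arranging $\psi_2$ so that $v_1$ and $v_2$ lie in a common $\Ad(\mL_x)$-stable complement of $\mk p_x$, which is the role of the Levi factor $\ell$ extracted in the first step.
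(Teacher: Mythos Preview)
Your proof is correct and follows essentially the same approach as the paper's: reduce to a Levi element relating the two photons, use that the Levi preserves $\mk u^{\mathrm{opp}}_{\Theta}$ to upgrade tangency (a congruence modulo $\mk p_x$) to an actual equality of tangent directions in a complement, and conclude via Corollary~\ref{coro:unipotent-orbit-dense-in-photon}. The paper carries this out in $\mk g_0$ using the projections $\pi^{\cF}_{\phi}$, while you work in $\mk g$ via $\psi_{i*}$, but this is only a cosmetic difference. One small notational point: in the paper $\mL_x = \mP_x/\mU_x$ is a quotient, so your $\ell \in \mL_x$ should really be taken in the Levi subgroup $\psi_1(\mL_\Theta) \subset \mP_x$ (which is what you implicitly use when invoking that $\Ad(\ell)$ preserves $\psi_{1*}(\mk u^{\mathrm{opp}}_{\Theta})$).
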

\begin{proof} Given any $\psi$ such that $\pi^{\cF}(\psi)=x$, we have a projection 
	$$\pi^{\cF}_{\psi}\colon \mk g_0\to\T_x\cF_\Theta\ .$$
	Observe that the restriction of~$\pi^{\cF}_{\psi}$ to any vector subspace intersecting $\mk p_\Theta$ trivially is injective.
	
	Let $\psi$ and $\phi$ be in ${(\pi^{\cF})^{-1}}(x)$ such that the photons
	$\Phi_\psi = \psi(\mH_\theta)\cdot
	x$ and $\Phi_\phi = \phi(\mH_\theta)\cdot x$ are tangent at~$x$.
	
	By item~(\ref{item:2:prop:photons-through-x}) of
	Proposition~\ref{prop:photons-through-x}, we can assume that there is~$g$ in $\mL_\Theta$ such that
	$\psi=\phi\circ \operatorname{int}_g$.
	By
	Corollary~\ref{coro:unipotent-orbit-dense-in-photon}
	\begin{equation}
		\label{eq:photon-closure-orbit}
		\Phi_\psi=\overline{\psi\left(\exp\braket{x_{-\theta}}\right)\cdot x}\ ,
		\text{ and }
		\Phi_\phi=\overline{\phi\left(\exp\braket{x_{-\theta}}\right)\cdot x}\ .
	\end{equation}
	\[
	\]
	Since $\Phi_\psi $ is tangent at~$x$ to $\Phi_\phi$, by construction we have
	\[
	\pi^{\cF}_{\phi}(x_{-\theta})=\pi^{\cF}_{\psi}(x_{-\theta})\ .
	\]
	It follows that 
	\[
	\pi^{\cF}_{\phi}(x_{-\theta})=\pi^{\cF}_{\phi}(\Ad(g)x_{-\theta})\ .
	\]
	However, both $x_{-\theta}$ and $\Ad(g)x_{-\theta}$ lie in the $\mL_\Theta$-invariant subspace
	$\mathfrak{u}^{\mathrm{opp}}_{\Theta}$, which intersects $\mk p_\Theta$
	trivially and thus $\pi^{\cF}_{\psi}$ is injective
	in restriction to~$\mathfrak{u}_{\Theta}^{\mathrm{opp}}$. It follows that 
	\[
	\Ad(g)x_{-\theta}=x_{-\theta}\ .
	\]
	Hence
	\[
	\psi\left(\exp\braket{x_{-\theta}}\right)
	=
	\phi\circ \operatorname{int}_g \left(\exp\braket{x_{-\theta}}\right)
	=
	\phi\left(\exp\left(\braket{\Ad(g) x_{-\theta}}\right)\right)
	=
	\phi\left(\exp\braket{x_{-\theta}}\right)
	\ .
	\]
	Thus by Equation~\eqref{eq:photon-closure-orbit}, $\Phi_\psi=\Phi_\phi$ 
	and the two photons coincide.
\end{proof}
\begin{examples}
	\label{exam:photons}
	Let us illustrate what the photons are for some of the groups admitting a
	positive structure (cf.\ Section~\ref{sec:def-pos}).
	\begin{enumerate}[leftmargin=*]
		\item
		For \noindent{\bf the symplectic group $\mathrm{Sp}(2n,\mathbb{R})$}, the
		generalized flag variety is  $\cF_\Theta = \mathrm{Lag}(\mathbb{R}^{2n})$  the space
		of Lagrangians. This is also the Shilov boundary of this Hermitian group. In this case $\Theta$ consists of a single element $\theta$.
		
		Let~$x$ be in~$\cF_\Theta$ and
		fix a symplectic basis $\{e_1,\dots, e_n, f_1,\dots,f_n\} $ such that  $x$~is the Lagrangian  $\braket{f_1,\dots, f_n}$. 
		Then $\T_{x} \cF_\Theta$ and $\mathfrak{u}_{\Theta}$ both identify with the space of
		symmetric $n\times n$ matrices, and, under this identification, $\mk g_\theta$~corresponds to the matrices
		whose only non-zero entry is in position $(1,1)$. An example of a photon~$\Phi$ through~$x$ 
		consists of the set of all Lagrangians~$L$ that 		contains 
		the subspace
		$V\defeq\braket{f_2,\dots f_n}$ and are contained in $W\defeq\braket{e_1, f_1, \dots, f_n}$. 
		
		The isomorphism between the photon $\Phi$ and~the projective line $\bP(W/V)$ is given by $L\mapsto L/V$.
		\item\noindent For {\bf the orthogonal group $\mathrm{SO}(p+1,p+k)$},  with $p\geq 1, k\geq 2$, the flag variety $\cF_\Theta=
		\cF_{1,\dots,p}$ is the space of partial isotropic flags consisting of~$p$
		nested isotropic subspaces of dimension~$1$ up to~$p$. 
		The set  $\Theta $ is $ \{\alpha_1, \dots ,\alpha_p\}$, with the standard
		numbering of the simple roots,
		in the Dynkin diagram 
		$\alpha_i$~is connected to~$\alpha_{i+1}$ for $i<p$. We pick a basis $\{e_1, \dots
		,e_{2p+k+1}\}$, such that the orthogonal form is given by
		\[
		\sum_{i=1}^{p+1} x_i x_{2p+k+2-i} - \sum_{i=1}^{k-1} x^{2}_{p+1+i} 
		\ ,
		\]
		and choose $x$ to be the flag whose $j$-th subspace is given by $x^{(j)} =
		\braket{ e_1, \dots,e_{j}}$. To ease further notation we also set $x^{(p+1)} \defeq
		\braket{ e_1, \dots,e_{p+1}}$ and $x^{(0)}\defeq\{0\}$.
		Then a photon associated to the root $\alpha_i$ for $i\leq p-1$ is the set 
		\[
		\Phi_i = \{F \in \cF_\Theta\,|\, F^{j} = x^{(j)} \text{ for all } j=1, \dots, p, \ j\neq i\}. 
		\]
		A photon associated to the root $\alpha_p$ is the set 
		\[
		\Phi_p = \{F \in \cF_\Theta\,|\, F^{j} = x^{(j)} \text{ for all } j\neq p \text{
			and } {F^{(p)}} \subset x^{(p+1)}\}. 
		\]
		In all cases, the isomorphism with $\bP^1(\mathbb{R}) = \bP( x^{(i+1)}/x^{(i-1)})$ is now
		given by $F\mapsto F^{(i)}/x^{(i-1)}$. 	\end{enumerate}
\end{examples} 

\subsection{The set of $\theta$-light-like vectors}

Let $Z_\theta$ 
be  the $L_\Theta$-orbit
of~$x_{-\theta}$ in $\mathfrak{u}_{-\theta}$.

\begin{lemma}
	\label{lemma:theta-light-closed}
	The projectivization $\bP( Z_\theta)$ is a closed subset in $\bP( \mathfrak{u}_{-\theta})$.
\end{lemma}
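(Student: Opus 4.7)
The plan is to exhibit $\bP(Z_\theta) = \mL_\Theta\cdot[x_{-\theta}]$ as the continuous image of a compact space, which will immediately imply it is closed in $\bP(\mk u_{-\theta})$. The key structural observation motivating the approach is that $x_{-\theta}$ behaves as a highest-weight vector for the action of the semisimple part $\mS_\Theta$ on the irreducible module $\mk u_{-\theta}$, so its projective stabilizer ought to be a parabolic-type subgroup and the orbit a compact homogeneous space.

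Concretely, I will use the Iwasawa decomposition of the reductive group $\mL_\Theta$, in the form $\mL_\Theta = K_\mL\cdot A\cdot N$, where $K_\mL\defeq K\cap\mL_\Theta$ is maximal compact in $\mL_\Theta$, $A=\exp(\mk a)$, and
\[
N=\exp\Bigl(\bigoplus_{\alpha\in\Sigma^+\cap\operatorname{Span}(\Delta\smallsetminus\Theta)}\mk g_\alpha\Bigr).
\]
I then need to verify that $A$ and $N$ both preserve the line $\mathbb R\,x_{-\theta}$. For $A$ this is immediate: $[h,x_{-\theta}]=-\theta(h)\,x_{-\theta}$ for every $h$ in $\mk a$, so $A$ acts on $x_{-\theta}$ by scaling. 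For $N$, the crucial observation is root-theoretic: whenever $\alpha$ lies in $\Sigma^+\cap\operatorname{Span}(\Delta\smallsetminus\Theta)$, the simple-root expansion of $\alpha-\theta$ has coefficient $-1$ on $\theta$ and non-negative coefficients elsewhere, so $\alpha-\theta$ cannot be a root. Hence $[\mk g_\alpha,x_{-\theta}]\subset\mk g_{\alpha-\theta}=\{0\}$, and every element of $N$ fixes $x_{-\theta}$ pointwise.

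Combining these two facts, $\mL_\Theta\cdot[x_{-\theta}]=K_\mL\cdot[x_{-\theta}]$, which is the continuous image of a compact group, hence compact, hence closed in $\bP(\mk u_{-\theta})$. I expect the root calculation itself to be routine; the main point requiring care is that $\mL_\Theta$ may well be disconnected, in which case either one invokes the version of the Iwasawa decomposition valid for reductive groups whose maximal compact meets every connected component, or one runs the argument first for the identity component $\mL_\Theta^\circ$ and uses finiteness of the component group together with the fact that a finite union of compact sets is compact.
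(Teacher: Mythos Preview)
Your proof is correct. Both your argument and the paper's rest on the same underlying fact---that $x_{-\theta}$ is a highest-weight vector for the $\mL_\Theta$-module $\mk u_{-\theta}$---but they package it differently. The paper decomposes $\mL_\Theta$ as an almost product of the central torus $\exp(\mk b_\Theta)$, a compact factor $\mM_\Theta$, and the semisimple part $\mS_\Theta$; it then observes that $\exp(\mk b_\Theta)$ acts by homotheties (Schur), that $\mM_\Theta$ is compact, and that the $\mS_\Theta$-stabilizer of $[x_{-\theta}]$ is parabolic, so the $\mS_\Theta$-orbit is a flag variety and hence compact. You instead use the Iwasawa decomposition $\mL_\Theta = K_\mL A N$ and verify directly, via the root calculation, that $AN$ fixes $[x_{-\theta}]$, reducing the orbit to a $K_\mL$-orbit. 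Your route is slightly more elementary in that it avoids invoking the parabolic-stabilizer/flag-variety fact and replaces it with a one-line root-sign argument; the paper's route makes the conceptual reason (highest weight line) more explicit and ties in with the irreducibility statement from Theorem~\ref{theo:irred-fact-Theta}. Your handling of the disconnectedness of $\mL_\Theta$ is also fine.
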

\begin{proof}
	The group~$\mL_\Theta$ is the almost product of~$\mS_\Theta$,
	$\exp(\mathfrak{b}_\Theta)$ and a compact factor~$\mM_\Theta$.
	Since the action of~$\mL_\Theta$ on~$\mathfrak{u}_{-\theta}$ is irreducible,
	the subgroup $\exp(\mathfrak{b}_\Theta)$ acts by homotheties
	on~$\mathfrak{u}_{-\theta}$.  This
	implies that $\bP( Z_\theta)$ is the $\mM_\Theta \times \mS_\Theta$-orbit of the element
	$t_{-\theta}$ in $ \bP( \mathfrak{u}_{-\theta})$ represented by~$x_{-\theta}$. Since
	$\mathfrak{g}_{-\theta}$ is the highest weight space, the stabilizer of~$t_{-\theta}$
	in~$\mS_\Theta$ is a parabolic subgroup of~$\mS_\Theta$ and hence the
	$\mS_\Theta$-orbit of $t_{-\theta}$ is compact. Since $\mM_\Theta$~is
	compact, the result follows.
\end{proof}
We first prove:
\begin{proposition}
	\label{prop:theta-light-well-def}
	If $\pi^{\cF}(\phi)=\pi^{\cF}(\psi)=x$ then $\pi^{\cF}_{\phi}(Z_\theta)=\pi^{\cF}_{\psi}(Z_\theta)$.
\end{proposition}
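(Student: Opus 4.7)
The plan is to rewrite both $\pi^{\cF}_{\phi}(Z_\theta)$ and $\pi^{\cF}_{\psi}(Z_\theta)$ inside the tangent space $\T_x \cF_\Theta$, turn the assertion into an $\mP_\Theta$-invariance statement modulo the kernel $\mk p_\Theta$ of $\pi^{\cF}_{\phi}$, and then verify this invariance by a Levi decomposition together with a short root calculation.

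First, since $\pi^{\cF}(\phi) = \pi^{\cF}(\psi) = x$ and the right $\mG_0$-action on $\cG$ is free, there is a unique $p$ in $\mP_\Theta$ with $\psi = \phi\cdot p$. Differentiating $\pi^{\cF}\circ r_p = \pi^{\cF}$ at $\phi$ and using the identity $p\exp(tY)p^{-1} = \exp(t\Ad(p)Y)$ to relate the orbit maps defining $\iota_{\phi}^{\cG}$ and $\iota_{\psi}^{\cG}$, I would derive the intertwining relation
\[
\pi^{\cF}_{\psi} \;=\; \pi^{\cF}_{\phi} \circ \Ad(p^{\pm 1})\ ,
\]
the exponent depending solely on sign conventions for pre-conjugation. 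Since the argument below shows that for every $q$ in $\mP_\Theta$ one has $\Ad(q)(Z_\theta) \subset Z_\theta + \mk p_\Theta$, applying this twice (once to $q=p$ and once to $q=p^{-1}$) and using $\ker \pi^{\cF}_{\phi} = \mk p_\Theta$ yields the desired equality $\pi^{\cF}_{\phi}(Z_\theta) = \pi^{\cF}_{\psi}(Z_\theta)$.

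Write the Levi decomposition $q=lu$ with $l$ in $\mL_\Theta$ and $u$ in $\mU_\Theta$. Since $Z_\theta$ is by definition the $\mL_\Theta$-orbit of $x_{-\theta}$, we have $\Ad(l)(Z_\theta) = Z_\theta$; and since $\mL_\Theta\subset\mP_\Theta$, the subspace $\mk p_\Theta$ is $\Ad(l)$-invariant. Hence the task reduces to showing that for every $u=\exp X$ with $X$ in $\mk u_\Theta$ and every $Y$ in $\mk u_{-\theta}$ one has $\Ad(u)(Y) \equiv Y \pmod{\mk p_\Theta}$. Expanding
\[
\Ad(\exp X)(Y) \;=\; Y + [X,Y] + \tfrac12 [X,[X,Y]] + \cdots\ ,
\]
and noting that $X$ already lies in $\mk p_\Theta$ and that $\mk p_\Theta$ is a Lie subalgebra, the congruence follows from the single inclusion $[\mk u_\Theta, \mk u_{-\theta}] \subset \mk p_\Theta$.

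This last inclusion is the main technical point and the only place where the specific shape of $\Sigma_\theta$ enters. For $\alpha$ in $\Sigma_{\Theta}^{+}$ and $\beta$ in $\Sigma_\theta$, the bracket $[\mk g_\alpha, \mk g_{-\beta}]$ lies in $\mk g_{\alpha-\beta}$ when $\alpha\neq\beta$, and in the centralizer of $\mk a$ (hence in $\mk l_\Theta$) when $\alpha=\beta$. The only way for $\mk g_{\alpha-\beta}$ to escape $\mk p_\Theta = \mk l_\Theta\oplus \mk u_\Theta$ is for $\alpha-\beta$ to lie in $-\Sigma_\Theta^{+}$. But $\beta$ has $\Theta$-component equal to $\theta$ with coefficient one and zero elsewhere in $\Theta$, while $\alpha$ has non-negative $\Theta$-coefficients summing to at least one. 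A direct sign check on the $\Delta$-coefficients of $\alpha-\beta$ shows that $\alpha-\beta\in-\Sigma_\Theta^{+}$ forces $\alpha$ itself to belong to $\Sigma_\theta$, but then $\alpha-\beta$ lies in $\mathrm{span}(\Delta\smallsetminus\Theta)$ and is therefore not in $-\Sigma_\Theta^{+}$. This contradiction shows $\mk g_{\alpha-\beta}\subset\mk u_\Theta\oplus\mk l_\Theta = \mk p_\Theta$, closing the argument.
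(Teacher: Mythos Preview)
Your proof is correct and follows essentially the same route as the paper: relate $\pi^{\cF}_{\phi}$ and $\pi^{\cF}_{\psi}$ via $\Ad$ of an element of $\mP_\Theta$, then use the Levi decomposition to reduce to the fact that $\mU_\Theta$ acts trivially on $(\mk u_{-\theta}\oplus \mk p_\Theta)/\mk p_\Theta$ while $\mL_\Theta$ preserves $Z_\theta$ by definition. The paper simply asserts the triviality of the $\mU_\Theta$-action on this quotient, whereas you spell out the underlying root computation $[\mk u_\Theta,\mk u_{-\theta}]\subset \mk p_\Theta$; this extra detail is fine and correct.
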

\begin{proof}
	Consider the isomorphisms $\bar{\pi}^{\cF}_{\phi}$,
	$\bar{\pi}^{\cF}_{\psi}$ from 	$\mathfrak{g}_0/ \mathfrak{p}_\Theta$ to $\T_x \cF_\Theta$ obtained by modding out
	the common kernel of $\pi^{\cF}_{\phi}$, $\pi^{\cF}_{\psi}$. It is enough to prove that
	$\bar{\pi}^{\cF}_{\phi}({Z}_{\theta}^{\prime})=\bar{\pi}^{\cF}_{\psi}({Z}_{\theta}^{\prime})$ where
	${Z}_{\theta}^{\prime}$ denote the canonical image of~$Z_\theta$ in
	$\mathfrak{g}_0/ \mathfrak{p}_\Theta$.
	
	Since $\pi^{\cF}(\phi)=\pi^{\cF}(\psi)$, it follows that $\phi=\psi\circ
	\operatorname{int}_g$ for some~$g$ in $\mP_\Theta$. Thus $\pi^{\cF}_{\phi}=\pi^{\cF}_{\psi}\circ\Ad(g)$.
	Since $\mU_\Theta$ acts trivially on $(\mathfrak{u}_{-\theta}\oplus
	\mathfrak{p}_\Theta) / \mathfrak{p}_\Theta$, it also acts trivially on
	${Z}_{\theta}^{\prime}$; hence it follows that 
	$\pi^{\cF}_{\phi}(Z_\theta)=\pi^{\cF}_{\psi}\circ\Ad(h)(Z_\theta)$ for the
	element~$h$ in~$\mL_\Theta$ equal to~$g$ modulo~$\mU_\Theta$. Since $Z_\theta$ is $\mL_\Theta$-invariant, the result follows.
\end{proof}

Proposition~\ref{prop:theta-light-well-def} allows us to set:

\begin{definition} A {\em $\theta$-light-like vector} in $\T_x \cF_\Theta$ is a vector
	in
	\[
	Z^\theta_x\defeq \pi^{\cF}_{\phi}(Z_\theta)\ ,
	\]
	for one (equivalently every) isomorphism~$\phi$ in~$\cG$ such that $\pi^\cF(\phi)=x$. 
\end{definition}

We now have: 
\begin{proposition}\label{pro:PhoUni}
	There exists a unique $\theta$-photon through any $\theta$-light like vector. 
\end{proposition}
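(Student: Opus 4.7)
The plan is to separate uniqueness from existence. Uniqueness is immediate from Proposition~\ref{pro:uniq-phot}: any two $\theta$-photons through a point $x$ whose tangent lines at $x$ both contain the same nonzero vector $v$ are tangent at $x$, and hence coincide. So the content of the statement is existence: given $v\in Z_x^\theta$, one must produce a $\theta$-photon $\Phi$ through $x$ whose tangent line at $x$ equals $\mathbb{R}v$.

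For the construction, I would proceed as follows. Choose any $\phi\in\cG$ with $\pi^\cF(\phi)=x$. By definition of $Z_x^\theta=\pi^\cF_\phi(Z_\theta)$, there exists $w\in Z_\theta$ with $\pi^\cF_\phi(w)=v$. Since $Z_\theta$ is the $\mL_\Theta$-orbit of $x_{-\theta}$, we may write $w=\Ad(g)(x_{-\theta})$ for some $g\in\mL_\Theta$. Set $\psi=\phi\circ\operatorname{int}_g$. As $g\in\mL_\Theta\subset\mP_\Theta$, we still have $\pi^\cF(\psi)=x$, so $\Phi\defeq\psi(\mH_\theta)\cdot x$ is a $\theta$-photon through $x$. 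The tangent line to $\Phi$ at $x$ is the image of $\mathfrak{h}_\theta\hookrightarrow\mathfrak{g}_0$ under $\pi^\cF_\psi$; its kernel is $\mathfrak{h}_\theta\cap\mathfrak{p}_\Theta=\mathbb{R}h_\theta\oplus\mathbb{R}x_\theta$ (since $h_\theta\in\mathfrak{a}\subset\mathfrak{l}_\Theta$, $x_\theta\in\mathfrak{u}_\Theta$, and $x_{-\theta}\in\mathfrak{u}^{\mathrm{opp}}_\Theta$ is transverse to $\mathfrak{p}_\Theta$), so $T_x\Phi=\mathbb{R}\cdot\pi^\cF_\psi(x_{-\theta})$. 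Applying the identity $\pi^\cF_\psi=\pi^\cF_\phi\circ\Ad(g)$ used in the proof of Proposition~\ref{prop:theta-light-well-def}, we get
\[
\pi^\cF_\psi(x_{-\theta})=\pi^\cF_\phi(\Ad(g)x_{-\theta})=\pi^\cF_\phi(w)=v,
\]
so $T_x\Phi=\mathbb{R}v$, as required.

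The only potentially subtle point is the verification that $T_x\Phi$ coincides with $\mathbb{R}\cdot\pi^\cF_\psi(x_{-\theta})$: this relies on the explicit description of the two parabolic directions of the $\mathfrak{sl}_2$-triple $(x_\theta,x_{-\theta},h_\theta)$ relative to $\mathfrak{p}_\Theta$, which is straightforward since $\theta\in\Theta$ forces $x_\theta\in\mathfrak{u}_\Theta$. Beyond that, the argument is essentially a bookkeeping exercise in translating the light-like condition on the target side ($v\in Z_x^\theta$) into the source side ($w\in Z_\theta$) via a lift $\phi$, and then adjusting $\phi$ by an element of $\mL_\Theta$ so that the canonical photon through the identity $\mH_\theta\cdot x$ realizes precisely the prescribed tangent direction.
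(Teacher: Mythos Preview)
Your proof is correct and follows essentially the same approach as the paper's: uniqueness via Proposition~\ref{pro:uniq-phot}, then existence by lifting the light-like vector to $Z_\theta$, writing it as $\Ad(g)x_{-\theta}$ for some $g\in\mL_\Theta$, and adjusting the isomorphism by $\operatorname{int}_g$ so that the resulting photon has the prescribed tangent direction. Your version is more detailed in justifying that $T_x\Phi=\mathbb{R}\cdot\pi^{\cF}_{\psi}(x_{-\theta})$, which the paper leaves implicit.
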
 
\begin{proof}
	Proposition~\ref{pro:uniq-phot} proves  uniqueness. If $u$~belongs to $Z^\theta_x$, then
	$v=\iota^{\cF}_{\psi}(u)$ belongs to~$Z_\theta$ for any~$\psi$ such that $\pi^{\cF}(\psi)=x$. By
	construction $v=\Ad(g)\cdot x_{-\theta}$ for $g$ in $ \mL_\Theta$. Hence
	$u=\pi^{\cF}_{\phi}(x_{-\theta})$ for $\phi=\psi\circ \operatorname{int}_g$. Then
	$u$ is tangent to the photon~\mbox{$\phi(\mH_\theta)\cdot x$}.
\end{proof}

The set $\mathbf{\Phi}(x)$ of photons through $x$ identifies by the above
discussion with $\mL_x\slash \mathrm{stab}_{\mL_x}(\Phi) \simeq
\mathbf{P}(Z_{\theta})$ for some $\Phi$ in $ \mathbf{\Phi}(x)$. Thus $\mathbf{\Phi}(x)$ is a
compact manifold (see Lemma~\ref{lemma:theta-light-closed}).

\subsection{Photon projection} Given a $\theta$-photon $\Phi$, we may now
define the {\em photon projection}~$p_\Phi$.	Let \[
O_\Phi\defeq
\{y\in\cF^{\mathrm{opp}}_{\Theta}\mid \hbox{there exists $x$ in $\Phi$ such that $x\pitchfork y$}\}\ ,
\]
and observe that $O_\Phi$~is an open set.   We have:
\begin{proposition}\label{p.Phipro}
	Let~$y$ be in~$O_\Phi$. Then:
	\begin{enumerate}
		\item\label{item1:p.Phipro} There exists a unique~$z$ in~$\Phi$ such that
		$z$~is not transverse to~$y$.
		\item\label{item2:p.Phipro} The group $\mH_\Phi\cap\mU_{y}$ is a $1$-parameter
		unipotent subgroup.
		\item \label{item3:p.Phipro} $p_{\Phi}(y)$ is 
		the unique fixed point of $\mH_\Phi\cap\mU_y$ on~$\Phi$.
	\end{enumerate}
\end{proposition}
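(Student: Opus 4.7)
My strategy is to reduce all three assertions to a computation inside the $\psld$-subgroup $\mH_\theta$ of $\mG_0$. As $y\in O_\Phi$, pick $x_0\in\Phi$ with $x_0\pitchfork y$. The first step is to construct an isomorphism $\psi\in\cG$ satisfying simultaneously $\pi^\cL(\psi)=(x_0,y)$ and $\psi(\mH_\theta)=\mH_\Phi$. By Proposition~\ref{prop:photons-through-x}, I first choose $\psi_1\in\cG$ with $\pi^\cF(\psi_1)=x_0$ and $\psi_1(\mH_\theta)=\mH_\Phi$. Since $\mU_{x_0}$ acts trivially on the family of photons through~$x_0$ (Proposition~\ref{prop:photons-through-x}(\ref{item:1:prop:photons-through-x})) and the photon subgroup is unique (Proposition~\ref{prop:photons-through-x}(\ref{item:0:prop:photons-through-x})), $\mU_{x_0}$ normalizes~$\mH_\Phi$; on the other hand $\mU_{x_0}$ acts simply transitively on the open $\mP_{x_0}$-orbit of~$\cF^{\mathrm{opp}}_\Theta$ (by the Levi decomposition of~$\mP_{x_0}$). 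Hence some $u\in\mU_{x_0}$ takes $\pi^{\cF^{\mathrm{opp}}}(\psi_1)$ to~$y$, and $\psi\defeq\operatorname{int}_u\circ\psi_1$ is as required. With this choice, $\mP_y=\psi(\mP^{\mathrm{opp}}_\Theta)$, $\mU_y=\psi(\mU^{\mathrm{opp}}_\Theta)$, and $\mH_\Phi\cap\mU_y=\psi(\mH_\theta\cap\mU^{\mathrm{opp}}_\Theta)$, reducing the problem to the standard analogue inside~$\mG_0$.

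Item~(\ref{item2:p.Phipro}) is now immediate from the decomposition $\mk h_\theta=\mathbb R x_\theta\oplus\mathbb R h_\theta\oplus\mathbb R x_{-\theta}$: since $\theta\in\Sigma^+_\Theta$, the only summand lying in~$\mk u^{\mathrm{opp}}_\Theta$ is $\mathbb R x_{-\theta}$, so $\mH_\theta\cap\mU^{\mathrm{opp}}_\Theta=\exp(\mathbb R x_{-\theta})$ is a one-parameter unipotent subgroup. For item~(\ref{item1:p.Phipro}), set $\dot s_\theta\defeq\exp(\tfrac{\pi}{2}(x_\theta-x_{-\theta}))\in\mH_\theta$ and $z_0\defeq\psi(\dot s_\theta)\cdot x_0\in\Phi$. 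Since $\mk h_\theta\cap\mk p^{\mathrm{opp}}_\Theta=\mathbb R h_\theta\oplus\mathbb R x_{-\theta}$, the subgroup $\mH_\Phi\cap\mP_y=\psi(\mH_\theta\cap\mP^{\mathrm{opp}}_\Theta)$ is the Borel of~$\mH_\Phi$ opposite to~$\mH_\Phi\cap\mP_{x_0}$, and its unique fixed point on~$\Phi\cong\bP^1(\mathbb R)$ (Proposition~\ref{pro:PhoRP}) is precisely~$z_0$. The non-transverse locus $N\defeq\{z\in\Phi\mid z\not\pitchfork y\}$ is closed (transversality is open) and $\mH_\Phi\cap\mP_y$-invariant, so it is one of $\emptyset$, $\{z_0\}$, or $\Phi$. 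Since $x_0\in\Phi\setminus N$, $N\neq\Phi$; and the calculation $\Ad(\dot s_\theta)x_\theta=-x_{-\theta}\in\mk g_{-\theta}\subset\mk u^{\mathrm{opp}}_\Theta$, together with $\Ad(\dot s_\theta)x_\theta\in\Ad(\dot s_\theta)\mk p_\Theta$, yields (after transport by~$\psi$) a nonzero element of $\mk p_{z_0}\cap\mk u_y$, forcing $z_0\in N$. Hence $N=\{z_0\}$, which proves item~(\ref{item1:p.Phipro}) and identifies $p_\Phi(y)=z_0$.

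For item~(\ref{item3:p.Phipro}), $\exp(\mathbb R x_{-\theta})$ is a nontrivial unipotent subgroup of $\mH_\theta$, so it acts on $\Phi\cong\bP^1(\mathbb R)$ with a unique fixed point; this fixed point must also be fixed by the enclosing Borel $\mH_\Phi\cap\mP_y$, hence equals $z_0=p_\Phi(y)$. The main obstacle is the reduction carried out in the first paragraph: simultaneously realizing~$(x_0,y)$ in~$\cL_\Theta$ and~$\mH_\Phi$ as~$\psi(\mH_\theta)$ by a single~$\psi\in\cG$ relies on the (somewhat delicate) triviality of the $\mU_{x_0}$-action on photons through~$x_0$. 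Once the reduction is in place, everything else is a direct $\mathfrak{sl}_2$-computation.
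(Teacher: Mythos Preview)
Your proof is correct and follows the same overall plan as the paper: produce $\psi\in\cG$ with $\pi^\cL(\psi)=(x_0,y)$ and $\psi(\mH_\theta)=\mH_\Phi$, then compute inside the $\mathfrak{sl}_2$-subgroup. Two steps are handled differently, and in both the paper's route is a bit more economical.

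For the reduction, the paper starts directly with any $\psi$ satisfying $\pi^\cL(\psi)=(w,y)$ and then adjusts by an element of~$\mL_\Theta$, using Proposition~\ref{prop:photons-through-x}(\ref{item:2:prop:photons-through-x}) (transitivity of~$\mL_\Theta$ on photons through~$w$); since $\mL_\Theta$~is precisely the fiber of~$\pi^\cL$, this does not disturb $\pi^\cL(\psi)$. Your detour through the $\mU_{x_0}$-normalization of~$\mH_\Phi$ is correct but unnecessary, and the point you flag as ``somewhat delicate'' disappears entirely with the Levi approach.

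For item~(\ref{item1:p.Phipro}), the paper avoids your explicit $\dot s_\theta$-computation and the argument about Borel-invariant closed subsets of~$\bP^1$. Instead, once $V_y\defeq\mH_\Phi\cap\mU_y=\psi(\exp\mathbb{R}x_{-\theta})$ is known, its unique fixed point~$z$ on~$\Phi$ satisfies $V_y\cdot w=\Phi\smallsetminus\{z\}\subset \mU_y\cdot w=O_y$, so every point of~$\Phi$ other than~$z$ is transverse to~$y$; and $z\notin O_y$ because $\mU_y$~acts \emph{freely} on~$O_y$ while the nontrivial subgroup $V_y\subset\mU_y$ fixes~$z$. Item~(\ref{item3:p.Phipro}) then drops out immediately. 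Your approach has the virtue of exhibiting the non-transverse point explicitly as $\psi(\dot s_\theta)\cdot x_0$, but the paper's free-action observation is shorter.
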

\begin{proof}
	Since $y$~belongs to~$O_\Phi$, there exists~$w$ in~$\Phi$ such that
	$w\pitchfork y$. It follows that we can find~$\psi$ in~$\cG$ (i.e.\
	$\psi$~is an isomorphism from~$\mG_0$
	to~$\mG$) such that $\psi(\mP_\Theta)=\mP_w$ and
	$\psi(\mU_{\Theta}^{\mathrm{opp}})=\mU_y$ (i.e.\ such that $\pi^\cL(\psi)=(w,y)$). We can (and will) further assume
	that the photon subgroup~$\mH_\Phi$ is $\psi(\mH_\theta)$ (cf.\
	Proposition~\ref{prop:photons-through-x}.(\ref{item:2:prop:photons-through-x})).
	It follows that $\mH_\Phi\cap\mU_y$ is the 	unipotent group
	$V_y=\psi( \exp( \mathbb{R} x_{-\theta}))$ (thus item~(\ref{item2:p.Phipro}) holds).
	Since by Proposition~\ref{pro:PhoRP} a photon is identified with $\bP^1(\mathbb{R})$ as an $\sld$-space, it follows that the orbit of~$w$ under $V_y$ is
	$\Phi\smallsetminus\{z\}$ for some~$z$ in~$\Phi$. Observe now that $V_y\cdot
	w$ is
	precisely $\Phi\cap O_y$, where
	\[
	O_y\defeq\mU_y\cdot w=\{x\in\cF_{\Theta}\mid \ x\pitchfork y\}\ .
	\]
	This implies that $z$~is not transverse to~$y$ and concludes
	item~(\ref{item1:p.Phipro}). Since furthermore the action of~$V_y$ on
	$\Phi\smallsetminus\{z\}$ is simply transitive, we get item~(\ref{item3:p.Phipro}).
\end{proof}

We now define:
\begin{definition}
	Given a photon~$\Phi$, the {\em photon projection} is the map~$p_\Phi$
	from~$O_\Phi$ to~$\Phi$, which associates to~$y$, the point $p_\Phi(y)$
	which is the unique point in~$\Phi$ not transverse to~$y$.
\end{definition} 
From the definition, it follows that the graph of  $p_\Phi$ is algebraic, hence $p_\Phi$ is continuous.

\begin{remark}
	In the case of the Shilov boundary of an tube type Hermitian Lie group, the
	photon projection is also defined in \cite[Section 6.2.2]{Galiay}. \end{remark}

We can rephrase point~(\ref{item1:p.Phipro}) of Proposition~\ref{p.Phipro} by
saying that
transversality between an element of~$\Phi$ and an element of~$O_\Phi$ can be
asserted using the photon projection:
\begin{corollary}
	\label{coro:photon-proj-trans}
	Let~$y$ be in~$O_\Phi$ and let~$x$ be in~$\Phi$. Then $y$ is transverse
	to~$x$ if and only if $p_\Phi(y)$ is not equal to~$x$.
\end{corollary}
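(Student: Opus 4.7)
The plan is to observe that Corollary~\ref{coro:photon-proj-trans} is an immediate reformulation of the definition of $p_\Phi$ together with item~(\ref{item1:p.Phipro}) of Proposition~\ref{p.Phipro}, which has already been established. So essentially no new work is required; the corollary is a bookkeeping consequence. I would simply unfold the definition and use uniqueness.

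More precisely, I would argue as follows. By construction $p_\Phi\colon O_\Phi \to \Phi$ sends $y$ to the unique point in~$\Phi$ that is not transverse to~$y$; this uses precisely the existence and uniqueness established in item~(\ref{item1:p.Phipro}) of Proposition~\ref{p.Phipro}. Therefore for any $x$ in $\Phi$, the condition ``$x$ is not transverse to $y$'' is equivalent to ``$x = p_\Phi(y)$''. Taking contrapositives in both directions gives the claimed equivalence.

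Since the proof is just a one-line restatement, there is essentially no obstacle; the only subtlety is that one must check $p_\Phi(y)$ is well defined (i.e.\ there is \emph{exactly one} non-transverse point in $\Phi$), which was done in Proposition~\ref{p.Phipro}(\ref{item1:p.Phipro}) using the identification of $\Phi$ with $\mathbf{P}^1(\mathbb{R})$ and the simple transitivity of the $1$-parameter unipotent group $\mH_\Phi \cap \mU_y$ on $\Phi\smallsetminus\{p_\Phi(y)\}$. So I would simply write a two-line proof invoking the definition and this uniqueness statement.
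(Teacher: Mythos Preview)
Your proposal is correct and matches the paper's own treatment: the corollary is stated immediately after the definition of~$p_\Phi$ as a direct rephrasing of Proposition~\ref{p.Phipro}(\ref{item1:p.Phipro}), with no separate proof given.
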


The following result is also a
direct consequence of the definition:
\begin{proposition}\label{pro:phi-equi}
	The photon projection~$p_\Phi$ is equivariant under~$\mH_\Phi$.
\end{proposition}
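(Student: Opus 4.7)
The plan is to unwind the definitions and exploit the $\mG$-invariance of the transversality relation together with the uniqueness clause in Proposition~\ref{p.Phipro}.(\ref{item1:p.Phipro}). Concretely, given $h$ in $\mH_\Phi$ and $y$ in $O_\Phi$, I want to show that $h \cdot y$ again lies in $O_\Phi$ and that $p_\Phi(h \cdot y) = h \cdot p_\Phi(y)$.

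For the first point, I would observe that $\mH_\Phi$ preserves $\Phi$ (by definition $\Phi$ is an $\mH_\Phi$-orbit, cf.\ Proposition~\ref{prop:photons-through-x}), and that transversality $x \pitchfork y$ in $\cF_\Theta \times \cF_{\Theta}^{\mathrm{opp}}$ is preserved by the diagonal action of $\mG$. Thus if $x$ in $\Phi$ satisfies $x \pitchfork y$, then $h \cdot x$ lies in $\Phi$ and $h \cdot x \pitchfork h \cdot y$, so $h \cdot y$ belongs to $O_\Phi$.

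For the equivariance itself, let $z = p_\Phi(y)$, which by definition is the unique point in $\Phi$ that is not transverse to $y$. Then $h \cdot z$ is in $\Phi$ (again because $\mH_\Phi$ preserves $\Phi$) and $h \cdot z$ is not transverse to $h \cdot y$, by $\mG$-invariance of transversality. By the uniqueness statement of Proposition~\ref{p.Phipro}.(\ref{item1:p.Phipro}) applied to the point $h \cdot y$ of $O_\Phi$, we deduce $p_\Phi(h \cdot y) = h \cdot z = h \cdot p_\Phi(y)$, which is the desired equivariance.

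There is no real obstacle here: the whole argument is a bookkeeping consequence of the fact that $p_\Phi(y)$ was characterized by a property (being in $\Phi$ and failing to be transverse to $y$) that is manifestly preserved by the action of $\mH_\Phi \subset \mG$, together with the uniqueness of such a point.
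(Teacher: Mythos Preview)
Your argument is correct and is precisely what the paper has in mind: the authors simply state that the result is ``a direct consequence of the definition,'' and your unwinding of this claim via the $\mG$-invariance of transversality together with the uniqueness in Proposition~\ref{p.Phipro}.(\ref{item1:p.Phipro}) is exactly the intended justification.
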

As a corollary we give another characterization 
of the photon projection.

\begin{corollary}\label{coro:fixed-fibre}
	Let $x$ be a point in a photon $\Phi$, $y$ a point in $O_\Phi$, then  $p_\Phi(y)=x$ if and only if we have the following inclusion:
	\begin{equation*}
		\ms H_\Phi \cap \ms U_x\ \subset \ \ms U_y\ .
	\end{equation*}
\end{corollary}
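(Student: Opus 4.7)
The plan is to use Proposition~\ref{p.Phipro} together with the fact that $\mH_\Phi$ is (isogenic to) $\mathsf{PSL}_2(\mathbb R)$ acting on $\Phi\simeq \bP^1(\mathbb R)$ (Proposition~\ref{pro:PhoRP}), which controls the unipotent subgroups involved.

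First I would describe the relevant unipotent subgroups. On the one hand, for $x$ in $\Phi$, the stabilizer of~$x$ in~$\mH_\Phi$ is the intersection $\mH_\Phi\cap \mP_x$; since $\mH_\Phi$ acts on $\Phi\simeq \bP^1(\mathbb R)$ through its adjoint form $\mathsf{PSL}_2(\mathbb R)$, this stabilizer is a Borel subgroup of~$\mH_\Phi$, whose unipotent radical is precisely the $1$-parameter unipotent subgroup $\mH_\Phi\cap \mU_x$. On the other hand, by point~(\ref{item2:p.Phipro}) of Proposition~\ref{p.Phipro}, $\mH_\Phi\cap \mU_y$ is also a $1$-parameter unipotent subgroup of~$\mH_\Phi$, and by point~(\ref{item3:p.Phipro}) its unique fixed point on~$\Phi$ is~$p_\Phi(y)$.

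Assume first that $\mH_\Phi\cap \mU_x\subset \mU_y$. Then $\mH_\Phi\cap \mU_x\subset \mH_\Phi\cap \mU_y$, and by the previous paragraph both sides are connected $1$-parameter unipotent subgroups of $\mH_\Phi\simeq \mathsf{PSL}_2(\mathbb R)$; hence they coincide. But $\mH_\Phi\cap \mU_x$ fixes~$x$ (it is contained in $\mP_x$), so $\mH_\Phi\cap \mU_y$ fixes~$x$, and uniqueness of the fixed point on~$\Phi$ gives $p_\Phi(y)=x$.

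Conversely, assume $p_\Phi(y)=x$. Then $\mH_\Phi\cap \mU_y$ fixes $x$, so it is contained in $\mH_\Phi\cap \mP_x$. Being a unipotent connected subgroup of the Borel $\mH_\Phi\cap \mP_x$, it lies in its unipotent radical, which is $\mH_\Phi\cap \mU_x$; since both are $1$-parameter and connected, they are equal. In particular $\mH_\Phi\cap \mU_x=\mH_\Phi\cap \mU_y\subset \mU_y$, as desired.

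No step here looks like a real obstacle; the only subtlety is ensuring that the subgroups in play really are connected $1$-parameter subgroups of $\mH_\Phi$ so that inclusion forces equality, but this is immediate from Propositions~\ref{pro:PhoRP} and \ref{p.Phipro}.
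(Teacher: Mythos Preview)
Your proof is correct and follows essentially the same approach as the paper: both directions hinge on Proposition~\ref{p.Phipro} (items~(\ref{item2:p.Phipro}) and~(\ref{item3:p.Phipro})) together with the observation that $\mH_\Phi\cap \mU_x$ and $\mH_\Phi\cap \mU_y$ are one-parameter unipotent subgroups of~$\mH_\Phi$, so that an inclusion between them forces equality. The paper's argument is organized in the opposite order (it first assumes $p_\Phi(y)=x$) but the logic is the same.
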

\begin{proof}
	Suppose that $p_\Phi(y)=x$. By Proposition
	\ref{p.Phipro}.(\ref{item3:p.Phipro}), the intersection $\mH_\Phi \cap \mU_y$
	is contained in~$\mP_x$; this intersection is therefore the unipotent
	subgroup of~$\mH_\Phi$ fixing~$x$. As this unipotent subgroup of~$\mH_\Phi$ is contained
	in~$\mU_x$ we also get the inclusion $\mH_\Phi \cap \mU_x \subset \mU_y$.
	
	Conversely assume that $\mH_\Phi \cap \mU_x \subset \mU_y$. Then $\mH_\Phi
	\cap \mU_x$ and $\mH_\Phi \cap \mU_y$ are two unipotent subgroups
	of~$\mH_\Phi$ contained one in the other. This forces the equality  $\mH_\Phi
	\cap \mU_x = \mH_\Phi \cap \mU_y$ and thus the inclusion $\mH_\Phi \cap \mU_y
	\subset \mP_x$. By Proposition
	\ref{p.Phipro}.(\ref{item3:p.Phipro}) we conclude that $p_\Phi(y)=x$.
\end{proof}

In the case when $\Theta$ is invariant by the opposition involution, points in
a given fiber of~$p_\Phi$ are not transverse:

\begin{corollary}\label{coro:fibre-not-transverse}
	Suppose that $\Theta$~is invariant by the opposition involution so that we
	identify $\cF_{\Theta}^{\mathrm{opp}}$ with~$\cF_\Theta$ and transversality
	makes sense between elements of~$\cF_\Theta$.
	Let $x$ be a point in a photon $\Phi$. If $y$ and $z$ in $O_\Phi$, are
	transverse, then $p_\Phi(y)\not=p_\Phi(z)$.
\end{corollary}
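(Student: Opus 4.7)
The plan is a short argument by contrapositive, relying essentially on Corollary \ref{coro:fixed-fibre} which was just proved. Suppose towards a contradiction that $y,z\in O_\Phi$ are transverse and that $p_\Phi(y)=p_\Phi(z)=x$ for some $x\in\Phi$. Applying Corollary \ref{coro:fixed-fibre} to both fibres yields
\[
\mH_\Phi \cap \mU_x \ \subset\ \mU_y \cap \mU_z.
\]
I would then observe that $\mH_\Phi\cap\mU_x$ is non-trivial: by Proposition \ref{pro:PhoRP}, the group $\mH_\Phi$ acts on $\Phi$ as $\mathsf{PSL}_2(\mathbb R)$ acts on $\bP^1(\mathbb R)$, so the stabilizer of $x$ in $\mH_\Phi$ is a Borel subgroup, whose unipotent radical is a one-parameter unipotent subgroup contained in $\mU_x$. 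In particular, $\mU_y\cap\mU_z$ contains a non-trivial element.

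To reach a contradiction, I would check that the transversality of $y$ and $z$ (using the opposition-invariance identification $\cF_\Theta^{\mathrm{opp}}\simeq\cF_\Theta$) forces $\mU_y\cap\mU_z=\{1\}$. By definition, $y\pitchfork z$ means $\mathfrak p_y\oplus\mathfrak u_z=\mathfrak g$; since $\mathfrak u_y\subset\mathfrak p_y$, this gives $\mathfrak u_y\cap\mathfrak u_z=\{0\}$ and hence $\mU_y\cap\mU_z=\{1\}$, contradicting the previous paragraph.

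There is no real obstacle here: all the non-trivial structural content has already been put into Proposition \ref{p.Phipro} and Corollary \ref{coro:fixed-fibre}, and the final step is the standard observation that opposite parabolics have trivially intersecting unipotent radicals. The only point worth being a little careful about is the use of opposition invariance to make sense of ``$y$ and $z$ transverse'' when both live in $\cF_\Theta^{\mathrm{opp}}$, and to guarantee that $\mathfrak p_y$ and $\mathfrak u_z$ have complementary dimensions so that the transversality condition $\mathfrak p_y+\mathfrak u_z=\mathfrak g$ is actually a direct sum decomposition.
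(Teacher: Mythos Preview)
Your proof is correct and follows essentially the same approach as the paper: use Corollary~\ref{coro:fixed-fibre} to get $\mH_\Phi\cap\mU_x\subset\mU_y\cap\mU_z$, observe that this intersection is non-trivial, and conclude that $y$ and $z$ cannot be transverse. The paper's proof is just a compressed version of yours, leaving implicit both the non-triviality of $\mH_\Phi\cap\mU_x$ and the fact that transversality forces $\mU_y\cap\mU_z=\{1\}$; you spell out both steps, which is fine. One small remark: your justification that the unipotent radical of the Borel in $\mH_\Phi$ lies in $\mU_x$ is correct but could be shortened by noting that the proof of Corollary~\ref{coro:fixed-fibre} already establishes $\mH_\Phi\cap\mU_x=\mH_\Phi\cap\mU_y$, which is a one-parameter group by Proposition~\ref{p.Phipro}(\ref{item2:p.Phipro}).
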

\begin{proof} 
	If $p_\Phi(y)=p_\Phi(z)$, then
	$\ms U_y\cap\ms U_z$ is not reduced to zero by Corollary~\ref{coro:fixed-fibre}, hence $y$ and $z$ are not
	transverse.
\end{proof}

\begin{examples}
	Let us illustrate the photon projections in the Examples~\ref{exam:photons} we
	discussed before, we use the notation (and the photons) introduced there.
	\begin{enumerate}[leftmargin=*]
		\item For {\bf the symplectic group $\mathrm{Sp}(2n,\mathbb{R})$} with $x  =
		\braket{f_1,\dots, f_n}$ and $\Phi$ the photon given by the orbit of
		$\mH_\theta$ (here the stabilizer of~$x$ in~$\mathsf{H}_\theta$ is the standard opposite Borel subgroup). Then
		\[
		O_\Phi = \{ L \in \mathrm{Lag}(\mathbb{R}^{2n})\,|\,  \dim(L \cap
		\braket{e_1, f_1, \dots, f_n}) = 1\} \ , 
		\]
		and the projection sends $L $ in $ O_\Phi$ to the Lagrangian $(L \cap
		\braket{e_1, f_1, \dots, f_n}) \oplus \braket{f_2,\dots,f_n}$. 
		
		\item For {\bf the orthogonal group $\mathrm{SO}(p+1,p+k)$},  and $x$ the flag whose
		$j$-th subspace is given by $x^{(j)} = \braket{e_1, \dots,e_{j}}$ ($j=1,
		\dots, p$). For  $1\leq i\leq p$, let
		$\Phi_i$ be the photon through~$x$ associated to~$\alpha_i $ in $ \Theta$, 
		then we have   \[
		O_{\Phi_i} =\bigl\{ F\in  \cF_{1,\dots,p}\,\bigl|\bigr.\,
		F^{(j)\perp} \cap x^{(j)}=\{0\}  \text{ for all } j\neq i,\text{ and } \dim(F^{(i)\perp} \cap
		{x^{(i+1)}})= 1\bigr\}, 
		\]
		(again with the notation $x^{(p+1)} =
		\braket{ e_1, \dots,e_{p+1}}$).
		The projection sends $F$ to the flag whose $j$-th subspace is $x^{(j)}$  for
		$j\neq i$ and the $i$-th subspace is $ x^{(i-1)} \oplus
		(F^{(i)\perp} \cap x^{(i+1)})$. 
	\end{enumerate}
\end{examples}

\subsection{Photon projection and photon cross-ratio}	
\label{sec:photon-cross-ratio}
Let $\eta$ be a $\Theta$-compatible dominant
form. 
Let $\Phi$ be a $\theta$-photon. In this section we will prove the following: 

\begin{proposition}\label{prop:photon-cross-ratio-equal-1}
	Let~$z$ and~$y$ be in $O_\Phi$ such that
	$p_\Phi(y)=p_\Phi(z)$. Then for all $x$ and $w$ in $\Phi$ which are pairwise
	transverse to~$z$ and~$y$ (i.e\ distinct from $p_\Phi(y)=p_\Phi(z)$), we have
	\[
	\bb^\eta(x,w,z,y)=1\ .\label{eq:bb=1}
	\]
\end{proposition}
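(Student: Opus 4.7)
The plan is to reduce, by multiplicativity and continuity in $\eta$, to the case of a $\Theta$-compatible dominant weight, and then use the representation theory of $\mH_\Phi$ (isogenic to $\psld$) to show that the cross-ratio is actually computed inside a single irreducible $\mathfrak{sl}_2$-submodule of the representation space, where invariance under the $1$-parameter unipotent subgroup $N \defeq \mH_\Phi \cap \mU_u$ forces the answer to be $1$.

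For the reduction, every $\Theta$-compatible dominant form lies in the cone generated by $\{\omega_{\theta'}\}_{\theta' \in \Theta}$; since $\bb^{t_1 \eta_1 + t_2 \eta_2} = (\bb^{\eta_1})^{t_1}(\bb^{\eta_2})^{t_2}$ and $\eta \mapsto \bb^\eta(x,w,z,y)$ is continuous (by the integral formula of Proposition~\ref{pro:cr-om}), it suffices to treat $\eta$ a $\Theta$-compatible dominant weight. Let $\tau\colon \mG \to \pgl(E)$ be the associated irreducible representation, let $u \defeq p_\Phi(y) = p_\Phi(z)$, and pick $\psi \in \cG$ with $\pi^\cF(\psi) = u$ and $\mH_\Phi = \psi(\mH_\theta)$. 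A representative $v_0$ of $\Xi_\eta(u)$ is then a highest weight vector for $\tau \circ \psi$: it is annihilated by $\tau_*\psi_*(x_\theta)$ and has $h_\theta$-weight $n \defeq \eta(h_\theta) \geq 0$. Standard $\mathfrak{sl}_2$-theory then says that the $\mH_\Phi$-submodule $V \subset E$ generated by $v_0$ is the irreducible module of dimension $n+1$, and $\mH_\Phi$-equivariance of $\Xi_\eta$ gives $\Xi_\eta(\Phi) = \mH_\Phi \cdot [v_0] \subset \bP(V)$. When $n=0$ the image is a single point and the cross-ratio is trivially $1$, so I assume $n \geq 1$ from now on.

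The core of the argument is to observe that, by Corollary~\ref{coro:fixed-fibre}, the unipotent $N = \mH_\Phi \cap \mU_u = \psi(\exp(\mathbb{R} x_\theta))$ sits inside both $\mU_y$ and $\mU_z$, so $y$ and $z$ are fixed by $N$, and the lines $\Xi_\eta^*(y), \Xi_\eta^*(z) \in \bP(E^*)$ are $\tau(N)$-fixed. The restriction map $E^* \to V^*$ is $\mH_\Phi$-equivariant, and picking any $x' \in \Phi$ transverse to $y$ (which exists as $y \in O_\Phi$), the decomposition $E = \Xi_\eta(x') \oplus \ker \Xi_\eta^*(y)$ coming from the Levi $\mP_{x'} \cap \mP_y$ forces $\dual{\Xi_\eta(x')}{\Xi_\eta^*(y)} \neq 0$ with $\Xi_\eta(x') \in V$, so that $\Xi_\eta^*(y)|_V \in V^*$ is non-zero; similarly for $z$. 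The unipotent $\tau(N)$ acts on the $(n+1)$-dimensional irreducible module $V^*$ with a single Jordan block, hence admits a unique fixed line in $\bP(V^*)$; therefore the restrictions $\Xi_\eta^*(y)|_V$ and $\Xi_\eta^*(z)|_V$ are proportional.

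To conclude, since $\Xi_\eta(x), \Xi_\eta(w) \in V$, all four pairings in $\bb^\eta(x,w,z,y) = \bb^E(\Xi_\eta(x),\Xi_\eta(w),\Xi_\eta^*(z),\Xi_\eta^*(y))$ only see the restrictions to $V$, and taking representatives $\bar Y, \bar Z \in E^*$ with $\bar Y|_V = \bar Z|_V$ makes the ratio equal~$1$. The main delicate step I anticipate is the non-vanishing of $\Xi_\eta^*(y)|_V$, which relies on the standard correspondence between transversality in $\cL_\Theta$ and non-degeneracy of the pairing induced by $\tau$ at the highest weight line; the rest is straightforward $\mathfrak{sl}_2$-representation theory and the identification $\Xi_\eta(\Phi) \subset \bP(V)$ enforced by $\mH_\Phi$-equivariance.
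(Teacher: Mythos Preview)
Your proof is correct and takes a genuinely different route from the paper's. The paper argues via the symplectic machinery: it uses the connectedness of the fibre $p_{\Phi}^{-1}(p)$ (Lemma~\ref{lem:connected}) to build a square $f(s,t)=(c_0(s),c_1(t))$ with $c_0$ an arc in~$\Phi$ avoiding~$p$ and $c_1$ a path in the fibre, then invokes the integral formula (Proposition~\ref{pro:cr-om}) together with the Infinitesimal Lemma~\ref{lem:infi-bphi}, which shows that $\dual{\Omega((u,0),(0,v))}{\eta}=0$ whenever $u$~is tangent to~$\Phi$ and $v$~is tangent to a fibre of~$p_\Phi$; the integral therefore vanishes.

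Your argument bypasses all of this differential geometry: after reducing by multiplicativity to a weight, you observe that $N=\mH_\Phi\cap\mU_u$ lies in both $\mU_y$ and $\mU_z$ by Corollary~\ref{coro:fixed-fibre}, so the dual lines $\Xi_{\eta}^{*}(y),\Xi_{\eta}^{*}(z)$ are $N$-fixed, and since their restrictions to the irreducible $\mathfrak{sl}_2$-submodule $V$ generated by the highest-weight line are non-zero (by transversality) and $N$~has a single fixed line in~$\bP(V^*)$, these restrictions coincide projectively; as $\Xi_\eta(x),\Xi_\eta(w)\in V$, the projective cross-ratio collapses to~$1$. This is a clean, purely algebraic argument that also makes the later computation of Proposition~\ref{pro:phot-cr} feel more natural. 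The one thing the paper's route gives for free and yours does not is the construction of the bounding square, i.e.\ the fact that $(x,w,z,y)\in O^\square$, which is what makes $\bb^\eta$ well defined for a general $\Theta$-compatible dominant \emph{form}; your reduction step via multiplicativity tacitly uses this, so if you want the statement at that generality you should either invoke Lemma~\ref{lem:connected} to build the square, or simply state the result for weights and deduce the form case afterwards.
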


Let us first show the following:

\begin{lemma}[\sc Infinitesimal lemma]\label{lem:infi-bphi} 
	Let~$u$ be a 
	tangent vector to~$\Phi$ at a point~$z$. Let $c\colon [-1,1]\to\cF_{\Theta}^{\mathrm{opp}}$, be a
	curve differentiable at~$0$, with $y=c(0)$
	transverse to~$z$, such that $p_\Phi(c(t))$ is constant in~$t$. Let  $v=\dot c(0)$.  Then
	\[
	\bigdual{\Omega\left((u,0),(0,v)\right)}{\eta}=0 \ .
	\]
\end{lemma}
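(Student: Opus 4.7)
The plan is to work in a frame $\phi\in\cG$ adapted to the configuration $(z,y,\Phi)$: using the transitivity of $\mG$ on transverse pairs together with the transitivity of $\mL_z$ on the family $\mathbf{\Phi}(z)$ of photons through $z$ (Proposition~\ref{prop:photons-through-x}), we choose $\phi$ so that $\pi^\cL(\phi)=(z,y)$ and $\mH_\Phi=\phi(\mH_\theta)$. Under $\iota^\cF_\phi$ the tangent to $\Phi$ at $z$ is $\mathbb{R}\,x_{-\theta}\subset\mathfrak{u}_\Theta^\mathrm{opp}$, so $u$ corresponds to $\lambda x_{-\theta}$ for some $\lambda\in\mathbb{R}$; and $v$ corresponds under $\iota^{\cF^\mathrm{opp}}_\phi$ to some $V\in\mathfrak{u}_\Theta$. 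The construction of the curvature form in Section~\ref{app:curv} then gives
\[
\Omega_{(z,y)}\bigl((u,0),(0,v)\bigr)=p\bigl([\lambda x_{-\theta},V]\bigr),
\]
where $p\colon\mathfrak{g}_0\to\mathfrak{b}_\Theta$ is the orthogonal projection.

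The crucial step is to identify the tangent space to the fiber $p_\Phi^{-1}(x_0)$ at $y$, where $x_0=p_\Phi(y)$. I claim it corresponds under $\iota^{\cF^\mathrm{opp}}_\phi$ exactly to $\mathfrak{v}_\theta$. For the inclusion $\mathfrak{v}_\theta\subseteq$ fiber tangent, Proposition~\ref{pro:vtheta} tells us that the conjugates of $\mV_\theta$ by elements of $\mH_\theta$ are contained in $\mP_\Theta$, which translates to $\mV_\theta$ stabilizing every point of the photon $\mH_\theta\cdot\mP_\Theta$ in $\cF_\Theta$. Applying $\phi$, the subgroup $\phi(\mV_\theta)$ stabilizes every point of $\Phi$, in particular $x_0$. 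Since the diagonal $\mG$-action preserves transversality and $y$ is not transverse to $x_0$, the curve $c(t)=\phi(\exp(tV))\cdot y$ is also not transverse to $x_0$ for each $V\in\mathfrak{v}_\theta$, so by uniqueness of the non-transverse point on $\Phi$ (Proposition~\ref{p.Phipro}.(\ref{item1:p.Phipro})) we have $p_\Phi(c(t))=x_0$. For the opposite direction, the $\mH_\Phi$-equivariance of $p_\Phi$ (Proposition~\ref{pro:phi-equi}) yields $p_\Phi(\phi(\exp(t x_\theta))\cdot y)=\phi(\exp(t x_\theta))\cdot x_0$, a non-constant curve on $\Phi$; hence $\iota^{\cF^\mathrm{opp}}_\phi{}^{-1}(x_\theta)$ is not in the fiber tangent. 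Combined with the decomposition $\mathfrak{u}_\Theta=\mathbb{R} x_\theta\oplus\mathfrak{v}_\theta$, this forces equality.

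It remains to compute $p([x_{-\theta},V])$ for $V\in\mathfrak{v}_\theta$. Decompose $V=V^H_\theta+\sum_{\alpha\in\Sigma_\Theta^+\setminus\{\theta\}}V_\alpha$ with $V^H_\theta\in\mathfrak{g}_\theta^H$. For $\alpha\neq\theta$, the bracket $[x_{-\theta},V_\alpha]$ lies in the root space $\mathfrak{g}_{\alpha-\theta}$, which is orthogonal to $\mathfrak{a}$ in the Killing form and hence annihilated by $p$. For the $\theta$-term, the Killing invariance gives
\[
B\bigl([x_{-\theta},V^H_\theta],H\bigr)=-\theta(H)\,B(x_{-\theta},V^H_\theta)=0
\]
for all $H\in\mathfrak{a}$, by definition of $\mathfrak{g}_\theta^H=x_{-\theta}^\perp\cap\mathfrak{g}_\theta$. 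Thus $[x_{-\theta},V^H_\theta]$ has vanishing $\mathfrak{a}$-component and lies in the compact centralizer $\mathfrak{m}$, which is also orthogonal to $\mathfrak{b}_\Theta$ and killed by $p$. Altogether $p([\lambda x_{-\theta},V])=0$ and pairing with $\eta$ vanishes. The main obstacle in carrying this out is the precise identification of the fiber tangent with $\mathfrak{v}_\theta$; Proposition~\ref{pro:vtheta} is exactly the structural input that makes this work.
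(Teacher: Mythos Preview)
Your proof is correct and takes a somewhat different route than the paper's. The paper does not identify the fiber tangent space of $p_\Phi$ explicitly. Instead, writing $c(t)=c_0(t)\cdot y$ with $c_0(t)\in\mU_z$ and representing $u$ by an element $w$ in the Lie algebra of $\mH_\Phi\cap\mU_x$ (where $x=p_\Phi(y)$), the paper invokes Corollary~\ref{coro:fixed-fibre} to obtain $\exp(sw)\in\mU_{c(t)}=c_0(t)\mU_y c_0(t)^{-1}$ for all $s,t$; differentiating gives $[w,\dot c_0(0)]\in\mathfrak{u}_y$, and the conclusion follows since $\mathfrak{u}_y$ is a sum of root spaces and hence has trivial projection to~$\mathfrak{b}_\Theta$. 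Your approach, by contrast, pins down $\ker d(p_\Phi)_y$ precisely as the image of~$\mathfrak{v}_\theta$ via Proposition~\ref{pro:vtheta}, and then verifies $p([x_{-\theta},\mathfrak{v}_\theta])=0$ root by root, the key point being the vanishing of the $\mathfrak{a}$-component of $[x_{-\theta},\mathfrak{g}_\theta^H]$ coming from the very definition of~$\mathfrak{g}_\theta^H$. The paper's argument is a touch more economical (no root-space computation, and no need to adapt the frame so that $\mH_\Phi=\phi(\mH_\theta)$), while yours makes the algebraic mechanism transparent and produces the explicit identification of the fiber tangent with~$\mathfrak{v}_\theta$ as a pleasant by-product.
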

\begin{proof}
	We can assume that $u$~is non-zero.
	
	Let us write $c(t)=c_0(t)\cdot y$ with $c_0(t)$ in $\ms U_z$ (this is
	possible in a neighborhood of~$0$). Let
	$x=p_\Phi(c(t))$, we have that $x\neq y$. Since $\mH_\Phi \cap \mU_x$ acts
	simply transitively on $\Phi\smallsetminus \{x\}$, there is~$w$ in the Lie
	algebra of  $\mH_\Phi \cap \mU_x$ such that $u = \frac{d}{ds}|_{s=0}
	\exp(sw)\cdot z$.
	By
	Corollary~\ref{coro:fixed-fibre}, we have, for all real $s$,
	$$
	\exp(s w)\in \mH_\Phi\cap \ms U_x\ \subset \ms U_{c(t)}=c_0(t)\ms U_y c_0(t)^{-1}\ .
	$$
	Thus for all real number~$s$ and all~$t$ close enough to~$0$
	$$
	c_0(t)^{-1} \exp(s w)c_0(t)^{-1} \in \ms U_y\ .
	$$
	After taking the derivatives at $s=0$ and $t=0$, it follows that
	$$
	[w,\dot{c}_0(0)]\in \mk u_y\ .
	$$
	Let  $\phi$ in $\cG$ such that
	$\pi^\cL(\phi)=(z,y)$, hence $\phi_{*}^{-1}(\dot{c}_0(0))$ belongs to
	$\mathfrak{u}_{\Theta}^{\mathrm{opp}}$, $\phi_{*}^{-1}(w)$ belongs to
	$\mathfrak{u}_\Theta$ and the previous equation means that
	$\phi_{*}^{-1}([w,\dot{c}_0(0)])$  belongs to
	$\mathfrak{u}_\Theta$.
	By the construction of~$\Omega$, one has 	$$
	\bigdual{\Omega\left((u,0),(0,v)\right)}{\eta} = \bigdual{p(\phi_{*}^{-1}([w,\dot{c}_0(0)]))}{\eta}=0
	$$
	since $p(\phi_{*}^{-1}[(w,\dot{c}_0(0)]))=0$.
	This concludes the proof.
\end{proof}
We need another lemma:
\begin{lemma}[\sc Fiber is connected]\label{lem:connected}
	Let $x$ be a point in $\Phi$, then the set $p_\Phi^{-1}(x)\subset O_\Phi$ is
	a connected submanifold.
\end{lemma}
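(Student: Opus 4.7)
The plan is to show that the photon projection is a smooth submersion and then describe a single fiber as the orbit of a connected algebraic subgroup of~$\mG$, from which connectedness follows immediately.

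First, I would establish that $p_\Phi\colon O_\Phi\to\Phi$ is a smooth submersion. By Proposition~\ref{pro:phi-equi} the map $p_\Phi$ is $\mH_\Phi$-equivariant, and by Proposition~\ref{pro:PhoRP} the group $\mH_\Phi$ acts transitively on $\Phi\cong\bP^1(\mathbb R)$. Differentiating equivariance, for any $y\in O_\Phi$ and $X\in\mathfrak h_\Phi$, the image under $dp_\Phi$ of the infinitesimal vector $X\cdot y$ at $y$ is $X\cdot p_\Phi(y)\in T_{p_\Phi(y)}\Phi$; since the orbital map $\mathfrak h_\Phi\to T_{p_\Phi(y)}\Phi$ is surjective, so is $dp_\Phi$. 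Therefore each fiber $p_\Phi^{-1}(x)$ is a smooth codimension-one submanifold of~$O_\Phi$. Moreover, for any $h\in\mH_\Phi$, translation by~$h$ induces a diffeomorphism $p_\Phi^{-1}(x)\to p_\Phi^{-1}(h\cdot x)$, so it suffices to prove that one single fiber is connected.

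Next, I would realize this fiber as an orbit of a connected subgroup of~$\mG$. Fix $x\in\Phi$ and consider the group
\[
\mathsf Q\defeq\mathrm{Stab}_\mG(x)\cap\mathrm{Stab}_\mG(\Phi),
\]
where $\mathrm{Stab}_\mG(\Phi)$ denotes the setwise stabilizer of $\Phi\subset\cF_\Theta$. By Proposition~\ref{prop:photons-through-x}, the connected unipotent radical $\mU_x$ stabilizes $\Phi$ setwise, hence lies in~$\mathsf Q$. The quotient $\mathsf Q/\mU_x$ identifies with the stabilizer in the Levi~$\mL_x^\circ$ of the point $[\Phi]$ in the homogeneous space $\mathbf\Phi(x)\simeq\bP(Z_\theta)$; this is a parabolic-type subgroup of~$\mL_x^\circ$, hence connected. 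Thus $\mathsf Q$ is connected. Moreover, for $g\in\mathsf Q$ and $y\in p_\Phi^{-1}(x)$, the element $g\cdot y$ is non-transverse to $g\cdot x=x$ and transverse to each $g\cdot x'\in\Phi\setminus\{x\}$ (as $g$ permutes $\Phi\setminus\{x\}$), so $p_\Phi(g\cdot y)=x$; therefore $\mathsf Q$ preserves the fiber.

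Finally, I would show that $\mathsf Q$ acts transitively on $p_\Phi^{-1}(x)$, which then gives connectedness. Pick $y_0\in p_\Phi^{-1}(x)$ (existence is guaranteed by Proposition~\ref{p.Phipro} applied to any $\psi\in\cG$ with $\pi^\cF(\psi)=x$, $\Phi=\psi(\mH_\theta)\cdot x$, taking $y_0=\psi(\dot s_\theta)\cdot\psi(\mP_\Theta^{\mathrm{opp}})$). A direct root-count, using Corollary~\ref{coro:fixed-fibre} to describe the Lie algebra of $\mathsf Q\cap\mP_{y_0}$ from the condition $\psi(\exp(\mathbb R x_\theta))\subset\mU_{y_0}$, yields $\dim\mathsf Q-\dim(\mathsf Q\cap\mP_{y_0})=\dim\cF_\Theta^{\mathrm{opp}}-1=\dim p_\Phi^{-1}(x)$, so the orbit $\mathsf Q\cdot y_0$ is open in the fiber. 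To promote openness to equality, I would identify the closure of $\mathsf Q\cdot y_0$ in $\cF_\Theta^{\mathrm{opp}}$ with the Schubert-type divisor associated to the simple reflection $s_\theta$, and check that its complement within $p_\Phi^{-1}(x)$ would consist of elements $y$ having additional non-transversality with some $x'\in\Phi\setminus\{x\}$, contradicting $y\in p_\Phi^{-1}(x)$.

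The main obstacle is this last transitivity argument: the dimension count is routine, but carefully identifying $p_\Phi^{-1}(x)$ with an open cell in the Schubert divisor indexed by~$s_\theta$ and controlling the boundary behavior in the \emph{real} (as opposed to complex) algebraic setting requires a careful Bruhat-type analysis of the relative position of~$x$ and~$y_0$ inside $\cF_\Theta\times\cF_\Theta^{\mathrm{opp}}$.
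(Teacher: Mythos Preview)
Your submersion argument via $\mH_\Phi$-equivariance is exactly what the paper does. After that, however, the approaches diverge sharply, and the paper's is considerably simpler.

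You try to exhibit the fiber as a single orbit of a connected subgroup~$\mathsf Q$, and you correctly identify the weak point: promoting ``open orbit'' to ``transitive'' over the reals is delicate. The connectedness of~$\mathsf Q$ is also not as immediate as you suggest --- the stabilizer of $[\Phi]$ in $\mL_x$ need not lie in $\mL_x^\circ$, and the compact factor~$\mM_\Theta$ of~$\mL_\Theta$ has to be handled separately from the parabolic piece coming from~$\mS_\Theta$. None of this is insurmountable, but it is real work, and your sketch of the Schubert-divisor closure argument would need substantial detail (in particular because real Bruhat cells do not close up as cleanly as complex ones).

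The paper bypasses all of this with a one-line product decomposition. Pick any $z\in\Phi\smallsetminus\{x\}$; then $W\defeq p_\Phi^{-1}(x)\subset O_z$, the affine cell of points transverse to~$z$. The one-parameter unipotent group $\ms u\defeq\mH_\Phi\cap\mU_z$ acts simply transitively on $\Phi\smallsetminus\{z\}$, so for $w\in O_z$ there is a unique $\xi(w)\in\ms u$ with $p_\Phi(w)=\xi(w)\cdot x$. The map $\ms u\times W\to O_z$, $(u,w)\mapsto u\cdot w$, is then a diffeomorphism with inverse $w\mapsto(\xi(w),\xi(w)^{-1}w)$. Since $O_z$ is an affine space (hence connected) and $\ms u\cong\mathbb R$, it follows that~$W$ is connected. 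This replaces your orbit/closure analysis with the trivial observation that a factor of a connected product is connected.
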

\begin{proof}
	Since $p_\Phi$~is smooth (as it is algebraic) and by $\mH_\Phi$-equivariance (Proposition \ref{pro:phi-equi}),
	we see that $p_\Phi$~is a submersion so that $W\defeq p_\Phi^{-1}(x)$ is a submanifold.
	
	Let~$z$ be in~$\Phi\smallsetminus\{x\}$. Recall that by definition of the
	photon projection, $W$ is included in the set $O_z$ of points transverse to~$z$. Let $\ms u$ be the unipotent subgroup $\ms H_\Phi\cap \ms U_z$. We then have a continuous map 
	$$
	\xi\colon O_z\to \ms u\ ,
	$$ 
	characterized uniquely by $p_\Phi(w)=\xi(w)\cdot x$. Let us consider the map from $\ms u\times W$ to $O_z$ given by 
	$$
	\psi(u,w)=u\cdot w .
	$$
	The map $\psi$ is a diffeomorphism: its inverse is given by
	$$
	w\mapsto (\xi(w),\xi(w)^{-1} w)\ ,
	$$
	(this follows from the $\ms u$-equivariance of~$p_\Phi$).
	Thus $O_z$ is diffeomorphic to $\ms u\times W$, hence $W$~is connected since $O_z$~is.
\end{proof}

\begin{proof}[Proof of Proposition~\ref{prop:photon-cross-ratio-equal-1}]
	Let $p=p_\Phi(y)=p_\Phi(z)$.
	
	Let~$W=p_\Phi^{-1}(p)$. By Lemma~\ref{lem:connected}, 
	we can find a continuous, piecewise $C^1$ curve $c_1(t)$ joining~$y$
	to~$z$.  Any point in~$W$ is transverse to any point in the interval in~$\Phi$
	joining~$x$ to~$w$ and not containing $p$; let $c_0\colon [0,1]\to \cF_\Theta$ be
	a $C^1$ parameterization of this interval.
	
	Then by
	Proposition~\ref{pro:cr-om} applied to $f\colon [0,1]^2\to \cL_\Theta \mid
	(s,t)\mapsto (c_0(s), c_1(t))$ and  the Infinitesimal Lemma~\ref{lem:infi-bphi},
	the equality   
	\[
	\bb^\eta(x,w,z,y)=1\ ,
	\]
	holds. 
\end{proof}
Using the cocycle identity we obtain as a corollary that
\begin{equation}
	\label{eq:equality-of-bb-eta}
	\bb^\eta(x,w,z_0,y_0)= \bb^\eta(x,w,z'_0,y'_0) \ ,
\end{equation}
if $p_\Phi(z_0) = p_\Phi(z'_0)$  and $p_\Phi(y_0) = p_\Phi(y'_0)$

Therefore
Proposition~\ref{prop:photon-cross-ratio-equal-1} allows us to define the
photon cross-ratio associated to the $\Theta$-compatible dominant
form $\eta$:
\begin{definition}
	Let $x,w, z,y$ be points in $\Phi$ satisfying the transversality condition
	$x\neq y$ and $w\neq z$.
	The {\em photon cross-ratio} on $\Phi$ associated to $\eta$ is
	\[
	\bb^\eta_\Phi(x,w,z,y)\defeq \bb^\eta(x,w,z_0,y_0)\ ,
	\]
	where $z_0$, $y_0$ are any points such that $p_\Phi(z_0)=z$,
	$p_\Phi(y_0)=y$.
\end{definition}

\subsection{Photon cross-ratio and projective cross-ratio}

The photon cross-ratio is a cross-ratio on a projective line invariant under the projective group, therefore on positive quadruples it is a power of the projective cross-ratio. More generally for all quadruples, we have the following 

\begin{proposition}\label{pro:phot-cr}
	Let $\eta$ be a $\Theta$-compatible dominant
	weight and $\theta$ in $\Theta$, 	then for any $\theta$-photon
	$\Phi$
	\begin{equation}
		\label{eq:pro:phot-cr}
		\bb^\eta_\Phi(x,y,z,w)=[x,y,z,w]^{\dual{ h_\theta}{ \eta}}\ ,
	\end{equation}
	where $[a,b,c,d]$ denotes the projective cross-ratio on
	$\Phi\cong\mathbf{P}^1(\mathbb R)$. In particular if $\omega_\theta$ is the fundamental weight associated to $\theta$,
	\begin{equation}
		\label{eq:pro:phot-cr1}
		\bb^{\omega_\theta}_\Phi(x,y,z,w)=[x,y,z,w]\ .
	\end{equation}
\end{proposition}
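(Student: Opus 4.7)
I would exploit that both sides of \eqref{eq:pro:phot-cr} are $\mH_\Phi$-invariant cross-ratios on $\Phi\cong\mathbf P^1(\mathbb R)$. First, I verify that $\bb^\eta_\Phi$ itself is $\mH_\Phi$-invariant: for $h\in\mH_\Phi$ and points $x,y,z,w\in\Phi$, any lifts $z_0,w_0\in O_\Phi$ of $z,w$ yield lifts $hz_0,hw_0$ of $hz,hw$ by the $\mH_\Phi$-equivariance of $p_\Phi$ (Proposition~\ref{pro:phi-equi}), and $\mG$-invariance of $\bb^\eta$ then gives the claim. Since the adjoint group of~$\mH_\Phi$ is isomorphic to $\mathsf{PSL}_2(\mathbb R)$ and acts on $\Phi$ transitively on ordered triples (Proposition~\ref{pro:PhoRP}), any $\mH_\Phi$-invariant cross-ratio on $\Phi$ must be a function $f$ of the projective cross-ratio; combined with the cocycle identities, which translate via $[x,w,X,Y][w,y,X,Y]=[x,y,X,Y]$ into the multiplicative equation $f(ab)=f(a)f(b)$, and semi-algebraicity, this forces $f(t)=t^c$ for some integer~$c$ on positive arguments.

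To pin down $c$, I compute the $\bb^\eta_\Phi$-period of a single loxodromic element of~$\mH_\Phi$. Pick $\psi\in\cG$ with $\pi^\cF(\psi)=x$ and $\Phi=\psi(\mH_\theta)\cdot x$, and set $\gamma_s\defeq\psi(\exp(sh_\theta))$. On $\Phi$, $\gamma_s$ is conjugate in $\mathsf{PSL}_2(\mathbb R)$ to $z\mapsto e^{2s}z$, so its projective period is $e^{2s}$. In the representation $\tau\colon\mG\to\pgl(E)$ of highest weight~$\eta$, the line $\Xi_\eta(x)$ is spanned by a vector $v_\eta$ with $\tau_*\psi_*(h_\theta)\cdot v_\eta=k\,v_\eta$ for $k\defeq\dual{h_\theta}{\eta}$; the other fixed point $\gamma_s^-=\psi(\dot s_\theta)\cdot x$ then maps to a line spanned by $v'\defeq\tau\psi(\dot s_\theta)v_\eta$, on which $\tau(\gamma_s)$ acts by $e^{-sk}$ (using $\dot s_\theta^{-1}h_\theta\dot s_\theta=-h_\theta$). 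Substituting $\Xi_\eta(\gamma_s^\pm)$, $\Xi_\eta^*(u_0)=[\phi]$, and $\Xi_\eta^*(\gamma_s u_0)=[\phi\circ\tau(\gamma_s^{-1})]$ into formula~\eqref{e.projcr} yields
\[\bb^\eta_\Phi(\gamma_s^+,\gamma_s^-,u,\gamma_s u)=\frac{\phi(v_\eta)\cdot e^{sk}\phi(v')}{\phi(v')\cdot e^{-sk}\phi(v_\eta)}=e^{2sk}\ ,\]
and comparing with the projective period $e^{2s}$ identifies $c=k$. Equation~\eqref{eq:pro:phot-cr1} is the specialization $\eta=\omega_\theta$, where $\dual{h_\theta}{\omega_\theta}=1$ by definition of the fundamental weight.

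The delicate point will be upgrading $f(t)=t^c$ from positive~$t$ to all admissible $t$, since $f(ab)=f(a)f(b)$ also admits the spurious solutions $|t|^c\operatorname{sign}(t)^\epsilon$. This is cleanest to resolve via the Veronese viewpoint: the $\mH_\Phi$-orbit of $v_\eta$ generates an irreducible $(k+1)$-dimensional $\mH_\Phi$-submodule $V\subset E$, and $\Xi_\eta|_\Phi\colon\Phi\to\mathbf P(V)$ is a rational normal curve of degree~$k$; by (the proof of) Proposition~\ref{prop:photon-cross-ratio-equal-1}, the functional $\Xi_\eta^*(z_0)|_V$ depends only on $p_\Phi(z_0)$, giving a well-defined dual map $\Phi\to\mathbf P(V^*)$ realising a dual rational normal curve; and Lemma~\ref{lem:proj-cross-tensor} applied to $\operatorname{Sym}^k$ then identifies the induced cross-ratio on $\Phi\times\Phi\times\Phi\times\Phi$ with the $k$-th power of the projective cross-ratio, giving~\eqref{eq:pro:phot-cr} for all admissible quadruples.
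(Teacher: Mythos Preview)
Your argument is correct and differs genuinely from the paper's. The paper constructs an explicit $\mH_\Phi$-equivariant section $z\mapsto z^\vee$ of~$p_\Phi$ via a ``dual photon'' $\Phi^\vee=\mH_\Phi\cdot f_\infty\subset\cF_{\Theta}^{\mathrm{opp}}$ and then directly computes $\bb^\eta(x,y,z^\vee,w^\vee)$ for one explicit configuration using the $\mathfrak{sl}_2$-module structure of the cyclic submodule of~$E$ generated by the highest weight vector; both signs of the projective cross-ratio are handled simultaneously by that single calculation. You instead argue structurally: $\mH_\Phi$-invariance together with the cocycle identity forces $\bb^\eta_\Phi=f\circ[\cdot,\cdot,\cdot,\cdot]$ with $f$ multiplicative, and one period pins down the exponent on positive values. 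Two remarks: first, your final Veronese paragraph is in fact a self-contained proof---once $\Xi_\eta^*(z_0)|_V$ is seen to depend only on $p_\Phi(z_0)$, the resulting equivariant map $\Phi\to\mathbf P(V^*)$ must be the dual Veronese by irreducibility of $V^*\cong\mathrm{Sym}^k W^*$, and then Lemma~\ref{lem:proj-cross-tensor} (iterated, together with Schur to match the pairing up to scalar) gives $[x,y,z,w]^k$ for \emph{all} signs---so the period computation becomes redundant, though a pleasant sanity check. Second, ``semi-algebraicity'' in your first paragraph can be weakened to continuity of $\bb^\eta_\Phi$, which is easier to justify (e.g.\ via a local continuous section of~$p_\Phi$, or directly from the rational Veronese expression).
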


\begin{proof} Let~$\psi$ be in~$\cG$ such that
	$\mH_\Phi = \psi(\mH_\theta )$.
	We can as well assume that $\Phi = \mH_\Phi \cdot f_0$
	where $f_0$~is the attracting fixed point in~$\cF_\Theta$ for the action
	of~$h = \Psi(\exp(a))$ for some (and equivalently any) $a$~in
	the open Weyl chamber.
	Let also~$f_\infty$ be the repelling fixed point
	in~$\cF_{\Theta}^{\mathrm{opp}}$ for~$h$. The $\mH_\Phi$-orbit $\Phi^\vee =
	\mH_\Phi \cdot f_\infty$ is also 
	equivariantly isomorphic
	to the projective line $\mathbf{P}^1(\mathbb R) \simeq \mH_\theta /
	\mB_\theta$ (where $\mB_\theta$ is the standard Borel subgroup in
	$\mH_\theta$). Precisely, the isomorphisms are given by
	$$\begin{array}{cllccll}
		\mH_\theta / \mB_\theta &\longrightarrow &\Phi &\qquad& \mH_\theta / \mB_\theta &\longrightarrow &\Phi^\vee \\
		g\cdot \mB_\theta & \longmapsto &\Psi(g)\cdot f_0&&
		g\cdot \mB_\theta & \longmapsto &\Psi(g \dot{s}_\theta) \cdot f_\infty,
	\end{array}$$
		where $\dot{s}_\theta$ is an element of $\mH_\theta$ representing the non-trivial
	element in the Weyl group of $\mH_\theta$.
	
	These maps allow to define an $\mH_\Phi$-equivariant identification $z\mapsto z^\vee$ from
	$\Phi$ to $\Phi^\vee$. Moreover by equivariance, this identification has the following properties:\begin{itemize}
		\item The point $z^\vee$ is not transverse to $z$. Indeed from the Schubert's cells decomposition,
		$\Psi(\dot{s}_\theta) \cdot f_\infty$ is not
		transverse to~$f_0$.
		\item for all~$w$ in~$\Phi$ distinct from~$z$, the elements $w$
		and~$z^\vee$ are transverse, indeed  $f_0$ and $f_\infty$ are
		transverse.
	\end{itemize}
	This implies that $p_\Phi(z^\vee)=z$ and thus we can use this map $z\mapsto z^\vee$ to calculate the photon
	cross-ratio. For
	$x$, $y$, $z$, and $t$ in~$\Phi$,
	\begin{equation*}
		\bb^{\eta}_{\Phi}(x,y,z,t)= \bb^\eta(x,y,z^\vee,t^\vee)\ .
	\end{equation*}
	Let now~$E$ be the real vector space underlying an irreducible
	proximal
	representation $\tau\colon
	\mG \to \gl(E)$ of highest weight~$\eta$. We choose a basis
	$(e_i)_{i=1}^{d}$ of weight vectors such that $e_1$ generates the highest
	weight space with respect to the Cartan subspace $\psi_*(\mathfrak{a})$ of~$\mG$.
	The equivariant maps $\Xi\colon
	\cF_\Theta\to \mathbb{P}(E)$ and $\Xi^*\colon \cF_{\Theta}^{\mathrm{opp}}\to \mathbb{P}(E^*)$
	are then given 
	by $\Xi( g\cdot f_0) = \tau(g)\cdot [e_1]$ and $\Xi^*( g\cdot
	f_\infty) = \tau^{*}(g)\cdot [e^{*}_{1}]$ where $\tau^*\colon \mG \to
	\gl(E^*)$ is the contragredient representation $g\mapsto {}^T
	\tau(g)^{-1}$. Then
	\begin{equation*}\bb^\eta(a,b,c,d)=\bb^E(\Xi(a), \Xi(b), \Xi^*(c), \Xi^*(d))\ ,   \end{equation*} where
	$\bb^E( [v], [w], [\phi], [\psi]) = \frac{ \dual{v}{\phi}
		\dual{w}{ \psi}}{ \dual{v}{ \psi} \dual{w}{ \phi}}$.
	We now prove 
	Equality~\eqref{eq:pro:phot-cr}. By continuity   it is sufficient to treat the case when $x\neq y$, and by $\mH_\theta$-equivariance we may assume
	that $x= \psi(\dot{s}_\theta) f_0$; in other words, $x$~is the repelling fixed point in~$\Phi$ for the Weyl
	chamber of $\mH_\theta$, and
	$$y = f_0\ , \ z= \psi(\exp( \lambda x_\theta)\dot{s}_\theta)
	\cdot f_0\ \hbox{, and }\ \ t= \psi(\exp( \mu x_\theta)\dot{s}_\theta)
	\cdot f_0\ .$$ Thus  the projective cross-ratio $[x,y,z,t]$ is equal to
	$[0,\infty, \lambda,\mu] =\lambda/\mu$. Furthermore we have 
	\begin{align*}
		\bb^{\eta}_{\Phi}(x,y,z,t)
		&= \bb^{\eta}(x,y,z^\vee,t^\vee)\\
		&= \bb^{\eta}( \psi(\dot{s}_\theta) f_0, f_0, \psi( \exp( \lambda x_\theta)) f_\infty, \psi(\exp(
		\mu x_\theta)) f_\infty)\\
		&= \bb^E (\tau\circ\psi(\dot{s}_\theta) [e_1], [e_1], \tau^*\circ\psi(\exp( \lambda
		x_\theta))[e_{1}^{*}], \tau^*\circ\psi(\exp( \mu x_\theta))[e_{1}^{*}])\\
		&= \frac{ \dual{ \tau\circ\psi(\dot{s}_\theta) e_1}{ \tau^*\circ\psi(\exp( \lambda
				x_\theta))e_{1}^{*}} \dual{ e_1}{  \tau^*\circ\psi(\exp( \mu
				x_\theta))e_{1}^{*}} }{ \dual{\tau\circ\psi(\dot{s}_\theta) e_1}{ \tau^*\circ\psi(\exp( \mu
				x_\theta))e_{1}^{*}} \dual{ e_1}{  \tau^*\circ\psi(\exp( \lambda
				x_\theta))e_{1}^{*}} }\\
		&= \frac{ \dual{\tau\circ\psi(\dot{s}_\theta) e_1}{  \tau^*\circ\psi(\exp( \lambda
				x_\theta))e_{1}^{*}}  }{ \dual{\tau\circ\psi(\dot{s}_\theta) e_1}{  \tau^*\circ\psi(\exp( \mu
				x_\theta))e_{1}^{*}} }
	\end{align*}
	since $\dual{ e_1}{ \tau^*\circ\psi(\exp( \mu
		x_\theta))e_{1}^{*}} = \dual{  \tau\circ\psi(\exp( \mu
		x_\theta)) e_1}{e_{1}^{*}} = \dual{  e_1}{e_{1}^{*}} =1$ and similarly $\dual{  e_1}{ \tau^*\circ\psi(\exp( \lambda
		x_\theta))e_{1}^{*}} =1$. The proposition will be proven if we can show
	that there is a non-zero number~$c$ such that, for all~$\lambda$
	in~$\mathbb{R}$,
	\begin{equation}
		\label{eq:dual-power}
		\dual{ \tau\circ\psi(\dot{s}_\theta) e_1}{ \tau^*\circ\psi(\exp( \lambda
			x_\theta))e_{1}^{*}} = c \lambda^{\dual{\eta}{h_\theta}}\ .
	\end{equation}
	For this, note first that 
	$$\dual{ \tau\circ\psi(\dot{s}_\theta) e_1}{ \tau^*\circ\psi(\exp( \lambda
		x_\theta))e_{1}^{*}} =\dual{ \tau\circ\psi(\exp( \lambda
		x_\theta)) \tau\circ\psi(\dot{s}_\theta) e_1}{ e_{1}^{*}} .$$ Furthermore, denoting
	$\tau_*\colon \mathfrak{g}\to \mathrm{End}(E)$ the Lie algebra homomorphism
	associated to~$\tau$ and $\psi_*\colon \mathfrak{g}_0\to \mathfrak{g}$ the
	isomorphism associated to~$\psi$, classical calculations in $\mathfrak{sl}_2$-modules
	give that $\tau\circ\psi(\dot{s}_\theta) e_1$ is a non-zero multiple of
	$(\tau_*\circ\psi_*(x_{-\theta}))^{ \dual{h_\theta}{\eta}} e_1$ and that 
	$\dual{(\tau_*\circ\psi_*(x_\theta))^k (\tau_*\circ\psi_*(x_{-\theta}))^{ \dual{h_\theta}{\eta}}
		e_1}{ e_{1}^{*}} =0$ if $k\neq \dual{ h_\theta}{\eta}$ and  $\dual{(\tau_*\circ\psi_*(x_\theta))^{ \dual{h_\theta}{\eta}} (\tau_*\circ\psi_*(x_{-\theta}))^{ \dual{h_\theta}{\eta}}
		e_1}{ e_{1}^{*}} \neq 0$. Using the equality $\tau\circ\psi(\exp(\lambda
	x_\theta)) = \sum_k \frac{1}{k!} \lambda^k (\tau_*\circ\psi
	_*(x_\theta))^k$ gives the existence of~$c$ such that
	Equation~\eqref{eq:dual-power} holds, hence the
	wanted conclusion.
\end{proof}

\begin{remark}
	Galiay obtained Proposition~\ref{pro:phot-cr} for the photons in
	the Shilov boundary of tube type Hermitian Lie groups, see
	\cite[Lemma 6.11]{Galiay}.
\end{remark}
\section{Positivity}\label{sec:def-pos}

Now we will restrict to semisimple Lie groups $\mG_0$ that admit a {\em
	positive structure relative to a subset} $\Theta$ of  $\Delta$ as defined in \cite[Definition~3.1]{GuichardWienhard_pos}. 
By definition, 
this means that for every $\theta$ in  $\Theta$ there exists a
convex acute open cone $c_\theta $ inside $\mk u_\theta$, which is invariant by
$\mL_{\Theta}^{\circ}$. 
(Note that in \cite{GuichardWienhard_pos}, the symbol
$c_\theta$ stands for the \emph{closed} invariant cone, but, since the closed cone does not play a big role in the present work, we simplify notation and a denote here by  $c_\theta$ the  open cone.)

As the action of~$\mL^{\circ}_{\Theta}$ on~$\mathfrak{u}_\theta$ is
irreducible, there exist exactly two such invariant cones (namely $c_\theta$
and $-c_\theta$). We distinguish between the two invariant cones by requesting that the element~$x_\theta$ (of the $\skd$-triple associated
to~$\theta$, cf.\ Section~\ref{sec:skd-triples}) belongs to the closure of~$c_\theta$ (cf.\
\cite[Theorem~3.13]{GuichardWienhard_pos}). Similarly the
cone~$c_{-\theta}$ in~$\mathfrak{u}_{-\theta}$ is the invariant cone whose
closure contains~$x_{-\theta}$. Equivalently one can set $c_{-\theta} =
-\sigma(c_\theta)$ where $\sigma$~is the Cartan involution.

\begin{remark}
	There are exactly four families of simple Lie groups admitting a positive structure
	with respect to some subset $\Theta$ of $\Delta$ (see \cite[Theorem~1.1]{GuichardWienhard_pos}). 
	Up to isogeny, these correspond to the following cases: 
	\begin{enumerate}
		\item $\mG_0$ is a split real form, and $\Theta = \Delta$;
		\item $\mG_0$ is Hermitian of tube type and of real rank~$r$ and $\Theta = \{ \alpha_r\}$, where $\alpha_r$ is the long simple restricted root;
		\item $\mG_0$ is $\mathrm{SO}(p+1,p+k)$, $p>1$, $k>1$ and
		$\Theta = \{ \alpha_1, \dots, \alpha_{p}\}$, where $\alpha_1, \dots,
		\alpha_{p}$ are the long simple restricted roots; 
		\item $\mG_0$ is the real form of $F_4$, $E_6$, $E_7$, or of $E_8$ whose system of restricted roots
		is of type~$F_4$, and $\Theta = \{ \alpha_1, \alpha_2\}$, where $\alpha_1,
		\alpha_{2}$ are the long simple restricted roots.
	\end{enumerate}
	In general a semisimple Lie group admits a positive structure relative to~$\Theta$ if it
	is the almost product of simple Lie groups $\mG_i$, $i= 1, \dots, n$, where
	each $\mG_i$ admits a positive structure relative to~$\Theta_i$ and $\Theta  = \Theta_1 \cup
	\cdots \cup \Theta_n$. Here the parabolic subgroup~$\mP_\Theta$ is the almost
	direct product of the parabolic subgroups in the factors~$\mG_i$ and the flag
	manifold $\cF_\Theta$ is the product of the flag manifolds corresponding to the
	different factors.
\end{remark}

The fact that $\mG_0$ admits a positive structure relative to~$\Theta$ implies in particular that 
\begin{enumerate}
	\item the parabolic subgroup $\mP_\Theta$ is conjugate to its opposite, 
	\item we have $\dim \mk g_\alpha = 1$ for all $\alpha$ in $\Theta$. 
\end{enumerate}

Furthermore the positive structure gives rise to an open and sharp
semigroup
$\mN_\Theta$ in $\mU_\Theta$ invariant under $\mL_{\Theta}^{\circ}$. The
properties of~$\mN_\Theta$ will be reflected in the properties of the diamonds
that we introduce next. We refer to
\cite{GuichardWienhard_pos} for a precise description of~$\mN_\Theta$ and its
algebraic properties.

\subsection{Diamonds}
\label{sec:diamonds-positive-n}

Let~$x$ and~$y$ be transverse points in~$\cF_\Theta$ (recall that
$\cF_{\Theta}^{\mathrm{opp}} \simeq \cF_\Theta$), and consider an element~$\psi
$ in~$\cG$ such that 
$\pi^{\cL}(\psi) =
(x,y)$. Note that $\psi$~depends only on~$(x,y)$ up to
precomposition by the conjugation by an
element in~$\mL_\Theta$. 
Then by \cite[Proposition 2.6]{Guichard:2021aa}  and \cite[Theorem
8.1]{GuichardWienhard_pos} we have:

\newcommand{\diamant}{D}
\begin{proposition}
	Given $(x,y)$ and $\psi$ as above, the set 
	$\psi(\mN_\Theta)\cdot x$ is a connected component of the set 
	\[
	\{z \in \cF_\Theta\,|\, z\pitchfork x \text{ and } z\pitchfork y \}\ .
	\]
\end{proposition}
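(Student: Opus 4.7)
My plan is to transfer the statement through $\psi$ to a model situation inside $\mG_0$, and then to recognize the claim as a standard consequence of the defining properties of the positive semigroup $\mN_\Theta$ from \cite{GuichardWienhard_pos, Guichard:2021aa}.

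\textbf{Reduction and parametrization.} Since $\psi$ conjugates $\mP_\Theta$ and $\mP_\Theta^{\mathrm{opp}}$ to $\mP_x$ and $\mP_y$ respectively, the assertion is $\mG$-equivariant under $\psi$, and it suffices to establish it in the model case where $(x,y)$ is the pair of standard base points of $(\cG/\mP_\Theta, \cG/\mP_\Theta^{\mathrm{opp}})$. By the Bruhat decomposition, the open Schubert cell $\{z \in \cF_\Theta : z\pitchfork x\}$ is a single free orbit of the unipotent group $\mU_\Theta$ through an appropriate base point $e$; under this parametrization, the set
\[
\Omega \defeq \{z \in \cF_\Theta : z\pitchfork x \text{ and } z\pitchfork y\}
\]
corresponds to an open, dense, semi-algebraic subset $\mU_\Theta^\pitchfork \subseteq \mU_\Theta$.

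\textbf{Positivity argument.} By its construction from the invariant cones $c_\theta$, the semigroup $\mN_\Theta$ is an open connected subsemigroup of $\mU_\Theta$. The two cited results---\cite[Theorem~8.1]{GuichardWienhard_pos} and \cite[Proposition~2.6]{Guichard:2021aa}---supply the two key facts: (i) $\mN_\Theta \subseteq \mU_\Theta^\pitchfork$, so that $\mN_\Theta\cdot e$ is an open connected subset of $\Omega$; and (ii) the topological boundary $\partial \mN_\Theta$ inside $\mU_\Theta$ is contained in the closed degeneracy locus $\mU_\Theta \smallsetminus \mU_\Theta^\pitchfork$. From (ii), any sequence in $\mN_\Theta$ converging to a point of $\Omega$ has its limit already in $\mN_\Theta$, so $\mN_\Theta\cdot e$ is also closed in $\Omega$. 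Being open, closed, non-empty and connected in $\Omega$, the orbit $\mN_\Theta\cdot e$ is exactly one connected component of $\Omega$.

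\textbf{Main obstacle.} The crux is statement (ii), the identification of $\partial\mN_\Theta$ with the failure of transversality. In the referenced works it is obtained from the Lusztig-style positive parametrization of $\mN_\Theta$: elements of $\mN_\Theta$ are characterized by the simultaneous positivity of a finite collection of generalized minors (built from $\Theta$-compatible dominant weights via the representations $\tau$ introduced in Section~\ref{sec.cr}), while the failure of the transversality $n\cdot e \pitchfork e$ amounts to the vanishing of one such minor. The signatures of these minors thus distinguish the connected components of $\Omega$, and $\mN_\Theta\cdot e$ is the component on which all of them are strictly positive.
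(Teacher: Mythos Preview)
The paper does not give its own proof of this proposition: it simply records that the statement follows from \cite[Proposition~2.6]{Guichard:2021aa} and \cite[Theorem~8.1]{GuichardWienhard_pos}. Your proposal invokes exactly the same two references for the two key facts (i) and (ii), so at the level of attribution your approach coincides with the paper's. The additional material you supply---the reduction to the model pair, the parametrization by the big unipotent cell, the open/closed-in-$\Omega$ argument, and the ``Main obstacle'' paragraph on generalized minors---is a reasonable gloss on \emph{why} those cited results yield the conclusion, but it is not something the present paper attempts; the paper treats the proposition as a black-box import.

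One small remark on your write-up: be careful with which unipotent group parametrizes which big cell. With $\pi^{\cL}(\psi)=(x,y)$ one has $\psi(\mU_\Theta)=\mU_x$, which \emph{fixes}~$x$; the cell $\{z\in\cF_\Theta: z\pitchfork y\}$ is the free $\mU_y$-orbit through~$x$, and it is inside this cell that one reads off the transversality condition $z\pitchfork x$ and identifies the $\mN_\Theta$-orbit as a component. Your sentence ``$\{z\in\cF_\Theta: z\pitchfork x\}$ is a single free orbit of~$\mU_\Theta$ through an appropriate base point~$e$'' has the roles of~$x$ and~$y$ (equivalently of $\mU_\Theta$ and $\mU_\Theta^{\mathrm{opp}}$) slightly crossed, though the subsequent argument is unaffected once the correct identification is made.
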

Such a connected component~$\diamant$ is called a \emph{diamond with
	extremities~$x$ and~$y$}. A diamond  with extremities~$x$ and~$y$ will be denoted $\diamant(x,y)$; observe the
slight abuse of notation since $\diamant(x,y)$ does not only depend on~$x$
and~$y$: there are $2^{\sharp \Theta}$ diamonds with the same extremities.

The map~$\pi^{\cF}_{\psi}$
(see Section~\ref{sec:parab-subgr-flag}) gives an identification of $\mk
u^{\mathrm{opp}}_{\Theta} $ with~$ \T_x \mathcal{F}$ and the  map~$\pi^{\cF^{\mathrm{ opp}}}_{\psi}$  gives an identification of~$\mk u_{\Theta} $
with~$\T_y \mathcal{F}$. 
The tangent cone at~$x$ of the closure of a diamond $\diamant=\psi(\mN_\Theta)\cdot x$ is exactly the
image by $\pi^{\cF}_{\psi}$  of the closed cone $\sum_{\theta\in\Theta}
\bar{c}_{-\theta}$ inside $\mathfrak{u}^{\mathrm{opp}}_{\Theta}$
(cf.\
\cite[Section~8.5]{GuichardWienhard_pos}):
\begin{itemize}
	\item The tangent vectors belonging to this tangent cone will be
	called \emph{non-negative} (with respect to~$\diamant$);
	\item  The tangent vectors in
	the (relative) interior of the tangent cone, i.e.\ those belonging to
	$\pi^{\cF}_{\psi}\bigl(\sum_{\theta\in\Theta} {c}_{-\theta}\bigr)$, are called
	\emph{positive}.
\end{itemize}
Equivalently, a vector~$v$ in $\T_x \cF_\Theta$ is positive (respectively
non-negative) with respect to~$\diamant$ if $\iota_{\psi}^{\cF}(v)$ belongs to $\sum_{\theta\in\Theta}
{c}_{-\theta}$ (respectively to $\sum_{\theta\in\Theta}
\bar{c}_{-\theta}$).

Note that the shape of a diamond near its extremities should really be thought of
as a ``cusp''; 
indeed the diamond is open in~$\cF_\Theta$ whereas the dimension
of its tangent cone at~$x$ is of positive codimension in $\mathsf{T}_x\cF_\Theta$ as soon as the set 
$\Theta$ has at least 2 elements.

The tangent cone at~$y$ of~$\diamant$ is the image by~$\pi^{\cF^\mathrm{opp}}_{\psi}$ of
$\sum_{\theta\in\Theta} \bar{c}_{\theta}$.

The subset $\psi(\mN_{\Theta}^{-1})\cdot x$ is also a diamond with extremities~$x$ and~$y$ and is called the \emph{diamond opposite to~$\diamant$} and will be denoted by $\diamant^\vee$.
Its tangent cone at~$x$ is the image by $\pi^{\cF}_{\psi}$ of
$\sum_{\theta\in\Theta} -
\bar{c}_{-\theta}$ whereas its tangent cone at~$y$ is the image by
$\pi^{\cF^\mathrm{opp}}_{\psi}$ of $\sum_{\theta\in\Theta} - \bar{c}_{\theta}$. This
opposite diamond $\diamant^\vee$ depends only on~$\diamant$ and not on the isomorphism~$\psi$.

\subsection{ Positive tuples}
\label{sec:pos-tuples}

When $z$~belongs to a diamond~$\diamant$ with extremities~$x$ and~$y$, the triple
$(x,z,y)$ of $\cF^{3}_{\Theta}$ will be called \emph{positive}. Positive triples form a
$\mG$-invariant and $S_3$-invariant subset of~$\cF^{3}_{\Theta}$. When $z$~belongs
to~$\diamant$ and~$w$ to~$\diamant^\vee$, the quadruple $(x,z,y,w)$ is called
\emph{positive}. Positive quadruples form a $\mG$-invariant subset of $\cF^{4}_{\Theta}$
that is invariant by the cyclic permutation $(x,z,y,w)\mapsto (z,y,w,x)$ as well as  the double transposition $(x,z,y,w)\mapsto (z,x,w,y)$.

Finally, for any $k$ greater than~$4$, positive
$k$-tuples are characterized 
in \cite[Section~2.4]{Guichard:2021aa} as those $(x_1, x_2, \dots, x_k)$ in $
\cF^{k}_{\Theta}$ such that  $(x_i, x_j, x_\ell, x_m)$ is a positive
quadruple for all $1\leq i<j<\ell<m\leq k$.

Let~$E$ be a set with a cyclic ordering. A map $f\colon E\to\cF_\Theta$ will be called
\emph{positive} if, for every $k\geq 3$ and for every cyclically ordered
$k$-tuple $(t_1, \dots, t_k)$ of~$E^k$, the $k$-tuple $(f(t_1), \dots, f(t_k))$ 
of $\cF^{k}_{\Theta}$ is positive. In view of the definition of positive $k$-tuples, it is
enough to check this property with $k=3$ or~$4$ and, in the case when $\sharp
E>3$, only with $k=4$.

Examples of positive maps are \emph{positive circles} (as well as their restrictions to intervals). These arise as orbits in~$\cF_\Theta$ for certain $3$-dimensional subgroups. We
refer to \cite[Section~2.5]{Guichard:2021aa} or to \cite[Section~7]{GuichardWienhard_pos} for more  details. We will
need the following statement that can be easily obtained using positive circles.

\begin{lemma}
	\label{lem:positive-arc}
	Let $(x,y)$ be in~$\cL$ and let~$D$ be a diamond with extremities~$x$
	and~$y$. There exists then a smooth 	positive arc $c\colon [0,1]\to \cF_\Theta$
	such that $c(0)=x$, $c(1)=y$, and $c(t)$ is in $D$ for all $t$ in $(0,1)$.
\end{lemma}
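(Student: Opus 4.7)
The plan is to realize the desired arc as (a reparameterization of) one of the two open arcs of a positive circle through $x$ and $y$. Positive circles are orbits of certain $\psld$-subgroups of $\mG$ arising from $\mathfrak{sl}_2$-triples compatible with the positive structure, as developed in \cite[Section~2.5]{Guichard:2021aa}.

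First, I fix $\psi\in\cG$ with $\pi^{\cL}(\psi)=(x,y)$ so that the diamond $D$ is the $\psi(\mN_\Theta)$-orbit singled out by the proposition defining diamonds. Using the positive cones I choose a positive element $u^+=\sum_{\theta\in\Theta}u^{+}_{\theta}\in\sum_{\theta\in\Theta}c_\theta\subset\mathfrak{u}_\Theta$ and a companion $u^-\in\sum_{\theta\in\Theta}c_{-\theta}\subset\mathfrak{u}^{\mathrm{opp}}_\Theta$ (for instance $u^-=-\sigma(u^+)$). By the positive-structure version of Jacobson--Morozov established in \cite{GuichardWienhard_pos}, one may arrange that $(u^+,u^-,h)$ is an $\mathfrak{sl}_2$-triple with $h=[u^+,u^-]\in\mathfrak{a}$ satisfying $\dual{h}{\theta}>0$ for every $\theta\in\Theta$. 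Let $H_\psi\subset\mG$ be the connected subgroup generated by the $\psi$-image of this triple.

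Second, by the argument of Proposition~\ref{pro:PhoRP} the action of $H_\psi$ on any of its regular orbits factors through a faithful action of $\psld$ on $\mathbf{P}^1(\mathbb R)$, and Proposition~\ref{pro:loxo-alg} ensures that $\psi(\exp h)$ is $\Theta$-loxodromic with attracting fixed point $x$ and repelling fixed point $y$. Hence for any $z\in D$ the orbit $\Phi=H_\psi\cdot z$ is a smooth embedded circle in $\cF_\Theta$ passing through $x$ and $y$, split by them into two open arcs. The arc containing $z$ is parameterized by $t\mapsto\psi(\exp(tu^+))\cdot z$ for $t\in\mathbb R$: since $\exp(tu^+)\in\mN_\Theta$ for $t\geq 0$ and $D$ is invariant under $\psi(\mN_\Theta)$, the half-orbit $\{t\geq 0\}$ is contained in $D$; likewise the half-orbit $\{t\leq 0\}$ lies in $D^\vee$. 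One of these two sides converges to $x$ and the other to $y$; re-parameterizing the closure of the arc through $D$ by a smooth diffeomorphism from $[0,1]$ produces $c\colon[0,1]\to\overline D$ with $c(0)=x$, $c(1)=y$ and $c((0,1))\subset D$.

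Positivity of $c$ then follows because $\Phi$ is a \emph{positive circle} in the sense of \cite[Section~2.5]{Guichard:2021aa}, and positive arcs are precisely restrictions of positive circles to (cyclically ordered) sub-intervals. The main technical input, which I would cite rather than reprove, is the compatibility between the positive structure and the Jacobson--Morozov construction: once the correct $\mathfrak{sl}_2$-triple with $u^\pm$ in the positive cones is available and its $H_\psi$-orbits on $\cF_\Theta$ are known to be positive circles, everything else reduces to a one-dimensional computation on $\mathbf{P}^1(\mathbb R)$.
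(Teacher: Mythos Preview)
Your approach via positive circles is exactly what the paper intends; the paper itself gives no proof beyond the sentence ``can be easily obtained using positive circles''. The construction of the $\mathfrak{sl}_2$-triple $(u^+,u^-,h)$ with $u^\pm$ in the positive cones and the resulting subgroup $H_\psi$ is correct and is precisely how positive circles are produced in \cite[Section~2.5]{Guichard:2021aa} and \cite[Section~7]{GuichardWienhard_pos}.

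There is, however, a genuine error in the middle of your argument. The assertion ``for any $z\in D$ the orbit $\Phi=H_\psi\cdot z$ is a smooth embedded circle in $\cF_\Theta$ passing through $x$ and $y$'' is false. The argument of Proposition~\ref{pro:PhoRP} applies only when the stabilizer of the base point in $H_\psi$ is a Borel subalgebra; this holds for $z=x$ (since $\psi_*(u^+)$ and $\psi_*(h)$ both lie in $\mathfrak{p}_x$) and for $z=y$, but for a generic $z\in D$ the stabilizer in $H_\psi$ is finite and the orbit $H_\psi\cdot z$ is three-dimensional. Already for $\mG=\mathsf{PSL}_3(\mathbb R)$ with $H_\psi$ the principal $\psld$, the only one-dimensional $H_\psi$-orbit in the full flag variety is the Veronese circle; all other orbits are open or two-dimensional, and they certainly do not contain~$x$ and~$y$. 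Consequently the subsequent parameterization ``$t\mapsto\psi(\exp(tu^+))\cdot z$'' does not describe an arc of a circle through $x$ and $y$, and the claim that the half with $t\leq 0$ lies in $D^\vee$ has no justification.

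The fix is simple: take $\Phi=H_\psi\cdot x$ itself, which \emph{is} a circle and contains~$y$ (the nontrivial Weyl element of~$H_\psi$ exchanges $x$ and $y$). That one of its two open arcs lies in~$D$ is then most cleanly obtained not by arguing that $\exp(tu^+)\in\mN_\Theta$, but by invoking directly that $\Phi$ is a positive map in the sense of Section~\ref{sec:pos-tuples}: every cyclically ordered triple on~$\Phi$ is a positive triple in~$\cF_\Theta$, so each point of an open arc between~$x$ and~$y$ lies in some diamond with those extremities. The tangent to~$\Phi$ at~$x$ is $\pi^{\cF}_{\psi}(u^-)$, which is a positive vector for~$D$ in the sense of Section~\ref{sec:diamonds-positive-n}, so the arc lies in~$D$ rather than in one of the other $2^{\sharp\Theta}-1$ diamonds.
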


Finally:
\begin{definition}\label{defi:semipositive}
	We say that a quadruple of points $(X,Y,x,y)$ is {\em semi-positive} if	$X$ and $Y$ are both transverse to~$x$ and~$y$, and moreover $(X,Y,x,y)$ is the limit of a sequence of positive quadruples.
\end{definition}

Observe that if $(X,Y,x,y)$ is semi-positive, then $(Y,X,y,x)$ is semi-positive as well.

We prove now that the photon projection of a positive
quadruple is a cyclically ordered quadruple (on the projective line) and that
photon projections give rise to semi-positive quadruples. Note here that we
can (and will) identify $\cF_{\Theta}^{\mathrm{opp}}$ with~$\cF_\Theta$ so
that the photon projection~$p_\Phi$ is indeed defined on the open subset
of~$\cF_\Theta$ of elements that are transverse to some point in~$\Phi$. With
this in mind:
\begin{proposition}\label{pro:semi-pos}
	Assume that $(X,Y,x,y)$ is a positive four-tuple in~$\cF_\Theta$. Let~$\Phi$
	be a photon through~$X$, then
	\begin{enumerate}
		\item the configuration $(X,p_\Phi(Y),p_\Phi(x),p_\Phi(y))$ in the
		projective line~$\Phi$ is positive,
		\item the configurations $(p_\Phi(Y),Y,x,y)$ and $(Y,p_\Phi(Y),y,x)$
		in~$\cF_\Theta$ are semi-positive. 
	\end{enumerate} 
\end{proposition}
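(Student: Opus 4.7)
The plan is to establish part~(1) using the connectedness of diamonds together with a local analysis at~$X$, and then deduce part~(2) by a continuous deformation of~$X$ along the photon~$\Phi$.

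First I would check that the four points $X, p_\Phi(Y), p_\Phi(x), p_\Phi(y)$ are pairwise distinct in~$\Phi$: distinctness from~$X$ follows from Corollary~\ref{coro:photon-proj-trans} since $Y, x, y$ are each transverse to~$X$ by positivity of $(X,Y,x,y)$, while pairwise distinctness among $p_\Phi(Y), p_\Phi(x), p_\Phi(y)$ follows from Corollary~\ref{coro:fibre-not-transverse} applied to $Y\pitchfork x$, $Y\pitchfork y$, $x\pitchfork y$ (here~$\Theta$ is opposition-invariant since $\mG$~admits a positive structure relative to~$\Theta$). Let~$D$ be the diamond with extremities $X, x$ containing~$Y$, and let~$D^\vee$ be its opposite diamond, which contains~$y$. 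Since both~$D$ and~$D^\vee$ are connected, $p_\Phi(D)$ and $p_\Phi(D^\vee)$ are connected subsets of~$\Phi$; every point of $D\cup D^\vee$ is transverse to~$X$ and to~$x$, so Corollary~\ref{coro:photon-proj-trans} places both images inside $\Phi\smallsetminus\{X, p_\Phi(x)\}$, which has exactly two connected arcs. Consequently each of $p_\Phi(D), p_\Phi(D^\vee)$ lies in a single arc.

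The main obstacle for part~(1) is to show that these two arcs are \emph{different}. I would argue this by a local tangential analysis at~$X$: fixing $\psi\in\cG$ with $\pi^\cL(\psi)=(X,x)$, the tangent cone to~$\bar D$ at~$X$ is $\pi^\cF_\psi\bigl(\sum_{\theta'\in\Theta}\bar c_{-\theta'}\bigr)$, and that of~$\bar D^\vee$ is its negative, while the tangent line to~$\Phi$ at~$X$ is spanned by a $\theta$-light-like direction. Choosing positive arcs $c_D, c_{D^\vee}$ issuing from~$X$ into $D$ and $D^\vee$ respectively via Lemma~\ref{lem:positive-arc}, the projections $p_\Phi\circ c_D$ and $p_\Phi\circ c_{D^\vee}$ leave~$X$ in opposite directions along~$\Phi$ (since the images of the two opposite tangent cones under $dp_\Phi$ at~$X$ hit the two sides of the tangent line to~$\Phi$); hence they reach $p_\Phi(x)$ along opposite arcs of $\Phi\smallsetminus\{X,p_\Phi(x)\}$, placing $p_\Phi(Y)$ and $p_\Phi(y)$ on opposite arcs and yielding the claimed cyclic order.

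For part~(2), part~(1) singles out a unique open arc $I\subset\Phi$ from~$X$ to~$p_\Phi(Y)$ not meeting $\{p_\Phi(x), p_\Phi(y)\}$. Let $\phi\colon[0,1]\to\Phi$ be a continuous path with $\phi(0)=X$, $\phi(1)=p_\Phi(Y)$, and $\phi((0,1))\subset I$. I claim $(\phi(s),Y,x,y)$ is positive for every $s\in[0,1)$: all required transversalities are maintained, by Corollary~\ref{coro:photon-proj-trans} applied to $\phi(s)\in\Phi$ with $\phi(s)\notin\{p_\Phi(Y),p_\Phi(x),p_\Phi(y)\}$, together with the transversalities $Y\pitchfork x$, $Y\pitchfork y$, $x\pitchfork y$ inherited from the initial positivity; moreover the diamond $D_s$ with extremities $(\phi(s),x)$ containing~$Y$ varies continuously in~$s$, and~$Y$ never reaches $\partial D_s$ on~$[0,1)$ (which would require losing transversality to an extremity), so $Y\in D_s$ throughout, and analogously $y\in D_s^\vee$. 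Letting $s\to 1$, the limit $(p_\Phi(Y),Y,x,y)$ still satisfies the transversality conditions required by the definition of semi-positivity (by Corollary~\ref{coro:photon-proj-trans}), so it is a limit of positive quadruples and hence semi-positive. The semi-positivity of $(Y,p_\Phi(Y),y,x)$ follows immediately from the invariance of (semi-)positive quadruples under the double transposition.
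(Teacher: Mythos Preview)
Your treatment of part~(2) is correct and essentially the same as the paper's: once~(1) is known, one slides~$X$ toward~$p_\Phi(Y)$ along the arc of~$\Phi$ avoiding $p_\Phi(x)$ and $p_\Phi(y)$; all required transversalities persist, so positivity persists by continuity, and the limit is semi-positive.

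The gap is in your tangential analysis for part~(1). The photon projection~$p_\Phi$ is not defined at~$X$: whenever $\dim\cF_\Theta>1$, distinct points of a photon are never transverse (the Schubert cell of $\dot s_\theta\cdot X$ relative to~$X$ has dimension~$1$, not the top dimension $|\Sigma_\Theta^+|$; concretely, in $\mathrm{Sp}(2n,\mathbb R)$ any two Lagrangians on a photon share an $(n{-}1)$-plane), so $X\notin O_\Phi$ and there is no ``$dp_\Phi$ at~$X$'' to invoke. Your assertion that $p_\Phi\circ c_D$ and $p_\Phi\circ c_{D^\vee}$ leave~$X$ in opposite directions along~$\Phi$ therefore has no meaning as stated; to salvage it you would first need to show $p_\Phi(c_D(t))\to X$ as $t\to 0^+$ and then control the first-order direction of approach, neither of which you supply. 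The paper takes a different route: it first treats the case where $(X,Y,x,y)$ lie on a common positive circle~$C$, observing that $p_\Phi$ restricted to $C\smallsetminus\{X\}$ is a continuous injection into $\Phi\smallsetminus\{X\}$ (injectivity from Corollary~\ref{coro:fibre-not-transverse}), hence monotone after identifying both with~$\mathbb R$, which preserves the order of $Y,x,y$ and thus the cyclic order of $(X,p_\Phi(Y),p_\Phi(x),p_\Phi(y))$. The general positive quadruple is then reduced to this case by a deformation through positive quadruples keeping $X,Y$ fixed (citing \cite[Lemma 3.7]{Guichard:2021aa}); along the deformation the four projected points stay pairwise distinct, so their cyclic order type is preserved.
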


\begin{proof}
	Since $Y$, $x$, and $y$ are transverse to $X$ in $\Phi$, we indeed have that
	$Y$, $x$, and $y$ belong to~$O_\Phi$ so that we can consider the photon
	projections $p_\Phi(Y)$, $p_\Phi(x)$, and $p_\Phi(y)$.
	
	Corollary~\ref{coro:fibre-not-transverse}
	gives that $X,p_\Phi(Y),p_\Phi(x),p_\Phi(y)$ are pairwise distinct. Equally if $C$~is a positive circle though~$X$ and~$Y$, the restriction
	of~$p_\Phi$ to~$C$ is an injective continuous map to the photon $\Phi$ and thus sends
	positive configurations in $\cF_\Theta$ to positive configurations in $\Phi$ (with respect to the
	positive structure on the projective line~$\Phi$). Given any positive
	configuration $(X,Y,x_0,y_0)$, we can find a deformation $(X,Y,x_t,y_t)$
	through positive configurations so that $(X,Y,x_1,y_1)$ is on a positive
	circle \cite[Lemma 3.7]{Guichard:2021aa}. Hence for any positive $(X,Y,x,y)$,
	the  configuration $(X,p_\Phi(Y),p_\Phi(x),p_\Phi(y))$ is  positive with
	respect to~$\Phi$. This proves the first item.
	
	In particular, $p_\Phi(x)$ and $p_\Phi(y)$ both lie in the same connected
	component of $\Phi\smallsetminus\{X,p_\Phi(Y)\}$. Let $I$ be the other
	component of $\Phi\smallsetminus\{X,p_\Phi(Y)\}$, we now observe that for all
	$Z$ in $I$, $Z$ is distinct from~$p_\Phi(x)$, from~$p_\Phi(y)$, and
	from~$p_\Phi(Y)$, hence transverse (by the definition of $p_\Phi$) to~$x$,
	to~$y$, and to~$Y$. It follows by continuity and transversality that
	$(Z,Y,x,y)$ is positive. Letting $Z$~tend to $p_\Phi(Y)$, we get that
	$(p_\Phi(Y),Y,x,y)$ is semi-positive. Since double  transpositions preserve
	positivity  $(Y,p_\Phi(Y),y,x)$ is also semi-positive.
\end{proof}

As an important consequence of the previous proposition and of Proposition~\ref{pro:phot-cr}, we have that the photon cross-ratio of a positive
quadruple is positive.

\begin{proposition}\label{pro:phot>0}
	Let~$\eta$ be a $\Theta$-compatible dominant non-zero weight.
	Given a photon $\Phi$ through a point $x$, as well as $y$, $z$ and $w$ in
	$\cF_\Theta$ such that $(x,y,z,w)$ is positive, then
	$\bb^{\eta}(p_\Phi(w), x, y, z) > 1$.
\end{proposition}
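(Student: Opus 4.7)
The plan is to collapse the computation of $\bb^\eta$ onto the projective line $\Phi$ using the photon projection, and then read off the inequality from the positive-cross-ratio fact on $\mathbf{P}^1(\mathbb{R})$.

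First, since $(x,y,z,w)$ is positive, pairwise transversality gives that $y$, $z$, $w$ all lie in $O_\Phi$ (each being transverse to $x \in \Phi$), so the photon projections $p_\Phi(y)$, $p_\Phi(z)$, $p_\Phi(w)$ are well-defined. The definition of $\bb^\eta_\Phi$ (Section~\ref{sec:photon-cross-ratio}, via Proposition~\ref{prop:photon-cross-ratio-equal-1} and the cocycle consequence \eqref{eq:equality-of-bb-eta}) yields, after identifying $\cF^{\mathrm{opp}}_\Theta$ with $\cF_\Theta$,
\[
\bb^{\eta}(p_\Phi(w), x, y, z) \;=\; \bb^{\eta}_\Phi\bigl(p_\Phi(w), x, p_\Phi(y), p_\Phi(z)\bigr).
\]

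Second, I would apply Proposition~\ref{pro:semi-pos}(1) to the positive quadruple $(x,y,z,w)$ and the photon $\Phi \ni x$, which says exactly that $(x, p_\Phi(y), p_\Phi(z), p_\Phi(w))$ is a positive---i.e.\ cyclically ordered---quadruple on $\Phi \cong \mathbf{P}^1(\mathbb{R})$. Cyclic permutation (which preserves positivity, cf.\ Section~\ref{sec:pos-tuples}) then gives that $(p_\Phi(w), x, p_\Phi(y), p_\Phi(z))$ is also cyclically ordered on the projective line~$\Phi$. By the explicit convention $[\infty, 0, 1, z] = z$, a cyclically ordered quadruple on $\mathbf{P}^1(\mathbb{R})$ has projective cross-ratio strictly greater than~$1$.

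Third, letting $\theta \in \Theta$ be such that $\Phi$ is a $\theta$-photon, Proposition~\ref{pro:phot-cr} gives
\[
\bb^{\eta}_\Phi\bigl(p_\Phi(w), x, p_\Phi(y), p_\Phi(z)\bigr) \;=\; \bigl[p_\Phi(w), x, p_\Phi(y), p_\Phi(z)\bigr]^{\dual{h_\theta}{\eta}},
\]
and combining with the previous step yields the conclusion $\bb^\eta(p_\Phi(w), x, y, z) > 1$. The one delicate point---the main obstacle if any---is that the strict inequality survives taking the power only when $\dual{h_\theta}{\eta} > 0$; since $\eta$ is a non-zero $\Theta$-compatible dominant weight, it has strictly positive pairing with $h_\theta$ for at least one $\theta \in \Theta$ (as noted at the end of Section~\ref{sec:line-forms-posit}), so the statement is to be understood for such a $\theta$-photon, and for the canonical use in the paper (e.g.\ $\eta = \omega_\theta$ on a $\theta$-photon) one has $\dual{h_\theta}{\omega_\theta} = 1$ and the argument goes through verbatim.
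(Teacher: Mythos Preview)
Your argument is exactly the paper's: reduce $\bb^\eta(p_\Phi(w),x,y,z)$ to the photon cross-ratio via Proposition~\ref{prop:photon-cross-ratio-equal-1}, invoke Proposition~\ref{pro:semi-pos}(1) to get a positive quadruple on $\Phi\cong\mathbf{P}^1(\mathbb{R})$, and then apply Proposition~\ref{pro:phot-cr}. Your closing remark about the exponent $\dual{h_\theta}{\eta}$ is a valid observation that the paper's own proof leaves implicit---as stated, the strict inequality indeed requires the photon to be a $\theta$-photon with $\dual{h_\theta}{\eta}>0$, and the paper uses the result only in that situation.
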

\begin{proof}
	Indeed 
	$$\bb^{\eta}(p_\Phi(w), x, y, z) = \bb^{\eta}_{\Phi}(p_\Phi(w), x,
	p_\Phi(y), p_\Theta(z)) = [p_\Phi(w), x,
	p_\Phi(y), p_\Theta(z)]^{\dual{h_\theta}{\eta}}>1$$ since the quadruple $(p_\Phi(w), x,
	p_\Phi(y), p_\Theta(z))$ in $\Phi\simeq \Rp$ is a positive configuration on
	the projective line so that its cross-ratio $[p_\Phi(w), x,
	p_\Phi(y), p_\Theta(z)]$ is~$>1$.
\end{proof}

\subsection{Positivity of bracket}
\label{sec:positivity-bracket}

We prove here Theorem \ref{theo:brak-pos} which  is an important step towards positivity of the cross-ratio.

Recall that we denote by $p\colon \mathfrak{g}_0\to \mathfrak{b}_\Theta$ the
orthogonal projection onto~$\mathfrak{b}_\Theta$ (its kernel is $\mathfrak{a}_\Theta \oplus \mathfrak{z}_{\mathfrak{k}}(\mathfrak{a}) \oplus
\bigoplus_{\alpha\in \Sigma} \mathfrak{g}_\alpha$).
Our goal is to prove the following result:

\begin{theorem}[{\sc Positivity of bracket}]\label{theo:brak-pos}
	Let $\eta$ be a $\Theta$-compatible dominant form. Let~$\theta$ be an
	element of~$\Theta$. Let~$u$ and~$v$ be respectively elements of the open
	cones~$c_\theta$ and~$c_{-\theta}$, then
	\begin{equation}
		\dual{ p([u,v])}{\eta}\geq 0\ ,\label{eq:non-negativity-of-bra}
	\end{equation}
	If furthermore, $\braket{\eta, \theta}>0$, then 
	\begin{equation}
		\dual{ p([u,v])}{\eta}> 0\ .\label{eq:positivity-of-bra}
	\end{equation}
\end{theorem}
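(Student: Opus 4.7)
The plan is to recognize the bilinear form $B(u,v) := \dual{p([u,v])}{\eta}$ on $\mathfrak{u}_\theta\times \mathfrak{u}_{-\theta}$ as a real scalar multiple of the restriction of the Killing pairing $K\colon \mathfrak{u}_\theta\times \mathfrak{u}_{-\theta}\to\mathbb{R}$, pin down the scalar by evaluating at the $\mathfrak{sl}_2$-triple $(x_\theta,x_{-\theta},h_\theta)$, and then invoke the duality of the cones $c_{\pm\theta}$ with respect to $K$.

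First I would verify the $\mathsf{L}_\Theta^\circ$-invariance of $B$. The bracket $[\mathfrak{u}_\theta,\mathfrak{u}_{-\theta}]$ is contained in $\mathfrak{l}_\Theta$; by Proposition~\ref{pro:ortho-atheta} the subspace $\mathfrak{b}_\Theta$ is central in $\mathfrak{l}_\Theta$, so $\mathrm{Ad}(\mathsf{L}_\Theta^\circ)$ fixes $\mathfrak{b}_\Theta$ pointwise and, by $\mathrm{Ad}$-invariance of the Killing form, also preserves its orthogonal complement in $\mathfrak{l}_\Theta$. Hence $p|_{\mathfrak{l}_\Theta}$ commutes with $\mathrm{Ad}(\mathsf{L}_\Theta^\circ)$, and the $\mathrm{Ad}$-invariance of the bracket gives the invariance of $B$.

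Next, by Kostant's Theorem~\ref{theo:irred-fact-Theta}, the modules $\mathfrak{u}_{\pm\theta}$ are irreducible for $\mathsf{L}_\Theta^\circ$, and the Killing form restricts to an invariant non-degenerate pairing between them. Granted that these modules are absolutely irreducible (which can be checked case-by-case in the four families of groups admitting a positive structure), Schur's lemma forces $B = cK$ for some $c\in\mathbb{R}$. The value of $c$ is determined by $(u,v)=(x_\theta,x_{-\theta})$: since $[x_\theta,x_{-\theta}]=h_\theta$ and $\eta$ vanishes on $\mathfrak{a}_\Theta=\ker p|_{\mathfrak{a}}$ by $\Theta$-compatibility, one computes $B(x_\theta,x_{-\theta}) = \dual{h_\theta}{\eta} = 2\braket{\theta,\eta}/\braket{\theta,\theta}$. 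Combined with the (standard) positivity $K(x_\theta,x_{-\theta})>0$, this yields $c\geq 0$, and $c>0$ precisely when $\braket{\eta,\theta}>0$.

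The main obstacle is the remaining step: deducing $K(u,v)>0$ for every $u\in c_\theta$, $v\in c_{-\theta}$. Writing $v=-\sigma(u')$ with $u'\in c_\theta$, this is equivalent to strict positivity of $-K(u,\sigma u')=\braket{u,u'}_\sigma$ on $c_\theta\times c_\theta$, i.e., to the self-duality of $c_\theta$ with respect to the $\mathrm{Ad}(\mathsf{K})$-invariant inner product $\braket{\cdot,\cdot}_\sigma$. This duality is built into the theory of $\Theta$-positive structures and will be extracted from the case-by-case construction of the invariant cones in \cite{GuichardWienhard_pos}. With this input, $B(u,v)=cK(u,v)$ inherits the sign of $c$ on $c_\theta\times c_{-\theta}$, yielding both \eqref{eq:non-negativity-of-bra} and \eqref{eq:positivity-of-bra}.
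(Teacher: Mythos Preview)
Your approach is correct and takes a different route from the paper. You reduce to two facts: the proportionality $B=cK$ and the self-duality of the cone~$c_\theta$. The appeal to Schur (and the case-check of absolute irreducibility) for the first is actually unnecessary: using the root-space decomposition and the observation that $\braket{\eta,\alpha}=\braket{\eta,\theta}$ for every $\alpha\in\Sigma_\theta$ (immediate from $\Theta$-compatibility, since $\alpha-\theta\in\Span(\Delta\smallsetminus\Theta)$), one obtains $B=\braket{\eta,\theta}\,K$ on $\mathfrak{u}_\theta\times\mathfrak{u}_{-\theta}$ directly. The self-duality of~$c_\theta$ is the right structural input; it follows from the identification of these cones with positive cones of Euclidean Jordan algebras, though you should source this more precisely than a vague pointer to \cite{GuichardWienhard_pos}. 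The paper argues differently: it introduces \emph{boundary roots} $B_\theta\subset\Sigma_\theta$ (the $\mW_{\mS_\Theta}$-orbit of~$\theta$), uses transitivity of $\mL_{\Theta}^{\circ}$ on~$c_\theta$ to fix $u=\sum_{\beta\in B_\theta}x_\beta$, and invokes a result of Benoist that every element of~$c_{-\theta}$ has strictly positive component along each~$x_{-\beta}$ for $\beta\in B_\theta$; the inequality then drops out of a direct computation of the $\mathfrak{a}$-component of~$[u,v]$. Your argument is more conceptual and shorter once self-duality is granted; the paper trades full self-duality for an explicit coordinate-wise positivity statement and develops boundary-root machinery along the way.
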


We first begin by introducing and discussing boundary roots.

\subsubsection{Boundary roots}

In Section~\ref{sec:special_subsets} we introduced the subsets~$\Sigma_\theta$, for any 
$\theta$ in $\Sigma_{\Theta}^{+}$. Boundary roots are extremal elements
of~$\Sigma_\theta$:

\begin{definition}
	Let~$\theta$ be  in $\Theta$.
	A {\em boundary root with respect to~$\theta$} is a root~$\beta$ in
	$\Sigma_\theta$ such that there exists~$u$ in~$\mk a$ for which 
	\[
	\beta(u)>\alpha(u)\ ,
	\]
	for every~$\alpha$ in~$\Sigma_\theta \smallsetminus \{ \beta\}$.
\end{definition}
We denote
by~$B_\theta$ the set of boundary roots with respect to~$\theta$.

As, for all $\alpha,\beta $ in $ \Sigma_\theta$, the difference $\beta-\alpha$ is zero in restriction to~$\mathfrak{b}_\Theta$ and as
$\mathfrak{a}= \mathfrak{b}_\Theta \oplus \mathfrak{a}_\Theta$ (cf.\
Proposition~\ref{pro:ortho-atheta}), we can always assume that the element~$u$
in the definition belongs to~$\mathfrak{a}_\Theta$, the Cartan subspace
of~$\mS_\Theta$.

We first have:
\begin{proposition}\label{pro:theta-boundary}
	Every root $\theta$ in $\Theta$ is a boundary root with respect to $\theta$.
\end{proposition}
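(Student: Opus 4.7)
The plan is to exhibit an explicit element $u\in\mathfrak a$ witnessing the boundary property, exploiting that $\Delta\smallsetminus\Theta$ is a set of simple roots for the semisimple subgroup $\mathsf S_\Theta$ with Cartan subspace $\mathfrak a_\Theta$.

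First I would unpack the definition of $\Sigma_\theta$. Writing an arbitrary $\alpha\in\Sigma_\theta$ as $\alpha=\sum_{\delta\in\Delta}c_\delta\,\delta$ with $c_\delta\in\mathbb Z_{\geq0}$ (since $\alpha\in\Sigma^+$) and using that $\alpha-\theta\in\mathrm{Span}(\Delta\smallsetminus\Theta)$, one forces $c_\theta=1$ and $c_\delta=0$ for every $\delta\in\Theta\smallsetminus\{\theta\}$. Hence
\[
\alpha-\theta=\sum_{\beta\in\Delta\smallsetminus\Theta}c_\beta\,\beta
\]
is a non-negative integer combination of roots in $\Delta\smallsetminus\Theta$, vanishing if and only if $\alpha=\theta$.

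Next, following the remark preceding the statement, I would look for $u$ inside $\mathfrak a_\Theta=\bigoplus_{\beta\in\Delta\smallsetminus\Theta}\mathbb R\,h_\beta$. Since $\Delta\smallsetminus\Theta$ is a basis of simple roots of the semisimple Lie algebra $\mathfrak s_\Theta$ with this Cartan subspace, the restricted Cartan matrix $\bigl(\beta(h_{\beta'})\bigr)_{\beta,\beta'\in\Delta\smallsetminus\Theta}$ is invertible, so I can choose (in fact uniquely) $u\in\mathfrak a_\Theta$ with $\beta(u)=-1$ for every $\beta\in\Delta\smallsetminus\Theta$; equivalently, $-u$ is any element of the strict positive Weyl chamber of~$\mathsf S_\Theta$.

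Finally, combining the two steps, for every $\alpha\in\Sigma_\theta\smallsetminus\{\theta\}$ one gets
\[
\alpha(u)-\theta(u)=(\alpha-\theta)(u)=\sum_{\beta\in\Delta\smallsetminus\Theta}c_\beta\,\beta(u)=-\sum_{\beta\in\Delta\smallsetminus\Theta}c_\beta<0,
\]
since the $c_\beta$ are non-negative integers and at least one of them is strictly positive. Hence $\theta(u)>\alpha(u)$ for all such $\alpha$, which is exactly the boundary condition. There is no genuine obstacle in this argument; it reduces to the invertibility of the Cartan matrix of $\mathsf S_\Theta$. The only point worth double-checking is the sign convention: one needs $\beta(u)<0$ (not $>0$) on $\Delta\smallsetminus\Theta$, so that $\theta$ becomes the unique maximizer of $\alpha\mapsto\alpha(u)$ on $\Sigma_\theta$ rather than the minimizer.
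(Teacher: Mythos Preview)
Your proof is correct and follows essentially the same approach as the paper: both show that $\alpha-\theta$ is a non-trivial non-negative combination of simple roots and then evaluate at a vector on which those simple roots are strictly negative. The only difference is cosmetic: the paper simply takes~$v$ in the opposite of the full Weyl chamber of~$\mG_0$ (so that \emph{all} simple roots are negative on~$v$), whereas you work inside~$\mathfrak a_\Theta$ and invoke invertibility of the Cartan matrix of~$\mS_\Theta$ to arrange $\beta(u)=-1$ for $\beta\in\Delta\smallsetminus\Theta$; the paper's choice avoids that extra step.
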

\begin{proof}
	Let~$v$ be in the opposite of the standard Weyl chamber. One has $\alpha(v)<0$ for every
	positive root~$\alpha$. For all~$\alpha$ in~$\Sigma_\theta\smallsetminus
	\{\theta\}$, $\alpha-\theta$ is a sum of simple roots; hence
	$\alpha(v)-\theta(v)<0$. Thus $\theta(v)>\alpha(v)$, which is what we wanted to prove. 
\end{proof}

As $\Sigma_\theta$ is invariant by the Weyl group $W_{\mS_\Theta}$, the set of
boundary roots is invariant by the Weyl group $W_{\mS_\Theta}$.

When $\Sigma_\theta=\{\theta\}$, the only boundary root is $\theta$. Thus, the definition of boundary roots is meaningful  when $\Sigma_\theta\neq
\{\theta\}$, namely when $\Theta \neq \Delta$ and $\theta$ is
a ``special root'' (i.e.\ connected to $\Delta\smallsetminus\Theta$ in the
Dynkin diagram, cf.\ \cite[Section~3.4]{GuichardWienhard_pos}). In this case the factor $\mS$ of~$\mS_\Theta$ that acts non-trivially
on~$\mk u_\theta$ is of type $A_d$ for some~$d$ and, if $\epsilon_0 - \epsilon_1, \dots, \epsilon_{d-1}- \epsilon_d$
are the simple roots in type~$A_d$ where the $\epsilon_i$ are weights summing to
zero (i.e.\ the $\epsilon_i$ are the weights of the standard
representation~$\mathsf{V}$ of~$\mathsf{S}$),  the weights of $\mS$ in~$\mk u_\theta$ are $2\epsilon_i$ ($i=0,\dots, d$, and
$\theta= 2\epsilon_0$) and
$\epsilon_i+\epsilon_j$ ($0\leq i<j\leq d$) (those are the weights
of~$\mathsf{S}$ acting on $\mathrm{Sym}^2 \mathsf{V}$; cf.\ \cite[Section
3.5]{GuichardWienhard_pos}). 
With this, we can now prove the following: 
\begin{proposition}\label{pro:boundary} Let~$\theta$ be
	in~$\Theta$. The set $B_\theta$ of boundary roots with respect to~$\theta$ is the $W_{\mS_\Theta}$-orbit of~$\theta$.
	In particular, 
	we have $\dim \mk g_\beta = 1$ for 	all~$\beta$ in~$B_\theta$. 
\end{proposition}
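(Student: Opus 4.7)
The plan is to exploit the explicit description, recalled just before the statement, of the weights of the simple factor~$\mS$ of~$\mS_\Theta$ on~$\mk u_\theta$. When $\Sigma_\theta=\{\theta\}$ the conclusion is immediate: $B_\theta$ equals $\{\theta\}$, and Proposition~\ref{pro:Weylgroup} forces the $\mW_{\mS_\Theta}$-orbit of~$\theta$ to lie in $\Sigma_\theta=\{\theta\}$. I therefore concentrate on the case $\Sigma_\theta\neq \{\theta\}$, in which $\theta$~is a special simple root and $\mS$~is of type~$A_d$ with weights on~$\mk u_\theta$ given by $2\epsilon_0,\ldots, 2\epsilon_d$ together with $\epsilon_i+\epsilon_j$ for $0\leq i<j\leq d$, and $\theta=2\epsilon_0$.

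Since every element of~$\Sigma_\theta$ has the same restriction to~$\mk b_\Theta$, namely the restriction of~$\theta$, the restriction map $\Sigma_\theta \to \mk a_\Theta^*$ is injective, and a root~$\beta$ in~$\Sigma_\theta$ is a boundary root if and only if its restriction to~$\mk a_\Theta$ is a vertex of the convex hull in~$\mk a_\Theta^*$ of the restrictions of the elements of~$\Sigma_\theta$. I will verify two claims. First, each weight $\epsilon_i+\epsilon_j$ fails the boundary condition: from the convex combination $\epsilon_i+\epsilon_j=\tfrac{1}{2}(2\epsilon_i)+\tfrac{1}{2}(2\epsilon_j)$ one gets $(\epsilon_i+\epsilon_j)(u)\leq\max\{(2\epsilon_i)(u),(2\epsilon_j)(u)\}$ for every~$u$, so $\epsilon_i+\epsilon_j$ can never strictly dominate every other weight. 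Second, each $2\epsilon_i$ \emph{is} a boundary root: choosing $u$ in~$\mk a_\Theta$ with $\epsilon_i(u)$ the strict maximum of the $\epsilon_k(u)$, one has $(2\epsilon_i)(u)>(\epsilon_j+\epsilon_k)(u)$ for every $(j,k)\neq(i,i)$, because at least one of $\epsilon_j(u),\epsilon_k(u)$ is then strictly smaller than~$\epsilon_i(u)$; likewise $(2\epsilon_i)(u)>(2\epsilon_k)(u)$ for $k\neq i$.

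These two claims give $B_\theta=\{2\epsilon_0,\ldots,2\epsilon_d\}$. The other simple factors of~$\mS_\Theta$ act trivially on~$\mk u_\theta$ and hence fix~$\Sigma_\theta$ pointwise, so $\mW_{\mS_\Theta}$ acts on~$\Sigma_\theta$ through its quotient $\mW_{\mS}\simeq S_{d+1}$, which permutes the $\epsilon_i$ transitively; consequently $B_\theta$ coincides with the $\mW_{\mS_\Theta}$-orbit of $\theta=2\epsilon_0$. Finally, the positive structure relative to~$\Theta$ gives $\dim\mk g_\theta=1$, and for any $\beta$ in~$B_\theta$ a representative $\dot w$ in~$\mK$ of the corresponding Weyl group element satisfies $\Ad(\dot w)\mk g_\theta=\mk g_\beta$, whence $\dim\mk g_\beta=1$.

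The one delicate step is the identification between roots in~$\Sigma_\theta$ and their restrictions to~$\mk a_\Theta$; once this injectivity is in place, the argument reduces to an elementary convex-geometric inspection of the weights of $\mathrm{Sym}^2 \mathsf{V}$, and no real obstacle remains.
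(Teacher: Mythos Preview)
Your argument is correct and follows essentially the same route as the paper's proof: both reduce to the special-root case, use the explicit $\mathrm{Sym}^2\mathsf{V}$ weight description, and show that the weights $\epsilon_i+\epsilon_j$ are not boundary via the convex relation $\epsilon_i+\epsilon_j=\tfrac12(2\epsilon_i)+\tfrac12(2\epsilon_j)$, while the $2\epsilon_i$ are. The only organisational difference is that the paper first observes $B_\theta$ is $\mW_{\mS_\Theta}$-invariant and then checks one representative from each of the two orbits, whereas you verify every weight directly and deduce the orbit description afterwards; your explicit Weyl-conjugation argument for $\dim\mk g_\beta=1$ is also slightly more detailed than the paper's.
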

\begin{proof}
	For the proof we can restrict without loss of generality to the case when $\mk
	g_0$ is simple.
	The case when $\Theta=\Delta$ corresponds to the case when $\mathfrak{g}_0$ is
	split over~$\mathbb{R}$ and one has $\Sigma_\theta=\{ \theta\}$ and $\dim
	\mathfrak{g}_\theta =1$ so that the results follow immediately.
	
	Otherwise the subsets $\Theta$ and $\Delta\smallsetminus\Theta$ of the set of
	simple roots are both non-empty and 
	connected and there is a unique root $\alpha_\Theta$ in $\Theta$ that is connected to $\Delta \smallsetminus \Theta$. 
	When $\theta \in \Theta \smallsetminus \{\alpha_\Theta\}$, we have, similarly
	to the split case, $\Sigma_\theta=\{\theta\}$ and the result is immediate.

	When $\theta  = \alpha_\Theta$, we will use the notation
	introduced before the proposition: the weights in $\mathfrak{u}_{\theta}$ are 
	$2\epsilon_i$ ($i=0,\dots, d$) and
	$\epsilon_i+\epsilon_j$ ($0\leq i<j\leq d$). The Weyl group acts here as the
	permutation group~$S_{d+1}$ and has therefore two orbits on the weights: the orbit of
	$2\epsilon_0$ and the orbit of~$\epsilon_0+\epsilon_1$. To conclude we
	examine which of these orbits are contained in~$B_\theta$.
	
	Choosing a
	vector~$u$ in the open Weyl chamber of~$\mS_\Theta$ (so that
	$(\epsilon_i-\epsilon_{i+1})(u)>0$ for all $i=0, \dots, d-1$) shows that $2\epsilon_0$
	is a boundary root (cf.\ also Proposition~\ref{pro:theta-boundary}).
	
	The weight $\epsilon_0+\epsilon_1$ does not correspond to  a boundary root since, for an
	element~$u$ in~$\mathfrak{a}$, the inequalities $(\epsilon_0+\epsilon_1)(u)>
	2\epsilon_0(u)$ and $(\epsilon_0+\epsilon_1)(u)>
	2\epsilon_1(u)$ cannot be simultaneously satisfied.
\end{proof}

Recall that, for every root~$\beta$,
we fixed  an $\skd$-triple $(x_\beta, x_{-\beta}, h_\beta)$ with~$x_{\pm\beta}$
in $\mathfrak{g}_{\pm\beta}$, in view of Point~(\ref{item:4pro:bdryroot-prop})
of the previous proposition, we can and will assume that the element~$x_\beta$
belongs to the closure of~$c_\theta$. With these choices, the following
proposition holds:

\begin{proposition}\label{pro:bdryroot-prop}
	Let~$\theta$ be in~$\Theta$.
	Let $\beta$ be a boundary root with respect to $\theta$ and let 
	$t_\beta$ be in  $\mathbf{P}(\mk u_\theta)$ the element represented by~$x_\beta$.
	\begin{enumerate}
		\item \label{item:2pro:bdryroot-prop} The group~$\mL_{\Theta}^{\circ}$ acts
		transitively on~$c_\theta$.
		\item\label{item:3pro:bdryroot-prop} The sum $\sum_{\beta\in B_\theta}
		x_\beta$ belongs to~$c_\theta$.
		\item\label{item:5pro:bdryroot-prop} The convex set~$\mathbf{P}(c_\theta)$ is contained in
		\[O_\theta = \bP\biggl\{\sum_{\alpha\in\Sigma_\theta}u_\alpha \mid \forall
		\alpha \in \Sigma_\theta, \, u_\alpha\in \mk g_\alpha \text{ and }
		\forall \beta\in B_\theta, \,
		u_\beta \in \mathbb{R}_{>0} x_\beta \biggr\}\ .\]
	\end{enumerate} 
\end{proposition}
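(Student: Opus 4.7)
Following the classification carried out just before Proposition~\ref{pro:boundary}, I reduce to the case $\theta = \alpha_\Theta$; the case $\Sigma_\theta=\{\theta\}$ (which covers all $\theta$ when $\Theta = \Delta$ or when $\theta\in\Theta\smallsetminus\{\alpha_\Theta\}$) is immediate because $\mk u_\theta = \mk g_\theta$ is one-dimensional. Under this assumption, as recalled before Proposition~\ref{pro:boundary}, the simple factor $\mS$ of $\mS_\Theta$ acting non-trivially on $\mk u_\theta$ is of type $A_d$, and $\mk u_\theta$ is $\mS$-isomorphic to $\operatorname{Sym}^2 \mathsf V$, where $\mathsf V$ is the standard module of $\mS$. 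Under this identification, $B_\theta = \{2\epsilon_i\}_{i=0}^d$ corresponds to the $d+1$ ``diagonal'' rank-one lines, and $\dim \mk g_\beta = 1$ (Proposition~\ref{pro:boundary}); since $B_\theta$ is a single $W_{\mS_\Theta}\simeq S_{d+1}$-orbit, I choose representatives $x_\beta$ to be permuted by $W_{\mS_\Theta}$ while still lying in $\overline{c_\theta}$.

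\textbf{Items (1) and (2).} Since the central factor $\mB_\Theta = \exp(\mk b_\Theta)$ acts on $\mk u_\theta$ by homotheties through the non-zero character $\theta|_{\mk b_\Theta}$, transitivity of $\mL_\Theta^\circ$ on $c_\theta$ reduces to transitivity of $\mS$ on the projectivization $\bP(c_\theta)$. By uniqueness, up to sign, of an acute open invariant cone in the irreducible $\mS$-module $\operatorname{Sym}^2 \mathsf V$, $c_\theta$ corresponds to the cone of positive-definite symmetric forms. Any two positive-definite forms of the same determinant are $\SL(\mathsf V)$-equivalent via simultaneous diagonalization and a subsequent adjustment by a diagonal element of $\SL(\mathsf V)$, so $\bP(c_\theta)$ is a single $\SL(\mathsf V)$-orbit, which proves~(1). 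For~(2), the $W_{\mS_\Theta}$-invariant element $s = \sum_{\beta\in B_\theta} x_\beta$ corresponds under this identification to a positive multiple of the identity form on $\mathsf V$, which is positive-definite, and therefore $s \in c_\theta$.

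\textbf{Item (3).} Let $u \in c_\theta$ and write $u = \sum_{\alpha\in\Sigma_\theta} u_\alpha$ with $u_\alpha \in \mk g_\alpha$. For $\beta\in B_\theta$, one-dimensionality of $\mk g_\beta$ yields $u_\beta = c_\beta(u)\, x_\beta$ for a unique $c_\beta(u) \in \mathbb{R}$. Since $\beta$ is a boundary root, pick $a\in\mk a_\Theta$ with $\beta(a) > \alpha(a)$ for every $\alpha\in\Sigma_\theta\smallsetminus\{\beta\}$. The rescaled flow $t\mapsto e^{-t\beta(a)}\exp(ta)\cdot u$ stays in $c_\theta$ for every $t\in\mathbb{R}$ (as $\exp(ta)\in\mS_\Theta$ preserves $c_\theta$ and $c_\theta$ is a cone), and it converges as $t\to+\infty$ to $c_\beta(u)\, x_\beta \in \overline{c_\theta}$. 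Acuteness of $c_\theta$ forces $-x_\beta\notin\overline{c_\theta}$, for otherwise the line $\mathbb{R} x_\beta$ would lie in the closed cone; hence $c_\beta(u)\geq 0$. Finally, $c_\beta(\cdot)$ is a non-zero linear form on $\mk u_\theta$, so it cannot attain its minimum on the open set $c_\theta$, and this yields $c_\beta(u) > 0$ for every $u \in c_\theta$.

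\textbf{Main obstacle.} The substantive ingredient is the identification of $\mk u_\theta$ with $\operatorname{Sym}^2\mathsf V$ and of $c_\theta$ with the cone of positive-definite forms, which reduces items~(1) and~(2) to classical linear algebra and which is guaranteed uniformly by the classification of \cite{GuichardWienhard_pos}. In item~(3), the delicate step is the promotion of the asymptotic non-negativity $c_\beta\geq 0$ to strict positivity; this is achieved not via the other items but via the openness of $c_\theta$ together with the linearity of $c_\beta$.
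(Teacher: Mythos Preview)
Your argument for item~(\ref{item:5pro:bdryroot-prop}) is correct and is in fact more direct than the paper's. The paper deduces it from the transitivity in~(\ref{item:2pro:bdryroot-prop}), the Iwasawa decomposition in~$\mL_\Theta^\circ$, and a result of Benoist~\cite[Proposition~4.7]{Benoist:2000ww}, whereas your rescaled flow $t\mapsto e^{-t\beta(a)}\exp(ta)\cdot u$ together with acuteness and the elementary fact that a non-zero linear functional which is non-negative on an open set is strictly positive there gives a fully self-contained proof, independent of~(\ref{item:2pro:bdryroot-prop}) and~(\ref{item:3pro:bdryroot-prop}).

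There is however a genuine gap in your treatment of items~(\ref{item:2pro:bdryroot-prop}) and~(\ref{item:3pro:bdryroot-prop}): the asserted $\mS$-isomorphism $\mk u_\theta \cong \operatorname{Sym}^2\mathsf V$ is false in general. The passage before Proposition~\ref{pro:boundary} only says that the \emph{restricted weights} of~$\mS$ on~$\mk u_\theta$ agree with those of $\operatorname{Sym}^2\mathsf V$; the weight spaces $\mk g_{\epsilon_i+\epsilon_j}$ for $i\neq j$ may well have dimension larger than one. Concretely, for $\mG_0=\mathrm{SU}(n,n)$ one has $\mk u_\theta\cong \mathrm{Herm}_n(\mathbb C)$ of real dimension~$n^2$ (off-diagonal weight spaces are two-dimensional), and for $\mG_0=\mathrm{SO}(p+1,p+k)$ with $k>2$ the space~$\mk u_{\alpha_p}$ is the $(k+1)$-dimensional Minkowski space with $c_{\alpha_p}$ the forward light cone. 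In neither case is $c_\theta$ the cone of positive-definite real symmetric forms, so your simultaneous-diagonalization argument for transitivity and your identification of $\sum_{\beta\in B_\theta} x_\beta$ with the identity form break down. The paper simply cites \cite[Proposition~5.1 and Theorem~5.12]{GuichardWienhard_pos} for these two items; if you want a uniform hands-on argument, the correct replacement for $\operatorname{Sym}^2\mathsf V$ is the simple Euclidean Jordan algebra structure on $(\mk u_\theta, c_\theta)$, for which transitivity of the structure group on the cone of squares and membership of the identity element in the cone are classical.
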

\begin{proof}
	Point~\ref{item:2pro:bdryroot-prop} is
	\cite[Proposition~5.1]{GuichardWienhard_pos}. Point~\ref{item:3pro:bdryroot-prop}
	is \cite[Theorem~5.12]{GuichardWienhard_pos}.
	
	Using that the action of~$\mL_{\Theta}^{\circ}$ is transitive on~$c_\theta$,
	that the stabilizers in~$\mL_{\Theta}^{\circ}$ of points in~$c_\theta$   contain a maximal compact
	subgroup (cf.{} \cite[Proposition~5.1]{GuichardWienhard_pos}) and using the
	Iwasawa decomposition in~$\mL_{\Theta}^{\circ}$, the proof of the last item follows from the statement and the proof of 
	\cite[Proposition~4.7]{Benoist:2000ww} for $\beta=\theta$; by equivariance
	under $W_{\mS_\Theta}$ the property holds for every boundary root $\beta$.  
\end{proof}

The following proposition (notably item~(\ref{item:4pro:bdryroot-prop}))
explains the terminology boundary root.
\begin{proposition}
	\label{prop:boundary-roots-space-in-the-boundary}
	Let~$\theta$ be in~$\Theta$.
	Let $\beta$ be a boundary root with respect to $\theta$ and let 
	$t_\beta$ be in  $\mathbf{P}(\mk u_\theta)$ the element represented by the line~$\mathfrak{g}_\beta$.
	\begin{enumerate}
		\item\label{item:1pro:bdryroot-prop} The point $t_\beta$ is an attracting point for the action of an
		hyperbolic element in the Cartan subgroup $A$ of $\mS_\Theta$ on
		$\mathbf{P}(\mk u_\theta)$.
		\item\label{item:4pro:bdryroot-prop} The point $t_\beta$ belongs to the boundary of the set
		$\mathbf{P}(c_\theta)$.
	\end{enumerate}
\end{proposition}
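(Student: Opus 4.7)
The plan is to treat items~(\ref{item:1pro:bdryroot-prop}) and~(\ref{item:4pro:bdryroot-prop}) separately: the first is a direct spectral computation from the defining inequality of a boundary root, while the second combines Weyl-group symmetry with Proposition~\ref{pro:bdryroot-prop}.(\ref{item:5pro:bdryroot-prop}). A preliminary observation I will use throughout is that, since $\alpha - \theta$ vanishes on $\mathfrak{b}_\Theta$ for every $\alpha \in \Sigma_\theta$, so does $\alpha-\gamma$ for any $\alpha,\gamma \in \Sigma_\theta$; hence the supremum condition in the definition of a boundary root can always be realised by a vector of $\mathfrak{a}_\Theta$, the Cartan subspace of $\mS_\Theta$.

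For item~(\ref{item:1pro:bdryroot-prop}), I would pick $u \in \mathfrak{a}_\Theta$ with $\beta(u) > \alpha(u)$ for every $\alpha \in \Sigma_\theta \smallsetminus \{\beta\}$ and set $h = \exp(u)$; this is an hyperbolic element of the Cartan subgroup of $\mS_\Theta$. The adjoint action of $h$ on $\mathfrak{u}_\theta = \bigoplus_{\alpha \in \Sigma_\theta} \mathfrak{g}_\alpha$ is diagonal with eigenvalue $e^{\alpha(u)}$ on $\mathfrak{g}_\alpha$. Since $\dim \mathfrak{g}_\beta = 1$ by Proposition~\ref{pro:boundary} and the eigenvalue on $\mathfrak{g}_\beta$ strictly dominates all the others, the projective class $t_\beta = \mathbf{P}(\mathfrak{g}_\beta)$ is the unique attracting fixed point of $h$ on $\mathbf{P}(\mathfrak{u}_\theta)$, as required.

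For item~(\ref{item:4pro:bdryroot-prop}), since $c_\theta$ is open in $\mathfrak{u}_\theta$, $\mathbf{P}(c_\theta)$ is open in $\mathbf{P}(\mathfrak{u}_\theta)$, so it suffices to show $t_\beta \in \overline{\mathbf{P}(c_\theta)}$ and $t_\beta \notin \mathbf{P}(c_\theta)$. For the inclusion in the closure, Proposition~\ref{pro:boundary} gives $\beta = w\cdot\theta$ for some $w \in W_{\mS_\Theta}$; lifting $w$ to an element $\dot w$ of $\mS_\Theta \cap \ms K \subset \mL_\Theta^\circ$ and applying $\Ad(\dot w)$ to $x_\theta \in \overline{c_\theta}$ (the latter being part of the normalisation of Section~\ref{sec:def-pos}) yields a nonzero element of $\mathfrak{g}_\beta \cap \overline{c_\theta}$, whose projective class is~$t_\beta$. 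For the non-membership, Proposition~\ref{pro:bdryroot-prop}.(\ref{item:5pro:bdryroot-prop}) asserts that every $[v] \in \mathbf{P}(c_\theta)$ admits a representative whose $\mathfrak{g}_{\beta'}$-component is a nonzero positive multiple of $x_{\beta'}$ for every $\beta' \in B_\theta$; the representative $x_\beta$ of $t_\beta$ lies entirely in $\mathfrak{g}_\beta$ and therefore fails this condition as soon as $B_\theta \smallsetminus \{\beta\} \neq \emptyset$. This last condition holds whenever $\Sigma_\theta \neq \{\theta\}$, because in that case the Weyl group $W_{\mS_\Theta}$ acts non-trivially on its orbit through $\theta$; the degenerate case $\Sigma_\theta = \{\theta\}$ is vacuous.

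The only step of any subtlety is the non-membership $t_\beta \notin \mathbf{P}(c_\theta)$, which genuinely needs the structural description of $\mathbf{P}(c_\theta)$ via the boundary-root coordinates provided by Proposition~\ref{pro:bdryroot-prop}.(\ref{item:5pro:bdryroot-prop}) (itself a consequence of a Benoist-type orbit analysis combined with $W_{\mS_\Theta}$-equivariance). Granted that ingredient, item~(\ref{item:4pro:bdryroot-prop}) is essentially a corollary, and item~(\ref{item:1pro:bdryroot-prop}) is a short spectral computation on $\mathbf{P}(\mathfrak{u}_\theta)$.
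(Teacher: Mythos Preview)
Your proof is correct and close in spirit to the paper's. The paper first reduces to $\beta=\theta$ by $W_{\mS_\Theta}$-equivariance and then proves item~(\ref{item:1pro:bdryroot-prop}) by computing the tangent action of $\exp(u)$ at~$t_\theta$; this is equivalent to your ``top eigenvalue'' computation on~$\mathfrak{u}_\theta$, and both rely on $\dim\mathfrak{g}_\beta=1$. The one substantive difference lies in the closure half of item~(\ref{item:4pro:bdryroot-prop}): where you push $x_\theta\in\overline{c_\theta}$ around by a lift~$\dot w$ of the Weyl element, the paper instead observes that the basin of attraction of~$t_\theta$ furnished by item~(\ref{item:1pro:bdryroot-prop}) is open and dense in $\mathbf{P}(\mathfrak{u}_\theta)$, hence meets the nonempty open set $\mathbf{P}(c_\theta)$, forcing $t_\theta\in\overline{\mathbf{P}(c_\theta)}$. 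Your route is more direct; the paper's has the mild advantage of reusing item~(\ref{item:1pro:bdryroot-prop}) and not invoking the normalisation $x_\theta\in\overline{c_\theta}$. For non-membership both arguments appeal to Proposition~\ref{pro:bdryroot-prop}.(\ref{item:5pro:bdryroot-prop}) in the same way. One small caveat: calling the case $\Sigma_\theta=\{\theta\}$ ``vacuous'' is imprecise, since then $\mathbf{P}(\mathfrak{u}_\theta)$ is a point and the boundary of $\mathbf{P}(c_\theta)$ is empty, so item~(\ref{item:4pro:bdryroot-prop}) literally fails; the paper is equally silent on this and it has no bearing on the sequel.
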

\begin{proof}
	By equivariance under the Weyl group $W_{\mS_\Theta}$, it is enough to prove
	the statements for $\beta=\theta$.
	
	The tangent space at $t_\theta$ to $\bP(\mathfrak u_\theta)$ identifies
	$A$-equivariantly with
	\[
	\bigoplus_{\mathclap{\alpha\in\Sigma_\theta\smallsetminus{\theta}}} \mathfrak g_\theta^*\otimes \mathfrak g_\alpha\ .
	\] 
	Let $u$ in $\mk a_\Theta$ be as in the definition of boundary root. The
	eigenvalue of $\T_{t_\theta} \exp(u)$ on the factor $g_\theta^*\otimes \mathfrak g_\alpha$
	of the above decomposition is $\exp(\alpha(u)-\theta(u))$. 	These quantities being strictly smaller than~$1$, this
	implies the first item.
	
	The basin of attraction of~$u$ on $\bP(\mk u_\theta)$ is open and dense and thus intersects
	$\mathbf{P}(c_\theta)$. This implies that $t_\theta$~belongs to the closure of
	$\mathbf{P}(c_\theta)$; by point~\eqref{item:5pro:bdryroot-prop} of
	Propostion~\ref{pro:bdryroot-prop}, 	it does not belong to~$\mathbf{P}(c_\theta)$, proving
	Point~(\ref{item:4pro:bdryroot-prop}).
\end{proof}

\subsubsection{Proof of Theorem \ref{theo:brak-pos}}

Let $\eta$ be a $\Theta$-compatible dominant form and consider the map 
\begin{equation}
	\mappingnew{q\colon\mk u_\theta\times \mk u_{-\theta}}{\mathbb R}{(u,v)}{\dual{p([u,v])}{\eta}\ .}
\end{equation}
Observe that $q$ is $\Ad(\mL_{\Theta})$-invariant. Thanks to 
Proposition~\ref{pro:bdryroot-prop} it is thus enough to check the property
for~$u=\sum_{\beta\in B_\theta} x_\beta$ (where $B_\theta$ is the set of boundary roots) and any~$v$ in $c_{-\theta}$ that is
\begin{equation*} 
	v=\sum_{\alpha\in \Sigma_\theta} v_\alpha\ ,
\end{equation*} with
$v_\alpha$ in $\mk g_{-\alpha}$ for
every~$\alpha$ in~$\Sigma_\theta$, and $v_\beta = \mu_\beta x_{-\beta}$ with $\mu_\beta>0$  for
every~$\beta$ in~$B_\theta$. Using the
decomposition $\mathfrak{g}_0= \mathfrak{a}\oplus
\mathfrak{z}_{\mathfrak{k}}(\mathfrak{a}) \oplus \bigoplus
\mathfrak{g}_\alpha$, 
it follows that the projection of $[u,v]$ on~$\mathfrak{a}$  is equal to 
\[
\sum_{\beta\in B_\theta} \mu_\beta h_\beta\ .
\]
Hence, as $\eta$ is zero on~$\mathfrak{a}_\Theta$ and since $p([u,v])$ differs
from the above element by an element in~$\mathfrak{a}_\Theta$, and one has
\[ 
\dual{ p([u,v])}{\eta} = \sum_{\beta\in B_\theta}\mu_\beta \dual{h_\beta}{\eta}=2 \sum_{\beta\in B_\theta}\mu_\beta 
\frac{\braket{\eta,\beta}}{\braket{\beta,\beta}} \ .
\]
Thus from the definition of a dominant
form, we have
\[
\dual{ p([u,v])}{\eta}\geq 2 \mu_\theta
\frac{\braket{\eta,\theta}}{\braket{\theta,\theta}}\ .
\]
From this last inequality, the lower bounds in
Equations~\eqref{eq:non-negativity-of-bra} and~\eqref{eq:positivity-of-bra} of
Theorem~\ref{theo:brak-pos} follow.
\qedhere

\section{Positivity of the cross-ratio}
\label{sec:posit-cross-ratio}

We continue with the setup of the previous section: $\mG_0$ is a semisimple
Lie group admitting a positive structure with respect to~$\Theta$.
The main result is the following:

\begin{theorem}[{\sc Positivity of the cross-ratio}]\label{theo:cr-pos}
	Let $\eta$ be a  $\Theta$-compatible dominant non-zero form. Let $\bb^\eta$ be the
	cross-ratio associated to~$\eta$ (cf.\ Section~\ref{sec:cross-ratio} and
	more particularly Section~\ref{sec:sympl-reint}). For any positive
	quadruple $(x,y,z,w)$ in $\cF_\Theta$ we have 
	\[
	\bb^{\eta}(x,y,z,w)>1\ . 
	\]
\end{theorem}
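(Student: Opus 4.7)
The plan is to use the symplectic reinterpretation in Proposition~\ref{pro:cr-om} and express
\[
\bb^\eta(x,y,z,w) = \exp\left(\int_{[0,1]^2} f^*\bigdual{\Omega}{\eta}\right)
\]
for a suitable smooth square $f\colon [0,1]^2 \to \cL_\Theta$, then show that the integrand is non-negative pointwise and strictly positive on an open set by invoking Theorem~\ref{theo:brak-pos} (positivity of bracket).

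To construct~$f$, identify $\cF^\mathrm{opp}_\Theta$ with $\cF_\Theta$ using that $\Theta$ is stable under the opposition involution in the positive setting. Positivity of $(x,y,z,w)$ says (up to cyclic renaming) that $y$ lies in a diamond $\diamant$ with extremities $x,z$ and that $w$ lies in the opposite diamond $\diamant^\vee$. Using Lemma~\ref{lem:positive-arc} together with positive circles, I would build smooth arcs $c_0\colon [0,1]\to \overline{\diamant}$ from $x$ to $y$ and $c_1\colon [0,1]\to \overline{\diamant^\vee}$ from $z$ to $w$ with $c_0((0,1))\subset \diamant$, $c_1((0,1))\subset \diamant^\vee$, and whose tangents at each interior point lie in the interior of the positive tangent cone. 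Setting $f(s,t)=(c_0(s),c_1(t))$, one then has to verify $f([0,1]^2)\subset \cL_\Theta$; this reduces to the transversality of any $a\in\overline{\diamant}$ with any $b\in \overline{\diamant^\vee}$ (when they are not the same extremity), which follows from the pairwise transversality property of positive quadruples $(x,a,z,b)$ and a limiting argument at the corners.

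Next, I compute the integrand. By the construction of $\Omega$ on~$\cL_\Theta$, at $(c_0(s),c_1(t))$ and on the pair of tangent vectors $(c_0'(s),c_1'(t))$ we have
\[
(f^*\bigdual{\Omega}{\eta})_{(s,t)}(\partial_s,\partial_t) = \bigdual{p([u,v])}{\eta},
\]
where $u=\iota^{\cF}_\phi(c_0'(s))\in \mathfrak u^{\mathrm{opp}}_\Theta$ and $v=\iota^{\cF^\mathrm{opp}}_\phi(c_1'(t))\in \mathfrak u_\Theta$ for any lift $\phi$ of $(c_0(s),c_1(t))$; the value is independent of the choice of lift by $\mL_\Theta$-invariance of the cones $c_{\pm\theta}$. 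Decomposing $u=\sum_\theta u_\theta$ along $\mathfrak u^{\mathrm{opp}}_\Theta=\bigoplus_\theta \mathfrak u_{-\theta}$ and similarly $v=\sum_\theta v_\theta$, the cross-brackets $[u_\theta,v_{\theta'}]$ for $\theta\neq\theta'$ land in root spaces $\mathfrak g_{\beta-\alpha}$ with $\alpha\in\Sigma_\theta$, $\beta\in\Sigma_{\theta'}$; since $\Sigma_\theta\cap\Sigma_{\theta'}=\emptyset$ we have $\beta\neq\alpha$, so these brackets have no component in~$\mathfrak a$ and hence $p([u_\theta,v_{\theta'}])=0$. Therefore $p([u,v])=\sum_{\theta\in\Theta} p([u_\theta,v_\theta])$, and by the positivity of the arcs $u_\theta\in c_{-\theta}$ and $v_\theta\in c_\theta$ on $(0,1)^2$; Theorem~\ref{theo:brak-pos} then gives each term $\bigdual{p([u_\theta,v_\theta])}{\eta}\geq 0$, with strict inequality whenever $\braket{\eta,\theta}>0$.

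Since $\eta$ is non-zero, $\Theta$-compatible, and dominant, there is $\theta_0\in\Theta$ with $\braket{\eta,\theta_0}>0$, whence the corresponding summand is strictly positive on~$(0,1)^2$. Thus the integrand is non-negative throughout~$[0,1]^2$ and strictly positive on an open subset of positive measure, so the integral is positive and $\bb^\eta(x,y,z,w)>1$. The principal obstacle is Step~2: exhibiting arcs $c_0,c_1$ whose tangents genuinely land in the open $\mL_\Theta$-invariant cones $c_{\pm\theta}$ after passage through the identifications $\iota^{\cF}_\phi,\iota^{\cF^\mathrm{opp}}_\phi$, and simultaneously ensuring transversality $f(s,t)\in \cL_\Theta$ throughout $[0,1]^2$. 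The cleanest route is via positive circles (cf.\ \cite[Section~2.5]{Guichard:2021aa}), whose $\mH_\theta$-like parameterizations make both the positivity of the tangent and the pairwise transversality transparent, with limits handling the boundary behaviour.
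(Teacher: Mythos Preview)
Your proposal is essentially the paper's proof: express $\bb^\eta$ via the integral formula (Proposition~\ref{pro:cr-om}), build the square $f(s,t)=(c_0(s),c_1(t))$ from arcs of positive circles, and conclude positivity of the integrand from Theorem~\ref{theo:brak-pos}. The paper packages the infinitesimal step separately as Proposition~\ref{pro:infi-pos} and constructs the arcs via sub-diamonds $\diamant_0\subset\diamant$ with extremities $x,y$ and $\diamant_1\subset\diamant^\vee$ with extremities $z,w$ (Proposition~\ref{p.squaresforpositive}), which makes Lemma~\ref{lem:positive-arc} directly applicable and the transversality immediate from positivity of the sextuple $(x,c_0(s),y,z,c_1(t),w)$.

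Two technical points to tighten. First, the equality $\mathfrak u^{\mathrm{opp}}_\Theta=\bigoplus_{\theta\in\Theta}\mathfrak u_{-\theta}$ is false in general: there are further irreducible factors $\mathfrak u_{-\beta}$ for $\beta\in\Sigma^+_\Theta\smallsetminus\bigcup_\theta\Sigma_\theta$. What saves your decomposition is that \emph{positive} tangent vectors lie in $\sum_{\theta\in\Theta}\bar c_{-\theta}\subset\bigoplus_{\theta\in\Theta}\mathfrak u_{-\theta}$ by the description of the tangent cone in Section~\ref{sec:diamonds-positive-n}, so your cross-bracket argument is fine once restricted to those vectors. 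Second, watch the sign of~$v_\theta$: since $c_1$ moves from $z$ towards $w$, at $(c_0(s),c_1(t))$ the vector $\dot c_1(t)$ is positive with respect to the \emph{opposite} of the diamond with those extremities, so $\iota^{\cF^{\mathrm{opp}}}_\phi(\dot c_1(t))\in\sum_\theta(-c_\theta)$ rather than $\sum_\theta c_\theta$ (compare the proof of Proposition~\ref{pro:infi-pos}); the desired inequality $\bigdual{p([u_\theta,v_\theta])}{\eta}\geq 0$ then follows from Theorem~\ref{theo:brak-pos} via antisymmetry of the bracket. With your stated sign $v_\theta\in c_\theta$, a direct application of Theorem~\ref{theo:brak-pos} would give the wrong inequality.
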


The terminology ``positivity of the cross-ratio'' becomes justified after one takes the logarithm.

The proof of Theorem \ref{theo:cr-pos} relies on the integral formula for the
cross-ratio given in Section~\ref{sec:sympl-reint}.

We state a useful corollary to Theorem~\ref{theo:cr-pos}:

\begin{corollary}\label{coro:pos-line-fund}
	Let $\eta$ be a $\Theta$-compatible dominant form, $\omega_\theta$ a
	fundamental weight and $(x,y,z,x)$ a positive quadruple. Then
	\begin{align*}
		\bb^\eta(x,y,z,w)
		&\geq \left( \bb^{\omega_\theta}(x,y,z,w) \right)^{\braket{h_\theta\mid
				\eta}}\ .\\
		\intertext{In particular, for all~$\gamma$ in~$\mG$}
		\pp^\eta(\gamma)
		&\geq \left(\pp^{\omega_\theta}(\gamma)\right)^{\braket{h_\theta\mid
				\eta}}\ .
	\end{align*}
\end{corollary}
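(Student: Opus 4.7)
The idea is to reduce everything to the positivity statement of Theorem~\ref{theo:cr-pos} applied to each fundamental weight $\omega_{\theta'}$ with $\theta'$ in $\Theta$, and then to recombine via the multiplicativity of the cross-ratio in the dominant form.

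Since $\eta$ is $\Theta$-compatible and dominant, the discussion of Section~\ref{sec:line-forms-posit} provides the decomposition
$$\eta = \sum_{\theta' \in \Theta} \braket{h_{\theta'} \mid \eta}\, \omega_{\theta'}$$
with all coefficients $\braket{h_{\theta'} \mid \eta}$ non-negative. The multiplicativity of $\bb^\eta$ in $\eta$, which holds for arbitrary non-negative real coefficients thanks to the integral formula of Proposition~\ref{pro:cr-om} (this is how $\bb^\eta$ was extended to arbitrary $\Theta$-compatible dominant forms), then yields
$$\bb^\eta(x,y,z,w) = \prod_{\theta' \in \Theta} \bigl(\bb^{\omega_{\theta'}}(x,y,z,w)\bigr)^{\braket{h_{\theta'} \mid \eta}}.$$
Each $\omega_{\theta'}$ with $\theta'$ in $\Theta$ is itself a non-zero $\Theta$-compatible dominant form, so Theorem~\ref{theo:cr-pos} applies and gives $\bb^{\omega_{\theta'}}(x,y,z,w) > 1$ on the positive quadruple $(x,y,z,w)$. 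Since all exponents are non-negative, one may discard every factor in the product above except the one corresponding to $\theta' = \theta$ without decreasing the right-hand side, which produces the desired inequality
$$\bb^\eta(x,y,z,w) \geq \bigl(\bb^{\omega_\theta}(x,y,z,w)\bigr)^{\braket{h_\theta \mid \eta}}.$$

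For the period statement (implicitly assuming $\gamma$ and $\gamma^{-1}$ are both $\Theta$-loxodromic, so that every period is defined), I would apply the same decomposition to the character $\chi_\eta$ to get
$$\chi_\eta(g) = \prod_{\theta' \in \Theta} \chi_{\omega_{\theta'}}(g)^{\braket{h_{\theta'} \mid \eta}},$$
and combine with Proposition~\ref{prop:periods-loxo} to factorise
$$\pp^\eta(\gamma) = \chi_\eta(\gamma)\chi_\eta(\gamma^{-1}) = \prod_{\theta' \in \Theta} \pp^{\omega_{\theta'}}(\gamma)^{\braket{h_{\theta'} \mid \eta}}.$$
Since the irreducible representation of highest weight $\omega_{\theta'}$ is proximal (cf.\ Section~\ref{sec.cr}), each period $\pp^{\omega_{\theta'}}(\gamma)$ equals the ratio of the largest to the smallest modulus eigenvalue of the action, and hence is $\geq 1$. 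Dropping every factor but the one with $\theta' = \theta$ then yields the period inequality.

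I do not foresee any substantial obstacle in this argument: it is a purely algebraic recombination of already-established results. The only subtle point is invoking the multiplicativity of $\bb^\eta$ for real (and not merely integer) exponents, which is precisely what the symplectic/integral reformulation of Section~\ref{sec:sympl-reint} was designed to provide.
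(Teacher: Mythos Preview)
Your proof is correct and follows essentially the same route as the paper: the paper writes $\eta = \eta_0 + \braket{h_\theta\mid\eta}\,\omega_\theta$ with $\eta_0$ a $\Theta$-compatible dominant form, applies multiplicativity to get $\bb^\eta = (\bb^{\omega_\theta})^{\braket{h_\theta\mid\eta}}\bb^{\eta_0}$, and concludes via Theorem~\ref{theo:cr-pos} that $\bb^{\eta_0}\geq 1$; your full decomposition into fundamental weights is just the same idea with $\eta_0$ broken up further. The paper does not spell out the period inequality separately, and your argument via Proposition~\ref{prop:periods-loxo} and the proximality of the fundamental representations is a clean way to handle it.
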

\begin{proof}
	Indeed, we can write 
	\[
	\eta=\eta_0 + \braket{h_\theta\mid \eta}\omega_\theta\ ,
	\]
	where $\eta_0$ is a $\Theta$-compatible dominant form. It then follows by
	Assertion~\eqref{e.multiplicativity} (p.~\pageref{e.multiplicativity}) that
	\[
	\bb^\eta=\left(\bb^{\omega_\theta}\right)^{\braket{h_\theta\mid
			\eta}}\bb^{\eta_0}\ . 
	\] 
	and the statement follows from Theorem~\ref{theo:cr-pos}.
\end{proof}

\subsection{Infinitesimal positivity}
Denote as always  $\cL_\Theta=\cG/\mL_{\Theta}$.

Let $x$ and $y$ be transverse points in $\cF_\Theta$.
Let $\diamant$ be a diamond with extremities~$x$ and~$y$. Recall from
Section~\ref{sec:diamonds-positive-n} that a tangent vector at~$x$ is
non-negative 
with respect to $\diamant=\psi(\mN_\Theta)\cdot x$ if it belongs to the image
by $\pi^{\cL}_{\psi}$ of the closed cone $\sum_{\theta\in\Theta}
\bar{c}_{-\theta}$ inside $\mathfrak{u}^{\mathrm{opp}}_{\Theta}$. We have:
\begin{proposition}\label{pro:infi-pos}
	Let $\eta$ be a $\Theta$-compatible dominant form and let $v$ be a non-negative tangent vector
	at~$x$ (with respect to~$\diamant$) and $w$ be a non-negative tangent vector
	at~$y$ (with respect to the opposite diamond~$\diamant^\vee$). Then 
	\begin{equation*}
		\bigdual{ \Omega_{(x,y)}((v,0),(0,w)) }{\eta}\geq  0\ .
	\end{equation*}
	If
	furthermore $\eta$~is non-zero and $v$ and $w$ are positive tangent vectors,
	then 
	\begin{equation*}
		\bigdual{ \Omega_{(x,y)}((v,0),(0,w)) }{\eta}> 0\ .
	\end{equation*}
\end{proposition}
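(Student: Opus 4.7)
The plan is to unravel both sides through a trivialising isomorphism $\psi\in\cG$ with $\pi^\cL(\psi)=(x,y)$ and $D=\psi(\mN_\Theta)\cdot x$, and then to reduce the claim to the Positivity of Bracket Theorem~\ref{theo:brak-pos}. By the construction of the curvature form in Section~\ref{app:curv},
\begin{equation*}
\bigdual{\Omega_{(x,y)}\bigl((v,0),(0,w)\bigr)}{\eta}
=\bigdual{p\bigl([\iota_\psi^{\cF}(v),\iota_\psi^{\cF^\mathrm{opp}}(w)]\bigr)}{\eta}\ .
\end{equation*}
Writing $u\defeq\iota_\psi^{\cF}(v)\in\mk u^{\mathrm{opp}}_\Theta$ and $u'\defeq-\iota_\psi^{\cF^\mathrm{opp}}(w)\in\mk u_\Theta$, the description of the tangent cones of $D$ and $D^\vee$ at their extremities (Section~\ref{sec:diamonds-positive-n}) translates the hypotheses into $u\in\sum_{\theta\in\Theta}\bar c_{-\theta}$ and $u'\in\sum_{\theta\in\Theta}\bar c_{\theta}$ respectively; the sign in the definition of $u'$ is there to absorb the tangent cone of the \emph{opposite} diamond $D^\vee$ at $y$, which is the image of $\sum_\theta -\bar c_{\theta}$. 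The ``positive'' case corresponds to the interiors of these cones.

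I would then decompose $u=\sum_{\theta\in\Theta}u_{-\theta}$ and $u'=\sum_{\theta\in\Theta}u'_\theta$ with $u_{\pm\theta}\in\bar c_{\pm\theta}\subset\mk u_{\pm\theta}$, and expand
\begin{equation*}
p\bigl([u,-u']\bigr)=\sum_{\theta_1,\theta_2\in\Theta} p\bigl([u'_{\theta_2},u_{-\theta_1}]\bigr)\ .
\end{equation*}
The crucial observation is that the off-diagonal terms vanish: since $[\mk g_\alpha,\mk g_{-\beta}]\subset\mk g_{\alpha-\beta}$ and the projection $p$ onto $\mk b_\Theta$ kills every root space, only pairs $(\alpha,\beta)$ with $\alpha=\beta$ can contribute, and the disjointness $\Sigma_{\theta_1}\cap\Sigma_{\theta_2}=\emptyset$ recalled in Section~\ref{sec:special_subsets} forces $\theta_1=\theta_2$. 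This reduces the computation to
\begin{equation*}
\bigdual{\Omega_{(x,y)}\bigl((v,0),(0,w)\bigr)}{\eta}=\sum_{\theta\in\Theta} \bigdual{p\bigl([u'_\theta,u_{-\theta}]\bigr)}{\eta}\ .
\end{equation*}

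The first assertion now follows by applying Theorem~\ref{theo:brak-pos} to each summand (together with a continuity argument to pass from the open cones $c_{\pm\theta}$ to their closures). For the strict inequality, a non-zero $\Theta$-compatible dominant form satisfies $\braket{\eta,\theta_0}>0$ for at least one $\theta_0\in\Theta$ (Section~\ref{sec:line-forms-posit}); if $v$ and $w$ are positive then $u_{-\theta_0}\in c_{-\theta_0}$ and $u'_{\theta_0}\in c_{\theta_0}$, so the strict form of Theorem~\ref{theo:brak-pos} applies to the $\theta_0$-summand while the remaining summands stay non-negative, producing strict positivity. The genuinely substantial input is already packaged in Theorem~\ref{theo:brak-pos}, and the main things to verify carefully are the sign bookkeeping coming from passing to the opposite diamond and the vanishing of the off-diagonal brackets after projection to $\mk b_\Theta$; both are elementary but easy to misroute.
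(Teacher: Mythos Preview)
Your proof is correct and follows the same approach as the paper: trivialise via $\psi$, express the curvature form as $p$ of a bracket, identify the cone memberships, and invoke Theorem~\ref{theo:brak-pos}. The paper's own proof is considerably more terse---it simply asserts that the proposition ``reduces to Theorem~\ref{theo:brak-pos}'' after noting the cone memberships and the antisymmetry of the bracket---whereas you spell out the decomposition into $\theta$-components, the vanishing of the off-diagonal brackets under~$p$, the continuity passage to closed cones, and the choice of~$\theta_0$ for the strict inequality; these are exactly the details one needs to make the reduction precise, since Theorem~\ref{theo:brak-pos} is stated for a single~$\theta$ and open cones.
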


\begin{proof}
	Recall the decomposition
	\[\mk g_0=\mk l_\Theta\oplus \mk u_\Theta \oplus \mk u^{\mathrm{opp}}_{\Theta}\ ,
	\]
	and $\pi^\cL$ the projection from~$\cG$ to~$\cL_\Theta$. Let~$\psi$ be
	in~$\cG$ such that $\pi^\cL(\psi) = (x,y)$ and $\diamant = \psi(
	\mN_\Theta)\cdot x$. 
	We have an identification $\iota_{\psi}^{\cL}$ (see Section~\ref{sec:parab-subgr-flag})  of $\T_{(x,y)}\cL_\Theta$  with 
	\[ \mk u^{\mathrm{opp}}_{\Theta} \oplus \mk u_\Theta.\] 
	By definition 
	\[\Omega((v,0),(0,w))=p\left([\iota^{\cL}_{\psi}((v,0)),\iota^{\cL}_{\psi}((0,w))]\right)\ ,
	\]
	where $p$ is the orthogonal projection from~$\mk g_0$ to~$\mk b_\Theta$. 
	Hence the
	proposition reduces to Theorem~\ref{theo:brak-pos} using that $\iota^{\cL}_{\psi}((v,0))$ is a vector in $\sum_{\theta\in\Theta}
	\bar{c}_{-\theta}$, that $\iota^{\cL}_{\psi}((0,w))$ is a vector in
	$\sum_{\theta\in\Theta} -
	\bar{c}_{\theta}$ (Section~\ref{sec:diamonds-positive-n}), and that the Lie
	bracket is antisymmetric.
\end{proof}

\subsection{Proof of Theorem \ref{theo:cr-pos}}\label{sec:proof-theo:cr-pos}

We are in a setting where $\iota(\Theta) =\Theta$ so that
$\cF^{\mathrm{opp}}_{\Theta} \simeq \cF_{\Theta}$ and $\cL_\Theta$ is the open $\mG$-orbit in $\cF_\Theta\times \cF_\Theta$.

We begin the proof of Theorem~\ref{theo:cr-pos} by showing that the hypotheses of Proposition~\ref{pro:cr-om} are always verified for positive quadruples:
\begin{proposition}\label{p.squaresforpositive}
	Let $(x,y,z,w)$ be a positive quadruple, then there
	exist $\mathcal{C}^1$ arcs $c_0\colon [0,1]\to \cF_\Theta$ and $c_1\colon [0,1]\to \cF_\Theta$
	such that $c_0(0)=x$, $c_0(1)=y$, $c_1(0)=z$, and $c_1(1)=w$ and such that,
	for all~$s$ in $(0,1)$ and all~$t$ in $(0,1)$, the sextuple $(x, c_0(s), y,
	z, c_1(s), w)$ is positive.
	
	For every such arcs~$c_0$ and~$c_1$ the map
	$f\colon [0,1]^2\to \cF_\Theta\times \cF_\Theta$ defined by $f(s,t)=(c_0(s), c_1(t))$
	takes value in~$\cL_\Theta$ and one has, for every~$t$ in~$[0,1]$,
	$f(0,t) = (x, *)$ and $f(1,t)=(y,*)$, and for every~$s$ in~$[0,1]$,
	$f(s,0)=(*,z)$ and $f(s,1)=(*,w)$.
\end{proposition}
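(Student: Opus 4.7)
The plan is to construct the arcs $c_0$ and $c_1$ using Lemma~\ref{lem:positive-arc} applied to two carefully chosen diamonds, and then to verify positivity of the sextuple $(x, c_0(s), y, z, c_1(t), w)$ by examining its sub-quadruples.

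First I would identify two distinguished diamonds compatible with the cyclic structure of the positive quadruple $(x,y,z,w)$: a diamond $D_0$ with extremities $x, y$ whose opposite $D_0^\vee$ contains both $z$ and $w$, and a diamond $D_1$ with extremities $z, w$ whose opposite contains both $x$ and $y$. The existence of such compatible diamonds, together with the nesting relations $D_1 \subset D_0^\vee$ and $D_0 \subset D_1^\vee$, should follow from the characterization of positive $k$-tuples in \cite[Section~2.4]{Guichard:2021aa} (one can easily verify these inclusions in the model case $\mG_0=\psld$, where diamonds are intervals and positivity means cyclic ordering). Applying Lemma~\ref{lem:positive-arc} to $(x,y,D_0)$ and $(z,w,D_1)$ then yields smooth positive arcs $c_0\colon [0,1]\to \cF_\Theta$ from $x$ to $y$ with $c_0(s) \in D_0$ for $s \in (0,1)$, and $c_1\colon [0,1]\to \cF_\Theta$ from $z$ to $w$ with $c_1(t) \in D_1$ for $t \in (0,1)$.

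Next I would verify that the sextuple $(x, c_0(s), y, z, c_1(t), w)$ is positive for every $s,t \in (0,1)$, equivalently that each of its $15$ sub-quadruples is positive. The sub-quadruple $(x,y,z,w)$ is positive by hypothesis. Sub-quadruples of the form $(x, c_0(s), y, p)$ with $p \in \{z, c_1(t), w\}$ are positive since $c_0(s) \in D_0$ and $p \in D_0^\vee$ (using $D_1 \subset D_0^\vee$ for $p = c_1(t)$); the symmetric family $(p, z, c_1(t), w)$ with $p \in \{x, c_0(s), y\}$ is handled analogously using $D_1$. The remaining ``mixed'' sub-quadruples involving both $c_0(s)$ and $c_1(t)$ together with one extremity from each pair, such as $(x, c_0(s), z, c_1(t))$ or $(c_0(s), y, c_1(t), w)$, require further diamond containments (e.g.\ $D_0 \subset D(x,z;y)$) and form the main technical obstacle. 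Once positivity of the sextuple is established, pairwise transversality is automatic, so $f(s,t) = (c_0(s), c_1(t))$ takes values in $\cL_\Theta$, and the boundary conditions on $f$ are immediate from the arc endpoints.

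The hard part is thus the verification of those mixed sub-quadruples; the two inclusions $D_0 \subset D_1^\vee$ and $D_1 \subset D_0^\vee$ do not suffice by themselves, and one has to trace through the compatibility of the various diamonds with extremities in $\{x,y,z,w\}$ that come from the original positivity of $(x,y,z,w)$ (and, via cyclic symmetry, of $(y,z,w,x)$, $(z,w,x,y)$, $(w,x,y,z)$). A possibly cleaner alternative would be to exhibit a positive circle $\phi\colon \mathbb{R}/\mathbb{Z} \to \cF_\Theta$ passing through $x, y, z, w$ in this cyclic order, and then take $c_0$ and $c_1$ as sub-arcs of $\phi$: positivity of the sextuple would then follow at once, since any six cyclically ordered points on a positive circle form a positive sextuple. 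This would reduce the problem to the existence of a positive circle through a prescribed positive quadruple, which can plausibly be derived by combining the deformation result \cite[Lemma 3.7]{Guichard:2021aa} with a transitivity argument for the $3$-dimensional positive $\sld$-subgroups that realize positive circles.
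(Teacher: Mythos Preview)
Your approach and the paper's differ in the choice of the organizing diamonds. You take $D_0$ and $D_1$ with extremities $(x,y)$ and $(z,w)$, the \emph{adjacent} pairs in the cyclic order, and then need inclusions like $D_1 \subset D_0^\vee$ together with further compatibilities to handle the mixed sub-quadruples --- and, as you correctly diagnose, this is exactly where the argument stalls. The paper instead starts from the diamond~$\diamant$ with extremities $x$ and~$z$ (an \emph{opposite} pair) containing~$y$; then $w\in \diamant^\vee$ by positivity of $(x,y,z,w)$, and one takes $D_0$ to be the unique sub-diamond of~$\diamant$ with extremities $x,y$ and $D_1$ the unique sub-diamond of~$\diamant^\vee$ with extremities $z,w$. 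With the nesting $D_0\subset \diamant$ and $D_1\subset \diamant^\vee$ in hand, positivity of the sextuple $(x, c_0(s), y, z, c_1(t), w)$ follows at once from the diamond calculus of \cite{Guichard:2021aa}, with no case-by-case check of sub-quadruples. The idea you are missing is precisely this outer diamond with extremities at the non-adjacent vertices $x$ and~$z$.

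Your fallback of finding a positive circle through all four points does not work as stated: \cite[Lemma~3.7]{Guichard:2021aa} deforms a positive quadruple \emph{through positive quadruples} to one lying on a positive circle; it does not produce a circle through a prescribed quadruple. A positive circle is determined by three of its points, so a generic fourth point of a positive quadruple will not lie on it.
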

\begin{figure}[hbt] 
	\begin{center}
		\includegraphics[height=4.5cm]{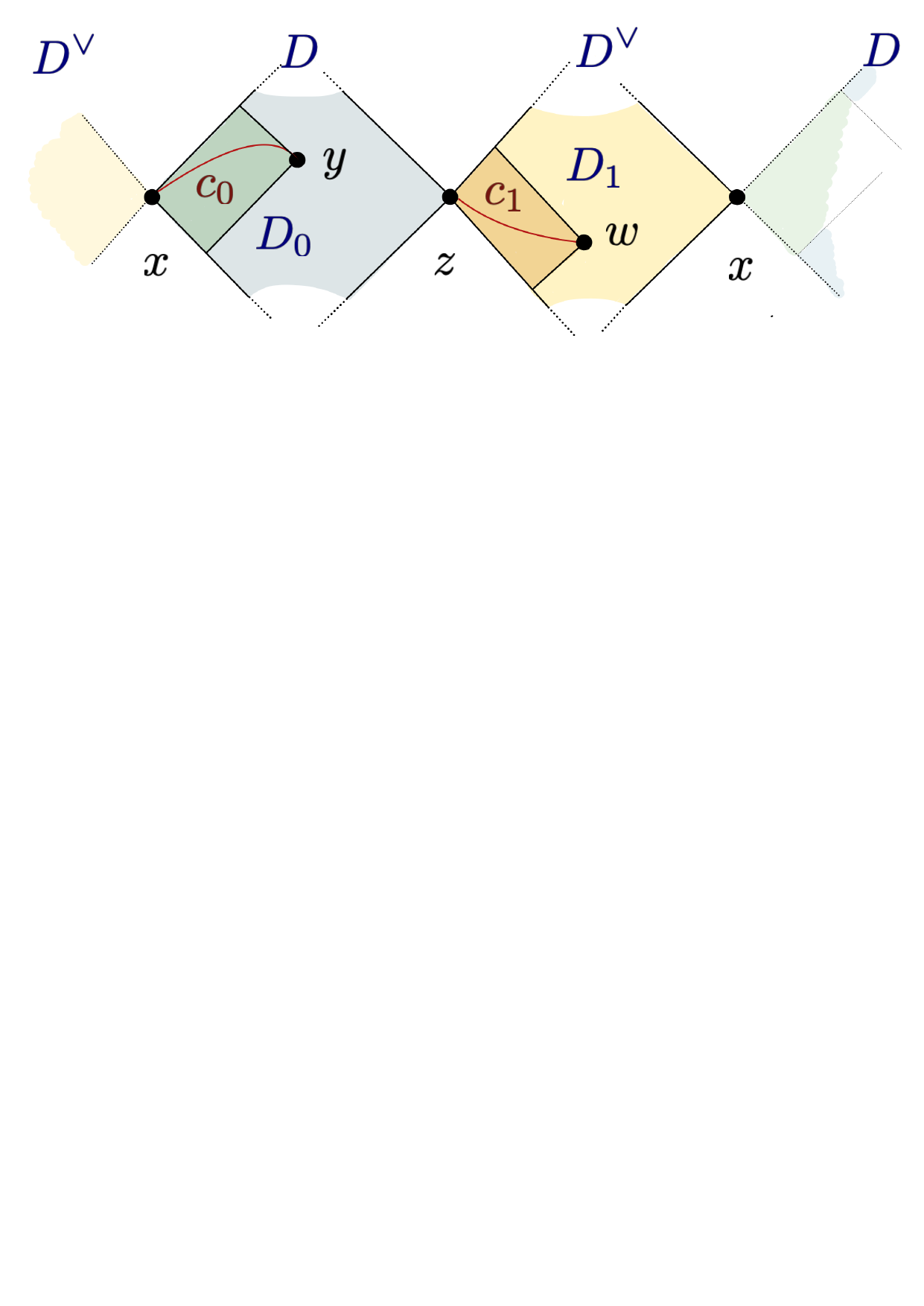}
	\end{center}
	\caption{Configuration of the positive quadruple $(x,y,z,w)$, the $\mathcal{C}^1$ arcs $c_0$, $c_1$ and the diamonds $\diamant,\diamant^\vee, \diamant_0, \diamant_1$ in a $\mathbb Z$-covering of an annulus} 
	\label{fig.diamonds}
\end{figure}

\begin{proof}
	Let~$\diamant$ be the diamond with extremities~$x$ and~$z$ containing~$y$;
	by positivity of the quadruple $(x,y,z,w)$ 
	the opposite diamond $\diamant^\vee$ contains~$w$. There exists a
	unique diamond~$\diamant_0$ contained in~$\diamant$ and with extremities~$x$
	and~$y$ and there exists a unique diamond~$\diamant_1$ contained
	in~$\diamant^\vee$ and with extremities~$z$ and~$w$ - see Figure \ref{fig.diamonds}. 
	
	We can now choose two arcs of positive circles: $c_0$~joining~$x$ to~$y$ and
	contained in~$\diamant_0$, $c_1$~joining~$z$ to~$w$ and contained
	in~$\diamant_1$ (Lemma~\ref{lem:positive-arc}). 
	The inclusions of diamonds give that, for all $s$ and $t$, the sextuple $(x, c_0(s), y,
	z, c_1(s), w)$ is positive.
	
	Given  such arcs~$c_0$ and~$c_1$, 
	by positivity, for all $s$ and $t$, $c_0(s)$ is transverse to $c_1(t)$. The map  $f$  given by
	\[(s,t)\mapsto (c_0(s),c_1(t))\ 
	\]
	takes thus value in~$\cL_\Theta$ and has the wanted properties.
\end{proof}
We can now conclude the proof of Theorem~\ref{theo:cr-pos} using $c_0$, $c_1$,
and~$f$ as in the previous lemma. 
By Proposition~\ref{pro:cr-om},
\[\bb^{\eta}(x,y,z,w)=\exp\left(\int_{[0,1]^2} f^*(\dual{\Omega}{\eta})\right)\ .
\]
By definition if $(u,v)$ belongs to $[0,1]^2$,
\[f^*(\dual{\Omega}{\eta})_{(u,v))}=\bigdual{\Omega\bigl(
	\dot c_0(u),\dot c_1(v)\bigr)}{\eta}\cdot\d s\wedge \d t\ .
\]
By Proposition~\ref{pro:infi-pos}, we have
\[ \bigdual{\Omega\bigl( \dot c_0(u),\dot c_1(v)\bigr)}{\eta}>0\ .
\] 
The result now follows.\qed

\section{The photon cross-ratio bounds the $\theta$-character}
\label{sec:simple-roots-sup}
In this section we relate the photon cross-ratios to characters of simple roots. In particular, we prove the following: 
\begin{theorem}\label{theo:sup-min}
	Let $\theta$ be an element of $\Theta$, $\eta$ a $\Theta$-compatible dominant
	form 
	such
	that $\braket{\eta,\theta}>0$, and $\gamma$ in $\mG$ be a $\Theta$-loxodromic element with attracting and repelling fixed points $\gamma^+$, $\gamma^-$, let
	$x$ in $\cF_\Theta$ be 
	such that 
	$(\gamma^+, \gamma^-,x,\gamma \cdot x)$ is a positive quadruple   then
	\begin{equation*}
		\chi_\theta(\gamma)^{\dual{ h_\theta}{\eta}} \geq
		\min_{\Phi\in \bPhi(\gamma^-)}
		\bb^\eta(p_\Phi(\gamma^+),\gamma^-, x, \gamma(x))\ ,
	\end{equation*}
	where $\bPhi(\gamma^-)$ is the family of $\theta$-photons through~$\gamma^-$.  
\end{theorem}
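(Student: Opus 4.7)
The plan is to exhibit a specific $\gamma$-invariant $\theta$-photon $\Phi_0$ through $\gamma^-$ on which the right-hand side can be computed in closed form, equal to $\chi_\theta(\gamma)^{\dual{h_\theta}{\eta}}$; the minimum is then bounded above by this value, giving the inequality. Using Propositions~\ref{pro:loxo-hyper} and~\ref{pro:loxo-alg} to focus on the hyperbolic part of $\gamma$, I fix $\psi \in \cG$ with $\pi^\cL(\psi) = (\gamma^+,\gamma^-)$ and $a \in \mathfrak{a}^+$ satisfying $\theta(a)>0$ such that $\gamma = \psi(\exp(a))$. Setting $\psi' = \psi\circ\operatorname{int}_{\dot w_0}$ with $\dot w_0$ a representative of the longest Weyl element gives $\pi^\cF(\psi') = \gamma^-$, so $\Phi_0 \defeq \psi'(\mH_\theta)\cdot\gamma^-$ is a $\theta$-photon through $\gamma^-$. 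Invariance $\gamma\Phi_0 = \Phi_0$ holds because $a\in\mathfrak{a}$ normalizes $\mH_\theta$ (the adjoint action of $\mathfrak{a}$ preserves the span of the $\mathfrak{sl}_2$-triple), hence $\exp(a)\mH_\theta = \mH_\theta\exp(a)$, and combining with $\gamma\cdot\gamma^- = \gamma^-$ yields the claim.

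With $\Phi_0$ preserved by $\gamma$, the photon projection $p_{\Phi_0}$ is $\gamma$-equivariant: for $z \in O_{\Phi_0}$, the point $\gamma^{-1}p_{\Phi_0}(\gamma z)$ lies in $\Phi_0 = \gamma^{-1}\Phi_0$ and fails to be transverse to $z$, hence equals $p_{\Phi_0}(z)$ by the uniqueness assertion of Proposition~\ref{p.Phipro}. In particular $p_{\Phi_0}(\gamma^+)$ is a $\gamma$-fixed point on $\Phi_0$, distinct from $\gamma^-$ since $\gamma^+\pitchfork\gamma^-$. Using the definition of the photon cross-ratio (Section~\ref{sec:photon-cross-ratio}) together with $p_{\Phi_0}(\gamma x) = \gamma\cdot p_{\Phi_0}(x)$,
\[
\bb^\eta\bigl(p_{\Phi_0}(\gamma^+),\gamma^-,x,\gamma x\bigr) \;=\; \bb^\eta_{\Phi_0}\bigl(p_{\Phi_0}(\gamma^+),\gamma^-, y, \gamma y\bigr)
\]
with $y = p_{\Phi_0}(x)$; Proposition~\ref{pro:phot-cr} then rewrites the right-hand side as the $\dual{h_\theta}{\eta}$-th power of the projective cross-ratio $\bigl[p_{\Phi_0}(\gamma^+),\gamma^-, y, \gamma y\bigr]$ on $\Phi_0 \cong \bP^1(\mathbb{R})$.

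This last projective cross-ratio is the period of the $\psld$-action of $\gamma$ on $\Phi_0$, namely the eigenvalue of the tangent action of $\gamma$ at one of its fixed points on the photon. Via the identification $\iota^\cF_{\psi'}\colon T_{\gamma^-}\cF_\Theta \to \mathfrak{u}_\Theta^{\mathrm{opp}}$, the tangent line to $\Phi_0$ at $\gamma^-$ is $\mathbb{R}\cdot x_{-\theta}$, and $\gamma$ acts on this line through $\exp(\operatorname{ad}(\operatorname{Ad}(\dot w_0^{-1})a))$; careful bookkeeping of the longest-Weyl-element conjugation identifies this eigenvalue with $\chi_\theta(\gamma)$. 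Consequently
\[
\bb^\eta(p_{\Phi_0}(\gamma^+), \gamma^-, x, \gamma x) \;=\; \chi_\theta(\gamma)^{\dual{h_\theta}{\eta}},
\]
and the theorem's inequality follows. The main technical obstacle is this eigenvalue identification: the opposition involution appearing naturally from the $\dot w_0$-twist must be tracked carefully to yield $\chi_\theta$ rather than $\chi_{\iota(\theta)}$, and one must also verify that positivity of $(\gamma^+,\gamma^-,x,\gamma x)$ guarantees the transversalities needed for $p_{\Phi_0}(\gamma^+)$ to exist and be transverse to both $x$ and $\gamma x$.
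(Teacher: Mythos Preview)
The proposal has a genuine gap at the very first step. Propositions~\ref{pro:loxo-hyper} and~\ref{pro:loxo-alg} tell you only that $\gamma$ and its hyperbolic part~$\gamma_h$ share the same fixed points~$\gamma^\pm$ and the same character~$\chi_\theta$; they do \emph{not} allow you to write an arbitrary $\Theta$-loxodromic~$\gamma$ as $\psi(\exp(a))$. In general $\gamma = \gamma_h \gamma_u \gamma_e$ (Kostant--Jordan), and the right-hand side of the inequality involves $\gamma(x)$, not $\gamma_h(x)$. Replacing~$\gamma$ by~$\gamma_h$ changes the point~$\gamma(x)$ and hence the cross-ratio $\bb^\eta(p_\Phi(\gamma^+),\gamma^-,x,\gamma(x))$; there is no mechanism in your argument to transfer the inequality from~$\gamma_h$ to~$\gamma$.

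The paper handles this by treating the three pieces separately. Proposition~\ref{pro:phot-preserv} produces a photon invariant under $\gamma_0 = \gamma_h\gamma_u$ on which the period of~$\gamma_0$ equals $\chi_\theta(\gamma)^{\dual{h_\theta}{\eta}}$ (your argument, done carefully, essentially covers the~$\gamma_h$ portion of this). The elliptic contribution is then controlled by a compactness argument (Proposition~\ref{pro:ssimple}): on the compact $\gamma_e$-invariant family~$M_0$ of such photons one has $\min_{\Phi\in M_0}\bigl|\bb^\eta(p_\Phi(\gamma^+),\gamma^-,x,\gamma_e(x))\bigr| \leq 1$, and the cocycle identity splices the two pieces together. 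As for the $\dot w_0$-twist issue you flag at the end: it is real, since $\psi(\dot w_0 \mH_\theta \dot w_0^{-1}) = \psi(\mH_{\iota(\theta)})$ gives an $\iota(\theta)$-photon when $\iota(\theta)\neq\theta$ (e.g.\ in split type~$A_n$); the paper avoids this by choosing~$\psi$ with $\pi^\cL(\psi)=(\gamma^-,\gamma^+)$ directly rather than twisting.
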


We first state and prove two preliminary results. Let~$a$ and~$b$ be two transverse points in $\cF_\Theta$
and $\mL\defeq \mL_{a,b}$ be the stabilizer in~$\mG$ of the pair
$(a,b)$.

\begin{proposition}[\sc The compact case]\label{pro:ssimple} Let $\mM$ be a
	compact subgroup of $\mL$. Assume that $k$ belongs to  $\mM$ and that $x$~is
	transverse to both~$a$ and~$b$, then for any $\mM$-invariant compact subset~$M_0$  of  $\bPhi(a)$ \[
	\min_{\Phi\in M_0}
		\bb^\eta(p_\Phi(b),a,x, k(x))
	\, \leq 1 \ .
	\]	
\end{proposition}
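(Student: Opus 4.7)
The plan is to treat $\Phi \mapsto \log \bb^\eta\bigl(p_\Phi(b), a, x, k(x)\bigr)$ as a continuous cocycle for the action of the compact group $\mM$ on $\bPhi(a)$ and exploit compactness to force the minimum to be non-positive.

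First, for each $g$ in $\mM$, I would define
\[
F(\Phi, g) := \log \bb^\eta\bigl(p_\Phi(b), a, x, g(x)\bigr)\,.
\]
Since $\mM$ is contained in $\mL_{a,b}$, every $g$ in $\mM$ fixes both $a$ and $b$. By $\mG$-equivariance of the photon projection we then have $g \cdot p_\Phi(b) = p_{g\Phi}\bigl(g(b)\bigr) = p_{g\Phi}(b)$. Combining this with the cocycle identity of $\bb^\eta$ in the last pair of entries, its $\mG$-invariance, and the identity $\bb^\eta(u,v,w,w)=1$, a direct computation gives the cocycle relation
\[
F(\Phi, gh) = F(\Phi, g) + F(g^{-1}\Phi, h) \qquad \text{for all } g, h \in \mM,
\]
with $F(\Phi, e)=0$. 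Iterating with $g=k^{i}$ and $h=k$ yields the telescoping identity
\[
F(\Phi, k^n) = \sum_{i=0}^{n-1} F(k^{-i}\Phi, k)\,.
\]

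Second, I would argue by contradiction. If $\bb^\eta\bigl(p_\Phi(b), a, x, k(x)\bigr) > 1$ for every $\Phi \in M_0$, then by continuity of $F(\cdot, k)$ and compactness of $M_0$ there exists $c>0$ with $F(\Phi, k) \geq c$ uniformly on $M_0$. The $\mM$-invariance of $M_0$ ensures each $k^{-i}\Phi$ stays in $M_0$, so the telescoping identity forces $F(\Phi, k^n) \geq nc \to +\infty$.

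Third, I would obtain an opposing upper bound. The sequence $\bigl(k^n(x)\bigr)_{n\geq 0}$ lies in the compact set $K := \overline{\langle k\rangle}\cdot x \subset \cF_\Theta$. For any $\Phi$ such that $p_\Phi(b)$ is transverse to every element of $K$, the map $y \mapsto \log \bb^\eta\bigl(p_\Phi(b), a, x, y\bigr)$ is continuous on $K$ and hence bounded, so $F(\Phi, k^n)$ is bounded in $n$, contradicting the linear growth.

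The main obstacle will be ensuring that such a generic $\Phi \in M_0$ exists. The cleanest way to sidestep it is to invoke the Krylov--Bogolyubov theorem to produce a $k$-invariant Borel probability measure $\mu$ on $M_0$, and to average the cocycle relation over the normalized Haar measure on $\mM$: this shows that $F(\cdot, g) = G - G\circ g^{-1}$ is a coboundary, where $G(\Phi) := \int_{\mM} F(\Phi, h)\, dh$ (well-defined for $\mu$-almost every $\Phi$ since the bad set of non-transversalities is a proper algebraic subvariety with integrable log singularity). Integrating $F(\cdot, k) = G - G\circ k^{-1}$ against $\mu$ and using $k$-invariance of $\mu$ yields $\int F(\cdot, k)\, d\mu = 0$, whence $\min_{\operatorname{supp}\mu} F(\cdot, k)\leq 0$ and therefore $\min_{M_0} \bb^\eta\bigl(p_\Phi(b), a, x, k(x)\bigr) \leq 1$.
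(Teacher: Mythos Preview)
Your first three steps are essentially the paper's argument in additive dress: both exploit the cocycle identity for~$\bb^\eta$, iterate along the $k$-orbit, and invoke a uniform bound from compactness to force the minimum down to~$1$. The paper works multiplicatively with $G(z,y)=\min_{\Phi\in M_0}\lvert\bb^\eta(p_\Phi(b),a,z,y)\rvert$ on $\cS=\mM\cdot x$ and deduces $A\geq G(x,k^n(x))\geq G(x,k(x))^n$, which is your telescoping identity rewritten.

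The ``main obstacle'' you flag, however, is not an obstacle, and the measure-theoretic detour is unnecessary. Every $y$ in $K=\overline{\langle k\rangle}\cdot x\subset\mM\cdot x$ is transverse to both~$a$ and~$b$ (since $\mM\subset\mL_{a,b}$ preserves transversality to~$a$ and~$b$). Hence $y$ and~$b$ both lie in~$O_\Phi$ for every $\Phi\in\bPhi(a)$; since $y\pitchfork b$, Corollary~\ref{coro:fibre-not-transverse} gives $p_\Phi(y)\neq p_\Phi(b)$, and then Corollary~\ref{coro:photon-proj-trans} gives $y\pitchfork p_\Phi(b)$. Thus $(\Phi,y)\mapsto\bb^\eta(p_\Phi(b),a,x,y)$ is defined and continuous on all of $M_0\times K$, which is compact, and your third step goes through for \emph{every} $\Phi\in M_0$ without any genericity. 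With this observation your steps~1--3 already constitute a complete proof; the Krylov--Bogolyubov paragraph (whose ``integrable log singularity'' is neither justified nor needed) should be dropped. A minor cosmetic point: using $\lvert\bb^\eta\rvert$ as the paper does avoids any concern about the sign of the cross-ratio, although under your contradiction hypothesis the issue does not actually arise.
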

\begin{proof} Let $\cS$ be the  $\mM$-orbit of~$x$ in~$\cF_\Theta$. 
	All~$z$ in~$\cS$ are transverse to $a$ and $b$, and hence to $p_\Phi(b)$ for all $\Phi$ in $\bPhi(a)$ by  Corollaries \ref{coro:photon-proj-trans} and \ref{coro:fibre-not-transverse}. Thus the function
	\[
	\Psi\colon (z,y,\Phi)\longmapsto  \bb^\eta(p_\Phi(b), a,z,y)
	\]
	on $\cS\times \cS\times M_0$
	is 
	defined and continuous. 
	We consider the function on $\cS^2$ 
	\begin{equation*}
		G(z,y)=\min_{\Phi\in M_0}
		\left\vert\bb^\eta(p_\Phi(b),a, z,y)\right\vert\ ,
	\end{equation*}
	which is
	continuous by the continuity of~$\Psi$ and the compactness
	of $M_0$. 
	As a consequence of the cocycle identity we
	have
	\begin{equation}
		G(z,y) \geq G(z,w) G(w,y)\ .\label{ineq:cocyc-ineq}
	\end{equation}
	Since $M_0$ is $\mM$-invariant, for every $g$ in $\mM$ we have
	\begin{equation*}
		G(g(z),g(y))=G(z,y)\ .
	\end{equation*}
	By the compactness of $\cS$ there is a constant $A$ such that for all $z$ and
	$y$ in $\cS$,
	\begin{equation*}
		G(z,y)\leq A\ .
	\end{equation*} 
	For any $z$ and $y$ in $\cS$, let $g$ in $\mM$ be such that  $y=g(z)$, we obtain
	by iterating the cocycle inequality~\eqref{ineq:cocyc-ineq}  and using the
	$\mM$-invariance of~$G$, that for all~$n$
	\[
	A\geq G(z,g^n(z))\geq G(z,g(z))^n=G(z,y)^n\ .
	\]
	This shows that  $G(z,y)\leq 1$ for all $y$ and $z$ in the  $\mM$-orbit of
	$x$. Hence  $G(x, k (x))$ is at most $1$ 
	and this 
	concludes the proof.
\end{proof}
In the next proposition, we use the Kostant--Jordan decomposition recalled in the beginning of Section~\ref{sec:loxodromic-elements}.
\begin{proposition}[\sc A photon is preserved]
	\label{pro:phot-preserv}
	Let $g$ be a $\Theta$-loxodromic element in~$\mG$ such that~$a$ and~$b$
	are respectively the repelling and attracting fixed points of~$g$. 
	Let $g=g_hg_ug_e$ be the Kostant--Jordan decomposition of~$g$ (in~$\mG$).
	Then there exists a $\theta$-photon~$\Phi$ in $\mathbf{\Phi}(a)$ invariant
	by~$g_h$ and by~$g_u$ and
	\begin{equation}\label{e.Photonpreserved}
		\chi_\theta(g)^{\dual{ h_\theta}{\eta}} =
		\bb^\eta(p_{\Phi}(b),a,y, g_hg_u(y))\ ,
	\end{equation}
	for all $y$ transverse to~$a$ and to~$b$.
\end{proposition}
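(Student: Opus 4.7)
The plan is to first produce a photon $\Phi \in \mathbf{\Phi}(a)$ preserved by both $g_h$ and $g_u$, and then to reduce the stated cross-ratio on $\cF_\Theta$ to a projective cross-ratio on $\Phi \simeq \mathbf{P}^1(\mathbb{R})$ computing the $\theta$-character. For the first step, I would use Propositions~\ref{pro:loxo-hyper} and~\ref{pro:loxo-alg} to choose an isomorphism $\psi\in\cG$ with $\pi^\cL(\psi)=(a,b)$ and write $\psi^{-1}(g_h)=\exp(h)$ for some $h\in\mathfrak{a}$ satisfying $\dual{h}{\theta}\neq 0$ for every $\theta\in\Theta$. Since the centralizer of $h$ in $\mathfrak{g}_0$ is exactly $\mathfrak{l}_\Theta$, the commutation $[g_h,g_u]=1$ gives $\psi^{-1}(g_u)=\exp(n)$ with $n$ nilpotent in $\mathfrak{l}_\Theta$ and $[h,n]=0$. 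Under the identification $\mathbf{\Phi}(a)\simeq \mathbf{P}(Z_\theta)\subset \mathbf{P}(\mathfrak{u}_{-\theta})$ from Section~\ref{sec:phot-subgr-phot}, the line $[x_{-\theta}]$ is $\exp(h)$-fixed, so the fixed locus $F_h$ of $\exp(h)$ is a non-empty closed subvariety of the compact variety $\mathbf{P}(Z_\theta)$ (Lemma~\ref{lemma:theta-light-closed}). Since $[h,n]=0$, the unipotent group $\exp(\mathbb{R}n)$ preserves $F_h$, and for any $[v]\in F_h$ the nilpotent limit $\lim_{t\to\infty}\exp(tn)\cdot[v] = [n^k v]$ (with $k$ maximal such that $n^k v\neq 0$) lies in $F_h$ by closedness and is fixed by $n$, giving a photon $\Phi\in \mathbf{\Phi}(a)$ preserved by both $g_h$ and $g_u$.

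For the cross-ratio identity, the characterization of $p_\Phi(y)$ in Proposition~\ref{p.Phipro}.(\ref{item1:p.Phipro}) as the unique element of $\Phi$ non-transverse to $y$, combined with the $\mG$-equivariance of transversality, yields $p_\Phi\circ g_hg_u = g_hg_u\circ p_\Phi$ on the domain $O_\Phi$; in particular $p_\Phi(b)$ and $a$ are both fixed by $g_hg_u|_\Phi$. Combining Equation~\eqref{eq:equality-of-bb-eta} with Proposition~\ref{pro:phot-cr} then gives
\[
\bb^\eta(p_\Phi(b),a,y,g_hg_u(y)) = [p_\Phi(b),a,p_\Phi(y),g_hg_u(p_\Phi(y))]^{\dual{h_\theta}{\eta}}.
\]
The restriction homomorphism from the stabilizer of $\Phi$ in $\mG$ to $\mathrm{Aut}(\Phi)\simeq\mathsf{PGL}_2(\mathbb{R})$ is algebraic, so it sends $g_u$ to a unipotent element commuting with $g_h|_\Phi$; the latter is non-trivially hyperbolic since its tangent action at $a$ along $\mathfrak{g}_{-\theta}$ equals $e^{-\theta(h)}\neq 1$, and the centralizer of a non-trivial hyperbolic element in $\mathsf{PGL}_2(\mathbb{R})$ contains no non-trivial unipotent, whence $g_u|_\Phi=\mathrm{id}$. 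The projective cross-ratio therefore equals the period of $g_h|_\Phi$, and a short computation using that $\mathrm{Ad}(\exp(h))$ and $\mathrm{Ad}(\exp((\theta(h)/2)h_\theta))$ agree on the $\mathfrak{sl}_2$-triple $(x_\theta,x_{-\theta},h_\theta)$ identifies this period with the $\theta$-character $\chi_\theta(g_h)=\chi_\theta(g)$.

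The main obstacle is producing the common fixed photon: the closedness of $\mathbf{P}(Z_\theta)$ in $\mathbf{P}(\mathfrak{u}_{-\theta})$ supplied by Lemma~\ref{lemma:theta-light-closed} is essential to ensure that the nilpotent limit actually lies in $\mathbf{P}(Z_\theta)$ and thus corresponds to an actual photon, rather than to an arbitrary line in $\mathfrak{u}_{-\theta}$.
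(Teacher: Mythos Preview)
Your approach is essentially the paper's: locate a $g_h,g_u$-invariant photon via a unipotent-orbit limit inside the closed set $\mathbf{P}(Z_\theta)$, then reduce the cross-ratio to a projective cross-ratio on $\Phi$ via Proposition~\ref{pro:phot-cr}. Two points need tightening.

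First, the assertion that the centralizer of~$h$ in~$\mathfrak{g}_0$ is \emph{exactly}~$\mathfrak{l}_\Theta$ is false in general; the correct (and sufficient) fact is the inclusion $\mathfrak{z}_{\mathfrak{g}_0}(h)\subset\mathfrak{l}_\Theta$, and this requires~$h$ to lie in the closed Weyl chamber (so that $\alpha(h)=0$ forces $\alpha\in\mathrm{Span}(\Delta\smallsetminus\Theta)$). You should invoke the Kostant--Jordan decomposition to pick~$\psi$ with $\psi^{-1}(g_h)=\exp(h)$ and~$h$ in the Weyl chamber, as the paper does; then $n\in\mathfrak{z}_{\mathfrak{g}_0}(h)\subset\mathfrak{l}_\Theta$ follows.

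Second, and more substantively: the cross-ratio identity~\eqref{e.Photonpreserved} does \emph{not} hold for every photon in your fixed locus~$F_h$, only for those tangent to a vector in the specific eigenspace $E=\{v\in\mathfrak{u}_{-\theta}\mid\ad(h)v=-\theta(h)v\}$. A photon tangent to an eigenvector with a different eigenvalue~$-\alpha(h)$ would give $\chi_\alpha(g)^{\dual{h_\theta}{\eta}}$ instead. Your final computation invoking the $\mathfrak{sl}_2$-triple $(x_\theta,x_{-\theta},h_\theta)$ tacitly assumes $\Phi$ is the standard photon, but your construction does not produce that photon. The fix is already implicit in your argument: start specifically from $[x_{-\theta}]\in\mathbf{P}(E\cap Z_\theta)$, and observe that since $[h,n]=0$ the operator $\ad(n)$ preserves the eigenspace~$E$, so the nilpotent limit $[\ad(n)^k x_{-\theta}]$ lies in $\mathbf{P}(E\cap Z_\theta)$. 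This is exactly how the paper proceeds: it works throughout inside $\mathbf{P}(E\cap Z_\theta)$ rather than in all of~$F_h$, and then the period of $g_h|_\Phi$ is computed directly from the tangent eigenvalue $e^{-\theta(h)}$ along~$E$, without needing to identify $\mH_\Phi$ with the standard~$\mH_\theta$.
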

\begin{proof}  Let $\psi$ be an isomorphism of $\mG_0$ with $\mG$ such that
	$\pi^\cL(\psi)=(a,b)$ and  $g_h=\psi(\exp(X))$ with
	$X$~in the closed Weyl chamber~$\mathfrak{a}^+$ --- by the Kostant--Jordan
	decomposition  as in Section~\ref{sec:loxodromic-elements}. One also has  $\psi(\mL_\Theta)=\mL_{a,b}$.

	As $g$~is $\Theta$-loxodromic (see Section \ref{sec:loxodromic-elements}),
	the element~$X$ satisfies that $\dual{X}{\alpha}>0$ for all~$\alpha$
	in~$\Theta$ and $\dual{X}{\alpha}\geq 0$ for all~$\alpha$ in
	$\Delta\smallsetminus \Theta$.
	
	Let 
	$E=\{ v \in \mathfrak{u}_{-\theta} \mid \ad(X)v = -\dual{X}{\theta}v\}$, and
	let~$Z_\theta$ the $\mL_\Theta$-orbit of~$x_{-\theta}$ in~$\mathfrak{u}_{-\theta}$. We
	know that the image by $\pi_{\psi}^{\cF}$ of  every vector~$v$  in~$Z_\theta$ is tangent to a photon through~$a$ (Proposition~\ref{pro:PhoUni}). If
	furthermore this vector $v$ is in~$E$, the $\eta$-period of~$g_h$ on this photon satisfies the stated equality thanks to
	Proposition~\ref{pro:phot-cr}.
	
	The proposition will be proved if we can find a vector in~$E\cap Z_\theta$ that is
	also invariant by~$g_u$ (in which case the action of~$g_u$ on the corresponding
	photon will be trivial). Note that the space~$E$ is $\Ad(g_u)$-invariant since
	$E$~is the intersection of~$\mathfrak{u}_\theta$ with
	$\ker(\ad(X)-\dual{X}{\theta} \mathrm{Id})$ and both these spaces are
	$\Ad(g_u)$-invariant. The projectivization $\mathbf{P}({Z_\theta\cap E})$ of~$Z_\theta$
	in~$\mathbf{P}(E)$ is a closed $\Ad(g_u)$-invariant subset (cf.\ Lemma~\ref{lemma:theta-light-closed}). Since $\Ad(g_u)$ is
	unipotent, every $\langle\Ad(g_u)\rangle$-orbit in $\mathbf{P}(E)$ accumulates
	to a point  fixed by $\Ad(g_u)$. These last two remarks imply that
	$\mathbf{P}(Z_\theta\cap E)$ contains points fixed by~$\Ad(g_u)$. This finishes the proof. 
\end{proof}

\begin{proof}[Proof of Theorem~\ref{theo:sup-min}] 
	
	We can now prove the inequality of Theorem \ref{theo:sup-min}.
	
	Let us write $\gamma=\gamma_0\gamma_e$ with $\gamma_0=\gamma_h\gamma_u$,
	where $\gamma_h$, $\gamma_u$, and $\gamma_e$ are pairwise commuting and
	respectively the hyperbolic, unipotent, and elliptic parts of~$\gamma$. 
	Let then $\mM$ be the closure of the group generated by $\gamma_e$ and $M_0$ be the compact  set of photons~$\Phi$  preserved by $\gamma_0$ in $\bPhi(\gamma^-)$ and satisfying Equation~\eqref{e.Photonpreserved}, namely such that
	\begin{equation*}
		\chi_\theta(\gamma)^{\dual{h_\theta}{\eta}} =
		\bb^\eta(p_{\Phi}(b),a,y, \gamma_0\gamma_u(y))\ ,
	\end{equation*}
	for all $y$ transverse to $a$ and $b$.
	We observe that $M_0$ is invariant by $\mM$, and non-empty by
	Proposition~\ref{pro:phot-preserv} (applied with
	$a=\gamma^-$ and $b=\gamma^+$). Let finally~$\Phi_0$ be a photon in~$M_0$ such that 
	$$
	\bb^\eta(p_{\Phi_0}({\gamma}^+),{\gamma}^-,x, {\gamma}_e(x))=\min_{\Phi\in \mM_0}\bb^\eta(p_{\Phi}({\gamma}^+),{\gamma}^-,x, {\gamma}_e(x))\leq 1\ ,
	$$
	where the inequality comes from Proposition~\ref{pro:ssimple}.  
	We then have  by the cocycle identities
	\begin{align*}
		\bb^\eta(p_{\Phi_0}({\gamma}^+),{\gamma}^-,
		&\ x,{\gamma}(x))\\
		&= \bb^\eta(p_{\Phi_0}({\gamma}^+),{\gamma}^-,x, {\gamma}_e(x))\ 
		\bb^\eta(p_{\Phi_0}({\gamma}^+),{\gamma}^-,\gamma_e(x),
		\gamma_0{\gamma}_e(x))\\
		&\leq \bb^\eta(p_{\Phi_0}(\gamma^+),{\gamma}^-,
		\gamma_0(\gamma_e(x)),\gamma_e(x))\\
		&= \chi_\theta(\gamma_0)^{\dual{ h_\theta}{\eta}}=\chi_\theta(\gamma)^{\dual{ h_\theta}{\eta}}
		\ .
	\end{align*}
	It follows that
	\begin{equation*}
		\min_{\Phi\in\bPhi(\gamma^-)} \bb^\eta(p_{\Phi}({\gamma}^+),{\gamma}^-,
		x,{\gamma}(x)) \leq \chi_\theta(\gamma)^{\dual{ h_\theta}{\eta}} \ ,
	\end{equation*}
	and the result follows.
\end{proof}

\section{The collar inequality}
\label{sec:collar-inequality}

Our goal in this section is to prove the main result of this paper, it
generalizes Theorem \ref{theo:Bprime} in the sense that general
$\Theta$-compatible dominant forms are allowed.

\begin{theorem}[\sc Collar Lemma in the group]\label{thm:collarineq}
	Let $\mG$ a semisimple Lie group admitting a $\Theta$-positive
	structure. Let~$A$ and~$B$ be $\Theta$-loxodromic elements of~$\mG$. Denote
	by $(a^+, a^-)$ and $(b^+, b^-)$ the pair of attracting and repelling fixed
	points of~$A$ and~$B$ respectively in the flag variety~$\cF_\Theta$. Assume
	that the sextuple
	$$(a^+, b^-, a^-, b^+, B(a^+), A(b^+))\ ,$$
	is positive (see Figure~\ref{fig:sext}).  Let~$\theta$ be an element
	of~$\Theta$, and $\eta$ be a $\Theta$-compatible dominant form with
	$\dual{h_\theta}{\eta}>0$. Then
	\begin{equation}
		\left(
		\frac{1}{ \pp^{\eta}\left(B\right)  }
		\right)^{{1}/{\dual{ h_\theta}{\eta}}}
		+ \frac{1}{\chi_\theta(A)}< 1\ .\label{ineq:collarFOL}
	\end{equation}	
\end{theorem}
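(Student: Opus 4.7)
The plan is to project the entire configuration onto a suitable $\theta$-photon $\Phi$ through~$a^-$ and reduce the claim to an elementary inequality of projective cross-ratios on~$\mathbf{P}^1(\mathbb{R})$.

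First I would apply Theorem~\ref{theo:sup-min} with $\gamma = A$ and $x = b^+$. The four-tuple $(a^+, a^-, b^+, A(b^+))$ occupies positions $1, 3, 4, 6$ of the positive sextuple, hence is itself a positive quadruple. The theorem therefore yields a $\theta$-photon $\Phi \in \bPhi(a^-)$ with
\[
\chi_\theta(A)^{\dual{h_\theta}{\eta}} \geq \bb^\eta(p_\Phi(a^+), a^-, b^+, A(b^+)).
\]
By Proposition~\ref{pro:semi-pos} the photon projections of the sextuple form a cyclically ordered six-tuple on $\Phi \cong \mathbf{P}^1(\mathbb{R})$. Fix the affine chart placing $p_\Phi(a^+) = \infty$, $a^- = 0$, and set $\beta = p_\Phi(b^+)$, $\gamma = p_\Phi(b^-)$, $\epsilon = p_\Phi(B(a^+))$, $\delta = p_\Phi(A(b^+))$, so that $\gamma < 0 < \beta < \epsilon < \delta$. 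Proposition~\ref{pro:phot-cr} then identifies the cross-ratio above with $(\delta/\beta)^{\dual{h_\theta}{\eta}}$, yielding $\chi_\theta(A) \geq \delta/\beta$.

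Next I would establish the analogous lower bound
\[
\pp^\eta(B)^{1/\dual{h_\theta}{\eta}} \geq \frac{\epsilon - \gamma}{\epsilon - \beta}.
\]
Starting from $\pp^\eta(B) = \bb^\eta(b^+, b^-, a^+, B(a^+))$, I apply the symmetry $\bb^\eta(x, y, X, Y) = \bb^\eta(X, Y, x, y)$ (available because $\Theta$ is opposition invariant in the positive setting) and the cocycle identity inserted successively at $p_\Phi(a^+)$ and $p_\Phi(B(a^+))$ to obtain a decomposition $\pp^\eta(B) = F_1 \cdot F_2 \cdot F_3$ with
\[
F_2 = \bb^\eta(p_\Phi(a^+), p_\Phi(B(a^+)), b^+, b^-) = \Bigl(\frac{\epsilon - \gamma}{\epsilon - \beta}\Bigr)^{\dual{h_\theta}{\eta}}
\]
by Proposition~\ref{pro:phot-cr}, and with corrections $F_1 = \bb^\eta(a^+, p_\Phi(a^+), b^+, b^-)$ and $F_3 = \bb^\eta(p_\Phi(B(a^+)), B(a^+), b^+, b^-)$ each pairing a point with its photon projection. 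Using $B$-invariance (since $B$ fixes $b^\pm$) followed by one further cocycle, $F_1 \cdot F_3$ collapses to the single cross-ratio $\bb^\eta(b^+, b^-, p_\Phi(B(a^+)), B(p_\Phi(a^+)))$. The hard part will be to show $F_1 \cdot F_3 \geq 1$: this seems to require a delicate positivity argument combining the Infinitesimal Lemma~\ref{lem:infi-bphi}, the photon-fibre geometry, and Theorem~\ref{theo:brak-pos} applied along a suitable path.

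Granting the lower bound on $\pp^\eta(B)^{1/\dual{h_\theta}{\eta}}$, the proof concludes by an elementary algebraic manipulation. The identity
\[
(\delta - \beta)(\beta - \gamma) - \beta(\epsilon - \beta) = \beta(\delta - \epsilon) - \gamma(\delta - \beta)
\]
exhibits the left-hand side as a sum of two strictly positive quantities under $\gamma < 0 < \beta < \epsilon < \delta$, yielding $(\delta/\beta - 1)\bigl((\epsilon - \gamma)/(\epsilon - \beta) - 1\bigr) > 1$. Combining this with $\chi_\theta(A) \geq \delta/\beta$, the bound on $\pp^\eta(B)^{1/\dual{h_\theta}{\eta}}$, and the monotonicity of $(x-1)(y-1)$ in each argument, we conclude $(\chi_\theta(A) - 1)(\pp^\eta(B)^{1/\dual{h_\theta}{\eta}} - 1) > 1$, which is equivalent to the required inequality of the theorem.
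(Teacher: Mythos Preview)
Your proposal contains a genuine gap that you yourself flag: the inequality $F_1\cdot F_3\geq 1$. After the collapse you indicate, this amounts to showing
\[
\bb^\eta\bigl(p_\Phi(B(a^+)),\,B(p_\Phi(a^+)),\,b^+,\,b^-\bigr)\geq 1,
\]
which compares $p_\Phi(B(a^+))$ with $B(p_\Phi(a^+))$. These lie on \emph{different} photons ($\Phi$ and $B(\Phi)$), and none of the available tools --- Proposition~\ref{pro:semi-pos}, the Infinitesimal Lemma~\ref{lem:infi-bphi}, or Theorem~\ref{theo:brak-pos} --- give control over such a pair. There is no reason this quadruple should be semi-positive, and the hoped-for ``delicate positivity argument'' would be a new result rather than a routine verification. (A smaller issue: the symmetry $\bb^\eta(x,y,X,Y)=\bb^\eta(X,Y,x,y)$ you invoke requires $\iota(\eta)=\eta$, which the paper does not establish in general.)

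The paper's argument sidesteps this difficulty entirely. It first reduces to $\eta=\omega_\theta$ via Corollary~\ref{coro:pos-line-fund}, and then, for every photon~$\Phi$ through~$a^-$, uses the additive identity
\[
\bb^{\omega_\theta}(a^-,p_\Phi(b^+),a^+,A(b^+))+\bb^{\omega_\theta}(a^-,p_\Phi(a^+),b^+,A(b^+))=1
\]
coming from Proposition~\ref{pro:phot-cr}. The second term is handled by Theorem~\ref{theo:sup-min} exactly as you do. For the first term, a chain of cocycle identities factors it so that only a \emph{single} ``point versus its photon projection'' cross-ratio $\bb^{\omega_\theta}(b^+,p_\Phi(b^+),a^+,A(b^+))$ appears, and this one is $\geq 1$ by the semi-positivity of Proposition~\ref{pro:semi-pos}. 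The remaining factors are bounded by genuine positivity (Theorem~\ref{theo:cr-pos}), yielding $\bb^{\omega_\theta}(a^-,p_\Phi(b^+),a^+,A(b^+))>\pp^{\omega_\theta}(B)^{-1}$ directly. The key advantage is that only one photon-projection correction ever arises, and it involves a point and its \emph{own} projection --- precisely the situation covered by Proposition~\ref{pro:semi-pos}.
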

Observe that when $\dual{ h_\theta}{\eta}=0$, the above
inequality is still true but of little use.
\begin{figure}[hbt]
	\begin{center}
		\includegraphics[height=4cm]{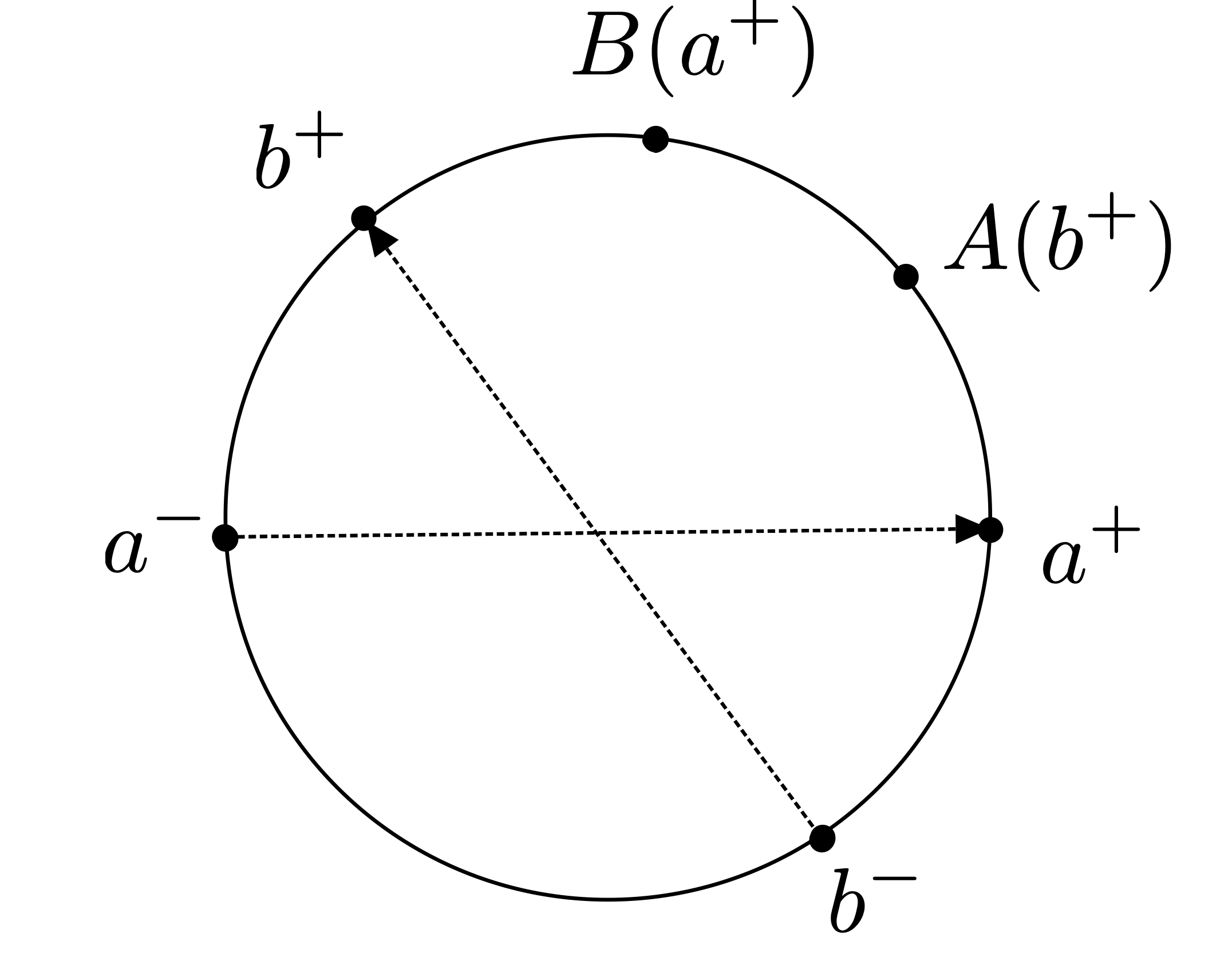}
	\end{center}
	\caption{Positive sextuple} 
	\label{fig:sext}
\end{figure}

\begin{proof}\label{rk:postive-linea/weight} From Corollary \ref{coro:pos-line-fund} it is enough to prove the inequality whenever $\eta$ is a fundamental weight $\omega_\theta$ of $\theta$. 
	
	Let~$\Phi$ be a $\theta$-photon
	through~$a^-$. From Proposition~\ref{pro:phot-cr} and the
	classical relation for the projective cross-ratio, we have 	\begin{equation*}
		\bb^{\omega_\theta}(a^-,p_\Phi(b^+),a^+, A(b^+)) +
		\bb^{\omega_\theta}(a^-,p_\Phi(a^+),b^+,A(b^+)) = 1\ .
	\end{equation*}
	We will now obtain a minoration of the first term in the left-hand side of
	this equation, we will then apply Theorem~\ref{theo:sup-min} in order to
	obtain the wanted majoration.
	In these computations, we will use freely that the cross-ratio  is greater
	than~$1$ for positive quadruples (Theorem \ref{theo:cr-pos}).
	\vskip 0.2truecm 
	\noindent{\sc First step:}
	We first bound from below the first term of the previous equality. 	\begin{equation*}
		\bb^{\omega_\theta}(a^-,p_\Phi(b^+),a^+, A(b^+))>\pp^{\omega_\theta}(B)^{-1}\ .
	\end{equation*}
	Let $L\defeq \bb^{\omega_\theta}(a^-,p_\Phi(b^+),a^+, A(b^+))$.
	By the cocyle relation we have 
	\[
	L = \bb^{\omega_\theta}(a^-,b^+,a^+,A(b^+)) \cdot
	\bb^{\omega_\theta}(b^+,p_\Phi(b^+),a^+,A(b^+))\ .
	\]
	
	The quadruple $(a^-,b^+,A(b^+),a^+)$ is positive, 
	hence by
	Proposition~\ref{pro:semi-pos}, the quadruple
	$(b^+, p_\Phi(b^+),a^+,A(b^+))$ is  also
	a semi-positive quadruple, thus Theorem~\ref{theo:cr-pos} gives        
	\[
	\bb^{\omega_\theta}(b^+,p_\Phi(b^+),a^+,A(b^+))\geq  1\ .
	\]
	Then
	\begin{align*}
		L & \geq  \bb^{\omega_\theta}(a^-,b^+,a^+,A(b^+))\\
		& =\bb^{\omega_\theta}(a^-,b^+,a^+,B(a^+))\cdot
		\bb^{\omega_\theta}(a^-,b^+, B(a^+),A(b^+))\\
		& =\bb^{\omega_\theta}(a^-,b^-,a^+,B(a^+)) \cdot \bb^{\omega_\theta}(b^-,b^+,a^+,B(a^+)) \cdot
		\bb^{\omega_\theta}(a^-,b^+, B(a^+),A(b^+))\ ,	
	\end{align*}
	where we used the cocycle identities twice. Since $(a^-,b^+, B(a^+),A(b^+))$ and $(a^-,b^-,a^+,B(a^+))$ are positive quadruples (the latter follows as $(b^-,a^-,B(a^+),a^+)$ is positive), their cross-ratios are
	greater than~$1$ and we get 
	\[
	L> \bb^{\omega_\theta}(b^-,b^+,a^+,B(a^+)) = \pp^{\omega_\theta}(B)^{-1}\ ,
	\]
	which is what {we wanted to prove}.
	
	\vskip 0.2truecm 
	\noindent{\sc Second step:}
	We obtain from the first step that, for
	every photon~$\Phi$ through~$a^-$,
	\[
	\pp^{\omega_\theta}\left(B\right)^{-1}+
	\bb^{\omega_\theta}(a^-,p_\Phi(a^+),b^+,A(b^+))< 1\ .
	\]
	Letting~$\Phi$ vary in $\bPhi(a^-)$, we get
	\begin{equation*}
		\pp^{\omega_\theta}\left(B\right)^{-1} +
		\max_{\Phi\in
			\bPhi(a^-)}
		\left(\bb^{\omega_\theta}(a^-,p_\Phi(a^+),b^+,A(b^+))\right) <
		1\ .
	\end{equation*}
	Observe now that $(a^-,a^+,A(b^+),b^+)$ is a positive quadruple, thus
	Theorem~\ref{theo:sup-min} gives in particular that
	\begin{align*}
		\chi_\theta(A)^{-1}
		&\leq \bigl(\min_{\Phi\in \bPhi(a^-)}
		\bb^{\omega_\theta}(a^-,p_\Phi(a^+),A(b^+),b^+)\bigr)^{-1}\\
		&=  \max_{\Phi\in \bPhi(a^-)}
		\bb^{\omega_\theta}(a^-,p_\Phi(a^+),b^+,A(b^+)) \ .	
	\end{align*}
	Thus combining the two last inequalities, we get	
	\begin{equation*}
		\pp^{\omega_\theta}\left(B\right)^{-1} +
		\left(\chi_{\theta}(A)\right)^{-1}< 1\ .
	\end{equation*}
	
	This is the inequality that we wanted to prove.
\end{proof}

\section[Positive representations]{Positive representations of finite type and infinite type
	surfaces}
\label{sec:posit-repr-infin}
In this section, we give the definition of positive representations in a
setting that allows surfaces that are not closed, or not even of finite
topological type, i.e.\ we do not assume that the fundamental group is
finitely generated.

Let $\Sigma$ be a ---possibly non-compact--- connected oriented surface whose fundamental group $\Gamma$~contains a free group. 
Among loops not homotopic to zero, we distinguish between  {\em peripheral loops}  and {\em non-peripheral loops}
in $\Sigma$: peripheral loops are curves in $\Sigma$ which are freely
homotopic to a multiple of a boundary component or a cusp, otherwise a loop is  non-peripheral. We  use the same  terminology for conjugacy classes of elements of $\pi_1(\Sigma)$, seen as free homotopy classes of loops.

We denote by $\Lambda$ the classes of non-peripheral
elements of $\pi_1(\Sigma)$ up to positive powers; i.e.\ $\gamma$
and~$\gamma'$ represent the same element in~$\Lambda$ if and only if there are
positive integers~$n$ and~$n'$ such that $\gamma^n=\gamma^{\prime n'}$. The
class in~$\Lambda$ of a non-peripheral element~$\gamma$ will be denoted
by~$\gamma^+$. The set~$\Lambda$ should be thought of as the set of attracting fixed
points of non-peripheral elements of $\pi_1(\Sigma)$ in the boundary at
infinity of the group. The conjugation induces a natural action of~$\pi_1(\Sigma)$
on~$\Lambda$. 
We will introduce a cyclic order on~$\Lambda$.
Since $\grf$ might not be finitely generated, we cannot directly use the boundary at infinity. Instead, we use the following trick. 

\begin{proposition}[\sc Reduction to finite type]\label{pro:Red-finite}
	Given finitely many elements $\gamma_1,\ldots,\gamma_p$ in $\pi_1(\Sigma)$,
	there exists an incompressible connected surface~$S$ of finite type,
	in~$\Sigma$, whose fundamental group contains all the $\gamma_i$. If
	furthermore, none of the $\gamma_i$ are peripheral, we can choose $S$ such
	that all the~$\gamma_i$ remain non-peripheral in~$S$.
\end{proposition}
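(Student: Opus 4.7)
The plan is to construct $S$ in three stages: first, take a regular neighborhood of loops representing the $\gamma_i$; second, fill in compressing disks to achieve incompressibility; third, enlarge further to preserve the non-peripheral condition.

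First, I would fix a basepoint $\ast\in\Sigma$ and represent each $\gamma_i$ by a based loop $\ell_i$. Put the $\ell_i$ in general position and let $L=\bigcup_{i=1}^p \ell_i$. A closed regular neighborhood $N_0$ of $L$ in $\Sigma$ is a connected compact subsurface with boundary. Since each $\ell_i$ is contained in $N_0$, the image of $\pi_1(N_0)\to\pi_1(\Sigma)$ contains $\gamma_1,\dots,\gamma_p$.

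Second, I would pass from $N_0$ to an incompressible subsurface $S_1$. Any simple closed curve in a surface that is null-homotopic bounds an embedded disk (by the Schoenflies theorem for surfaces). Thus every boundary component of $N_0$ that is null-homotopic in $\Sigma$ bounds a disk $D\subset\Sigma$; as $\Sigma$ is not itself a disk (since $\pi_1(\Sigma)$ contains a free group), $D$ must be disjoint from $\operatorname{int}(N_0)$. Attaching all such disks to $N_0$ produces a compact connected subsurface $S_1\supset N_0$ no boundary component of which is null-homotopic in $\Sigma$. By the standard surface criterion (equivalently, by the loop theorem), $\pi_1(S_1)\hookrightarrow\pi_1(\Sigma)$ is injective, i.e.\ $S_1$ is incompressible.

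Third, I would enlarge $S_1$ to ensure non-peripheralness. Suppose some $\gamma_i$, non-peripheral in $\Sigma$, is peripheral in $S_1$, so that $\gamma_i\simeq c^k$ in $S_1$ for some boundary component $c$ of $S_1$ and some $k\geq 1$. Then $c$ itself is non-peripheral in $\Sigma$ (being freely homotopic to a power of $\gamma_i$), so the component $\Sigma'$ of $\Sigma\smallsetminus\operatorname{int}(S_1)$ adjacent to $c$ is \emph{not} an annular neighborhood of a boundary component or cusp of $\Sigma$. Hence $\Sigma'$ contains a properly embedded arc $a$ with both endpoints on $c$ which is not boundary-parallel in $\Sigma'$, or an essential simple closed curve $\beta\subset\Sigma'$ not homotopic to $c$. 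I would enlarge $S_1$ by attaching a regular neighborhood of $a$ (and, if needed, of $\beta$ together with a connecting arc to $c$), then refill compressing disks as in the previous step. In the resulting $S_2$, the curve $c$ now lies in the interior, and by the choice of $a$ (resp.\ $\beta$) not being boundary-parallel (resp.\ homotopic to $c$) one verifies that $c^k$ is not conjugate in $\pi_1(S_2)$ to a power of any boundary curve of $S_2$. Iterating over the finitely many problematic indices $i$ yields the desired $S$.

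The main obstacle is the bookkeeping in the third step: one must verify that the attachment of a neighborhood of $a$ (or $\beta$) truly destroys the peripheral character of $c^k$, i.e.\ that none of the new boundary components of $S_2$ is freely homotopic to a power of $c$. This follows from a standard cut-and-paste analysis describing the boundary of $S_2$ as built from the old boundary of $S_1$ cut along the endpoints of~$a$ and joined with the two sides of the neighborhood of $a$, combined with the essentialness hypothesis on~$a$ in~$\Sigma'$.
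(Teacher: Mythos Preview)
The paper does not supply a proof of this proposition; it is stated as a well-known fact from surface topology. Your overall strategy---regular neighborhood, then disk-filling, then enlargement---is the standard one, and Steps~1--2 are essentially correct. (The sentence ``$D$ must be disjoint from $\operatorname{int}(N_0)$'' is not quite right: the disk~$D$ could contain~$N_0$, but then every~$\gamma_i$ is trivial and the conclusion is immediate.)

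Step~3, however, has two genuine gaps. First, your dichotomy ``essential arc~$a$ or essential curve~$\beta$'' fails when the complementary component~$\Sigma'$ is an annulus joining~$c$ to \emph{another boundary component~$c'$ of~$S_1$}; this can certainly occur (for instance when $S_1$ is a pair of pants in a closed genus-$2$ surface with two of its boundary circles isotopic in~$\Sigma$), and in an annulus there is no such~$a$ or~$\beta$. Second, you have not shown that the iteration terminates: enlarging to fix~$\gamma_i$ may produce new boundary components to which some~$\gamma_j$ becomes freely homotopic, and nothing in your argument bounds the number of steps. A cleaner replacement for Step~3 avoids both problems in one shot: since each~$\gamma_i$ is non-peripheral in~$\Sigma$, choose a loop~$\delta_i$ in~$\Sigma$ with positive geometric intersection number with~$\gamma_i$; add based representatives of the~$\delta_i$ to the original collection and rerun Steps~1--2. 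In the resulting incompressible~$S$, if~$\gamma_i$ were peripheral, say $\gamma_i\simeq c^k$ for a boundary component~$c$ of~$S$, then~$\delta_i\subset S$ would satisfy $i(\delta_i,c)>0$; but~$\delta_i$ can be isotoped into $\operatorname{int}(S)$ and hence off~$c$, a contradiction.
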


In the situation of the proposition, we say that $S$~\emph{encloses}
$(\gamma_1,\ldots,\gamma_p)$. 
Similarly, given finitely many elements $t_1, \dots, t_n$ in~$\Lambda$, we
say that  an
incompressible  connected surface~$S$  of finite type \emph{encloses} them
if there are $\gamma_1, \ldots, \gamma_n$ in $\pi_1(\Sigma)$ such that, for
all~$i$, $\gamma_{i}^{+}=t_i$ and  $S$~{encloses}
$(\gamma_1,\ldots,\gamma_n)$. 

If $S$~encloses a curve~$\gamma$, it also encloses every~$\gamma'$
representing the same element in~$\Lambda$.

\medskip

Given a $n$-tuple 
$(\gamma^{+}_{1},\dots,\gamma^{+}_{n})$ 
in~$\Lambda$ and a surface~$S$ of finite type enclosing the tuple, we say that
$(\gamma^{+}_{1},\dots,\gamma^{+}_{n})$ is {\em $S$-cyclically oriented} if
the tuple $(\gamma^{+}_{1,S},\dots,\gamma^{+}_{n,S})$ is cyclically oriented in $\partial_\infty\pi_1(S)$, where $\gamma_{i,S}^{+}$ is the attracting fixed point of $\gamma_i$ in $\partial_\infty\pi_1(S)$.

\begin{remarks}\
	\begin{itemize}
		\item Note that if $\gamma_{i}^{n_i}=(\gamma_{i}^\prime)^{ n'_{i}}$, then
		$\gamma_{i,S}^{+} = \gamma_{i,S}^{\prime+}$ so that the definition makes
		sense.
		\item Of course, it is enough here to define cyclically oriented triples and
		the definition of cyclically oriented $n$-tuples follows by compatibility.
		\item When $\Sigma$~is already of finite type, $\Lambda$~is identified with
		a subset of $\partial_\infty \pi_1(\Sigma)$.
	\end{itemize}
\end{remarks}

\begin{proposition}\label{pro:finite-trick}
	If $(\gamma^{+}_{1},\dots,\gamma^{+}_{n})$ is $S_0$-cyclically oriented for a
	subsurface~$S_0$ of finite type enclosing them, it is $S$-cyclically
	oriented for any subsurface~$S$  of finite type
	enclosing them.
\end{proposition}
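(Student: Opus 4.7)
The plan is to reduce to the case of an inclusion $S_1 \subseteq S_2$ of enclosing finite-type subsurfaces of $\Sigma$, and then invoke the preservation of cyclic order under the induced embedding of Gromov boundaries of their fundamental groups.

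First, given two incompressible finite-type subsurfaces $S_0$ and $S$ of $\Sigma$ both enclosing the tuple, I would construct an incompressible finite-type subsurface $S''$ of $\Sigma$ enclosing the tuple and containing both $S_0$ and $S$ up to isotopy. This can be done by taking a regular neighborhood of $S_0 \cup S$ in $\Sigma$ and passing to its incompressible core, or equivalently by applying Proposition~\ref{pro:Red-finite} to a set of generators of $\pi_1(S_0)$ and $\pi_1(S)$ together with the $\gamma_i$'s. Verifying that each $\gamma_i$ remains non-peripheral in $S''$ reduces to checking that a curve non-peripheral in $S \subseteq S''$ cannot become peripheral in the larger enclosing subsurface; this uses that the free homotopy class of $\gamma_i$ is realized in the interior of $S$.

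Next, assume $S_1 \subseteq S_2$ are both incompressible finite-type enclosing subsurfaces of $\Sigma$. Equip $S_2$ with a complete finite-area hyperbolic structure (with cusps at punctures and geodesic boundary otherwise). Its universal cover identifies with a convex subset of $\mathbb{H}^2$ on which $\pi_1(S_2)$ acts discretely, faithfully, and by orientation-preserving isometries. By incompressibility, the induced inclusion $\pi_1(S_1) \hookrightarrow \pi_1(S_2)$ is injective, so $\pi_1(S_1)$ acts discretely on $\mathbb{H}^2$, with limit set $L_1 \subseteq L_2$, where $L_i \subset \partial \mathbb{H}^2 \simeq S^1$ is canonically identified with $\partial_\infty \pi_1(S_i)$. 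The inclusion $L_1 \hookrightarrow L_2$ is $\pi_1(S_1)$-equivariant and preserves the cyclic order inherited from the orientation of $S^1$. Furthermore, for any $\gamma$ non-peripheral in both $\pi_1(S_1)$ and $\pi_1(S_2)$, the attracting fixed point of $\gamma$ on $\partial \mathbb{H}^2$ is intrinsic to $\gamma$, so $\gamma^{+}_{S_1}$ maps to $\gamma^{+}_{S_2}$ under this embedding.

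Applying this with $S_1 \in \{S_0, S\}$ and $S_2 = S''$ shows that the cyclic orientation of $(\gamma_1^+, \ldots, \gamma_n^+)$ in $\partial_\infty \pi_1(S_0)$ agrees with the one in $\partial_\infty \pi_1(S'')$, which in turn agrees with the one in $\partial_\infty \pi_1(S)$; the conclusion follows. The hard part will be the careful verification that the inclusion of limit sets preserves the cyclic order, which relies on the orientability of $\Sigma$ and on the fact that the abstract $\pi_1(S_1)$-boundary is canonically identified with $L_1$ as a cyclically ordered set.
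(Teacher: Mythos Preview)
Your proposal is correct and follows essentially the same approach as the paper: pass to a common incompressible finite-type subsurface containing both $S_0$ and $S$, then use that the induced embeddings of Gromov boundaries preserve cyclic order and send attracting fixed points to attracting fixed points. The paper's proof is considerably more terse---it simply asserts the existence of these order-preserving boundary embeddings---whereas you supply the hyperbolic-geometric justification via limit sets in $\partial\mathbb{H}^2$; but the architecture of the argument is the same.
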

\begin{proof}
	Let $S_0,S_1$ be two incompressible finite type connected subsurfaces
	enclosing $(\gamma_1,\dots,\gamma_n)$. We find an incompressible finite type
	connected subsurfaces~$S$ containing both~$S_0$ and~$S_1$. Then there are
	embeddings $\iota_i\colon \partial_\infty \pi_1(S_i)\to \partial_\infty \pi_1(S)$,
	$i=0,1$, such that a tuple in $\partial_\infty \pi_1(S_i)$ is cyclically
	oriented if and only if its image under $\iota_i$ is. This proves the claim as
	$i_0(\gamma_{i,S_0}^{+})=i_1(\gamma_{i,S_1}^{+})$.
\end{proof}

As a conclusion, there is a well defined cyclic ordering on~$\Lambda$. We use
this to define the notion of {\em $\Theta$-positive} representations.

In the next two definitions we assume that $\mG_0$ has a $\Theta$-positive
structure and we let $\mG$, and ${\mathcal F}_\Theta=\cG/\mP_\Theta$ be as in Section~\ref{sec:prelim}.

\begin{definition}\label{def:posmap}
	Let $C$ be a set with a cyclic order.  A map $\xi\colon C \to \cF_\Theta$ is called
	\emph{positive} if every cyclically ordered tuple in $C$ is mapped to a
	positive tuple in~$\cF_\Theta$ by~$\xi$ (cf.\ also
	Section~\ref{sec:diamonds-positive-n}).
\end{definition}

\begin{definition}\label{def:posrep}
	A representation $\rho\colon \pi_1(\Sigma) \to \mG$ is said to be {\em
		$\Theta$-positive} if there exists a $\rho$-equivariant positive map from
	$\Lambda$ to~$\cF_\Theta$.
\end{definition}

We then have:
\begin{proposition}\label{prop:non-peri_to_loxo}
	Let $\rho$ be a $\Theta$-positive representation of $\pi_1(\Sigma)$ in
	$\mG$, if $\gamma$ is a non-peripheral element, then $\rho(\gamma)$ is
	$\Theta$-loxodromic, and $\xi$ maps attracting fixed points to attracting fixed points.\end{proposition}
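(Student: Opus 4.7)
Set $p := \xi(\gamma^+)$ and $q := \xi(\gamma^-)$. Since $\xi$ is $\rho$-equivariant and $\gamma$ fixes $\gamma^\pm$ in $\Lambda$, both $p$ and $q$ are fixed points of $\rho(\gamma)$ on $\cF_\Theta$. Using that $\pi_1(\Sigma)$ contains a free subgroup, I can find a non-peripheral $\delta \in \pi_1(\Sigma)$ whose axis crosses that of $\gamma$, that is, such that $(\gamma^+, \delta^+, \gamma^-, \delta^-)$ is cyclically oriented in $\Lambda$; this is verified by embedding $\{\gamma,\delta\}$ into a finite-type subsurface $S$ via Proposition~\ref{pro:Red-finite} and applying Proposition~\ref{pro:finite-trick}. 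Set $t := \delta^+$. By positivity of $\xi$, the quadruple $(p, \xi(t), q, \xi(\delta^-))$ is positive in $\cF_\Theta$; in particular $p \pitchfork q$, and $\xi(t)$ lies in a well-defined diamond $D$ with extremities $p$ and $q$.

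Iterating, the sequence $\gamma^n \cdot t \in \Lambda$ converges monotonically (in the cyclic order) to $\gamma^+$ as $n \to +\infty$, since in any enclosing finite-type subsurface $S$ the element $\gamma$ acts on $\partial_\infty \pi_1(S)$ as a loxodromic homeomorphism with attractor $\gamma^+$. Therefore, for every $n$, the tuple $(\gamma^+, \gamma^{n+1}\cdot t, \gamma^n\cdot t, \gamma^-)$ is cyclically oriented, and by positivity of $\xi$ the tuple
$(p, \rho(\gamma)^{n+1}\xi(t), \rho(\gamma)^n \xi(t), q)$
is a positive quadruple in $\cF_\Theta$. This gives a nested sequence of sub-diamonds $D_n \subset D$ with extremities $p$ and $\rho(\gamma)^n \xi(t)$, each containing the next iterate $\rho(\gamma)^{n+1}\xi(t)$ and shrinking towards the extremity $p$. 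Any accumulation point $p^\ast$ of $\{\rho(\gamma)^n\xi(t)\}$ in $\overline{D}$ is $\rho(\gamma)$-fixed (by continuity of $\rho(\gamma)$) and lies in $\bigcap_n \overline{D_n}$; the cuspidal tangent-cone geometry of $\overline{D}$ at $p$ (see Section~\ref{sec:diamonds-positive-n}) forces this intersection to reduce to $\{p\}$. Hence $\rho(\gamma)^n\xi(t)\to p$, which proves that $p$ is an attracting fixed point of $\rho(\gamma)$ on $\cF_\Theta$. Applying the same argument to $\gamma^{-1}$ shows $q$ is repelling, so $\rho(\gamma)$ is $\Theta$-loxodromic with attracting fixed point $\xi(\gamma^+)$ and repelling fixed point $\xi(\gamma^-)$.

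\textbf{Main obstacle.} The delicate step is showing that the nested sub-diamonds $D_n$ accumulate only at $p$: in general an intersection of closed diamonds could be larger than their common extremity. One needs the fact---inherited from the openness and sharpness of the semigroup $\mN_\Theta$---that the tangent cone of $\overline{D}$ at $p$ is a proper closed cone in $\T_p \cF_\Theta$, of positive codimension whenever $\#\Theta \geq 2$. This cuspidal geometry forces a monotone sequence anchored at $p$ inside $\overline{D}$ to converge to $p$ alone. A parallel route, which bypasses this shrinking argument, is to reduce to the finite-type case: pick $S$ via Proposition~\ref{pro:Red-finite}, verify that the restriction $\xi|_{\Lambda(S)}$ is well-defined, equivariant and positive (the matching of peripheral classes between $\pi_1(S)$ and $\pi_1(\Sigma)$ requires care), and invoke the relative $\mP_\Theta$-Anosov property of $\Theta$-positive representations of finite-type surface groups established in \cite{Guichard:2021aa}, whose limit map automatically sends attracting fixed points to attracting fixed points.
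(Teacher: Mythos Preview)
Your ``parallel route'' at the end is exactly the paper's proof: reduce to a finite-type enclosing subsurface via Proposition~\ref{pro:Red-finite} and then invoke \cite[Proposition~3.18]{Guichard:2021aa}. The paper gives no further detail than this.

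Your primary argument via nested diamonds, however, has a genuine gap precisely where you flag it, and the ``cuspidal tangent-cone'' justification does not close it. The tangent cone of~$\overline{D_n}$ at~$p$ depends only on~$p$ and on the choice of component, not on the opposite extremity $z_n\coloneqq\rho(\gamma)^n\xi(t)$; hence all the $\overline{D_n}$ share the \emph{same} tangent cone at~$p$, and its sharpness cannot force the intersection to be~$\{p\}$. In fact the two statements are equivalent: since $z_m\in\overline{D_n}$ for all $m\geq n$, every accumulation point of $\{z_n\}$ lies in $\bigcap_n\overline{D_n}$, while conversely $\overline{D(p,z_n)}$ collapses to~$\{p\}$ only if $z_n\to p$. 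So you are assuming what you wish to prove. The rank-one case already makes this transparent: nested intervals in $\mathbf{P}^1(\mathbb R)$ sharing the endpoint~$p$ shrink to~$\{p\}$ exactly when the other endpoints tend to~$p$, and the tangent cone (a half-line) is irrelevant. The side claim that any accumulation point~$p^\ast$ is $\rho(\gamma)$-fixed ``by continuity'' is also unjustified: from $z_{n_k}\to p^\ast$ one gets $z_{n_k+1}\to\rho(\gamma)p^\ast$, but nothing forces this second subsequence to have the same limit.

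There is a further step missing even if one grants $\rho(\gamma)^n\xi(t)\to p$: convergence of a single orbit does not make~$p$ an attracting fixed point in the sense of Section~\ref{sec:loxodromic-elements}. One would need, for instance, convergence on an open set (yielding a basis of~$\T_p\cF_\Theta$ on which the iterates of the tangent action tend to zero) in order to control the spectral radius of $\Ad(\rho(\gamma))$ and rule out the hyperbolic part of $\rho(\gamma)\in\mL_{p,q}$ lying on a wall $\{\theta=0\}$ of the Weyl chamber. This promotion from orbit convergence to $\Theta$-loxodromicity is part of what is packaged into \cite[Proposition~3.18]{Guichard:2021aa}.
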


\begin{proof}
	We can reduce using Propositions~\ref{pro:Red-finite} to the case when
	$\Sigma$ is a finite type surface. Then the result follows from
	\cite[Proposition 3.18]{Guichard:2021aa}.
\end{proof}

When $\Sigma$ is closed, every non-trivial element in $\grf$ is
non-peripheral. In fact, in that case, Definition~\ref{def:posrep} agrees with
the definition from \cite{Guichard:2021aa} (cf.\ Proposition~5.7 in that reference).

\section{Collar inequality for representations}
\label{sec:coll-ineq-repr}

In this section we collect the material of
Sections~\ref{sec:collar-inequality} and~\ref{sec:posit-repr-infin} in order
to produce Collar Lemmas for $\Theta$-positive representations.

Theorem~\ref{thm:collarineq}
applies to 
$\Theta$-positive representations: 
\begin{corollary}[{\sc Collar Inequality}]\label{cor:col-ineq}
	Let $\eta$ be a $\Theta$-compatible dominant form and let~$\theta$ be in~$\Theta$.  Assume that
	$\braket{\theta,\eta}>0$.
	Let~$\Sigma$ be a connected oriented (not necessarily of finite type)
	surface whose fundamental group contains a free group.
	Then given a positive representation~$\rho$ of
	$\pi_1(\Sigma)$, two loops $\gamma_0$ and $\gamma_1$ geometrically intersecting,
	we have
	\begin{equation*} 
		\left(\frac{1}
		{\pp^\eta\left(\rho(\gamma_1)\right)}\right)^
		{{1}/{\dual{ h_\theta}{\eta}}}
		+\frac{1}{\chi_\theta
			\left(\rho(\gamma_0)\right)}< 1\ .
	\end{equation*}	
\end{corollary}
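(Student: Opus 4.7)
The plan is to apply Theorem~\ref{thm:collarineq} with $A=\rho(\gamma_0)$ and $B=\rho(\gamma_1)$. Let $\xi\colon\Lambda\to\cF_\Theta$ denote the $\rho$-equivariant positive map provided by Definition~\ref{def:posrep}. By equivariance I have $B(a^+)=\rho(\gamma_1)\xi(\gamma_0^+)=\xi(\gamma_1\cdot\gamma_0^+)$ and $A(b^+)=\xi(\gamma_0\cdot\gamma_1^+)$, where $\gamma_i\cdot\gamma_j^+$ denotes the attracting fixed point of the conjugate $\gamma_i\gamma_j\gamma_i^{-1}$. Since $\gamma_0,\gamma_1$ intersect geometrically they are non-peripheral, hence Proposition~\ref{prop:non-peri_to_loxo} guarantees that $\rho(\gamma_0)$ and $\rho(\gamma_1)$ are $\Theta$-loxodromic with attracting/repelling fixed points obtained by applying $\xi$ to the corresponding fixed points in $\Lambda$. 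The only remaining hypothesis to verify is that the sextuple
\[
\bigl(\xi(\gamma_0^+),\xi(\gamma_1^-),\xi(\gamma_0^-),\xi(\gamma_1^+),\xi(\gamma_1\cdot\gamma_0^+),\xi(\gamma_0\cdot\gamma_1^+)\bigr)
\]
is positive in~$\cF_\Theta$.

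By positivity of $\xi$ (Definition~\ref{def:posmap}), this reduces to showing that the corresponding sextuple in $\Lambda$ is cyclically ordered. I would first invoke Proposition~\ref{pro:Red-finite} to choose an incompressible finite-type subsurface $S\subset\Sigma$ enclosing $\gamma_0,\gamma_1$ in which both remain non-peripheral; the conjugates $\gamma_1\gamma_0\gamma_1^{-1}$ and $\gamma_0\gamma_1\gamma_0^{-1}$ automatically lie in $\pi_1(S)$ and remain non-peripheral there. Proposition~\ref{pro:finite-trick} then transfers the cyclic ordering question in $\Lambda$ to the canonical cyclic ordering on $\partial_\infty\pi_1(S)$.

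To verify the ordering on $\partial_\infty\pi_1(S)$, I would equip $S$ with any hyperbolic structure, realizing $\pi_1(S)$ as a discrete subgroup of $\psld$ acting on $\mathbb{H}^2$. Geometric intersection of $\gamma_0,\gamma_1$ is equivalent to their axes crossing in $\mathbb{H}^2$, which forces the four fixed points to lie in the cyclic order $(\gamma_0^+,\gamma_1^-,\gamma_0^-,\gamma_1^+)$ (after choice of orientation). Because $\gamma_1$ acts on $\partial_\infty\mathbb{H}^2$ as a hyperbolic contraction toward $\gamma_1^+$ and stabilizes the arc containing $\gamma_0^+$, the image $\gamma_1\cdot\gamma_0^+$ lies in the open arc from $\gamma_1^+$ to $\gamma_0^+$ that avoids $\gamma_0^-$ and $\gamma_1^-$; by symmetry $\gamma_0\cdot\gamma_1^+$ also lies in that same arc. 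The remaining point is that along this arc the two points appear in the order $\gamma_1^+,\gamma_1\cdot\gamma_0^+,\gamma_0\cdot\gamma_1^+,\gamma_0^+$. Normalizing so that $\gamma_1$ has axis the imaginary axis and $\gamma_0$ has axis the upper unit semicircle, this ordering reduces by direct computation to the classical Keen collar inequality $\sinh(\ell(\gamma_0)/2)\sinh(\ell(\gamma_1)/2)\geq 1$, which holds automatically in any discrete subgroup of $\psld$ with intersecting axes.

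The main obstacle is this last hyperbolic-geometric step: establishing the correct relative order of $\gamma_1\cdot\gamma_0^+$ and $\gamma_0\cdot\gamma_1^+$, which relies on the discreteness of a Fuchsian representation of $\pi_1(S)$ and would fail for generic pairs of abstract hyperbolic elements with crossing axes. Once the sextuple is shown to be cyclically ordered in $\Lambda$, positivity of $\xi$ yields positivity of the image sextuple in $\cF_\Theta$, and Theorem~\ref{thm:collarineq} applies directly to give the stated inequality.
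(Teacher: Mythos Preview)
Your overall strategy is the same as the paper's: reduce to a finite-type subsurface, establish that the sextuple $(a^+,b^-,a^-,b^+,B(a^+),A(b^+))$ is positive in~$\cF_\Theta$ by showing the corresponding sextuple in $\partial_\infty\pi_1(S)$ is cyclically ordered, and then invoke Theorem~\ref{thm:collarineq}. The paper carries this out by choosing the basepoint at an intersection point and citing \cite[Lemma~2.2]{Lee-Zhang:2017} for the sextuple ordering (after possibly replacing~$\gamma_1$ by~$\gamma_1^{-1}$, which is harmless since $\pp^\eta$ is invariant under inversion).

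The gap in your argument is precisely where you locate it, but your proposed resolution does not work. With the normalization you describe (axes perpendicular at~$i$, $\gamma_1(z)=\lambda z$, $\gamma_0$ with fixed points $\pm 1$ and multiplier~$\mu$), the required ordering $\gamma_1^+,\gamma_1\cdot\gamma_0^+,\gamma_0\cdot\gamma_1^+,\gamma_0^+$ becomes $\lambda>(\mu+1)/(\mu-1)$, i.e.\ $(\lambda-1)(\mu-1)>2$. This is \emph{not} the Keen inequality $\sinh(\ell_0/2)\sinh(\ell_1/2)\geq 1$ (which is the stronger $(\lambda-1)(\mu-1)\geq 4\sqrt{\lambda\mu}$), so your ``reduces to'' is incorrect. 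More importantly, Keen's inequality is a statement about \emph{simple} closed geodesics, whereas the corollary makes no simplicity assumption; the assertion that it ``holds automatically in any discrete subgroup with intersecting axes'' is not a standard fact and would itself require proof---indeed, for $\lambda=\mu=2$ the ordering fails, and one checks the commutator is elliptic, so discreteness plus torsion-freeness is what rules this out, but turning that observation into the needed inequality is essentially the $\psld$ collar lemma you are trying to generalize.

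The cleanest fix is to do what the paper does: treat the sextuple ordering as a known topological fact about surface groups and cite Lee--Zhang. Alternatively, since the cyclic order on $\partial_\infty\pi_1(S)$ is independent of the hyperbolic metric, you could argue that one may choose a metric making both curves long enough; but this still requires justification and is less direct than the citation.
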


\begin{proof}
	Let~$S$ be a finite type surface enclosing~$\gamma_0$ and~$\gamma_1$.
	
	Let~$x$ be a point of intersection of~$\gamma_0$ and~$\gamma_1$, we choose
	(and denote them the same way) representatives~$\gamma_0$ and~$\gamma_1$ in
	$\pi_1(S,x)$. Let us denote by $\gamma_{i}^{\pm}$ the attracting/repelling
	fixed points of~$\gamma_i$ in $\partial_\infty \pi_1(S)$.  The intersection
	hypothesis implies that, up to exchanging $\gamma_1$ and $\gamma_1^{-1}$,
	the sextuple
	\begin{equation*}
		(\gamma_1^-,\ \gamma_0^-,\ \gamma_1^+,  \gamma_1(\gamma_0^+),\
		\gamma_0(\gamma_1^+),\ \gamma_0^+)\ ,
	\end{equation*}
	is a positive configuration in $\partial_\infty \pi_1(S)$ (see for instance
	\cite[Lemma 2.2]{Lee-Zhang:2017}).
	
	We denote by $a^+, a^-, b^+$ and $b^-$ the images of respectively $\gamma_{0}^{+}, \gamma_{0}^{-}, \gamma_{1}^{+}$ 
	and $\gamma_{1}^{-}$ under the limit map. We also write
	\[ A\defeq\rho(\gamma_0)\ , \ \ B\defeq\rho(\gamma_1)\ .\]
	By Proposition~\ref{prop:non-peri_to_loxo}, $a^+$ and~$a^-$ are the attracting
	and repelling fixed points of~$A$, and $b^+$ and~$b^-$ are those of~$B$.
	By positivity, it follows that
	\begin{equation*}
		(b^-,a^-,b^+,B(a^+), A(b^+), a^+)
	\end{equation*} 
	is also a positive configuration (see figure~\ref{fig:sext}). Then the theorem follows from Theorem~\ref{thm:collarineq}.
\end{proof}

Choosing the $\Theta$-compatible dominant form~$\eta$ in
Corollary~\ref{cor:col-ineq} to be equal to a fundamental
weight~$\omega_{\theta}$ we immediately get the following corollary which is
Corollary~\ref{coro:B} of the introduction.

\begin{corollary}\label{cor:weight-root-collar}
	Let $\rho$ be a $\Theta$-positive representation of a surface group
	$\pi_1(\Sigma)$. Let $\gamma_0$ and $\gamma_1$ be two geometrically intersecting loops and
	$\theta$ in $\Theta$, then
	\begin{equation*} 
		\frac{1}{\pp^{\omega_{\theta}}(\rho(\gamma_0))}
		+\frac{1}{\chi_{\theta}
			(\rho(\gamma_1))}\ <\ 1\ .
	\end{equation*}
\end{corollary}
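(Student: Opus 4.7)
The plan is to derive this as an immediate specialization of Corollary~\ref{cor:col-ineq}, namely by taking the $\Theta$-compatible dominant form $\eta$ to be the fundamental weight $\omega_\theta$.

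First I would verify the hypotheses of Corollary~\ref{cor:col-ineq} for this choice. The fundamental weight $\omega_\theta$ is $\Theta$-compatible and dominant by the discussion at the end of Section~\ref{sec:line-forms-posit} (it lies in the convex cone generated by the $\{\omega_{\theta'}\}_{\theta'\in\Theta}$ as a generator). Moreover, since $\{h_{\theta'}\}_{\theta'\in\Delta}$ and $\{\omega_{\theta'}\}_{\theta'\in\Delta}$ are dual bases of $\mathfrak{a}$ and $\mathfrak{a}^*$ (Section~\ref{sec:skd-triples}), we have $\dual{h_\theta}{\omega_\theta}=1$, whence
\[
\braket{\theta,\omega_\theta}=\tfrac{\braket{\theta,\theta}}{2}\dual{h_\theta}{\omega_\theta}=\tfrac{\braket{\theta,\theta}}{2}>0,
\]
so the strict positivity hypothesis $\braket{\theta,\eta}>0$ is satisfied.

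Second, I would apply Corollary~\ref{cor:col-ineq} with the roles of $\gamma_0$ and $\gamma_1$ interchanged. The geometric intersection of the free homotopy classes associated to $\gamma_0$ and $\gamma_1$ is a symmetric condition on the pair, so the swapped pair still satisfies the hypotheses of that corollary. Plugging in $\eta=\omega_\theta$, the exponent $1/\dual{h_\theta}{\omega_\theta}$ equals $1$, and the corollary yields directly
\[
\frac{1}{\pp^{\omega_\theta}(\rho(\gamma_0))}+\frac{1}{\chi_\theta(\rho(\gamma_1))}<1,
\]
which is the desired inequality.

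There is no real obstacle here: this is a pure specialization, and the only thing to check is that the multiplicativity of cross-ratios from Lemma~\ref{lem:cross-ratios-mult} (and the fact that the exponent becomes $1$) cleans up the inequality so that the statement matches exactly the form requested in the corollary. The content of the argument lies entirely in Corollary~\ref{cor:col-ineq} (hence in Theorem~\ref{thm:collarineq}), whose proof combined Theorem~\ref{theo:cr-pos} on positivity of the cross-ratio with Theorem~\ref{theo:sup-min} on bounding $\chi_\theta$ by a minimum over photons.
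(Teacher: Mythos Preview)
Your proposal is correct and matches the paper's approach exactly: the paper states just before the corollary that one ``immediately'' gets it from Corollary~\ref{cor:col-ineq} by choosing $\eta=\omega_\theta$. Your explicit verification that $\dual{h_\theta}{\omega_\theta}=1$ and your remark about swapping the roles of~$\gamma_0$ and~$\gamma_1$ (needed because the indexing in Corollary~\ref{cor:col-ineq} is the reverse of that in the present statement) are the only details the paper leaves implicit.
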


The inequality of this last corollary can be reformulated as
$$\left(\pp^{\omega_{\theta}}(\rho(\gamma_0))-1\right) \left( \chi_{\theta}
(\rho(\gamma_1))-1\right) > 1. $$

\begin{remark}\label{rem.positiveweights}	The previous corollary ---or Corollary~\ref{coro:B}--- is
	actually equivalent to Corollary~\ref{cor:col-ineq}, by Corollary~\ref{coro:pos-line-fund}. 
\end{remark}

\subsection{Comparison with other Collar Lemmas}\label{sec:Collar-discuss}
Collar Lemmas originate from the work of Keen in hyperbolic geometry. 
For the holonomy $\rho\colon \pi_1(S)\to \mathsf{PSL}_2(\mathbb{R})$ of  a
hyperbolic structure, denoting by $\ell(\rho(\gamma))$ the length of the
geodesic representative of $\rho(\gamma)$ in the hyperbolic surface, building
on the results she
proved in  \cite{Keen} the following  sharp inequality was deduced  (see
\cite[Section 6]{Matelski:1976},  \cite[Corollary 4.1.2]{Buser:2010}): for $\gamma_0$ and $\gamma_1$ geometrically intersecting 

\begin{equation}\label{ineq:classical-collarlemma}
	\sinh\left(
	\frac{1}{2}\ell(\rho(\gamma_0)) \right) \sinh\left(
	\frac{1}{2}\ell(\rho(\gamma_1)) \right)>1.
\end{equation}

Moving to the higher rank setting there are several possible generalizations of this result, as many possible quantities can be understood as length of an element with respect to a representation. One possible direction is to replace the length with a suitable Finsler translation length; results in this direction are discussed in Section \ref{s.domination}. Our collar lemma, as well as its predecessors discussed in Section \ref{s.rootweight} is a non-symmetric generalization, as it compares the character of a root with respect to that of a weight. 

This asymmetry between roots and weights is key: on the one hand only by controlling the root we can deduce closedness in the space of representations, on the other hand 
it is proven in \cite[Theorem 7.1]{Beyrer:2021aa} that for Hitchin representations in $\mathsf{PSL}_3(\mathbb R)$ no collar lemma comparing the roots of two  elements that intersect geometrically can exist, and in this respect our result, as well as the results discussed in Section~\ref{s.rootweight}, are optimal.

\subsubsection{Collar Lemmas comparing roots and weights for $\Theta$-positive representations}\label{s.rootweight}
Many instances of Collar Lemmas comparing roots and weights for special classes of $\Theta$-positive representations already appeared in the literature. None of these results are sharp, as the proofs always involve a crude minoration.

In the case of Hitchin representations into $\mathsf{PSL}_n(\mathbb{R})$  Lee and Zhang prove the following inequalities, for $k$ in $\{1,\ldots, n-1\}$  \cite[Proposition 2.12(1)]{Lee-Zhang:2017}

\[	\left(\pp^{\omega_{1}}(\rho(\gamma_0))-1\right)\left(\chi_{\alpha_k}
(\rho(\gamma_1))-1\right)>1\ \]
here we denote by $\{\alpha_1,\ldots,\alpha_{n-1}\}$
the simple roots 
of $\mathsf{PSL}_n(\mathbb{R})$
in the standard numeration (i.e.\ $\alpha_i$ is connected to $\alpha_{i\pm 1}$
in the Dynkin diagram) and denote $\omega_i\coloneqq \omega_{\alpha_i}$.

In the case of maximal representations into $\mathsf{Sp}_{2n}(\mathbb{R})$ Burger and Pozzetti  obtain \cite[Theorem 3.3(2)]{Burger-Pozzetti:2017}  
\[\pp^{\omega_{1}}(\rho(\gamma_0))^n\left(\chi_{\alpha_n}(\rho(\gamma_1))-1\right)>1\ .\]

In the case of $\Theta$-positive representations into $\mathsf{SO}(p,q)$ for $p\leq q$ Beyrer and Pozzetti  proved Corollary  \ref{cor:weight-root-collar} \cite[Theorem B]{BeyrerPozzetti}: for every $1\leq k\leq p-1$
\[	\left(\pp^{\omega_k}(\rho(\gamma_0))-1\right)\left(\chi_{\alpha_k}
(\rho(\gamma_1))-1\right)>1\ . \]

While all these  results, as well as ours,  rely on the positivity of the sextuple $(a^+,b^-,a^-,b^+, B(a^+), A(b^+))$ the strategy of proofs is different in the three cases. Our proof follows the approach outlined in \cite{BeyrerPozzetti} with the important new contribution of the introduction of photons which allows to treat all roots simultaneously regardless of the dimension of the associated root space.
As such our proof is uniform for all $\Theta$-positive representations, independent of the zoology of the group involved.

\subsubsection{Collar Lemmas through domination}\label{s.domination}	
For a Hitchin representation into $\mathsf{PSL}_3(\mathbb{R})$, or
a maximal representation in $\mathsf{SO}_0(2,n)$, Tholozan \cite[Corollary 4]{Tholozan:2017} and
Collier--Tholozan--Toulisse \cite[Corollary 6]{Collier:2019aa} used domination to deduce
a Collar Lemma: for a Hitchin representation~$\rho$ in $\mathsf{PSL}_3(\mathbb{R})$ Tholozan finds a Fuchsian representation whose spectrum dominates $\ell_1(A)\coloneqq \log \pp^{\omega_{1}}(A)$  and he deduces from the hyperbolic Collar Lemma 
\[\sinh\left(\frac{1}{4}\ell_1(\rho(\gamma_0))\right)\sinh\left(\frac{1}{4}\ell_1(\rho(\gamma_1))\right)>1.\]
For a maximal representation~$\rho$ in $\mathsf{SO}_0(2,n)$ Collier, Tholozan, and Toulisse find a Fuchsian representation whose length spectrum dominates $\ell_1=\log\pp^{\omega_1}$,  and deduce similarly
\[\sinh\left(\frac{1}{2}\ell_1(\rho(\gamma_0))\right)\sinh\left(\frac{1}{2}\ell_1(\rho(\gamma_1))\right)>1\ .\]

These Collar Lemmas are sharp, but since they don't control the root character, they do not guarantee that the limit of a converging sequence contains loxodromic elements in its image.

\subsubsection{Other Collar Lemmas}
Beyrer and Pozzetti show in \cite[Theorem 1.1]{Beyrer:2021aa} that
Collar Lemmas are not specific to $\Theta$-positive representations and
define other classes of representations in $\mathsf{PSL}_d(\mathbb R)$
that satisfy the inequality
\[ \left(\pp^{\omega_{k}}(\rho(\gamma_0))-1\right)\left(\chi_{\alpha_k}
(\rho(\gamma_1))-1\right)>1\ .\]

They exhibit in particular the class of $(k+2)$-positive representations
for which this Collar Lemma holds (see \cite[Corollary
6.20]{Beyrer:2021b}).  In particular $(k+2)$-positive representations
form open subsets of the representation variety, but never connected
components, outside of the Hitchin component.

This highlights that, even though we use the Collar Lemma in Section \ref{ssec:closed}
to prove the closedness of the space of positive representations, this
is not a mere consequence of the Collar Lemma, but really of the
combination of the structure of limits of positive representations
established in \cite[Proposition 6.4]{Guichard:2021aa} ---see also \cite[Theorem B]{Beyrer:2021b}--- and the Collar
Lemma.

 \subsection{Closedness of the space of positive representations}\label{ssec:closed}
As a consequence of the Collar Lemma together with results of
\cite{Guichard:2021aa} we obtain:
\begin{corollary}\label{cor:closed}
	The space of positive representations is closed in the space of
	representations.
\end{corollary}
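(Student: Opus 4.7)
The plan is to prove this by contradiction, combining the Collar inequality of Corollary~\ref{cor:weight-root-collar} with the structural description of limits of positive representations established in \cite[Proposition 6.4]{Guichard:2021aa}. Let $\seq{\rho}$ be a sequence of $\Theta$-positive representations of $\pi_1(\Sigma)$ converging to a representation $\rho$, and suppose for contradiction that $\rho$ is not $\Theta$-positive.

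The first step is to extract from \cite[Proposition 6.4]{Guichard:2021aa} a degeneracy: the obstruction to $\rho$ being positive is witnessed by a non-trivial element $\gamma_0$ of $\pi_1(\Sigma)$ (representable by a simple closed curve) and some $\theta\in\Theta$ for which $\rho(\gamma_0)$ fails to be $\Theta$-loxodromic at the root~$\theta$; concretely, one has $\chi_\theta(\rho_m(\gamma_0))\to 1$ as $m\to\infty$. Since $\Sigma$ has higher genus and $\gamma_0$ represents a non-trivial, non-peripheral class, there exists another simple closed curve $\gamma_1$ whose free homotopy class has non-zero geometric intersection with that of~$\gamma_0$.

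Next I would apply Corollary~\ref{cor:weight-root-collar} to each $\rho_m$ with the pair $(\gamma_0,\gamma_1)$:
\begin{equation*}
\frac{1}{\pp^{\omega_\theta}(\rho_m(\gamma_0))} + \frac{1}{\chi_\theta(\rho_m(\gamma_1))} < 1.
\end{equation*}
Swapping the roles of $\gamma_0$ and $\gamma_1$ (which is legitimate since they intersect geometrically) yields
\begin{equation*}
\frac{1}{\pp^{\omega_\theta}(\rho_m(\gamma_1))} + \frac{1}{\chi_\theta(\rho_m(\gamma_0))} < 1.
\end{equation*}
Since $\chi_\theta(\rho_m(\gamma_0)) \to 1$, the second term tends to $1$, so $\pp^{\omega_\theta}(\rho_m(\gamma_1)) \to +\infty$. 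On the other hand, by Proposition~\ref{prop:periods-loxo}, $\pp^{\omega_\theta}(\rho_m(\gamma_1)) = \chi_{\omega_\theta}(\rho_m(\gamma_1)) \cdot \chi_{\omega_\theta}(\rho_m(\gamma_1)^{-1})$, and these characters are continuous functions of the matrix $\rho_m(\gamma_1)$ on its domain of definition. Since $\rho_m(\gamma_1) \to \rho(\gamma_1)$ in $\mG$, the sequence $\rho_m(\gamma_1)$ is bounded in $\mG$ and so are its eigenvalue moduli; thus $\pp^{\omega_\theta}(\rho_m(\gamma_1))$ remains bounded, contradicting the divergence above. Hence $\rho$ is $\Theta$-positive.

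The main subtlety I expect is the first step: extracting from the limit analysis of \cite{Guichard:2021aa} precisely the existence of a simple closed curve $\gamma_0$ with $\chi_\theta(\rho_m(\gamma_0)) \to 1$, and ensuring that this element can be paired with a geometrically intersecting curve $\gamma_1$ so that the Collar inequality applies on both sides. The rest is a quantitative comparison: the root character degeneration along $\gamma_0$ would force the weight-period along $\gamma_1$ to blow up, which is incompatible with convergence in the representation variety.
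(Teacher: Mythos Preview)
Your argument is correct and uses the same two ingredients as the paper (the Collar inequality of Corollary~\ref{cor:weight-root-collar} and \cite[Proposition~6.4]{Guichard:2021aa}), but the logical order is inverted. The paper argues \emph{directly}: fix any pair of geometrically intersecting non-peripheral loops $\gamma_0,\gamma_1$; since $\rho_m\to\rho_\infty$ in~$\mG$, the periods $\pp^{\omega_\theta}(\rho_m(\gamma_0))$ stay bounded by some~$K$, and the Collar inequality then gives $\chi_\theta(\rho_m(\gamma_1))\geq (1-1/K)^{-1}>1$ uniformly in~$m$ and in~$\theta\in\Theta$. By continuity of the Jordan projection this passes to the limit, so $\rho_\infty(\gamma_1)$ is $\Theta$-loxodromic and its attracting/repelling fixed points are the limits of those of $\rho_m(\gamma_1)$, hence transverse. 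One then invokes \cite[Proposition~6.4]{Guichard:2021aa} in its stated form to conclude that $\rho_\infty$ is positive.

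Your version runs the contrapositive: you apply \cite[Proposition~6.4]{Guichard:2021aa} \emph{first}, in contrapositive form, to extract a~$\gamma_0$ with $\chi_\theta(\rho_m(\gamma_0))\to 1$, and then use the Collar inequality to force the periods of an intersecting~$\gamma_1$ to blow up, contradicting convergence. This works, but it requires you to know precisely that the hypothesis of that proposition is the existence of a single element whose fixed points survive transversely in the limit (so that its failure forces \emph{every} non-peripheral element to degenerate)---exactly the subtlety you flagged. The paper's direct route sidesteps this by only ever using the proposition in its positive form. Two small clean-ups: your first displayed Collar inequality is never used, and the phrase ``$\Sigma$ has higher genus'' is unnecessary---the paper works for any (possibly infinite-type) surface whose fundamental group contains a free group, and one only needs that every non-peripheral curve is crossed by some other non-peripheral curve.
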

\begin{proof}
	Let $\seq{\rho}$ be a sequence of positive representations converging to a
	representation $\rho_\infty$. Let $\gamma_0$ and $\gamma_1$ be loops
	intersecting at least once. Let $\theta$ be an element of $\Theta$.  Since
	for every $\gamma$ in $\pi_1(S)$, the sequence
	$\{\pp^{\omega_\theta}(\rho_m(\gamma))\}_{m\in\mathbb N}$ is bounded by a
	constant $K(\gamma)$, it follows from the collar inequality
	(Corollary~\ref{cor:weight-root-collar}) that for all~$m$,
	\[
	\frac{1}{\chi_\theta(\rho_m(\gamma_1))}\leq
	1-\left(\frac{1}{K(\gamma_0)}\right)\ .
	\]
	As a consequence there is a positive~$\epsilon$, such that for all~$m$ and
	all~$\theta$ in~$\Theta$, we have
	\[
	\chi_\theta\left(\rho_m(\gamma_1)\right)\geq 1+\epsilon\ .
	\]
	Since the Jordan projection (and hence $\chi_\theta$) is continuous, it
	follows that
	\[
	\chi_\theta\left(\rho_\infty(\gamma_1)\right)\geq 1+\epsilon\ .
	\]
	Recall that $h$ is $\Theta$-loxodromic %
	if and only if $\chi_\theta(h)>1$ for all~$\theta$ in~$\Theta$ (Proposition \ref{pro:loxo-alg}). 
	In particular $\rho_\infty(\gamma_1)$ is loxodromic. 
	Let  $\seq{x}$ and $\seq{y}$ be the repelling and attracting fixed points of $\rho_m(\gamma_1)$, then $\seq{x}$ and $\seq{y}$
	converge to, respectively,  the attracting and the repelling fixed
	points $x_\infty$ and $y_\infty$ of $\rho_\infty(\gamma_1)$ which are
	transverse. By \cite[Proposition 6.4]{Guichard:2021aa},  $\rho_\infty$ is positive. This concludes the proof.
\end{proof}
\begin{appendix}
\section{Extension to real closed fields}\label{app:real}
In this appendix, we explain how to extend the results obtained previously to all real closed fields by using the quantifier elimination Theorem of Tarski and Seidenberg. 
The importance of real closed field in the theory of surface group representations is outlined in the work of Brumfiel \cite{Brumfiel:1987} and more recently Burger, Iozzi, Parreau and Pozzetti \cite{Burger:2023}. 
We will not address here challenges on the structure of the character variety of positive representations itself but only focus on the extension of our main results to positive representations defined over real closed fields.

\subsection{Real closed fields} 
A {\em real closed field} is a totally ordered  field so that every positive element is a square and every odd degree polynomial has a root. Obviously $\mathbb R$ is a real closed field.

The {\em Tarski--Seidenberg quantifier elimination theorem} loosely says that any semi-algebraic statement holding over $\mathbb R$ holds for any real closed field $\mathbb F$. 
Recall that a {\em semi-algebraic set} over an ordered field $\mathbb K$ is a subset of $\mathbb K^n$ which can be defined by finitely many algebraic equalities and inequalities with coefficients in $\mathbb K$. We will use two important consequences of  Tarski--Seidenberg:  the \emph{Projection Theorem}, stating that the image a semi-algebraic set by a polynomial map is also semi-algebraic \cite[Proposition 2.2.7]{Bochnak:1998}, and the \emph{transfer principle} stating that  a semi-algebraic set defined over $\mathbb R$ is empty if and only if for  any real closed extension $\mathbb F$ of $\mathbb R$ the subset of $\mathbb F^n$ defined by the same equalites and inequalities is empty \cite[Proposition 5.3.5]{Bochnak:1998}.

Our goals are  now the following
\begin{itemize}
	\item Define positive representations with values in a semisimple algebraic
	group defined over a real closed field $\mathbb F$,
	\item Show that Theorems \ref{theo:A} and \ref{theo:Bprime} can be rephrased
	in terms of semi-algebraic subset, and thus hold over arbitrary real closed
	fields.
\end{itemize}

\subsection{Positive representations over real closed fields}
Let $\mG$ be a semisimple real algebraic group equipped with a positive
structure relative to~$\Theta$ and $\cF_\Theta$ the  generalized flag manifold associated to $\mP_\Theta$. Our first result is: 
\begin{proposition}
	The set of positive $n$-tuples is a semi-algebraic subset of $\cF^{n}_{\Theta}$.
\end{proposition}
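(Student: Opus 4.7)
The plan is to reduce the claim to positive triples and positive quadruples via the characterization from Section~\ref{sec:pos-tuples}, and then describe each of those sets as the image of a semi-algebraic set under a polynomial map.

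First I would invoke the fact recalled in Section~\ref{sec:pos-tuples}: for $n\geq 4$, a tuple $(x_1,\dots,x_n)$ in $\cF_\Theta^n$ is positive if and only if every ordered subquadruple $(x_i,x_j,x_\ell,x_m)$ with $1\leq i<j<\ell<m\leq n$ is positive. The set of positive $n$-tuples is therefore a finite intersection of preimages, under coordinate projections $\cF_\Theta^n\to\cF_\Theta^4$, of the set of positive quadruples. Since semi-algebraic subsets are stable under finite intersection and under preimages by polynomial maps, it is enough to treat the cases $n=3$ and $n=4$.

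For the case $n=3$, I would use the description in Section~\ref{sec:diamonds-positive-n}: a triple $(x,z,y)$ is positive if and only if there exist $\psi\in\cG$ with $\pi^{\cL}(\psi)=(x,y)$ and $n\in\mN_\Theta$ such that $z=\psi(n)\cdot x$. The key input is that both $\cG$ (the algebraic $\mG$-torsor of isomorphisms) and the semigroup $\mN_\Theta\subset\mU_\Theta$ are semi-algebraic; the latter follows from the explicit description of the cones $c_\theta\subset\mathfrak u_\theta$ in \cite{GuichardWienhard_pos}, out of which $\mN_\Theta$ is constructed by finitely many polynomial inequalities in the root spaces. The set of positive triples is then the image of the semi-algebraic set $\cG\times\mN_\Theta$ under the polynomial map
\[
(\psi,n)\longmapsto \bigl(\pi^{\cF}(\psi),\ \psi(n)\cdot\pi^{\cF}(\psi),\ \pi^{\cF^{\mathrm{opp}}}(\psi)\bigr),
\]
and is therefore semi-algebraic by the Tarski--Seidenberg projection theorem. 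The same strategy handles positive quadruples, replacing the source by $\cG\times\mN_\Theta\times\mN_\Theta^{-1}$ and adding a fourth coordinate $\psi(n')\cdot\pi^{\cF}(\psi)$ with $n'\in\mN_\Theta^{-1}$ to encode membership in the opposite diamond.

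The only non-formal step is the verification that $\mN_\Theta$ and the cones $c_\theta$ are semi-algebraic; this is not a genuine difficulty but rather a bookkeeping task of unwinding the definitions in \cite{GuichardWienhard_pos}, after which the argument reduces to standard semi-algebraic geometry.
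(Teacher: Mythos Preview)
Your proposal is correct and follows essentially the same route as the paper: both arguments rest on the fact that a diamond is the image of a semi-algebraic parameter space (the cones $c_\theta$, or equivalently the semigroup $\mN_\Theta$) under a polynomial map, and then invoke the Tarski--Seidenberg projection theorem. Your version is somewhat more explicit than the paper's in spelling out the reduction from general $n$-tuples to quadruples and in incorporating the varying extremities via the torsor $\cG$, whereas the paper's proof simply states that a diamond is semi-algebraic and leaves the passage to tuples implicit.
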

\begin{proof}
	The parametrization theorem of Guichard--Wienhard \cite[Theorem
	10.1]{GuichardWienhard_pos} parametrizes a positive diamond as the image by a
	polynomial map ---indeed the exponential map defined on a unipotent subalgebra is actually polynomial--- of a product of cones in $\mk u_\theta$ that are semi-algebraic and in fact defined by finitely
	many explicit inequalities \cite[Section 5]{GuichardWienhard_pos}. Thus by the Projection Theorem a positive diamond is a semi-algebraic set.  
\end{proof}
Any semi-algebraic set defined over~$\mathbb{K}$ admits a natural $\mathbb
F$-extension for any real closed field $\mathbb F$ containing~$\mathbb{K}$, which amounts to considering the set defined by the same polynomial equalities and inequalites in $\mathbb F^n$.
Given a real closed field $\mathbb F$ we denote by $\cF_\Theta(\mathbb F)$ the $\mathbb F$-extension of the flag manifold $\cF_\Theta$ on which $\mG(\mathbb F)$ acts and say that a triple in $\cF_\Theta(\mathbb F)^3$ is positive if it belongs to the $\mathbb F$-extension of the set of positive triples in $\cF^{3}_{\Theta}$ (cf.\ \cite[Example 6.17 (c)]{Burger:2023}). 

We can now extend verbatim the definition of positive representations into algebraic groups with coefficients in real closed fields since it only involves the notion of positive triples and quadruples.

\subsection{The main results for real closed fields}

We now state Theorem \ref{theo:A} and Theorem \ref{theo:Bprime} for real closed fields and prove the corresponding statements. 

\begin{theorem}[\sc Positivity of the cross-ratio]\label{theorem:Aprime}
	Let $\ms G$ be a semisimple algebraic group admitting
	a positive structure relative to~$\Theta$,  $\cF_\Theta$ be the generalized flag manifold
	associated to $\Theta$, $\lambda$ a $\Theta$-compatible
	dominant weight, 
	$\bb^\lambda$  the associated cross-ratio. Then for any
	real closed field $\mathbb F$ and every  positive quadruple
	$(x,y,X,Y)$ in $ \cF_\Theta(\mathbb F)^4$ it holds
	\[\bb^\lambda (x,y,X,Y)>1\ . 
	\]
\end{theorem}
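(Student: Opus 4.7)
The plan is to recast Theorem~\ref{theo:A} (the real case) as the emptiness of a semi-algebraic set and then invoke the Tarski--Seidenberg transfer principle.

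First I would verify that each of the three ingredients in the statement is semi-algebraic over the ground field $\mathbb{K}$ of definition of $\mG$ (which we may take to be $\mathbb{Q}$ or a suitable number field). The proposition recalled just before the statement already gives that the set $\mathcal{P}_4 \subset \cF_\Theta^4$ of positive quadruples is semi-algebraic, via the parametrization of diamonds by polynomial maps from explicit cones in the $\mk u_\theta$. Next, the cross-ratio $\bb^\lambda$ associated to a $\Theta$-compatible dominant weight $\lambda$ is semi-algebraic: the equivariant maps $\Xi_\lambda\colon\cF_\Theta\to\bP(E)$ and $\Xi_\lambda^*\colon\cF_\Theta^{\mathrm{opp}}\to\bP(E^*)$ are polynomial (they are given by evaluating the highest/lowest weight vector of the algebraic representation $\tau\colon\mG\to\pgl(E)$), and the projective cross-ratio $\bb^E$ is by definition a rational function of the representatives. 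Composition and pullback preserve semi-algebraicity, so the graph of $\bb^\lambda$ over its natural domain of definition is semi-algebraic.

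Now form the semi-algebraic set
\[
\mathcal{B} \defeq \bigl\{ (x,y,X,Y) \in \cF_\Theta^4 \ \bigl|\bigr.\
(x,y,X,Y) \in \mathcal{P}_4 \text{ and } \bb^\lambda(x,y,X,Y) \leq 1\bigr\}.
\]
Theorem~\ref{theo:A} asserts precisely that $\mathcal{B}(\mathbb{R}) = \emptyset$. By the transfer principle (\cite[Proposition~5.3.5]{Bochnak:1998}), emptiness of a semi-algebraic set defined over $\mathbb{K} \subset \mathbb{R}$ is preserved under passage to any real closed extension $\mathbb{F}$ of $\mathbb{K}$. Hence $\mathcal{B}(\mathbb{F}) = \emptyset$ as well, which translates to $\bb^\lambda(x,y,X,Y) > 1$ for every positive quadruple in $\cF_\Theta(\mathbb{F})^4$.

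The only subtle point, and the step I expect to require some care, is making sure the inequality ``$\bb^\lambda > 1$'' transfers as the correct inequality in $\mathbb{F}$. Since $\bb^\lambda$ takes values in $\mathbb{F}$ (viewed as an ordered field containing $\mathbb{Q}$), and the order relation on $\mathbb{F}$ restricts to the usual order on $\mathbb{R}$, the formula ``$\bb^\lambda(x,y,X,Y) \leq 1$'' is a legitimate semi-algebraic condition in both contexts and the transfer principle applies directly. The only potential nuisance is that $\bb^\lambda$ is defined as an $n$-th root of a rational function when $\lambda$ is not an honest weight but this is avoided by the hypothesis that $\lambda$ is a weight, so that $\bb^\lambda$ is an honest rational function in the projective coordinates.
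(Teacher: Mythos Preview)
Your proposal is correct and follows essentially the same route as the paper: form the semi-algebraic ``bad set'' where the quadruple is positive but the cross-ratio is at most~$1$, observe it is empty over~$\mathbb{R}$ by Theorem~\ref{theo:A}, and apply the transfer principle. Your justification that $\bb^\lambda$ is a rational function (via the algebraic equivariant maps $\Xi_\lambda$, $\Xi_\lambda^*$ and the projective cross-ratio formula) is exactly what the paper invokes by pointing back to the defining equations of Section~\ref{sec.cr}, and your remark that working over a small ground field~$\mathbb{K}$ lets the transfer reach \emph{every} real closed field (not only extensions of~$\mathbb{R}$) is a welcome bit of extra care.
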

\begin{proof}
	It      follows from Equations~\eqref{e.projcr} and~\eqref{e.crgen} in Section~\ref{sec:posit-cross-ratio} that the cross-ratio $\bb^\lambda$ defines an algebraic function from the semi-algebraic subset $O$ of $\cF^{4}_{\Theta}$. 
	As a result the set 
	$$\{(x,y,X,Y)\in \cF^{4}_{\Theta}|\,(x,y,X,Y) \text{ is positive, }\bb^\lambda (x,y,X,Y)\leq 1 \}$$
	is a semi-algebraic subset defined over $\mathbb R$. Since by Theorem  \ref{theorem:Aprime} such set is empty over $\mathbb R$, it is empty over every real closed field $\mathbb F$ extending $\mathbb R$, which proves the desired statement.
\end{proof}	

We now turn to Theorem \ref{theo:Bprime}. Given a real closed field $\mathbb F$ and an element~$A$ in $ \mG(\mathbb F)$ we say that a point~$a^+$ in $\cF_\Theta(\mathbb F)$ is an attracting fixed point for~$A$ (respectively $a^-$~is a repelling fixed point) if it is a fixed point for the $A$ action on $\cF_\Theta(\mathbb F)$ and the action of $\Ad(g)$ on the Lie algebra of the unipotent radical of the stabilizer of $a^+$ has all eigenvalues of modulus in $\mathbb F$ strictly larger than one (respectively strictly smaller than one). Then the subset of $\mG(\mathbb F)\times\cF_\Theta(\mathbb F)$ consisting of pairs $(A,a)$ such that $a$ is an attracting fixed point for $A$ (respectively repelling) is semi-algebraic, and corresponds to the $\mathbb F$-extension of the subset of $\mG\times\cF_\Theta$ consisting of pairs satisfying the same property  \cite[Section 4.4]{Burger:2023}.

\begin{theorem}
	Let $\mathbb F$ be real closed. For every pair of $\Theta$-loxodromic elements~$A$ and~$B$ in $\mG(\mathbb F)$ with attracting and repelling  fixed points $(a^+, a^-)$
	and $(b^+, b^-)$ such that the sextuple
	$(a^+, b^-, a^-, b^+, B(a^+), A(b^+))$ in $\cF_\Theta(\mathbb F)^6$ is positive and any $\theta$ in $\Theta$ it holds
	\begin{equation*}
		\frac{1}
		{\pp^{\omega_\theta}\left(A\right)}
		+\frac{1}{\chi_{\theta}
			\left(B\right)}< 1\ .
	\end{equation*}
\end{theorem}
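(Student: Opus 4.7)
The plan is to deduce this from Theorem~\ref{theo:Bprime} (the case $\mathbb F = \mathbb R$) via the Tarski--Seidenberg transfer principle, following exactly the strategy used in the proof of Theorem~\ref{theorem:Aprime}. The idea is to exhibit the failure of the conclusion under the stated hypotheses as a semi-algebraic set defined over $\mathbb Q$, which is empty over $\mathbb R$ by Theorem~\ref{theo:Bprime}, hence empty over every real closed field by \cite[Proposition 5.3.5]{Bochnak:1998}.

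First I would verify that all the objects appearing in the statement are semi-algebraic. By the discussion preceding the theorem (and \cite[Section 4.4]{Burger:2023}), the subset
\[ \mathcal L \defeq \bigl\{(A, a^+, a^-) \in \mG \times \cF_\Theta \times \cF_\Theta \mid A \text{ is $\Theta$-loxodromic with fixed points } (a^+, a^-)\bigr\} \]
is semi-algebraic, being cut out by conditions on the moduli of eigenvalues of the adjoint action of $A$ on $\mk u_{a^+}$. The positivity of a sextuple in $\cF_\Theta^6$ is also semi-algebraic, via the Guichard--Wienhard parameterization of diamonds from \cite[Theorem 10.1]{GuichardWienhard_pos}, as already used in the proof of Theorem~\ref{theorem:Aprime}.

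Second, I would express the two quantities appearing in the inequality as semi-algebraic functions on $\mathcal L$. The period $\pp^{\omega_\theta}(A)$ is given by the rational formula $\bb^{\omega_\theta}(a^+, a^-, x, A\cdot x)$ for any generic $x$ in $\cF_\Theta$, hence is algebraic in its arguments. For the character $\chi_\theta$ of a root, I would expand $\theta = \sum_{\alpha \in \Delta} c_\alpha \omega_\alpha$ with integer coefficients $c_\alpha$ (the entries of the Cartan matrix), which gives $\chi_\theta(g) = \prod_{\alpha} \chi_{\omega_\alpha}(g)^{c_\alpha}$; each $\chi_{\omega_\alpha}(g)$ is the maximum modulus of an eigenvalue of $g$ under the irreducible proximal representation with highest weight $\omega_\alpha$, which is a semi-algebraic function.

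Combining these observations, the set of tuples $(A, B, a^+, a^-, b^+, b^-)$ in $(\mG \times \cF_\Theta^2)^2$ satisfying the hypotheses of the theorem and failing the displayed inequality is a semi-algebraic subset $S$ defined over $\mathbb Q$. By Theorem~\ref{theo:Bprime} we have $S(\mathbb R) = \emptyset$, and the transfer principle then yields $S(\mathbb F) = \emptyset$ for every real closed $\mathbb F$, which is the desired conclusion. The only substantive obstacle, rather than any hard mathematical step, is to verify that the $\mathbb F$-extensions of $\mathcal L$ and of the semi-algebraic formulae for $\pp^{\omega_\theta}$ and $\chi_\theta$ coincide with the conceptual definitions of those objects over $\mathbb F$ used in the theorem statement; this coherence is standard and is treated in \cite[Section 4.4]{Burger:2023}, as already invoked in the paragraph preceding the theorem.
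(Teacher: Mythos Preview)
Your proposal is correct and follows essentially the same route as the paper: both exhibit the failure set as a semi-algebraic subset, observe it is empty over~$\mathbb R$ by Theorem~\ref{theo:Bprime}, and invoke the transfer principle. The only cosmetic difference is that the paper cites \cite[Proposition~4.7]{Burger:2023} directly for the semi-algebraicity of~$\chi_\theta$, whereas you expand~$\theta$ in fundamental weights and argue via maximal eigenvalues of the associated proximal representations; both justifications are valid.
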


\begin{proof}
	We need, again 	to show that the set $P$ of pairs $(A,B)$ in $\mG^2$ admitting attracting and repelling fixed points such that the sextuple 
	$$(a^+, b^-, a^-, b^+, B(a^+), A(b^+))$$ is positive and for which the Equation 	\begin{equation}\label{e.collarF}
		\frac{1}
		{\pp^{\omega_\theta}\left(A\right)}
		+\frac{1}{\chi_{\theta}
			\left(B\right)}\geq  1\ .
	\end{equation}  
	holds
	is a semi-algebraic set.
	
	On the one hand, let $P_1$ be the set of pairs $(A,B)$ in $\mG^2$ admitting
	attracting and repelling fixed points such that 	the sextuple $(a^+,
	b^-, a^-, b^+, B(a^+), A(b^+))$ is positive, then $P_1$~is a semi-algebraic set. Indeed this follows from the discussion above concerning attracting fixed points, from  the action of $\mG$ on $\cF_\Theta$ being algebraic, and the set of positive $6$-tuples being semi-algebraic. 
	
	On the other hand, the set $P_2$ of pairs $(A,B)$ in $\mG^2$ such that Equation
	\eqref{e.collarF} holds is a semi-algebraic set, since both the period
	$\pp^{\omega_\theta}$ and the $\theta$-character $\chi_\theta$ are
	semi-algebraic functions. Indeed the periods are defined by the  expressions $\bb^{\omega_\theta}(b^+,b^-,a^+,B(a^+)),$
	which are semi-algebraic since the cross-ratio $\bb^{\omega_\theta}$ and the function associating to $B$ 
	its attracting and repelling fixed points is semi-algebraic; that the character $\chi_\theta$ is semi-algebraic follows from  {\cite[Proposition 4.7]{Burger:2023}}.
	
	Now the set $P=P_1\cap P_2$ is semi-algebraic. Theorem \ref{theo:Bprime} guarantees that is empty over $\mathbb R$, and it thus follows from the transfer principle that it is empty over any real closed field $\mathbb F$ extending $\mathbb R$, which concludes the proof.  	
\end{proof}

\end{appendix}
\bibliographystyle{amsplain}
\bibliography{./collar.bib}
\end{document}